\numberwithin{equation}{section}
\newcommand{\abs}[1]{\lvert #1\rvert}
\newcommand{\cc}{{\normalfont{\text{c}}}}
\newcommand{\loc}{{\normalfont{\text{loc}}}}
\newcommand{\half}{\frac{1}{2}}
\newcommand{\norm}[1]{\left \lVert #1 \right \rVert}
\newcommand{\snorm}[1]{\left \lvert #1 \right \rvert}
\newcommand{\R}{\mathbb{R}}
\newcommand{\IN}{\mathbb{N}}
\newcommand{\OV}{\mathsf{V}}
\newcommand{\OK}{\mathsf{K}}
\newcommand{{\D}}{\normalfont{\text{D}}}
\newcommand{{\G}}{\normalfont{\text{G}}}
\newcommand{{\U}}{\normalfont{\text{U}}}
\newcommand{{\I}}{\normalfont{\text{I}}}
\newcommand{{\per}}{\normalfont{\text{per}}}
\newcommand{{\y}}{{\boldsymbol{{y}}}}
\newcommand{{\bx}}{{\bf x}}
\newcommand{{\by}}{{\bf y}}
\newcommand{{\bz}}{{\bf z}}
\newcommand{{\bd}}{{\bf d}}
\newcommand{\dotp}[2]{\left ( #1,#2 \right )}
\newcommand\encircle[1]{%
  \tikz[baseline=(X.base)] 
    \node (X) [draw, shape=circle, inner sep=0] {\strut #1};}
\newtheorem{theorem}{Theorem}[section]
\newtheorem{corollary}[theorem]{Corollary}
\newtheorem{lemma}[theorem]{Lemma}
\newtheorem{definition}[theorem]{Definition}
\newtheorem{assumption}[theorem]{Assumption}
\newtheorem{proposition}[theorem]{Proposition}
\theoremstyle{remark}
\newtheorem{remark}{Remark}[section]
\newcommand{\fh}[1]{{\color{OliveGreen} #1}}
\begin{document}
%
%

\title{
Deep ReLU Neural Network Emulation 
\\ 
in High-Frequency Acoustic Scattering}

\author{Fernando Henr\'iquez$^\dagger$}
\author{Christoph Schwab$^\ddagger$}
\address{$^\dagger$Chair of Computational Mathematics and Simulation Science (MCSS), \'Ecole Polytechnique F\'ed\'eral de Lausanne, 1015, Lausanne, Switzerland.}
\address{$^\ddagger$Seminar for Applied Mathematics, ETH Z{\"u}rich, 8092, Z{\"u}rich, Switzerland.}
\email{fernando.henriquez@epfl.ch}
\email{schwab@math.ethz.ch}

\begin{abstract}
We obtain wavenumber-robust error bounds 
for the deep neural network (DNN) emulation
of the solution to the time-harmonic, 
sound-soft acoustic scattering problem in the 
exterior of a smooth, convex obstacle in two physical dimensions.

The error bounds are based on
a boundary reduction of the scattering problem
in the unbounded exterior region 
to its smooth, curved boundary $\Gamma$
using the so-called
combined field integral equation (CFIE), a
well-posed, second-kind boundary integral equation (BIE) 
for the field's Neumann datum on $\Gamma$.
In this setting, the continuity and stability constants of this formulation 
are explicit in terms of the (non-dimensional) wavenumber $\kappa$.

Using wavenumber-explicit asymptotics of the problem's Neumann datum,
we analyze the DNN approximation rate for this problem.
We use fully connected NNs of the feed-forward type
with Rectified Linear Unit (ReLU) activation.
Through a constructive argument we prove the existence of DNNs 
with an $\epsilon$-error bound in the $L^\infty(\Gamma)$-norm
having a small, fixed width and a depth that increases \emph{spectrally}
with the target accuracy $\epsilon>0$. 
We show that for fixed $\epsilon>0$,
the depth of these NNs should increase
\emph{poly-logarithmically} with respect to the wavenumber $\kappa$
whereas the width of the NN remains fixed.
Unlike current computational approaches 
such as wavenumber-adapted versions of the Galerkin Boundary Element Method (BEM)
with shape- and wavenumber-tailored solution \emph{ansatz} spaces, 
our DNN approximations
do not require any prior analytic information about the scatterer's shape.
\end{abstract}

\keywords{Boundary Integral Equations, Acoustic Scattering, High Frequency, Deep Neural Networks}

\subjclass{35J05, 45F15, 68T07, 78M35.}

\maketitle

\section{Introduction}
\label{sec:Intro}
In recent years, the use of DNNs for the numerical approximation of PDEs
has emerged as an alternative to well-established, ``classical''
discretization techniques such as Finite Element-, Finite Difference- and
Finite Volume methods.
The success of DNNs in the field of computational mathematics
is arguably based on three main pillars: (i) universality results
(e.g.~\cite{PinkusActa99} and references therein), (ii) 
a considerably stronger approximation properties
compared to classical linear approximation methods
or nonlinear (adaptive) approximation schemes, and (iii)
on the intense development of dedicated machine learning
algorithms and computer hardware for DNN-based computations.

Mounting computational evidence together with theoretical insights
unmistakably underlines the notable gains stemming 
from the superior expressivity properties of DNNs.
These favorable properties come at the price of a \emph{nonlinear dependence
of the DNNs upon their hidden parameters}.
This renders the solution approximation by DNNs of linear PDEs and BIEs a nonlinear, finite-parametric problem which must be solved numerically
in an iterative fashion during the so-called ``DNN training'' phase.
Therefore, the question arises if, in particular applications, the potential gain afforded by employing 
DNNs for numerical approximation of solutions is so high as to justify the use of nonlinear optimization in the computation of the DNN approximation (as compared to a linear system solve in, e.g. Galerkin type 
methods based on linear subspaces). 

\subsection{Previous work on numerical high-frequency scattering}
\label{sec:HighkScat}
Due to its large importance in applications, 
the numerical solution of exterior, time-harmonic
acoustic and electromagnetic scattering problems
for large wavenumber $\kappa$ has received considerable
attention by engineers and by numerical analysts.
A large body of literature is available. 
First, as $\kappa\to\infty$, it is classical that the 
solution of the problem tends to the so-called
``geometric optics'' limit, the solution of 
which is governed by the Eikonal equation, see, eg., \cite[Chap. X]{TaylorPDO} and \cite{MoraLudwig68}
and the references there. 
The Eikonal equation is a Hamilton-Jacobi PDE which,
in general, must be treated numerically.
For large, but finite $\kappa$, asymptotic expansion w.r. to
$1/\kappa$ of the solution have been obtained, and justified,
for smooth, convex scatterers \cite{LudwigUniform67,MelTaylor85}. 
These works provide recursive expressions for the terms
appearing in the asymptotic expansions w.r.~to $1/\kappa \downarrow 0$ of the solution,
in terms of powers of $\kappa^{\frac{1}{3}}$,
for the solution of the exterior problem, in any space dimension $d\geq 2$.
These results indicate that in the case of convex scatterers, and 
for large $\kappa$, the solution comprises a piecewise
smooth part plus oscillatory terms whose specific structure amounts
to a smooth modulation of oscillatory functions depending on a nontrivial
way on the scatterer's boundary $\Gamma$.
This indicates the multiscale nature of the problem, and
immediately implies a \emph{scale resolution} requirement for 
the FEM and BEM. These are typically of the type $\kappa^2h<1$ for low
order FE or of the type $\kappa h<1$ for BE. 
This kind of requirement is prohibitive for large values of $\kappa$,
especially in space dimension
$d=3$.
High order methods of polynomial degree $p>1$ have scale resolution
requirements of the type $\kappa h/p< 1$. See, e.g., \cite{IhlBook,WildeActa12}
and the references there.

Early attempts to alleviate this scale resolution requirement
by incorporating information on the asymptotic expansions
into the numerical schemes include \cite{Bourdonn1994} 
for an approach in connection with Galerkin BEM.
We refer to \cite{WildeActa12} for a survey of these developments
and a comprehensive list of references.

The past decade has seen intense developments in the numerical
analysis of FE and BE methods for the Helmholtz equation at high wavenumber.
For further details, we refer to e.g. 
\cite{BabIhl1,BabIhl2}, and also to \cite{IhlBook} and
the references there. For boundary integral formulations, 
several critical issues in FE discretizations 
such as dispersion, and scale resolution
are either absent, or strongly mitigated. 
We refer to \cite{WildeActa12} and the 
references there for detailed discussion.
Boundary reduction and BIE reformulation 
is also the basis for the present results.

\subsection{Contributions}
\label{sec:contrib}
We investigate the wavenumber-robustness of a NN-based 
numerical approximation for time-harmonic, 
sound-soft, acoustic scattering in the exterior $\mathbb{R}^2 \backslash \overline{\D}$
of a bounded scatterer occupying a smooth, bounded domain $\D$.
Adopting a (standard, for this type of problem, see, e.g., \cite{WildeActa12,SS10})
suitable (in particular, resonance-free) 
reduction to a boundary integral equation on 
the closed curve $\Gamma = \partial \D$ 
implies satisfaction of the Sommerfeld radiation condition
at infinity, and of the homogeneous Helmholtz PDE in $\mathbb{R}^2 \backslash \overline{\D}$.
Continuity and coercivity of the boundary integral operator (BIO) 
with $\kappa$-explicit coercivity and continuity bounds renders 
Ritz-Galerkin variational approximations of the unknown 
density on $\Gamma$ well-posed, and quasi-optimal.
In the present paper, we extend our work \cite{AHS23_2956} and
address \emph{wavenumber-explicit convergence rates} of deep-BEM approximations 
in $L^2(\Gamma)$ of the corresponding boundary integral equations (BIEs for short).
This is reminiscent to e.g. the so-called ``Deep Ritz Method'' 
NN approximation for variational elliptic PDEs in \cite{Yuetal17}. 
In the low-frequency case, 
in \cite{AHS23_2956}, we showed that on polygonal $\Gamma$, 
ReLU-NN approximations can resolve the corner singularities
of the surface densities in BIEs at an optimal (even exponential)
rate, in terms of the ``number of degrees of freedom'', being the 
NN weights that are active in the DNN approximation.

The results in the present paper are based on the high wavenumber asymptotics
of the unknown surface density solution to a second kind BIE reformulation
of the exterior scattering problem. 
NN approximation is preceded by
boundary reduction whereby the governing equation and
the radiation condition at infinity are satisfied exactly.
The surface density is unknown, and we study neural networks 
to approximate it in a wavenumber-robust fashion, based on high wavenumber asymptotics
proposed in \cite{SKS15} for the density on the surface of the scatterer.
Numerical approximations based on this idea
have been developed in recent years in
\cite{Bourdonn1994,DGS07,EE19,EcevitBoubAn22}, among others.
These methods were based on earlier work on high-wavenumber asymptotics 
\cite{MR0196254,LudwigUniform67,MoraLudwig68,MelTaylor85}
in PDE theory.
While in \cite{EE19}, local geometric asymptotics 
have been used to develop geometry- and wavenumber-adapted 
trial spaces for Galerkin boundary integral equations, 
such spaces necessarily depend on the wavenumber and the geometry,
with corresponding challenges to accurate numerical integration
of the Galerkin matrix for the (likewise wavenumber-dependent)
fundamental solution. 
While affording wavenumber-robust approximations these 
highly problem-adapted trial spaces for the Galerkin BEM
method entail locally different approximation spaces, 
depending on the illuminated, shadow and glancing regions
on the boundary $\Gamma$, with wavenumber-explicit localization and 
of these regions of $\Gamma$ and strongly differing functional forms
of the trial functions in them. 
While it was argued (and corroborated by sophisticated numerical tests)
in \cite{EcevitBoubAn22} that this results in a $\kappa$-robust approximation
scheme, the significant analytic overhead in specifying the 
$\kappa \to \infty$ asymptotics of the solution in dependence on the geometry of $\Gamma$. 
This is used in \cite{Bourdonn1994,DGS07,EE19,EcevitBoubAn22} 
to a-priori design ``problem adapted, non-polynomial, approximation
spaces'' on $\Gamma$.
This design, being strongly geometry dependent, 
could be viewed as a significant obstruction
to the versatility of the approach proposed in \cite{EE19,EcevitBoubAn22}
and in similar, earlier work \cite{DGS07}.

In the present paper, we prove that 
deep feedforward, strict ReLU-activated NNs of \emph{fixed width}
can achieve a corresponding approximation performance
subject to mild constraints on NN depth $L$
w.r. to the wavenumber $\kappa$ and the target accuracy $\epsilon \in (0,1]$.
In contrast to e.g.~\cite{EcevitBoubAn22},
without any a-priori adaptation of the NN to the scatterers' shape.
The main results of this work are Theorems \ref{thm:approx_V_s_k} and 
\ref{thm:approximation_phi_k}, in which we state different ReLU-NN emulation results
with wavenumber robust convergence rates. However, we remark that the results 
presented in Section \ref{ssec:approx_fock_integral}, which concern the emulation
of the so-called Fock's integral and its derivatives
(naturally appearing in geometric optics based asymptotics, also on surfaces),
are of independent interest.

The results are established for NNs with ReLU activation fuction.
Standard arguments (sketched in various places of the text) allow to extend 
to other, non-polynomial activation functions (sigmoidal, tanh, etc.), 
and to infer corresponding results also for neuromorphic approximations
by means of the conversion algorithm introduced in \cite{SWBCPG2022}.
\subsection{Layout}
\label{sec:outline}
This work is structured as follows.
In Section~\ref{sec:problem_model} we introduce the sound-soft
acoustic scattering and describe its boundary reduction via the combined field
boundary integral equation.
In addition, we recall $\kappa$-explicit stability and coercivity bounds
together with the well-posedness of the Galerkin formulation in $L^2(\Gamma)$. 
A key component of the ensuing wavenumber-explicit DNN analysis
are known asymptotic expansion of the solution with respect to the wavenumber $\kappa$.
These are thoroughly reviewed in Section~\ref{sec:AsTotField}.
In Section~\ref{sec:NN} we introduce 
definitions, notation, and results concerning DNNs. 
In Section~\ref{sec:k_robust_ReLU_NN_emulation} we present 
the main results of this work: the wavenumber-explicit ReLU-NN
emulation of the solution to the CFIE, the problem's Neumann trace.
Finally, in Section~\ref{sec:Concl} we provide concluding remarks.
\subsection{Notation}
\label{sec:Notation}
We denote by $\mathbb{N}$ the set of natural number starting with $1$,
and for any $C>1$ we set $\mathbb{N}_{\geq C} \coloneqq \mathbb{N} \cap [C,\infty)$.
Let $\Omega$ be a bounded, Lispchitz domain with boundary $\partial \Omega$
and exterior complement
$\Omega^\cc\coloneqq \mathbb{R}^d \backslash \overline{\Omega}$
in $\mathbb{R}^d$, $d \in \mathbb{N}$.
We denote by $L^p(\Omega)$, $p\in [1,\infty)$ the Lebesgue space of measurable, $p$-integrable
functions in $\Omega$, with the appropiate extension to $p= \infty$, 
and by $H^s(\Omega)$, $s\geq0$, standard Sobolev spaces in $\Omega$.

In addition, let $H^s(\partial \D)$, for $s\in[0,1]$, {be the Sobolev}
space of traces on $\partial \D$ (\cite[Section 2.4]{SS10}).
As is customary, we identify $H^0(\partial \D)$ with $L^2({\partial \D})$ and, 
for $s\in [0,1]$, $H^{-s}(\partial \D)$ with the dual space of $H^s(\partial \D)$. 
Exterior Dirichlet and Neumann trace operators are denoted by
\begin{equation}
	\gamma^\cc\colon H^1_\loc(\D^\cc)\to H^{\half}(\Gamma)
	\quad
	\text{and}
	\quad
	\frac{\partial}{\partial\mathbf{n}}\colon H^1(\Delta,\D^\cc)\to H^{-\half}(\Gamma),
\end{equation}
where the spaces $H^1_\loc(\D^\cc)$ and $H^1(\Delta,\D^\cc)$ are defined
in \cite[Definition 2.6.1 ]{SS10} and \cite[Equation (2.108)]{SS10}, respectively.
These operators are continuous as stated in \cite[Theorem 2.6.8 and Theorem 2.7.7]{SS10}.
For a Banach space $X$, we denote by $\mathscr{L}(X)$ the space of bounded linear operators 
from $X$ into itself equipped with the standard operator norm, which we denote by
$\norm{\cdot}_{\mathscr{L}(X)}$.
Let $f(s)$ be a $\ell$-times continuously differentiable function depending on
a single variable $s$. We denote by $f'(s)$ the first derivative of $f(s)$ with respect
to $s$. 
For higher-order derivatives, 
we use both the notation $f^{(\ell)}(s)$ and $D^\ell_s f(s)$
to denote the $\ell$-th order differential of $f(s)$ with respect to the argument $s$.
\section{High-Frequency Acoustic Scattering Formulation}
\label{sec:problem_model}
\subsection{Sound-Soft Scattering Problem}
\label{sec:SoundSftScat}
Let $\D\subset \mathbb{R}^2$ be a bounded domain with 
smooth boundary $\Gamma\coloneqq\partial \D$.
We denote by $\D^\cc\coloneqq \mathbb{R}^2 \backslash \overline{\D}$ 
the corresponding exterior domain. 
Given $\kappa>0$ and $\hat{{\bf d}} \in \mathbb{R}^2$ a unit vector, 
we define 
$u^\text{i}(\bx)\coloneqq\exp(\imath \kappa \bx \cdot \hat{{\bf d}})$, $\bx \in \mathbb{R}^2$,
and consider the following problem: 
Find the scattered field $u^\text{s}\in H^1_\loc(\D^\cc)$ 
such that the total field 
$u \coloneqq u^\text{i}+ u^\text{s}$ satisfies
\begin{subequations}\label{eq:sound_soft_problem}
\begin{align}
	\Delta u + \kappa^2 u &=0 \quad \text{in } \D^\cc,  \quad \text{and}\label{eq:HHeqn} 
        \\
	\gamma^\cc u &= 0 \quad \text{on } \Gamma, \label{eq:SoundSftBc}
\end{align}
\end{subequations}
In addition to \eqref{eq:HHeqn} and \eqref{eq:SoundSftBc}, as it is customary,
we impose the Sommerfeld radiation condition on the scattered
field $u^{\text{s}}$
\begin{align}\label{eq:Sommerf}
	\frac{\partial u^{\text{s}}}{\partial r}(\bx) - \imath \kappa u^{\text{s}}(\bx) 
	= 
	o
	\left(
		\norm{\bx}^{-\half}
	\right)
\end{align}
as $\norm{\bx}\rightarrow \infty$, uniformly in $\hat{\bx} \coloneqq \bx/\norm{\bx}$. 

\subsection{Boundary Integral Formulation}
\label{ssec:BIF}
We reduce the exterior acoustic scattering problem \eqref{eq:sound_soft_problem}
to an integral equation on the boundary $\Gamma = \partial \D$. 
In doing so, we satisfy exactly the radiation condition \eqref{eq:Sommerf} 
and the homogeneous PDE \eqref{eq:HHeqn}.

Let $\text{G}_{\kappa}(\bz)\coloneqq\frac{\imath}{4}H^{(1)}_0(\kappa \norm{\bz})$ 
for $\bz \in \R^2 \backslash \{{\bf 0}\}$ denote the fundamental solution of
the Helmholtz equation in two spatial dimensions and of wavenumber $\kappa \in  \R_+$.
Here $H^{(1)}_0$ corresponds to the Hankel function of the first kind and of order zero.
We proceed to recast \eqref{eq:sound_soft_problem} using BIOs by means of boundary potentials. 
To this end, we introduce 
{\it single layer potential}
\begin{align}
	\left(
		\mathcal{S}_\kappa\, \psi
	\right)(\bx) 
	\coloneqq
	\int\limits_{\Gamma}
	\text{G}_\kappa(\bx-\by) \psi(\by) 
	\text{d}s_{\by}, 
	\quad \bx \in \D^\cc,
\label{eq:SLP}
\end{align}
and the {\it double layer potential}
\begin{align}\label{eq:DLP}
	\left(
		\mathcal{D}_{\kappa}
		\phi
	\right)
	(\bx)
	\coloneqq
	\int\limits_\Gamma
	\mathbf{n}(\by)
	\cdot
  	\nabla_{\by}
	\text{G}_\kappa(\bx-\by)
	\phi(\by)
	\text{d}\text{s}_\by,
	\quad
	\bx
	\in 
	\D^\cc,
\end{align}
where $\psi,\phi \in L^1(\Gamma)$ are referred to as \emph{surface densities},
which are defined on the boundary $\Gamma$.
The application of exterior Dirichlet and Neumann traces to 
we obtain the single layer operator
\begin{align}\label{eq:SLO}
	\OV_{\kappa}
	\coloneqq
	\gamma^\cc
	\mathcal{S}_{\kappa}
	\colon H^{-\half}(\Gamma)\to H^{\half}(\Gamma),
\end{align}
the double layer operator
\begin{align}\label{eq:DLO}
	\half \mathsf{Id}
	+
	\OK_{\kappa}
	\coloneqq
	\gamma^\cc\mathcal{D}_{\kappa}
	\colon 
	H^{\half}(\Gamma)\to H^{\half}(\Gamma),
\end{align}
and the adjoint double layer operator
\begin{align}\label{eq:ALO}
	\half \mathsf{Id}-\OK'_{\kappa}
	\coloneqq
	\frac{\partial}{\partial\mathbf{n}}
	\mathcal{S}_{\kappa}
	\colon H^{-\half}(\Gamma)\to H^{-\half}(\Gamma).
\end{align}
The double layer operator and the adjoint double layer operator
are 
mutually adjoint 
in the bilinear $L^2(\Gamma)$-duality pairing (e.g. \cite{SS10}).

The starting point of the reformulation of \eqref{eq:sound_soft_problem} is Green's representation formula, 
which provides the following representation of the total field $u$: 
\begin{align}\label{eq:green_rep_formula}
	u(\bx)
	=
	u^\text{i}(\bx)
	-
	\mathcal{S}_{\kappa}
	\left(
		\frac{\partial u}{\partial\mathbf{n}}
	\right)(\bx),
	\quad
	\bx \in \D^\cc,
\end{align}
where $\frac{\partial u}{\partial\mathbf{n}}$ corresponds to the 
Neumann trace of the scattered field $u$.
We remark that Sommerfeld radiation conditions are built-in in the single layer potential.
The application of both Dirichlet and Neumann traces to 
\eqref{eq:green_rep_formula} yields the BIEs
\begin{align}
	\OV_\kappa  \frac{\partial u}{\partial\mathbf{n}} 
	&= 
	\gamma^\cc u^\text{i},\quad \text{and}, \label{eq:IE1}\\
	\left(\half \mathsf{I}+ \mathsf{K}'_\kappa \right)\frac{\partial u}{\partial\mathbf{n}}
	&= 
	\frac{\partial u^\text{i}}{\partial\mathbf{n}}\label{eq:IE2}.
\end{align}
Both \eqref{eq:IE1} and \eqref{eq:IE2} can be proven to be well--posed,
except for those $\kappa$ that are eigenvalues of the Laplace operator
in $\D$ with Dirichlet and Neumann boundary conditions, respectively.
A linear combination of \eqref{eq:IE1} and \eqref{eq:IE2}
leads to the resonance-free, well-posed BIE
\begin{align}\label{eq:cfie_eq}
	\mathsf{A}'_{\kappa,\eta} 
	\varphi_\kappa 
	= 
	\frac{\partial u^\text{i}}{\partial\mathbf{n}} - \imath \eta \gamma^\cc u^\text{i}
        \eqcolon f_{\kappa,\eta} \in L^2(\Gamma),
\end{align}
where $\varphi_\kappa \coloneqq \frac{\partial u}{\partial\mathbf{n}} $, and
$\mathsf{A}_{\kappa,\eta}$ and its adjoint are defined as
\begin{align}\label{eq:cfie}
	\mathsf{A}_{\kappa,\eta}
	\coloneqq
	\half \mathsf{I}+ \mathsf{K}_\kappa  - \imath \eta \OV_\kappa
	\quad
	\text{and}
	\quad
	\mathsf{A}'_{\kappa,\eta}
	\coloneqq
	\half \mathsf{I}+ \mathsf{K}'_\kappa  - \imath \eta \OV_\kappa,
\end{align}
respectively.
Here $\eta \in \mathbb{R} \backslash \{0\}$ is a coupling parameter to be specified.
It is well established that $\frac{\partial u}{\partial\mathbf{n}} \in H^{-\half}(\Gamma)$. 
However, if $\D$ is a domain of class $\mathscr{C}^2$ standard regularity results assert
that $\frac{\partial u}{\partial\mathbf{n}}\in L^2(\Gamma)$. 
Indeed, this also holds true even if $\D$ is a Lipschitz domain
(see e.g. \cite[Section 5.1.2]{Nec1967}). 
Therefore, as it is customary, we consider \eqref{eq:cfie_eq} as an operator equation posed in $L^2(\Gamma)$.
\subsection{{Well-posedness of the CFIE and $\kappa$-Explicit Bounds}}
\label{sec:CondCFIE}
The following two propositions address the boundedness and coercivity
of $\mathsf{A}_{\kappa,\eta}$ and its adjoint, with bounds that are 
explicit in the wavenumber.

\begin{proposition}[{\cite[Theorem 3.6]{CGL09}}]
\label{thm:continuity_cfie}
It holds
\begin{align}
	\norm{\mathsf{A}_{\kappa,\eta}}_{\mathscr{L}(L^2(\Gamma))}
	=
	\norm{\mathsf{A}'_{\kappa,\eta}}_{\mathscr{L}(L^2(\Gamma))}
	\lesssim
	1+\kappa^{\frac{1}{2}} + \snorm{\eta} \kappa^{-\frac{1}{2}},
\end{align}
where the implied constant is independent of $\eta$ and $\kappa$.
\end{proposition}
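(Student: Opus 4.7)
The plan is to reduce the bound to $\kappa$-explicit estimates on the individual boundary integral operators appearing in the definitions in \eqref{eq:cfie}. Recall
\begin{equation}
\mathsf{A}_{\kappa,\eta} = \tfrac{1}{2}\mathsf{I} + \mathsf{K}_\kappa - \imath\eta\,\mathsf{V}_\kappa,
\qquad
\mathsf{A}'_{\kappa,\eta} = \tfrac{1}{2}\mathsf{I} + \mathsf{K}'_\kappa - \imath\eta\,\mathsf{V}_\kappa.
\end{equation}
Since $L^2(\Gamma)$ is a Hilbert space, the operator norm is invariant under taking the adjoint, so $\|\mathsf{A}_{\kappa,\eta}\|_{\mathscr{L}(L^2(\Gamma))} = \|\mathsf{A}'_{\kappa,\eta}\|_{\mathscr{L}(L^2(\Gamma))}$, which immediately gives the claimed equality and reduces the proof to bounding one of them. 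Applying the triangle inequality,
\begin{equation}
\|\mathsf{A}_{\kappa,\eta}\|_{\mathscr{L}(L^2(\Gamma))} \leq \tfrac{1}{2} + \|\mathsf{K}_\kappa\|_{\mathscr{L}(L^2(\Gamma))} + |\eta|\,\|\mathsf{V}_\kappa\|_{\mathscr{L}(L^2(\Gamma))}.
\end{equation}

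It therefore suffices to produce $\kappa$-explicit bounds of the form $\|\mathsf{V}_\kappa\|_{\mathscr{L}(L^2(\Gamma))} \lesssim \kappa^{-1/2}$ and $\|\mathsf{K}_\kappa\|_{\mathscr{L}(L^2(\Gamma))} \lesssim 1 + \kappa^{1/2}$, valid uniformly for $\kappa$ bounded away from $0$ (the low-wavenumber regime can be absorbed into the constant since the operators depend continuously on $\kappa$). Plugging these into the triangle-inequality bound yields $\tfrac{1}{2} + (1+\kappa^{1/2}) + |\eta|\kappa^{-1/2} \lesssim 1 + \kappa^{1/2} + |\eta|\kappa^{-1/2}$, which is the assertion of the proposition.

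To establish the two kernel-level bounds, I would decompose the two-dimensional Helmholtz fundamental solution $\mathrm{G}_\kappa(\mathbf{z}) = \tfrac{\imath}{4}H^{(1)}_0(\kappa\|\mathbf{z}\|)$ into a smooth oscillatory tail and a logarithmically singular core: use the near-field expansion $H^{(1)}_0(t) \sim \tfrac{2\imath}{\pi}\log t$ as $t\downarrow 0$ to handle the singularity uniformly in $\kappa$, and use the large-argument asymptotics $H^{(1)}_0(t) \sim \sqrt{2/(\pi t)}\,e^{\imath (t-\pi/4)}$ for the oscillatory regime. The smoothness and curvature bounds on $\Gamma$ allow a stationary-phase / nonstationary-phase analysis of the oscillatory integrals (or a direct Schur test on the associated kernel) that produces the factor $\kappa^{-1/2}$ for $\mathsf{V}_\kappa$; differentiating the kernel in the normal direction on $\Gamma$ loses exactly one power of $\kappa$, yielding the factor $\kappa^{1/2}$ for $\mathsf{K}_\kappa$ (the additive $1$ absorbs the contributions of the geometric/non-oscillatory remainder terms in the kernel of $\mathsf{K}_\kappa$).

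The hard part of this program is exactly the sharp wavenumber-explicit operator-norm estimates for the single and double layer operators on curved boundaries; these have been carried out in full generality for piecewise smooth $\Gamma$ by Chandler-Wilde, Graham and collaborators, and are precisely the content of \cite[Thm. 3.6]{CGL09}. The most honest plan is therefore to invoke those estimates directly as black boxes and combine them through the triangle-inequality argument above, while noting that adapting the kernel-level analysis to the smooth, convex setting considered here would require only the stationary-phase and log-singularity ingredients sketched above.
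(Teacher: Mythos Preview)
The paper does not prove this proposition at all; it is simply quoted from \cite[Theorem 3.6]{CGL09} and used as a black box. Your proposal correctly identifies this and even sketches the mechanism behind the cited result (adjoint equality, triangle inequality, and the $\kappa$-explicit bounds $\|\mathsf{V}_\kappa\|\lesssim\kappa^{-1/2}$, $\|\mathsf{K}_\kappa\|\lesssim 1+\kappa^{1/2}$), so your approach is entirely consistent with the paper's treatment.
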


\begin{proposition}[{\cite[Theorem 1.2]{SKS15}}]
\label{thm:coercivity_cfie}
Let $\D$ be a convex domain in either two dimensions or three dimensions whose 
boundary, $\Gamma$, has a strictly positive curvature and is 
both $\mathscr{C}^3$ and piecewise analytic. 
Then there exists a constant $\eta_0>0$ such that, given $\delta \in (0, 1/2)$, 
there exists $\kappa_0$ (depending on $\delta$) such that 
for $\kappa \geq \kappa_0$ and $\eta\geq \eta_0 \kappa$, 
\begin{equation}
	\mathfrak{Re}
	\left \{ 
		\dotp{\mathsf{A}'_{\kappa,\eta} \phi}{\phi}_{L^2(\Gamma)}
	\right\}
	\geq
	\left( \frac{1}{2}-\delta\right) 
	\norm{\phi}^2_{L^2(\Gamma)}
\end{equation}
for all $\phi \in L^2(\Gamma)$. 
The bound also holds with $\mathsf{A}'_{\kappa,\eta}$
replaced by $\mathsf{A}_{\kappa,\eta}$. 
\end{proposition}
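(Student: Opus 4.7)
The plan is to establish a coercivity bound of the form $\mathfrak{Re}\langle \mathsf{A}'_{\kappa,\eta}\phi,\phi\rangle \geq (\tfrac{1}{2}-\delta)\|\phi\|^2_{L^2(\Gamma)}$ by reducing the estimate to a volume identity for the Helmholtz equation and then applying a Morawetz-type multiplier argument adapted to the convex geometry. The contribution of the identity term is transparent: $\mathfrak{Re}\langle \tfrac{1}{2}\mathsf{I}\phi,\phi\rangle = \tfrac{1}{2}\|\phi\|^2_{L^2(\Gamma)}$. The whole difficulty is to show that the remaining sesquilinear form $\mathfrak{Re}\langle \mathsf{K}'_\kappa\phi,\phi\rangle - \eta\,\mathfrak{Im}\langle \OV_\kappa\phi,\phi\rangle$ can be bounded below by $-\delta\|\phi\|^2_{L^2(\Gamma)}$, uniformly for $\kappa\geq\kappa_0$ and $\eta\geq\eta_0\kappa$.

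My first step would be to introduce the single-layer potential $v \coloneqq \mathcal{S}_\kappa \phi$, which solves the homogeneous Helmholtz equation in both $\D$ and $\D^\cc$, satisfies the Sommerfeld radiation condition at infinity, and carries the jumps $[\gamma v]=0$, $[\partial_n v]=-\phi$ across $\Gamma$. Using the jump relations one rewrites $\OV_\kappa \phi = \gamma v$ and $(\tfrac{1}{2}\mathsf{I}+\mathsf{K}'_\kappa)\phi = \partial_n^- v$ (or the exterior counterpart), so that the two boundary sesquilinear forms appearing in $\mathfrak{Re}\langle \mathsf{A}'_{\kappa,\eta}\phi,\phi\rangle$ become boundary integrals of traces of $v$. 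An integration by parts (Green's first identity) in $\D$ and $\D^\cc$ then expresses these boundary quantities through the Dirichlet energy and the $L^2$ mass of $v$ over the volume.

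Next I would apply a Morawetz (or Rellich-type) multiplier identity to $v$ in the interior and the exterior domain separately. The standard choice is $Mv \coloneqq \bx\cdot\nabla v - \imath\kappa\, r\, v + \alpha v$ for a suitable $\alpha$ (and a cutoff in the exterior to handle infinity, discarded using the Sommerfeld condition and a standard limiting argument). Testing the Helmholtz equation against $Mv$ and taking real parts produces, modulo interior terms that are non-negative (or controllable), boundary contributions involving $\bx\cdot \mathbf{n}$ on $\Gamma$. \emph{Convexity of $\D$ and strict positivity of the curvature of $\Gamma$} are exactly what make these boundary contributions have the right sign: $\bx\cdot\mathbf{n}\geq c>0$ on $\Gamma$ for a suitable origin, and the curvature term controls tangential derivatives. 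This delivers the crucial inequality $\eta\,\mathfrak{Im}\langle\OV_\kappa\phi,\phi\rangle + \mathfrak{Re}\langle\mathsf{K}'_\kappa\phi,\phi\rangle \geq -\delta\|\phi\|^2_{L^2(\Gamma)}$ provided $\eta\geq\eta_0\kappa$ with $\eta_0$ chosen large enough to absorb lower-order terms against the positive curvature and radiation contributions.

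I expect the main obstacle to be the careful bookkeeping of the non-negative and sign-indefinite terms produced by the multiplier identity: one must simultaneously (i) control the exterior integral at infinity using Sommerfeld, (ii) exploit the strictly positive curvature to dominate tangential derivative terms, and (iii) calibrate $\eta$ versus $\kappa$ so that the mixed boundary terms are absorbed while retaining the constant $\tfrac{1}{2}-\delta$. The $\mathscr{C}^3$ regularity and piecewise-analyticity of $\Gamma$ enter precisely to guarantee that the geometric quantities (curvature, normal field, $\bx\cdot\mathbf{n}$) are sufficiently smooth and uniformly positive to make this balancing work. Once the estimate is established for $\mathsf{A}'_{\kappa,\eta}$, the statement for $\mathsf{A}_{\kappa,\eta}$ follows immediately from the duality relation between $\mathsf{K}_\kappa$ and $\mathsf{K}'_\kappa$ in the $L^2(\Gamma)$ pairing together with the self-adjointness of $\mathsf{I}$ and the purely imaginary coefficient of $\OV_\kappa$.
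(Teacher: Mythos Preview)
The paper does not prove this proposition at all: it is quoted verbatim as \cite[Theorem 1.2]{SKS15} and no argument is given. So there is no ``paper's own proof'' to compare against here.

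That said, your sketch is essentially the strategy of \cite{SKS15}: represent $\mathsf{A}'_{\kappa,\eta}\phi$ via the single-layer potential $v=\mathcal{S}_\kappa\phi$, convert the boundary sesquilinear forms into volume integrals using Green's identities and the jump relations, and then apply a Morawetz--Rellich multiplier identity in $\D$ and $\D^\cc$ to produce the required sign, exploiting strict convexity (positive curvature, $\bx\cdot\mathbf{n}>0$) to control the boundary terms. Your identification of the roles of the curvature hypothesis, the Sommerfeld condition at infinity, and the calibration $\eta\geq\eta_0\kappa$ is accurate. One refinement worth flagging: in the actual argument the multiplier is not quite $\bx\cdot\nabla v-\imath\kappa r v+\alpha v$ globally; the exterior identity is combined with a careful treatment near $\Gamma$ where the positive-curvature term must dominate a tangential-gradient contribution, and this is where the $\mathscr{C}^3$ and piecewise-analytic hypotheses are used more substantively than merely to ensure smoothness of $\mathbf{n}$ and the curvature. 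Your outline captures the architecture correctly; the delicate part you correctly anticipate is the bookkeeping in the Morawetz identity, which in \cite{SKS15} occupies most of the work.
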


As a consequence of Propositions \ref{thm:continuity_cfie} and 
\ref{thm:coercivity_cfie}, together with the Lax-Milgram lemma, we may conclude
well-posedness of the CFIE, stated before in \eqref{eq:cfie_eq}, 
as an operator equation set in $L^2(\Gamma)$.
In addition, as thoroughly discussed in \cite[Section 1.3]{SKS15}, 
there holds
$\kappa$-explicit quasi-optimality of the Galerkin discretization 
method. 
More precisely, given a finite-dimensional subspace
$V_N$ of $L^2(\Gamma)$, we consider the following Galerkin
approximation of the variational form of \eqref{eq:cfie_eq}:
Find ${\varphi}_{\kappa,N} \in V_N$ such that
\begin{equation}\label{eq:cfieN}
	\dotp{
		\mathsf{A}'_{\kappa,\eta} {\varphi}_{\kappa,N}
	}{
		\psi_N
	}_{L^2(\Gamma)}
	=
	\dotp{
		f_{\kappa,\eta} 
	}{
		\psi_N
	}_{L^2(\Gamma)}
	\quad
	\forall \psi_N \in V_N,
\end{equation}
with $f_{\kappa,\eta} $ as in the right-hand side of \eqref{eq:cfie_eq}.
Quasi-optimality of the Galerkin BEM reads as follows: 
Provided that $\eta$ is chosen such that
$\eta_0 \kappa \leq \eta \lesssim \kappa$ with $\eta_0$
and $\kappa_0$ as in Proposition \ref{thm:coercivity_cfie},
then for all $\kappa \geq \kappa_0$
it holds
\begin{equation}\label{eq:quasi_opt_galerkin_BEM}
	\norm{
		{\varphi}_{\kappa,N}
		-
		{\varphi}_{\kappa}
	}_{L^2(\Gamma)}
	\lesssim
	\kappa^{\frac{d-1}{2}}
	\inf_{{\phi}_{N}\in V_N}
	\norm{
		{\varphi}_{\kappa,N}
		-
		{\phi}_{N}
	}_{L^2(\Gamma)},
\end{equation}
where $d\in \mathbb{N}$ corresponds to the problem's
physical dimension.
\subsection{Galerkin vs DNN Approximations}
\label{sec:GalDNN}
In view of \eqref{eq:quasi_opt_galerkin_BEM},
and as it is usual in the Galerkin BEM, the convergence analysis of \eqref{eq:cfieN}
boils down to the choices of the subspaces $V_N$ and the regularity
of the solution $\varphi_\kappa$ to the CFIE \eqref{eq:cfie_eq}.
Concerning the latter aspect, in Section \ref{sec:AsTotField} ahead we recall
relevant results addressing the asymptotic regularity of $\varphi_\kappa$
in \eqref{eq:cfie}, which are explicit in the wavenumber $\kappa$. 

These results have been used in \cite{EcevitBoubAn22} 
and the references therein to design \emph{solution-adapted
subspaces} $V_{N}$ affording wavenumber-robust spectral convergence
in the approximation of the discrete Galerkin BEM problem stated in \eqref{eq:cfieN}.
The subspaces $V_N$ obtained in \cite{EcevitBoubAn22} depend on the wavenumber $\kappa$,
the incident angle $\hat{\bf d}$, and on $\Gamma$ itself in a nontrivial way.

In the present work, we use wavenumber-explicit asymptotic regularity
for the solution $\varphi_\kappa$ of the CFIE \eqref{eq:cfie_eq}
in our deep ReLU-NN emulation construction.
The NN architecture in the present paper is a deep feedforward NN 
with strict ReLU activation and \emph{fixed width}.

Contrary to the construction in \cite{EcevitBoubAn22}, 
the presently obtained DNN emulation 
results indicate no need to include 
explicit asymptotic information of $\varphi_\kappa$ into the DNN
architecture, while enjoying, as we shall show, similar convergence properties.
In practice, NN training would account via feature optimization
for the dependence of the approximation on the problem's wavenumber and 
on the scatterer's geometry.
As usual in many DNN based PDE solvers,
this comes with the price of a nonlinear dependence of the 
DNN approximation on its constitutive parameters. 
In practice, for the computation of these parameters, iterative
training algorithms are required, as opposed to the solution of 
the Galerkin approximation \eqref{eq:cfieN}, which 
merely requires
assembling and solution of a linear system of equations.

\subsection{Asymptotics of the normal derivative of the total field}
\label{sec:AsTotField}
We assume that the scatterer's boundary $\Gamma$ is given by
$$
	\Gamma
	\coloneqq
	\left \{
		\bx \in \mathbb{R}^2:
		\bx  = \boldsymbol{\gamma}(s) :\; s\in [0,2\pi]
	\right \},
$$
where $\boldsymbol{\gamma} :[0,2\pi] \rightarrow \mathbb{R}^2$ is a $2\pi$--periodic
parametrization of $\Gamma$.  
Following \cite[Section 5]{DGS07} (which in turn relies 
on the work by Melrose and Taylor \cite{MelTaylor85},
and references therein), 
the solution $\varphi_\kappa \in L^2(\Gamma)$ to \eqref{eq:cfie_eq}
admits the following decomposition
\begin{equation}\label{eq:decomp_neumann_trace}
	\hat{\varphi}_{\kappa}(s) 
	\coloneqq 
	\left(\tau_{\boldsymbol{\gamma}} \varphi_\kappa \right)(s)
	=
	\kappa V_\kappa(s) \exp(\imath \kappa \boldsymbol{\gamma}(s)\cdot \hat{{\bf d}}),
	\quad
	s \in [0,2\pi],
\end{equation}
where $\tau_{\boldsymbol{\gamma}}:L^2(\Gamma) \rightarrow L^2((0,2\pi)): u \mapsto u \circ \boldsymbol{\gamma}$
defines a bounded linear operator with a bounded inverse. 

Throughout this manuscript, we work under the assumption stated below.
\begin{assumption}\label{assumption:smooth_curve}
We require $\Gamma$ to be $\mathscr{C}^\infty$,
simply connected and strictly convex, i.e.~with non-vanishing curvature.
\end{assumption}

When Assumption \ref{assumption:smooth_curve} is satisfied,
the following result provides an asymptotic expansion 
of $V_\kappa$ that is explicit in the wavenumber.

\begin{figure}
     \centering
     \begin{subfigure}[b]{0.45\textwidth}
         \centering
         \includegraphics[width=0.8\textwidth]{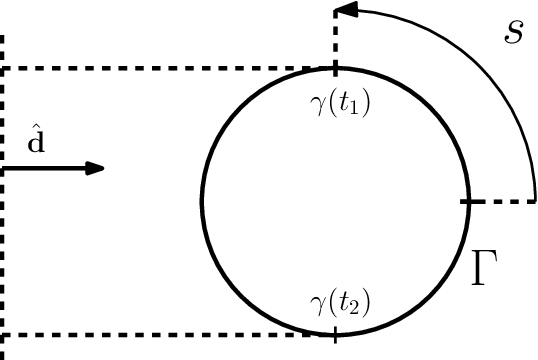}
         \caption{Geometrical setting.}
         \label{fig:geometry}
     \end{subfigure}
     \hfill
     \begin{subfigure}[b]{0.45\textwidth}
         \centering
         \includegraphics[width=0.8\textwidth]{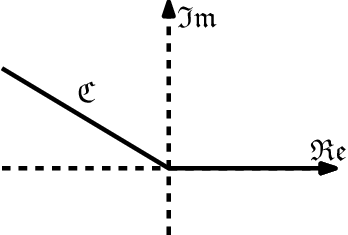}
         \caption{Complex integration contour used in the definition of Fock's integral.}
         \label{fig:contour}
     \end{subfigure}
        \caption{Figure \ref{fig:geometry} portrays the geometrical setting under consideration, whereas
        Figure \ref{fig:contour} depicts the complex integration contour used in the definition of Fock's
        integral state in \eqref{eq:fock_integral}.}
        \label{fig:geometry_contour}
\end{figure}

\begin{theorem}[{\cite[Theorem 5.1]{DGS07}},{\cite[Theorem 9.27]{MelTaylor85}}]
\label{thm:expansion_V}
Let Assumption \ref{assumption:smooth_curve} be satisfied.
There exists $\delta>0$ such that $V_\kappa$ has the asymptotic expansion:
\begin{align}\label{eq:decomp_V_frequency}
	V_\kappa(s) 
	\sim
	\sum_{\ell,m\geq 0}
	\kappa^{-\frac{1}{3}-\frac{2}{3}\ell-m} 
	b_{\ell,m}(s)
	\Psi^{(\ell)}\left(\kappa^{\frac{1}{3}}Z(s)\right)
\end{align}
valid for 
$s\in I_\delta \coloneqq(t_1-\delta,t_1+\delta)\cup(t_2-\delta,t_2+\delta)$,
where $\boldsymbol{\gamma}(t_1)$ 
and 
$\boldsymbol{\gamma}(t_2)$ are the tangency points (see Figure \ref{fig:geometry}).
The functions $b_{\ell,m}$, $\Psi$ and $Z$ have the following properties:
\begin{itemize}
\item[(i)] $b_{\ell,m}$ are $\mathscr{C}^\infty$ complex-valued functions on $I_\delta$.
\item[(ii)] $Z$ is a $\mathscr{C}^\infty$ real-valued function on $I_\delta$ with simple zeros at $t_1$,
and $t_2$, which is positive--valued on $(t_1,t_2)\cap I_\delta$and negative-valued on $(t_2-2\pi)\cap I_\delta$.
\item[(iii)] $\Psi:\mathbb{C}\rightarrow \mathbb{C}$ is an entire function specified by
\begin{align}\label{eq:fock_integral}
\Psi(\tau)\coloneqq
\exp(-\imath \tau^3/3)\int\limits_{\mathfrak{C}} \frac{\exp(-\imath z \tau)}{\normalfont\text{Ai}(\exp(2\pi \imath/3)z)} \normalfont\text{d}z,
\end{align}
where $\normalfont\text{Ai}$ is the Airy function and $\mathfrak{C}$ is the contour depicted in 
Figure \ref{fig:contour}. 
This function is often referred to as “Fock’s integral”, 
see e.g. \cite{fok1965electromagnetic,buslaev1964short}.
\end{itemize}
The asymptotics of $\Psi(\tau)$ for large $\tau$ is known. 
We have that
\begin{align}\label{eq:asymptotic_psi_tau_large}
	\Psi(\tau)
	=
	\sum_{j=0}^{n}a_j \tau^{1-3j} + \mathcal{O}(\tau^{1-3(n+1)}),
	\quad
	\text{as}\quad \tau\rightarrow\infty,
\end{align}
where $a_0 \neq 0$. 
This expansion remains valid for all derivatives of $\Psi$ by formally differentiating
each term on the right hand side, including the error term. 
Furthermore, there exists $\beta>0$ and $c_0 \neq 0$
such that for any $n\in \mathbb{N}_0$
\begin{align}
	D^n_\tau \Psi(\tau)
	=
	c_0 
	D^n_\tau 
	\left\{
		\exp(-\imath \tau^3/3-\imath \tau \alpha_1) 
	\right \}
	(1+ \mathcal{O}(\exp(-\beta \snorm{\tau})),
	\quad
	\text{as}\quad \tau\rightarrow-\infty,
\end{align}
where $\alpha_1=\exp(-2\pi/3 \imath)\nu_1$ and $\nu_1\in \mathbb{R}_{-}$ is the right-most root of $\normalfont\text{Ai}$. 
\end{theorem}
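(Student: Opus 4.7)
Since Theorem~\ref{thm:expansion_V} is originally due to Melrose--Taylor \cite{MelTaylor85} and reproduced in \cite{DGS07}, my plan is to sketch the microlocal route one would follow: construct a parametrix for the exterior Dirichlet problem that is uniformly valid across the transition from the illuminated to the shadow region, evaluate its Neumann datum on $\Gamma$, and identify the resulting model oscillatory integral with Fock's function $\Psi$. Away from the glancing region, the asymptotics reduce to standard geometric optics, so the whole analytical effort concentrates near the tangency points $\boldsymbol{\gamma}(t_1)$, $\boldsymbol{\gamma}(t_2)$.

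\textbf{Steps.} First, motivated by Kirchhoff's high-frequency ansatz applied to $-2\,\partial u^{\mathrm{i}}/\partial\mathbf{n}$, I would write the Neumann trace globally in the separated form $\hat\varphi_\kappa(s)=\kappa V_\kappa(s)\exp(\imath\kappa\boldsymbol{\gamma}(s)\cdot\hat{\bd})$ as in \eqref{eq:decomp_neumann_trace}, so that $V_\kappa$ carries the residual, non-trivial frequency dependence. Second, near the glancing set I would reduce the boundary value problem to a canonical model using the folded-canonical-relation Fourier integral operator calculus of Melrose. In this normal form, the principal symbol exhibits a simple zero along a codimension-one submanifold of the cotangent bundle of $\Gamma$; the corresponding boundary variable is precisely the function $Z(s)$ of item~(ii), vanishing simply at $t_1,t_2$ and changing sign across them. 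Third, I would construct a parametrix of Airy/Ludwig type whose symbolic rescaling $(\text{tangential})\sim\kappa^{-1/3}$ and $(\text{normal})\sim\kappa^{-2/3}$ in a boundary layer introduces the factor $\kappa^{1/3}Z(s)$ as the argument of the model transform. Solving the transport hierarchy order by order along the flow yields smooth complex coefficients $b_{\ell,m}(s)$ on $I_\delta$, giving items~(i) and~(iii) together with the expansion \eqref{eq:decomp_V_frequency}.

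\textbf{Asymptotics of $\Psi$.} The two tail regimes are a matter of contour analysis of \eqref{eq:fock_integral}. For $\tau\to+\infty$ I would deform $\mathfrak{C}$ through the saddle of the phase $-\imath z\tau$ against the factor $1/\mathrm{Ai}(e^{2\pi\imath/3}z)$, noting that $\mathrm{Ai}(e^{2\pi\imath/3}z)$ has no zeros in the relevant sector and decays super-exponentially on the rotated contour; a Watson-type lemma then converts the large-$\tau$ integral into a divergent series in $\tau^{1-3j}$ with leading coefficient $a_0\neq0$, and termwise differentiation of the integrand (legitimate in the contour representation) gives the same statement for all derivatives. For $\tau\to-\infty$ I would close $\mathfrak{C}$ into the sector where the exponential $\exp(-\imath z\tau)$ decays, picking up residues at the zeros $\nu_j\in\R_{-}$ of $\mathrm{Ai}(e^{2\pi\imath/3}\,\cdot)$; the right-most root $\nu_1$ provides the dominant contribution of the form $c_0\exp(-\imath\tau^3/3-\imath\tau\alpha_1)$ with $\alpha_1=e^{-2\pi\imath/3}\nu_1$, while all further zeros are separated from $\nu_1$ by a positive spectral gap, yielding the exponential remainder $\mathcal{O}(\exp(-\beta|\tau|))$ uniformly in derivatives.

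\textbf{Main obstacle.} The genuine difficulty lies in the parametrix construction at the glancing set: the standard WKB ansatz degenerates exactly where the geodesic flow is tangent to $\Gamma$, and one must work with Duistermaat--H\"ormander Fourier integral operators whose canonical relation has a fold singularity. Showing that the resulting operator composes correctly with the exterior resolvent modulo smoothing, and that the leading symbol on the boundary reduces to the Fock integral \eqref{eq:fock_integral} rather than a more general Airy-type transform, is the delicate point that forces the particular rescaling by $\kappa^{1/3}$ and accounts for the half-integer, rather than integer, powers of $\kappa$ appearing in \eqref{eq:decomp_V_frequency}. Once this microlocal machinery is in place, the remaining items follow by unwinding the construction and by the contour computations described above.
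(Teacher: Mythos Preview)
The paper does not prove Theorem~\ref{thm:expansion_V} at all: it is stated with citations to \cite[Theorem~5.1]{DGS07} and \cite[Theorem~9.27]{MelTaylor85} and used as a black box, so there is no in-paper proof to compare your proposal against. Your sketch is a reasonable high-level roadmap of the Melrose--Taylor argument (reduction to a glancing normal form via a folded canonical relation, Airy-type parametrix producing the $\kappa^{1/3}$ scaling and the Fock transform, transport hierarchy for the $b_{\ell,m}$, and contour deformation/residue analysis for the large-$|\tau|$ asymptotics of $\Psi$), and it correctly identifies the genuine analytic difficulty at the glancing set. That said, what you have written is a plan rather than a proof: turning the parametrix step into an actual argument requires the full machinery of \cite{MelTaylor85}, and the contour analysis for $\tau\to\pm\infty$ needs the precise location of the poles of $1/\mathrm{Ai}(e^{2\pi\imath/3}z)$ and careful sector bookkeeping to justify the claimed powers and the exponential gap $\beta>0$. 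For the purposes of this paper, simply citing the references (as the authors do) is the appropriate course.
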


\begin{remark}\label{rmk:periodicity_Z_symbol}
We make the following observations concerning Theorem \ref{thm:expansion_V}.
\begin{itemize}
	\item[(i)]
	In \eqref{eq:decomp_V_frequency}, the symbol ``$\sim$''
	refers to the symbol classes of H\"{o}rmander. 
	For details we refer to  \cite[p. 236, Definition 7.8.1]{hormander2015analysis}, 
	and we do not elaborate on this any further.
	\item[(ii)]
	The function $Z$ can be extended (non-uniquely) to a
	$\mathscr{C}^\infty$, $2\pi$--periodic function that 
	is positive-valued on $(t_1,t_2)$ and negative-valued
	on $(t_2-2\pi,t_2)$. 
	From now on we assume that this extension
	has been made.
\end{itemize}
\end{remark}

\begin{proposition}[{\cite[Corollary 5.3]{DGS07}}]
\label{prop:asymptotic_V_remainder}
The functions $b_{\ell,m}$ can be extended to $2\pi$--periodic $\mathscr{C}^\infty$
functions such that, for all $L$, $M\in \mathbb{N}_0$, the decomposition
\begin{align}\label{eq:decom_V_k}
	V_\kappa(s)
	=
	\sum_{\ell,m=0}^{L,M} \kappa^{-\frac{1}{3}-2\ell/3-m}b_{\ell,m}(s) \Psi^{(\ell)} \left(\kappa^\frac{1}{3} Z(s)\right)
	+
	R_{L,M,\kappa}(s)
\end{align}
holds for all $s\in [0,2\pi]$, with the remainder term satisfying, for all $n\in \mathbb{N}_0$,
\begin{align}\label{eq:residual_R}
	\snorm{D^n_sR_{L,M,\kappa}(s)}
	\leq
	C_{L,M,n} (1+\kappa)^{\mu+\frac{n}{3}},
\end{align}
where
\begin{align}\label{eq:residual_R_mu}
	\mu
	\coloneqq
	-\min \left\{\frac{2}{3}(L+1),(M+1) \right\},
\end{align}
and $C_{L,M,n}$ is independent of $\kappa$.
\end{proposition}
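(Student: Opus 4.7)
The plan is to extend the local asymptotic expansion of Theorem \ref{thm:expansion_V}, which is only valid on the transition neighborhoods $I_\delta$ of the tangency points $t_1, t_2$, to a global decomposition on $[0,2\pi]$ with a quantitative, $\kappa$-explicit remainder bound. First, I would introduce a smooth $2\pi$-periodic partition of unity $\{\chi_{\text{tr}}, \chi_{\text{lit}}, \chi_{\text{sh}}\}$ subordinate to a cover of the circle by (i) the transition region $I_\delta$, where \eqref{eq:decomp_V_frequency} applies; (ii) the illuminated arc $(t_1,t_2)$, on which $V_\kappa$ admits a classical geometric-optics expansion in pure powers of $\kappa^{-1}$ coming from a stationary-phase parametrix of $\mathsf{A}'_{\kappa,\eta}$; and (iii) the deep shadow, on which $V_\kappa$ is rapidly decreasing in $\kappa$ as a consequence of the Melrose--Taylor parametrix. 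Using \eqref{eq:asymptotic_psi_tau_large} for $\tau \to +\infty$ on the illuminated piece (where $Z>0$), the lit-region geometric-optics terms can be \emph{reinterpreted} as contributions of the form $\kappa^{-1/3-2\ell/3-m} b_{\ell,m}(s) \Psi^{(\ell)}(\kappa^{1/3}Z(s))$ modulo $\mathcal{O}(\kappa^{-\infty})$; symmetrically, the shadow-side asymptotics of $\Psi^{(\ell)}$ as $\tau\to-\infty$ (where $Z<0$) show that, for $s$ bounded away from $\{t_1,t_2\}$ in the shadow region, every term in \eqref{eq:decom_V_k} is exponentially small in $\kappa$, hence can absorb the exponentially small shadow contribution of $V_\kappa$.

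The second step is to stitch these three local expansions into a single global one. I would define the coefficient $b_{\ell,m}(s)$ as the already given smooth function on $I_\delta$, multiplied by $\chi_{\text{tr}}$, plus the smooth extension provided by the lit-region re-expansion matched near the tangency points. The transitional cut-offs introduce smooth $2\pi$-periodic factors only, so the extended $b_{\ell,m}$ are $\mathscr{C}^\infty$ and $2\pi$-periodic. The remainder $R_{L,M,\kappa}(s)$ is then defined by the equality in \eqref{eq:decom_V_k}.

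For the bound \eqref{eq:residual_R}, I would argue region by region. On $I_\delta$, the H\"ormander symbol-class statement (Theorem \ref{thm:expansion_V} with Remark \ref{rmk:periodicity_Z_symbol}(i)) gives, for the tail after truncation at indices $(L,M)$, a bound of order $(1+\kappa)^{-2(L+1)/3}$ from the $\ell$-index and $(1+\kappa)^{-(M+1)}$ from the $m$-index; the worse of the two is exactly $(1+\kappa)^\mu$. On the lit piece, the matching construction ensures the residual is dominated by the same power plus $\mathcal{O}(\kappa^{-\infty})$; on the shadow piece both $V_\kappa$ and the ansatz terms are exponentially small, and hence negligible compared to $(1+\kappa)^\mu$. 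Differentiation in $s$ of $\Psi^{(\ell)}(\kappa^{1/3}Z(s))$ produces, by the chain rule, at most one factor $\kappa^{1/3}$ per derivative against bounded derivatives of $Z$ and bounded $b_{\ell,m}$, giving the extra factor $\kappa^{n/3}$ in \eqref{eq:residual_R}; a parallel estimate with $n$ derivatives on the lit/shadow residuals is standard.

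The main technical obstacle is the matching step between regions: one must verify that the classical stationary-phase expansion on the illuminated boundary and the Melrose--Taylor transition expansion \eqref{eq:decomp_V_frequency} coincide, order by order, in their common domain, so that the globally extended $b_{\ell,m}$ are genuinely smooth and do not introduce spurious non-decaying contributions into $R_{L,M,\kappa}$. This is precisely the content of the Melrose--Taylor parametrix construction in \cite{MelTaylor85}, which I would invoke rather than reproduce; once matching is in hand, the bounds \eqref{eq:residual_R}--\eqref{eq:residual_R_mu} follow from the three regional estimates above.
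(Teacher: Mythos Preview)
The paper does not prove this proposition: it is quoted verbatim as \cite[Corollary 5.3]{DGS07} and stated without proof, so there is no ``paper's own proof'' to compare against. Your sketch is a reasonable outline of the argument in \cite{DGS07}, which indeed proceeds by globalizing the Melrose--Taylor transition expansion via cut-offs and matching with the classical illuminated-region parametrix and the exponentially small shadow contribution; the key technical input, as you correctly identify, is the compatibility of the Melrose--Taylor symbol expansion with the stationary-phase expansion on the lit side, which is supplied by \cite{MelTaylor85} rather than reproved.
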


\begin{remark}
Proposition \ref{prop:asymptotic_V_remainder} and Remark \ref{rmk:periodicity_Z_symbol} allow 
us to conclude that the remainder $R_{L,M,\kappa}(s)$ can also be extend (non-uniquely) 
to a $\mathscr{C}^\infty$, $2\pi$--periodic function.
\end{remark}

\begin{remark}\label{rmk:bound_psi}
For $\tau \in \mathbb{R}$ it holds
\begin{equation}
	\snorm{\Psi(\tau)} \leq C_0(1+\snorm{\tau}),
	\quad
	\snorm{\Psi'(\tau)} \leq C_1,
	\quad
	\snorm{\Psi^{(\ell)}(\tau)} \leq C_\ell(1+\snorm{\tau})^{-2-\ell}, 
	\quad
	\text{for } \ell\geq2,
\end{equation}
with $C_\ell$ independent of $\kappa$.
\end{remark}

The next result provides 
wavenumber-explicit bounds for the derivatives of $V_\kappa$.

\begin{lemma}[{\cite[Corollary 5.5]{DGS07}}]
\label{eq:smoothness_neuman_trace}
For all $n\geq1$ there exists a constant $C_n>0$ independent of $\kappa$ such that, for $\kappa$
sufficiently large, $\snorm{D^n_s V_\kappa(s)} \leq C_n(1+\kappa)^{\frac{(n-1)}{3}}$, $s\in[0,2\pi]$.
\end{lemma}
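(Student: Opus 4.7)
The plan is to combine the finite asymptotic expansion of $V_\kappa$ furnished by Proposition~\ref{prop:asymptotic_V_remainder} with the pointwise bounds on $\Psi$ and its derivatives from Remark~\ref{rmk:bound_psi}, and to carefully track the powers of $\kappa$ that appear after taking $n$ derivatives via Faà~di~Bruno.

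First, I would fix a truncation index, for instance $L=M=0$, and write
\begin{equation*}
    V_\kappa(s)
    = \kappa^{-\frac{1}{3}} b_{0,0}(s)\, \Psi\bigl(\kappa^{\frac{1}{3}} Z(s)\bigr)
    + R_{0,0,\kappa}(s),
\end{equation*}
so that $\mu = -\min\{2/3,1\} = -2/3$ in \eqref{eq:residual_R_mu}. The remainder then satisfies $\snorm{D^n_s R_{0,0,\kappa}(s)} \leq C_n (1+\kappa)^{-2/3 + n/3} = C_n (1+\kappa)^{(n-2)/3}$, which is already smaller than the target bound $(1+\kappa)^{(n-1)/3}$ for large $\kappa$. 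Thus it suffices to bound the first, explicit term.

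Next, I would apply Leibniz to distribute the $n$ derivatives between $b_{0,0}(s)$ and $\Psi(\kappa^{1/3}Z(s))$, and then use Faà~di~Bruno for each factor $D^{n-j}_s[\Psi(\kappa^{1/3}Z(s))]$. Because $Z$ is $\mathscr{C}^\infty$ and $2\pi$-periodic by Remark~\ref{rmk:periodicity_Z_symbol}, all of its derivatives are uniformly bounded on $[0,2\pi]$; the inner function $\kappa^{1/3}Z(s)$ is linear in $\kappa^{1/3}$, so the Bell polynomials produce exactly one factor of $\kappa^{1/3}$ for every derivative that hits the argument. Concretely,
\begin{equation*}
    D^{n-j}_s\bigl[\Psi(\kappa^{1/3}Z(s))\bigr]
    = \sum_{k=1}^{n-j} \Psi^{(k)}\bigl(\kappa^{1/3}Z(s)\bigr)\,\kappa^{k/3}\,B_{n-j,k}\bigl(Z',\dots,Z^{(n-j-k+1)}\bigr),
\end{equation*}
when $n-j\geq 1$, and equals $\Psi(\kappa^{1/3}Z(s))$ when $j=n$. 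By Remark~\ref{rmk:bound_psi} the factor $\snorm{\Psi^{(k)}(\kappa^{1/3}Z(s))}$ is uniformly bounded in $s$ and $\kappa$ for every $k\geq 1$, while for $j=n$ the growth estimate $\snorm{\Psi(\tau)} \leq C_0(1+\snorm{\tau})$ yields $\snorm{\Psi(\kappa^{1/3}Z(s))} \leq C(1+\kappa^{1/3})$.

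Collecting these observations, each contribution to the $n$-th derivative of $\kappa^{-1/3}b_{0,0}(s)\Psi(\kappa^{1/3}Z(s))$ is bounded by $C\,\kappa^{-1/3}\,\kappa^{k/3}$ for some $1 \leq k \leq n$, with the worst case $k=n$ yielding $C\,\kappa^{(n-1)/3}$; the case $j=n$ (no derivatives on $\Psi$) gives $C\,\kappa^{-1/3}(1+\kappa^{1/3}) \lesssim \kappa^{0} \leq \kappa^{(n-1)/3}$ since $n\geq 1$. Combining this with the remainder estimate produces the claimed bound.

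The main technical obstacle is the bookkeeping around Faà~di~Bruno: one must verify that no combination of derivative orders produces a power of $\kappa$ larger than $\kappa^{(n-1)/3}$, and in particular that the growth of $\Psi$ itself (which is linear rather than bounded at infinity) is compensated by the prefactor $\kappa^{-1/3}$. The argument is clean precisely because only the zeroth derivative of $\Psi$ grows at infinity, and that case is exactly the one that comes multiplied by the prefactor $\kappa^{-1/3}$ with no additional $\kappa^{1/3}$ factors from the chain rule.
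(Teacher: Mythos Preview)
Your argument is correct. Note that the paper does not itself prove this lemma---it is quoted verbatim as \cite[Corollary 5.5]{DGS07}, a corollary of \cite[Corollary 5.3]{DGS07} (which is Proposition~\ref{prop:asymptotic_V_remainder} here)---so there is no in-paper proof to compare against; your derivation from Proposition~\ref{prop:asymptotic_V_remainder} via Leibniz/Fa\`a~di~Bruno and Remark~\ref{rmk:bound_psi} is exactly the natural route implied by that corollary structure, and the bookkeeping you describe (the worst power $\kappa^{(n-1)/3}$ arising from $k=n$, $j=0$, and the undifferentiated $\Psi$ term being $O(1)$ after the $\kappa^{-1/3}$ prefactor) is accurate.
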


\begin{remark}\label{rmk:correction_lemma_V_k}
As originally stated in \cite[Corollary 5.5]{DGS07}, 
Lemma \ref{eq:smoothness_neuman_trace} holds for $n\in \mathbb{N}$.
An inspection of \cite[Corollary 5.3]{DGS07} reveals that 
one has
$\snorm{V_\kappa(s)} \leq C$,  $s\in[0,2\pi]$, 
for some constant $C>0$ independent of $\kappa$.
\end{remark}

\section{Deep ReLU Neural Networks}
\label{sec:NN}
We recall notation, terminology, and results concerning feedforward, fully connected
DNNs, as subsequently required, consistent with \cite{elbrachter2021deep}.

\begin{definition}[Deep Neural Network]
\label{def:DNN}
Let $L \in \IN$, $\ell_0,\dots, \ell_L \in \IN$
and let
$\varrho:\mathbb{R} \rightarrow\mathbb{R}$ be a
non-linear function, referred to in the following as
the \emph{activation function}.
Given $({\bf W}_k,{\bf b}_k)_{k=1}^{L}$,
${\bf W}_k\in\mathbb{R}^{\ell_k\times\ell_{k-1}}$, ${\bf b}_k\in\mathbb{R}^{\ell _k}$,
define the affine transformation
 $\mathcal{A}_k: \mathbb{R}^{\ell_{k-1}}\rightarrow \mathbb{R}^{\ell_k}: 
 {\bf x} \mapsto {\bf W}_k {\bf x}+{\bf b}_k$
for $k\in\{1,\ldots,L\}$. We define a \emph{Deep Neural Network (DNN)}
with activation function $\varrho:\mathbb{R}\rightarrow \mathbb{R}$ as a map
$\Phi_{\mathcal{N\!N}}: \mathbb{R}^{\ell_0}\rightarrow \mathbb{R}^{\ell_L}$ with
\begin{align}\label{eq:ann_def}
	\Phi_{\mathcal{N\!N}}({\bf x})
	\coloneqq
	\begin{cases}
	\mathcal{A}_1(\bx), & L=1, \\
	\left(
		\mathcal{A}_L
		\circ
		\varrho
		\circ
		\mathcal{A}_{L-1}
		\circ
		\varrho
		\cdots
		\circ
		\varrho
		\circ
		\mathcal{A}_1
	\right)(\bx),
	& L\geq2,
	\end{cases}
\end{align}
where the activation function $\varrho$ is applied component-wise to vector inputs.
We define the depth and the width
of an DNN as
\begin{equation}
	\mathcal{M}(\Phi_{\mathcal{N\!N}})\coloneqq\max\{\ell_1,\ldots, \ell_L\}
	\quad
	\text{and}
	\quad
	\mathcal{L}(\Phi_{\mathcal{N\!N}})\coloneqq L
\end{equation}
together with the weight bounds
\begin{equation}
	\mathcal{B}
	\left(
		\Phi_{\mathcal{N\!N}}
	\right)
	=
	\max_{\ell =1,\dots,L}
	\max
	\left\{
		\norm{{\bf W}_k}_{\infty},
		\norm{{\bf b}_k}_{\infty}
	\right\}.
\end{equation}
In the above definition, $\ell_0$ is the dimension of the \emph{input layer},
$\ell_L$ denotes the dimension of the \emph{output layer}.

In addition, we denote by $\mathcal{N\!N}^{\varrho}_{L,M,\ell_0,\ell_L}$
the set of all DNNs $\Phi_{\mathcal{N\!N}}:\mathbb{R}^{\ell_0}\rightarrow \mathbb{R}^{\ell_L}$ 
with input dimension $\ell_0$, output dimension $\ell_L$, a depth of at most $L$ layers,
maximum width $M$, and activation function $\varrho$.
\end{definition}

Unless explicitly stated otherwise, we assume that
the activation is by the ReLU defined as
$\varrho(x)\coloneqq \max\{ x,0 \}$, and we drop the explicit
dependence on the activation function in $\mathcal{N\!N}^{\varrho}_{L,M,\ell_0,\ell_L}$.

We proceed to recall results concerning the
approximation properties of ReLU-NNs following \cite{elbrachter2021deep}.
Firstly, we introduce the multiplication ReLU-NN.

\begin{proposition}[{\cite[Proposition III.3]{elbrachter2021deep}}]
\label{prop:mult_network}
There exists a constant $C>0$ such that for all $D\in \mathbb{R_{+}}$
and $\epsilon \in (0,1/2)$ there are networks 
$\Phi_{D,\epsilon} \in\mathcal{N\!N}_{L,5,2,1}$ with
\begin{equation}
	L
	\leq 
	C
	\left(
	\log
	\left(
		\lceil D \rceil
	\right)
	+
	\log
	\left(
		\frac{1}{\epsilon}
	\right)
	\right)
\end{equation}
such that
$$
	\norm{\Phi_{D,\epsilon}(x,y)-xy}_{L^\infty((-D,D)^2)}
	\leq 
	\epsilon.
$$
The weights of $\Phi_{D,\epsilon}$ 
bounded according to $\mathcal{B}(\Phi_{D,\epsilon})\leq1$.
\end{proposition}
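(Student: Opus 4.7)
The plan is the classical Yarotsky–Schmitt construction: reduce the bivariate multiplication to univariate squaring, approximate the square function on a bounded interval by iterated ReLU sawtooths, and rescale to the domain $(-D,D)^2$.

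First, I would exploit the polarization identity
\begin{equation}
xy \;=\; \tfrac{1}{4}\bigl((x+y)^2-(x-y)^2\bigr),
\end{equation}
so that emulating $xy$ reduces to emulating $u\mapsto u^2$. After an affine rescaling, it suffices to emulate $u\mapsto u^2$ on $[0,1]$: given $x,y\in(-D,D)$ we have $(x\pm y)/(2D)\in(-1,1)$, and $u^2=|u|^2$ so we can feed $|u|/(2D)$ (computed in one layer as $|u|=\varrho(u)+\varrho(-u)$ followed by scalar multiplication) into a squaring network on $[0,1]$ and finally rescale the output by $4D^2$. Using the polarization identity symmetrically keeps the two branches in parallel, which is where the width budget of $5$ comes from: roughly two channels carrying the running approximations of $((x+y)/(2D))^2$ and $((x-y)/(2D))^2$, plus auxiliary channels for identity skip-connections.

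The core step is the approximation of $f(u)=u^2$ on $[0,1]$ by the piecewise linear interpolant $f_m$ at the nodes $\{k\,2^{-m}\}_{k=0}^{2^m}$. A direct computation shows
\begin{equation}
f_m(u) \;=\; u - \sum_{s=1}^{m} \frac{g_s(u)}{4^s},
\qquad
\|f-f_m\|_{L^\infty([0,1])} \;\leq\; 2^{-2m-2},
\end{equation}
where $g_1$ is the tent function $g_1(u)=2\varrho(u)-4\varrho(u-\tfrac{1}{2})+2\varrho(u-1)$ and $g_s = g_1\circ\cdots\circ g_1$ ($s$-fold composition). Each new composition adds one layer of constant width to the network, while the scalar coefficients $4^{-s}$ can be absorbed into the output weights. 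Thus for any target accuracy $\delta>0$ there is a ReLU network of depth $\mathcal O(\log(1/\delta))$ and small constant width emulating $u\mapsto u^2$ on $[0,1]$ to accuracy $\delta$. Setting $\delta \leq \epsilon/(4D^2)$ and tracking the rescaling yields total depth $L \lesssim \log\lceil D\rceil + \log(1/\epsilon)$, matching the stated bound.

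The main technical obstacle is the joint constraint that the width stays fixed at $5$ and that $\mathcal{B}(\Phi_{D,\epsilon})\leq 1$. For the width, one must route the two squared branches through the same composed sawtooth network in parallel (with identity channels propagating the input coordinates and an accumulator channel summing the partial corrections $4^{-s}g_s$), which is tight but feasible with $5$ neurons per hidden layer. For the weight bound, the naive coefficients $\pm 2, \pm 4$ in the tent representation of $g_1$ and the factor $4D^2$ at the output exceed $1$; the standard remedy is to split each violating affine map into a short cascade of affine maps with weights in $[-1,1]$, absorbing the resulting extra logarithmic factor in $\lceil D\rceil$ into the depth estimate via the term $\log\lceil D\rceil$ already present. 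Finally I would collect the constants and verify that the assembled network indeed lies in $\mathcal{N\!N}_{L,5,2,1}$ with the claimed depth and weight bounds.
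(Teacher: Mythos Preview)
The paper does not give its own proof of this proposition: it is quoted verbatim as \cite[Proposition~III.3]{elbrachter2021deep} and used as a black box throughout. Your sketch is exactly the standard Yarotsky--Schmitt construction underlying that reference (polarization, iterated sawtooth squaring, rescaling, and splitting large weights into cascades to enforce $\mathcal{B}\leq 1$), so there is nothing to compare---your approach \emph{is} the cited proof, and the outline is correct.
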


Observe that $p_{0}(x)=1$ and $p_1(x)=x$
can be exactly represented using ReLU-NNs as follows
\begin{subequations}
\begin{align}
	p_0(x) 
	&
	=
	\begin{pmatrix}
	1 & -1 & -1 & 1
	\end{pmatrix}
	\varrho
	\left(
	\begin{pmatrix}
		1 \\
		1 \\
		1 \\
		1
	\end{pmatrix}
	x
	+
	\begin{pmatrix}
		2  \\
		1  \\
		-1 \\
		-2
	\end{pmatrix}
	\right)
	\in \mathcal{N\!N}_{2,4,1,1},\label{eq:relu_nn_po_1}
	\quad
	\text{and} \\
	p_1(x)
	&
	= 
	\begin{pmatrix}
		1 & -1 \\
	\end{pmatrix}
	\varrho
	\begin{pmatrix}
		\begin{pmatrix}
		1 \\
		-1
		\end{pmatrix}
		x
	\end{pmatrix}
	\in \mathcal{N\!N}_{2,2,1,1}.
	\label{eq:relu_nn_po_2}
\end{align}
\end{subequations}

The next result addresses the ReLU-NN emulation 
of polynomials of higher order.


\begin{proposition}[{\cite[Proposition III.5]{elbrachter2021deep}}]
\label{prop:approximation_polynomials}
There exist a constant $C>0$
such that for $m\in \mathbb{N}$, $A\geq1$, $D\in \mathbb{R}_+$, $\epsilon \in (0,1/2)$
and
\begin{equation}
	p_m(x)
	=
	\sum_{i=0}^m a_i x^i
	\quad
	\text{with }
	\displaystyle\max_{i=0,\dots,m}\snorm{a_i} \leq A,
\end{equation}
there are ReLU-NNs $\Phi_{p_m,D,\epsilon} \in \mathcal{N\!N}_{L,9,1,1}$ with 
\begin{equation}\label{eq:bound_ReLU_NN_pol}
	L
	\leq 
	C
	\left(
		m\log
		\left(
			\frac{1}{\epsilon}
		\right)
		+
		m^2 
		\log(\lceil D \rceil)+m\log(m)+m\log(A)
	\right)
\end{equation}
such that 
\begin{align}
	\norm{\Phi_{p_m,D,\epsilon}(x)-p_m(x)}_{L^\infty((-D,D))}
	\leq 
	\epsilon.
\end{align}
The weights of $\Phi_{p_m,D,\epsilon}$ are bounded according to
$\mathcal{B}(\Phi_{p_m,D,\epsilon})\leq1$.
\end{proposition}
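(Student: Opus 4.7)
The plan is to realize $p_m$ by a ReLU network implementing Horner's scheme,
\[
	p_m(x)
	=
	a_0 + x\bigl(a_1 + x(a_2 + \cdots + x(a_{m-1} + x a_m)\cdots )\bigr),
\]
so that only $m$ scalar multiplications (each of $x$ by a running accumulator) need to be emulated. I set $q_0(x) \coloneqq a_m$ and, for $k=1,\ldots,m$, $q_k(x) \coloneqq a_{m-k} + x\,q_{k-1}(x)$, so that $q_m(x) = p_m(x)$. Each recursive step is a single multiplication followed by an affine update, i.e.\ an evaluation of the multiplication network $\Phi_{D',\epsilon'}$ from Proposition~\ref{prop:mult_network}, postcomposed with $z \mapsto a_{m-k} + z$. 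The variable $x$ itself must be carried from layer to layer, which is done with the exact two-neuron ReLU identity $x = \varrho(x)-\varrho(-x)$ from \eqref{eq:relu_nn_po_2}. Concatenating the $m$ copies of $\Phi_{D',\epsilon'}$ in series, together with the identity channel for $x$ (width~2) and the accumulator channel (width~2), gives a network of width $5+2+2 = 9$, matching the width asserted in the statement.

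Next I quantify the parameters $D'$ and $\epsilon'$ needed for each multiplication block. A trivial induction using $|a_i|\le A$ and $|x|\le D$ yields the a~priori bound $|q_k(x)| \le A\,(1+D+\cdots+D^k) \le A(k+1)(1+D)^k$, so both inputs of the $k$-th multiplication lie in an interval of length $D'_k \le A(m+1)(1+D)^m$. Writing $\tilde q_k$ for the NN-realised accumulator and $\epsilon_k$ for the accuracy of the $k$-th multiplication block, the standard stability estimate
\[
	|\tilde q_k - q_k|
	\;\le\;
	\epsilon_k + D\,|\tilde q_{k-1}-q_{k-1}|
\]
unrolls to $|\tilde q_m-q_m| \le \sum_{k=1}^m \epsilon_k\, D^{m-k}$. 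Choosing a uniform $\epsilon_k = \epsilon/(m(1+D)^m)$ therefore guarantees the target $L^\infty$-bound $\epsilon$ on $(-D,D)$. (One must also ensure $\tilde q_{k-1}$ lies in the admissible range for the next multiplication; since its deviation from $q_{k-1}$ is bounded by a number $\le 1/2$, say, a slight enlargement of $D'_k$ by a constant factor suffices and does not affect the asymptotics below.)

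Now I combine the depth bounds. By Proposition~\ref{prop:mult_network}, each block has depth
\[
	\mathcal{L}(\Phi_{D'_k,\epsilon_k})
	\;\lesssim\;
	\log\lceil D'_k\rceil + \log(1/\epsilon_k)
	\;\lesssim\;
	\log(A) + m\log(\lceil D\rceil) + \log(m) + \log(1/\epsilon).
\]
Summing over the $m$ blocks gives a total depth
\[
	L
	\;\lesssim\;
	m\log(1/\epsilon) + m^2\log(\lceil D\rceil) + m\log(m) + m\log(A),
\]
exactly as claimed in \eqref{eq:bound_ReLU_NN_pol}. The weight bound $\mathcal{B}\le 1$ is inherited from the multiplication blocks and the identity channel; the only new weights come from the final addition of $a_0$ and the insertion of the constants $a_{m-k}$, which appear as entries of the bias vectors. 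Since $|a_i|\le A$ and $A$ may exceed $1$, one replaces each affine insertion by $a_{m-k} = A \cdot (a_{m-k}/A)$ with the factor $A$ absorbed into a single rescaling layer at the output (or, equivalently, one renormalises the recursion by working with $q_k/A$), keeping all weights in $[-1,1]$.

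The main obstacle I anticipate is the bookkeeping in the two previous paragraphs: propagating $x$ losslessly in parallel with the accumulator channel while keeping the total width at exactly $9$, and choosing $\epsilon_k$ and $D'_k$ uniformly so that the error estimate holds on the entire interval $(-D,D)$ and so that the resulting depth has the stated logarithmic dependence on $D$, $m$, $A$ and $\epsilon^{-1}$. No genuinely new approximation ingredient beyond the multiplication network of Proposition~\ref{prop:mult_network} and the ReLU identity \eqref{eq:relu_nn_po_2} is needed.
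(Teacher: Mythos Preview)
The paper does not prove this proposition; it is quoted from \cite[Proposition~III.5]{elbrachter2021deep}. Your argument via Horner's scheme---chaining $m$ instances of the multiplication network of Proposition~\ref{prop:mult_network} while carrying $x$ in parallel through a two-neuron ReLU identity channel---is precisely the construction in the cited reference, and your error propagation and depth accounting are correct.

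Two minor remarks. First, in your width count $5+2+2=9$ the second ``$+2$'' for an accumulator channel is not needed: $q_{k-1}$ enters as one of the two inputs to the multiplication block and is not required afterwards, so width $7$ already suffices (the stated width $9$ is of course still a valid upper bound). Second, your treatment of the weight bound $\mathcal{B}\le 1$ is not quite closed: absorbing the factor $A$ into ``a single rescaling layer at the output'' introduces a weight of magnitude $A\ge 1$. The clean fix is to realise the scalar multiplication by $A$ via Proposition~\ref{prop:trading_weights} at the cost of $O(\log A)$ additional layers, which is harmless since the depth bound \eqref{eq:bound_ReLU_NN_pol} already contains an $m\log A$ term.
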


\begin{remark}[ReLU-NN Emulation of Polynomials with Complex Coefficients]
\label{eq:remark_complex_coeff}
An inspection of the proof of Proposition \ref{prop:approximation_polynomials}
reveals that in order to accommodate for polynomials with complex coefficients 
the architecture of the ReLU-NN should be modified as follows:
two outputs should be considered instead of one in order to account for the real and imaginary
parts separately, and the width of the ReLU-NN should be increased to $11$.
The depth of the ReLU-NN remains unchanged, 
i.e.~it behaves exactly as in \eqref{eq:bound_ReLU_NN_pol}.
\end{remark}


\begin{proposition}[{\cite[Theorem III.8 and Corollary III.9]{elbrachter2021deep}}]
\label{prop:aprox_sc}
There exists a constant $C>0$ such for each $a,D \in \mathbb{R}_+$,
$\epsilon \in (0,1/2)$ there are ReLU-NNs
$\Phi^{\cos}_{a,D,\epsilon} \in \mathcal{N\!N}_{L,9,1,1}$ 
and $\Phi^{\sin}_{a,D,\epsilon} \in \mathcal{N\!N}_{L,9,1,1}$
with 
\begin{equation}
	L
	\leq 
	C
	\left(
	\left(
		\log
		\left(
			\frac{1}{\epsilon}
		\right)
	\right)^2
	+
	\log
	\left(
		\lceil aD\rceil
	\right)
	\right)
\end{equation}
such that
\begin{align}
	\norm{\cos(a x)-\Phi^{\cos}_{a,\epsilon}(x)}_{L^\infty((-D,D))} 
	\leq 
	\epsilon
	\quad
	\text{and}
	\quad
	\norm{\sin(a x)-\Phi^{\sin}_{a,\epsilon}(x)}_{L^\infty((-D,D))} 
	\leq 
	\epsilon.
\end{align}
The weights of $\Phi^{\cos}_{a,D,\epsilon}$ and $\Phi^{\sin}_{a,D,\epsilon}$
are bounded according to
$\mathcal{B}(\Phi^{\cos}_{a,\epsilon})\leq 1$ and $\mathcal{B}(\Phi^{\sin}_{a,\epsilon}) \leq 1$.
\end{proposition}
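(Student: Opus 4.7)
The plan is to split the approximation into two parts whose depths add to give the stated bound: an argument-reduction ReLU-NN exploiting the $2\pi$-periodicity of $\cos$ and $\sin$, of depth $O(\log\lceil aD\rceil)$, composed with a polynomial emulator of the trigonometric function on the fixed interval $[-\pi,\pi]$ of depth $O((\log(1/\epsilon))^2)$, the latter obtained from Proposition \ref{prop:approximation_polynomials}.

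First I would construct an \emph{exact} ReLU-NN $\Phi^{\mathrm{red}}_{a,D}$ of fixed width and depth $O(\log\lceil aD\rceil)$ that maps $(-D,D)$ into $[-\pi,\pi]$ and satisfies $\cos(ax)=\cos(\Phi^{\mathrm{red}}_{a,D}(x))$ pointwise in $(-D,D)$. The key point is Telgarsky's self-similarity trick: a single hat function $t$ is an exact ReLU-NN of width $O(1)$ and depth $2$, and its $n$-fold self-composition $t^{\circ n}$ exactly realizes a sawtooth with $2^n$ teeth at total depth $2n$. An appropriate affine input scaling, $n=\lceil\log_2\lceil aD\rceil\rceil$ compositions, and a final affine correction yield a sawtooth of period $2\pi$ covering the image of $x\mapsto ax$ on $(-D,D)$, with weights bounded by $1$ and zero approximation error.

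Next I would approximate $\cos$ on $[-\pi,\pi]$ by its truncated Taylor polynomial $p_m^{\cos}$ of degree $m$; the Lagrange remainder is bounded by $\pi^{m+1}/(m+1)!$, which by Stirling's formula is at most $\epsilon/2$ for some $m=O(\log(1/\epsilon))$. The coefficients are bounded by $1$ in modulus, so Proposition \ref{prop:approximation_polynomials}, applied with parameters $A=1$, $D=\pi$, target accuracy $\epsilon/2$, and degree $m$, furnishes a ReLU-NN $\Phi_{p_m,\pi,\epsilon/2}$ of width $9$, depth $O(m\log(1/\epsilon)+m^2+m\log m)=O((\log(1/\epsilon))^2)$, and weights bounded by $1$. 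Setting
\begin{equation*}
\Phi^{\cos}_{a,D,\epsilon}\coloneqq\Phi_{p_m,\pi,\epsilon/2}\circ\Phi^{\mathrm{red}}_{a,D},
\end{equation*}
the total depth is $O((\log(1/\epsilon))^2+\log\lceil aD\rceil)$, and writing $y=\Phi^{\mathrm{red}}_{a,D}(x)\in[-\pi,\pi]$,
\begin{equation*}
\snorm{\Phi^{\cos}_{a,D,\epsilon}(x)-\cos(ax)}\leq \snorm{\Phi_{p_m,\pi,\epsilon/2}(y)-p_m^{\cos}(y)}+\snorm{p_m^{\cos}(y)-\cos(y)}\leq\frac{\epsilon}{2}+\frac{\epsilon}{2}=\epsilon.
\end{equation*}
The sine network is constructed analogously using the Taylor polynomial $p_m^{\sin}$, or equivalently via $\sin(ax)=\cos(ax-\pi/2)$ after precomposing $\Phi^{\cos}_{a,D,\epsilon}$ with a trivial affine shift, which preserves both depth and weight bounds.

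The hardest step is the exact realization of the sawtooth argument-reduction ReLU-NN of depth merely $O(\log\lceil aD\rceil)$; the crucial ingredient is that composing a fixed hat function with itself halves the horizontal scale, so the depth scales only logarithmically in the number of teeth produced. Once this reduction is in hand, the rest of the proof reduces to a routine error decomposition relying on Proposition \ref{prop:approximation_polynomials} together with the observation that, after reduction, the polynomial approximation operates on the bounded interval $[-\pi,\pi]$ of $O(1)$ size, so the $(\log(1/\epsilon))^2$ term dominates.
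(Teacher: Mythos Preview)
The paper does not supply its own proof of this proposition; it is quoted from \cite{elbrachter2021deep}, and your sketch (Telgarsky-style iterated hat functions for periodic argument reduction, followed by the polynomial emulator of Proposition~\ref{prop:approximation_polynomials} applied to a Taylor truncation on a fixed interval) is exactly the construction used there.

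One small caveat on your treatment of $\sin$. Precomposing the completed network $\Phi^{\cos}_{a,D,\epsilon}$ with the shift $x\mapsto x-\pi/(2a)$ introduces a bias of magnitude $\pi/(2a)$, which is unbounded as $a\downarrow 0$ and therefore does not preserve $\mathcal{B}\leq 1$; the remedy is to insert the shift by $-\pi/2$ \emph{inside} the reduction network, immediately after the (implicit) computation of $ax$, where it becomes a bounded bias. Note also that your first alternative---reusing the same reduction $\Phi^{\mathrm{red}}_{a,D}$ together with the Taylor polynomial $p_m^{\sin}$---does not work as written: the triangle-wave reduction satisfies $\cos(ax)=\cos(\Phi^{\mathrm{red}}_{a,D}(x))$ precisely because $\cos$ is even, a symmetry $\sin$ lacks, so on descending segments of the sawtooth one would obtain $-\sin(ax)$ rather than $\sin(ax)$. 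A phase-shifted triangle wave tailored to the symmetry $\sin(\pi-y)=\sin(y)$ (equivalently, the $-\pi/2$ shift inserted at the $ax$ level) repairs this with no change to the depth, width, or weight bounds.
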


\subsection{ReLU-NN Emulation of Decaying Exponentials}
\label{ssec:approx_decay_exp}
We approximate decaying exponentials, 
i.e.~maps of the form $x\mapsto \exp(-a x)$ with $a\in\mathbb{R}_+$,
using ReLU-NNs within the interval $[0,D]$, where $D\in \mathbb{R}_+$.

\begin{proposition}\label{prop:approx_exp_NN}
There exist constants $C_1$, $C_2>0$  
such that for each $a$, $D \in \mathbb{R}_+$
there exist ReLU-NNs 
$\Phi^{\exp}_{a,D,\epsilon} \in \mathcal{N\!N}_{L,9,1,1}$ 
with 
\begin{align}
	L
	\leq
	C_1
	\left(
		\lceil\log(aD)\rceil^2
		+
		\left(\log\left(\frac{1}{\epsilon}\right)\right)^2
	\right),
	\quad
	\text{as }
	\epsilon \rightarrow 0,
\end{align}
such that $\norm{ \exp(-a x)- \Phi^{\exp}_{a,D,\epsilon}(x)}_{L^\infty((0,D))}\leq \epsilon$.
The weights of $\Phi^{\exp}_{a,D,\epsilon}$ are bounded in absolute value by a constant 
independent of $a$ and $D$.
\end{proposition}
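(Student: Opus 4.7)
The strategy is to exploit the multiplicative identity $\exp(-ax) = [\exp(-ax/2^N)]^{2^N}$, thereby reducing the problem to approximating $\exp(-y_0)$ on $[0,1]$ followed by $N$ iterated squarings via the multiplication network of Proposition \ref{prop:mult_network}. I would set $N := \max\{0, \lceil \log_2(aD) \rceil\}$, so that the rescaled variable $y_0(x) := (a/2^N) x$ lies in $[0,1]$ for every $x \in [0,D]$, and construct $\Phi^{\exp}_{a,D,\epsilon}$ as the serial composition of (i) an initial affine layer realising $x \mapsto y_0(x)$; (ii) a polynomial block $\Phi_p$ implementing $\widehat{a}_0 \approx \exp(-y_0)$ on $[0,1]$; and (iii) $N$ multiplication blocks $\Phi^{(k)}_\times$ implementing the iteration $\widehat{a}_{k+1} = \Phi^{(k)}_\times(\widehat{a}_k, \widehat{a}_k)$, so that $\widehat{a}_N \approx \exp(-ax)$.

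For the polynomial block I would invoke Proposition \ref{prop:approximation_polynomials} on the truncated Taylor polynomial $p_m(y_0) = \sum_{k=0}^{m}(-y_0)^k/k!$, whose coefficients are bounded in modulus by $1$ and which, for $m = O(\log(1/\delta))$, satisfies $\sup_{y_0 \in [0,1]} |\exp(-y_0) - p_m(y_0)| \le \delta/2$. The proposition then furnishes a ReLU-NN of width $9$ and depth $O(\log^2(1/\delta))$ approximating $y_0 \mapsto \exp(-y_0)$ to accuracy $\delta$. The iterated squaring blocks are multiplication subnetworks with $D=1$ and accuracy $\delta$, each of width $5$ and depth $O(\log(1/\delta))$; their serial composition, with intermediate duplication of the scalar argument, keeps the overall width at $\max(9,5) = 9$.

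The key error estimate is obtained by iterating the one-step bound. Writing $a_k := \exp(-2^k y_0) \in [0,1]$ and $E_k := |\widehat{a}_k - a_k|$, one has $E_0 \le \delta$ and $E_{k+1} \le \delta + (|\widehat{a}_k| + |a_k|) E_k \le \delta + 3 E_k$ as long as $E_k \le 1$, giving $E_N \le 3^N(E_0 + \delta) \le 2 \cdot 3^N \delta$. Choosing $\delta := \epsilon/(2 \cdot 3^{N})$ secures $E_N \le \epsilon$ and yields $\log(1/\delta) = O(N + \log(1/\epsilon))$, whence the polynomial block contributes depth $O((N+\log(1/\epsilon))^2)$ while the $N$ multiplication blocks contribute total depth $O(N(N+\log(1/\epsilon)))$. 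An application of the arithmetic--geometric mean inequality collapses these to $O(N^2 + \log^2(1/\epsilon)) = O(\lceil \log(aD) \rceil^2 + \log^2(1/\epsilon))$, as claimed. The weights of all subnetworks from Propositions \ref{prop:mult_network} and \ref{prop:approximation_polynomials} are bounded by $1$; the only non-unit scaling is the initial factor $a/2^N$, which can be absorbed through a short cascade of bounded-weight doubling layers, keeping the overall weight bound independent of $a$ and $D$.

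The principal technical obstacle is the error propagation in the squaring cascade: the $3^N$ amplification factor forces the multiplication accuracy $\delta$ to be exponentially small in $N$, adding an $O(N)$ term to $\log(1/\delta)$ and hence a cross term $O(N \log(1/\epsilon))$ in the depth count. The fact that this cross term is dominated by $N^2 + \log^2(1/\epsilon)$ is what ultimately produces the clean, additive bound $\log^2(aD) + \log^2(1/\epsilon)$ rather than a coarser $(\log(aD) + \log(1/\epsilon))^2$ one.
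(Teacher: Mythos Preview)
Your proposal is correct and follows essentially the same route as the paper: Taylor-approximate $\exp$ on $[0,1]$ via Proposition~\ref{prop:approximation_polynomials}, then reach the full range by $N\approx\lceil\log(aD)\rceil$ iterated squarings, with the geometric error amplification absorbed into the choice of $\delta$. The only cosmetic difference is that the paper uses Yarotsky's dedicated squaring network $\Phi^{\mathrm S}_\zeta$ (width~$3$, Lipschitz constant~$2$) for the iteration and splits into cases on the ranges of $a,D$, whereas you use the multiplication network $\Phi_\times(\cdot,\cdot)$ diagonally and handle all ranges at once; both yield the same $O(\lceil\log(aD)\rceil^2+\log^2(1/\epsilon))$ depth.
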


\begin{proof}
We divide the proof in several steps.
\\

{\sf \encircle{1} ReLU-NN Emulation for $a\in(0,1]$ and $D\in (0,1]$.}
Firstly, let us consider the case $a\in (0,1]$ and $D\in(0,1]$.
The MacLaurin series expansion of $\exp(-ax)$ reads
\begin{align}
	\exp(-ax)
	=
	\sum_{n=0}^{\infty} \frac{(-ax)^n}{n!},
	\quad 
	x\in \mathbb{R}.
\end{align}
Taylor's theorem with Lagrange remainder yields
\begin{align}
	\exp(-ax)
	=
	\sum_{n=0}^{N} \frac{(-ax)^n}{n!}
	+
	\frac{(-a)^{N+1}}{(N+1)!} \exp(-a \xi_x)x^{N+1},
	\quad
	x\in \mathbb{R},
\end{align}
for some $\xi_x \in (0,x)$ and $N\in \mathbb{N}$. 
For $N = \lceil (\log(2/\epsilon))\rceil$ it holds
\begin{equation}
\begin{aligned}
	\norm{
		\exp(-ax) 
		- 
		\sum_{n=0}^{N} \frac{(-ax)^n}{n!}
	}_{L^\infty((0,D))}
	&
	\leq
	\norm{\frac{(-a)^{N+1}}{(N+1)!} \exp(-a \xi_x)x^{N+1}}_{L^\infty((0,D))} 
	\\
	&
	\leq 
	\frac{1}{(N+1)!}
	\leq
	\frac{\epsilon}{2}.
\end{aligned}
\end{equation}
Thanks to Proposition \ref{prop:approximation_polynomials}, 
there exists a constant $C>0$ such that for each $a\in (0,1]$
and $\epsilon \in (0,1/2)$ there are
ReLU-NNs $\Phi_{a,\epsilon} \in \mathcal{N\!N}_{L,9,1,1}$
with 
$$
	L
	\leq 
	C
	\left(
		N\log\left(\frac{2}{\epsilon}\right)+N\log(N)+N
	\right),
$$
satisfying 
\begin{align}
	\norm{
		\Phi_{a,\epsilon}(x)
		-
		\sum_{n=0}^{N} \frac{(-ax)^n}{n!}
	}_{L^\infty((0,D))}
	\leq 
	\frac{\epsilon}{2}.
\end{align}
Therefore, there exists a constant $C>0$ such that for each $a\in(0,1]$ 
there are ReLU-NNs $\Phi_{a,\epsilon} \in \mathcal{N\!N}_{L,9,1,1}$ with
$L \leq C \left(\log(1/\epsilon)\right)^2$, as $\epsilon \rightarrow 0$, satisfying
$\norm{\exp(-a x)-\Phi_{a,\epsilon}(x)}_{L^\infty((0,1))}\leq \epsilon$
with weights bounded according to $\mathcal{B}(\Phi_{a,\epsilon}) \leq 1$.
\\

{\sf \encircle{2} ReLU-NN Emulation of the Squaring Operation.}
Let us shortly recall Yarotsky's result concerning 
the approximation of the squaring operation using ReLU-NNs \cite{Yar17}. 

We denote by $\Phi^\text{S}_\epsilon$ the linear piecewise
interpolation that approximates uniformly $x \mapsto x^2$ up to
accuracy $\epsilon$ in the interval $[0,1]$.
As stated in \cite[Proposition III.2]{elbrachter2021deep},
we have that $\Phi^\text{S}_\epsilon\in \mathcal{N\!N}_{L,3,1,1}$,
with $L \leq C \log\left(1/ \epsilon\right)$ for any $\epsilon \in (0,1/2)$
satisfying
\begin{align}
\label{eq:error_squaring_network}
	\norm{
		\Phi^\text{S}_\epsilon(x)
		-
		x^2
	}_{L^\infty((0,1))}
	\leq
	\epsilon.
\end{align}
An inspection of Yarotsky's construction reveals the following Lipschitz estimate
\begin{align}
\label{eq:lipschitz_cont_nn}
	\snorm{\Phi^\text{S}_\epsilon(x)- \Phi^\text{S}_\epsilon(y)}
	\leq
	2 \snorm{x-y},
	\quad
	x,y\in \mathbb{R}.
\end{align}
The weights are bounded according to $\mathcal{B}(\Phi^\text{S}_\epsilon) \leq 1$. 
\\

{\sf \encircle{3} ReLU-NN Emulation for $a>1$ and $D \in(0,1]$.}
For each $a\in \mathbb{R}_+$, there exists a $C_a\in (1/2,1]$ such that $a = C_a 2^{\lceil\log(a) \rceil}$.
Let us define $f_{n,a}(x)\coloneqq\exp(-2^{n-1}(C_a x))$.
Recalling {\sf \encircle{1}}, there exists a constant
$C>0$ and ReLU-NNs $\Phi_{a,\zeta} \in \mathcal{N\!N}_{L,9,1,1}$ 
with $L \leq C \left(\log(1/\zeta)\right)^2$ for any $\zeta \in (0,1/2)$ 
satisfying
\begin{align}
	\norm{
		f_{1,a}
		-
		\Phi_{a,\zeta}
	}_{L^\infty((0,D))}
	\leq 
	\zeta.
\end{align}
We define recursively $\Phi_{n+1,a,\zeta} \coloneqq \Phi^\text{S}_\zeta\circ \Phi_{n,a,\zeta}$, 
where $\Phi^\text{S}_\zeta\in \mathcal{N\!N}_{L,3,1,1}$, with $L \leq C \log\left(1/ \zeta\right)$
for any $\zeta \in (0,1/2)$ is the ReLU-NN discussed in {\sf \encircle{2}}. 

Next, we prove inductively that for each $n\in \mathbb{N}$ it holds
\begin{align}
	\label{eq:approx_exp_ind}
	\norm{f_{n,a}-\Phi_{n,a,\zeta}}_{L^\infty((0,D))}
	\leq
	2^{2(n-1)} \zeta.
\end{align} 
The statement for $n=1$ has been proved in {\sf \encircle{1}}.
Let us estimate
\begin{equation}
\begin{aligned}
	\snorm{\Phi_{n+1,a,\zeta}(x) - f_{n+1,a}(x)}
	\leq
	&
	\snorm{
		\left(
			\Phi^\text{S}_\zeta\circ \Phi_{n,a,\zeta}
		\right)(x)
		- 
		\left(
			\Phi^\text{S}_\zeta \circ f_{n,a}
		\right)(x)
	}\\
	&
	+
	\snorm{
		\left(
			\Phi^\text{S}_\zeta \circ f_{n,a}
		\right)(x)
		-
		f_{n+1,a}(x)
	},
	\quad
	x\in [0,D].
\end{aligned}
\end{equation}
Recalling \eqref{eq:lipschitz_cont_nn}, we have that
\begin{align}
	\snorm{
		\left(
			\Phi^\text{S}_\zeta\circ\Phi_{n,a,\zeta}
		\right)(x)
		- 
		\left(
			\Phi^\text{S}_\zeta \circ f_{n,a}
		\right)(x)
	}
	\leq
	2\snorm{\Phi_{n,a,\zeta}(x)- f_{n,a}(x)},
	\quad
	x\in[0,D],
\end{align}
therefore, assuming that \eqref{eq:approx_exp_ind} holds (induction hypothesis),
we have
\begin{align}
	\norm{
			\Phi^\text{S}_\zeta \circ\Phi_{n,a,\zeta}
		- 
			\Phi^\text{S}_\zeta\circ f_{n,a}
	}_{L^\infty((0,D))}
	\leq
	2^{2n-1} \zeta.
\end{align}
Observe that $f_{n+1,a} = \left(f_{n,a}\right)^2$ and that $f_{n,a}(x)\in [0,1]$, for $x\in [0,1]$ and for all $n\in \mathbb{N}$.
Hence, we obtain
\begin{align}
	\norm{
			\Phi^\text{S}_\zeta\circ f_{n,a}
		-
		f_{n+1,a}
	}_{L^\infty((0,D))}
	\leq
	\zeta
	\quad
	\text{and}
	\quad
	\norm{\Phi_{n+1,a,\zeta}- f_{n+1,a}}_{L^\infty((0,D))}
	\leq
	2^{2n}\zeta,
\end{align}
thus proving the desired result.
Let us set $\epsilon=2^{2n}\zeta$ and $n= {\lceil\log(a) \rceil}-1$. Furthermore, we define
\begin{align}
	\omega_{a,\epsilon}
	\coloneqq
	\underbrace{
		\Phi^\text{S}_\zeta
		\circ 
		\Phi^\text{S}_\zeta
		\circ
		\Phi^\text{S}_\zeta
		\circ
		\cdots
		\circ
		\Phi^\text{S}_\zeta
	}_{{\lceil\log(a) \rceil} \text{ times}}
	\circ
	\Phi_{1,a,\zeta}.
\end{align}
Observe that $\omega_{a,\epsilon} \coloneqq \Phi_{n+1,a,\zeta}$ and
$\norm{\Phi_{n+1,a,\epsilon}}_{L^\infty((0,D))} \leq 1+ 2^{2n}\zeta$. 
The depth of $\omega_{a,\epsilon}$ is
\begin{equation}
\begin{aligned}
	\mathcal{L}\left( \omega_{a,\epsilon}\right)
	\leq
	&
	C
	\left(
		\left(\log\left(\frac{1}{\zeta}\right)\right)^2
		+
		{\lceil\log(a) \rceil} \log(1+ 2^{2n}\zeta)
		+
		{\lceil\log(a) \rceil}\log\left(\frac{1}{\zeta}\right)
	\right) \\
	\leq
	&
	C
	\left(
		\left( 2(\lceil \log(a) \rceil -1)+ \log\left(\frac{1}{\epsilon}\right)\right)^2
		+
		{\lceil\log(a) \rceil} \log(1+ \epsilon)
	\right.
	\\
	&
	+
	\left.
		\left(
			2(\lceil\log(a) \rceil-1) 
			+
			\log\left(\frac{1}{\epsilon}\right)
		\right)
		{\lceil\log(a) \rceil} 
	\right)\\
	\leq
	&
	C
	\left(
		\lceil\log(a)\rceil^2
		+
		\left(\log\left(\frac{1}{\epsilon}\right)\right)^2
	\right)
	\quad
	\text{as }
	\epsilon\rightarrow 0.
\end{aligned}
\end{equation}
The weight of $\Phi^\text{S}_\epsilon $ and $\Phi_{a,\zeta}$ 
are bounded in absolute value by a constant (recall that $\epsilon=2^{2n}\zeta$ and $n= {\lceil\log(a) \rceil}-1$).
The weights of $ \omega_{a,\epsilon}$ are those of $\Phi^\text{S}_\epsilon $ and $\Phi_{a,\zeta}$, 
hence the weights are bounded according to $\mathcal{B}(\omega_{a,\epsilon}) \leq 1$ as well.
\\

{\sf \encircle{4} ReLU-NN Emulation for $a>1$ and $D>1$.}
Let $a$, $D>1$ and let $\omega_{a,D,\epsilon} \in  \mathcal{N\!N}_{L,9,1,1}$, 
with 
\begin{align}
	L
	\leq
	C
	\left(
		\lceil\log(aD)\rceil^2
		+
		\log^2\left(\frac{1}{\epsilon}\right)
	\right)
	\quad
	\text{as }
	\epsilon \rightarrow 0
\end{align}
be the ReLU-NN from {\sf \encircle{3}} approximating $\exp(-aDx)$ with
target accuracy $\epsilon>0$
over the interval $[0,1]$, i.e. 
$\norm{\omega_{a,D,\epsilon}(x)-\exp(-aDx)}_{L^\infty((0,1))}\leq \epsilon$.
Define 
$\Phi_{a,D,\epsilon}(x)\coloneqq\omega_{a,D,\epsilon}\left(\frac{x}{D}\right)$, 
for $x\in [0,D]$.
We have that
\begin{align}
	\norm{\Phi_{a,D,\epsilon}(x)-\exp(-ax)}_{L^\infty((0,D))}
	=
	\norm{\omega_{a,D,\epsilon}(x)-\exp(-aDx)}_{L^\infty((0,1))}
	\leq
	\epsilon.
\end{align}
Since $D>1$, the weights are bounded according to
$\mathcal{B}(\Phi_{a,D,\epsilon}) \leq 1$.
\end{proof}

\subsection{ReLU-NN Emulation of $x\mapsto \frac{1}{x}$.}
\label{sec:ReLUEm1/x}
We proceed to approximate the function $x\mapsto \frac{1}{x}$ using ReLU neural networks 
within the interval $[1,D]$ with $D>1$. The construction of these neural networks
is performed in two steps. Firstly, we approximate $x\mapsto\frac{1}{x}$, for $x\geq 1$, by an
appropriate combination of decaying exponentials. The key ingredient of this 
construction corresponds to the so-called ``sinc''-quadrature rule developed by 
F.~Stenger \cite{Stenger12}. 
The second step consists in approximating each of these exponentials using the 
results from subsection \ref{ssec:approx_decay_exp}. The ReLU-NN that 
approximates $x\mapsto\frac{1}{x}$ will be a suitable linear combination of the ReLU-NNs approximating 
each of the decaying exponentials.

\begin{lemma}[{{\cite[Lemma 4.2]{GHK03}}}]
\label{lmm:stenger_quad}
Let $z\in \mathbb{C}$ with $\Re\{z\}\geq 1$. Then for each $k\in \mathbb{N}$
the (positive) quadrature points and weights
\begin{align}
	t_j
	&
	\coloneqq
	\log\left(\exp\left(jk^{-\frac{1}{2}}\right) +\sqrt{1+\exp\left(2jk^{-\frac{1}{2}}\right)}\right), \\
	\omega_j
	&
	\coloneqq
	\left(k+k\exp\left(-2j k^{-\frac{1}{2}}\right)\right)^{-\frac{1}{2}},
	\quad
	j=-k,\dots,k,
\end{align}
fullfil 
\begin{align}
	\snorm{\int\limits_{0}^{\infty}\exp(-zt)\normalfont\text{d}t 
	- 
	\sum_{j=-k}^{k}\omega_j \exp(-t_j z)}
	\leq
	C_{S}
	\exp
	\left(
		\frac{\snorm{\Im\{z\}}}{\pi}
	\right)\exp(-\sqrt{k}),
\end{align}
where $C_S>0$ does not depend upon $k$ or $z$.
\end{lemma}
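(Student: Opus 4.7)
The plan is to recognize the stated quadrature as a sinc/trapezoidal rule on $\mathbb{R}$, obtained from the half-line integral by a suitable change of variables, and then invoke the classical Paley--Wiener style error analysis for sinc quadrature in a horizontal strip.

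First I would perform the substitution $t=t(u)\coloneqq\log(e^{u}+\sqrt{1+e^{2u}})$, which is the inverse of $u=\log(\sinh t)$ and maps $\mathbb{R}$ diffeomorphically onto $(0,\infty)$. A direct differentiation gives $t'(u)=e^{u}/\sqrt{1+e^{2u}}$, so that
\begin{equation}
\int_{0}^{\infty}\exp(-zt)\,\mathrm{d}t
=\int_{-\infty}^{\infty}F(u)\,\mathrm{d}u,\qquad
F(u)\coloneqq\exp(-z\,t(u))\,\frac{e^{u}}{\sqrt{1+e^{2u}}}.
\end{equation}
With step size $h\coloneqq k^{-1/2}$ and nodes $u_{j}=jh$, one checks that $h\,e^{u_{j}}/\sqrt{1+e^{2u_{j}}}=\omega_{j}$ and $t(u_{j})=t_{j}$, so the proposed sum is exactly the truncated trapezoidal rule $h\sum_{j=-k}^{k}F(u_{j})$ applied to $\int_{\mathbb{R}}F$.

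Next I would split the error into a discretization part and a truncation part,
\begin{equation}
\Bigl|\int_{\mathbb{R}}F(u)\,\mathrm{d}u-h\sum_{j=-k}^{k}F(u_{j})\Bigr|
\leq
\Bigl|\int_{\mathbb{R}}F(u)\,\mathrm{d}u-h\sum_{j\in\mathbb{Z}}F(u_{j})\Bigr|
+h\sum_{|j|>k}|F(u_{j})|.
\end{equation}
For the truncation part, I would use that $\Re(z)\geq 1$ together with the asymptotics $t(u)\sim u+\log 2$ as $u\to+\infty$ and $e^{u}/\sqrt{1+e^{2u}}\sim e^{u}$ as $u\to-\infty$ to obtain the uniform estimate $|F(u)|\lesssim e^{-|u|}$; summing from $|j|>k$ with $u_{j}=jh=j/\sqrt{k}$ yields a bound of order $e^{-\sqrt{k}}$. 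For the discretization part I would appeal to the standard sinc-quadrature error bound: for $F$ analytic in a horizontal strip $S_{d}\coloneqq\{w\in\mathbb{C}:|\Im w|<d\}$ and integrable along the horizontal lines bounding it,
\begin{equation}
\Bigl|\int_{\mathbb{R}}F-h\sum_{j\in\mathbb{Z}}F(jh)\Bigr|
\leq
\frac{e^{-2\pi d/h}}{1-e^{-2\pi d/h}}\,\bigl(\|F\|_{H^{1}(S_{d},+)}+\|F\|_{H^{1}(S_{d},-)}\bigr).
\end{equation}

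The main obstacle is to verify that $F$ extends analytically to a suitable strip $S_{d}$ and to track the $z$-dependence. The first singularities of $t(w)$ occur at $w=i(\pi/2+\pi\mathbb{Z})$, so $F$ is holomorphic in $S_{d}$ for any $d<\pi/2$. The crucial observation is that, choosing $d=1/(2\pi)$ so that $2\pi d/h=\sqrt{k}$, the map $t(w)$ satisfies $|\Im t(w)|\leq 1/\pi$ uniformly on $S_{d}$; consequently
\begin{equation}
|\exp(-z\,t(w))|\leq \exp(-\Re(z)\,\Re(t(w)))\,\exp(|\Im(z)|/\pi),
\end{equation}
and the $H^{1}$ norms of $F$ on the boundary of $S_{d}$ are controlled by $\exp(|\Im(z)|/\pi)$ times a $z$-independent constant coming from integrating the factor $e^{-\Re(z)\,\Re(t(w))}\,|t'(w)|$ along those lines (the decay at $\pm\infty$ being that of an absolutely convergent exponential integral, uniformly for $\Re(z)\geq 1$). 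Combining the two error pieces then yields the asserted bound $C_{S}\exp(|\Im(z)|/\pi)\exp(-\sqrt{k})$, with $C_{S}$ independent of $k$ and $z$.
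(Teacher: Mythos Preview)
The paper does not prove this lemma at all; it merely quotes it from \cite{GHK03} and uses it as a black box in the proof of Lemma~\ref{lmm:approx_NN_inverse}. So there is no ``paper's own proof'' to compare against, and your sketch is already more than what the paper offers.

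Your outline is the correct and standard one for sinc quadrature: the substitution $t=\operatorname{arcsinh}(e^{u})$ turns the half-line integral into a full-line integral, the trapezoidal rule with step $h=k^{-1/2}$ reproduces exactly the stated nodes and weights, and the error splits into a truncation piece (controlled by the exponential decay $|F(u)|\lesssim e^{-|u|}$ for $\Re z\geq 1$) and a discretization piece (controlled by analyticity in a horizontal strip). The only place where your argument is somewhat thin is the ``crucial observation'' that $|\Im t(w)|\leq 1/\pi$ uniformly on $S_{1/(2\pi)}$; this is asserted rather than verified, and getting the sharp constant $1/\pi$ in the exponent requires a careful look at the mapping properties of $\operatorname{arcsinh}(e^{w})$ on that strip. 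The asymptotic regimes $u\to\pm\infty$ do give $|\Im t|\lesssim 1/(2\pi)$, but the intermediate range needs a direct estimate. This is a detail you could fill in by bounding $\Im\operatorname{arcsinh}$ explicitly; the overall strategy is sound and matches what one finds in Stenger's sinc-quadrature theory and in the cited reference.
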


Equipped with Lemma \ref{lmm:stenger_quad} we prove the following result.

\begin{lemma}
\label{lmm:approx_NN_inverse}
There exists a constant $C>0$ 
such that for each $D >1$ 
there exist ReLU-NNs $\upphi_{D,\epsilon}\in \mathcal{N\!N}_{L,13,1,1}$ 
with 
\begin{equation}\label{eq:bound_L_NN_inverse}
\begin{aligned}
	L
	\leq
	C
	\left(\log\left(\frac{1}{\epsilon}\right)\right)^2
	\left(
		\left\lceil\log\left(D\right)\right\rceil^2
		+
		\left(
			\log
			\left(
				\frac{1}{\epsilon}
			\right)
		\right)^2
	\right)
	\quad
	\text{as }
	\epsilon \rightarrow 0
\end{aligned}
\end{equation}
such that
\begin{align}
	\norm{
		\frac{1}{x} 
		- 
		\upphi_{D,\epsilon}(x)
	}_{L^\infty((1,D))}
	\leq 
	\epsilon,
\end{align}
and with weights bounded in absolute value by 
a constant independent of $D$ and $\epsilon$.
\end{lemma}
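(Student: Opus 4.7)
The plan is to combine two ingredients: Stenger's sinc quadrature of Lemma \ref{lmm:stenger_quad} to represent $1/x$ as a sum of decaying exponentials, and the ReLU-NN emulation of such exponentials from Proposition \ref{prop:approx_exp_NN}. Since for $x\in [1,D]$ one has $\Re\{x\}\geq 1$ and $\Im\{x\}=0$, applying Lemma \ref{lmm:stenger_quad} to $1/x = \int_0^\infty \exp(-xt)\,\text{d}t$ yields
\[
	\left|\frac{1}{x} - \sum_{j=-k}^k \omega_j \exp(-t_j x)\right| \leq C_S \exp(-\sqrt{k}).
\]
I would choose $k = \lceil (\log(2C_S/\epsilon))^2\rceil = \mathcal{O}((\log(1/\epsilon))^2)$ so that the quadrature error is at most $\epsilon/2$. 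A direct inspection of the formulas for $t_j$ and $\omega_j$ gives $0<t_j\leq C\sqrt{k} = \mathcal{O}(\log(1/\epsilon))$ and $0<\omega_j\leq k^{-1/2}$.

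Next, for each $j\in\{-k,\dots,k\}$ I would invoke Proposition \ref{prop:approx_exp_NN} to obtain a ReLU-NN $\Phi^{\exp}_{t_j, D, \epsilon'}\in \mathcal{N\!N}_{L_j, 9, 1, 1}$ approximating $\exp(-t_j x)$ on $[0,D]$ with target accuracy $\epsilon' = \Theta(\epsilon/\sqrt{k})$, chosen so that the aggregate error $\sum_{j} |\omega_j|\epsilon'$ is at most $\epsilon/2$. Since $t_j = \mathcal{O}(\log(1/\epsilon))$ and $\log(1/\epsilon') = \mathcal{O}(\log(1/\epsilon))$, the per-exponential depth satisfies $L_j = \mathcal{O}(\lceil\log D\rceil^2 + (\log(1/\epsilon))^2)$.

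The delicate step, and the main obstacle, is to assemble these $2k+1$ subnetworks into a single ReLU-NN of fixed width $13$: a naive parallel combination would inflate the width to $\Theta(k)$. Instead, I would use a sequential accumulator architecture, carrying through every layer two ReLU identity channels (two neurons each, via $x = \varrho(x)-\varrho(-x)$) that transport the input $x$ and a running partial sum $s$, together with $9$ neurons dedicated to evaluating one subnetwork $\Phi^{\exp}_{t_j, D, \epsilon'}$ at a time. After each subnetwork, a thin linear layer updates $s \leftarrow s + \omega_j \Phi^{\exp}_{t_j, D, \epsilon'}(x)$. The width is then $2+2+9 = 13$, and the total depth is $\sum_{j=-k}^k L_j + \mathcal{O}(k) = \mathcal{O}\bigl((\log(1/\epsilon))^2(\lceil\log D\rceil^2 + (\log(1/\epsilon))^2)\bigr)$, matching \eqref{eq:bound_L_NN_inverse}.

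Finally, the weights of $\upphi_{D,\epsilon}$ are of three kinds: (a) internal weights of each $\Phi^{\exp}_{t_j, D, \epsilon'}$, bounded by a constant independent of $t_j$ and $D$ by Proposition \ref{prop:approx_exp_NN}; (b) the $\pm 1$ propagation weights of the identity channels; and (c) the quadrature coefficients $\omega_j\leq 1$, so the overall bound is a constant independent of $D$ and $\epsilon$. The error bookkeeping follows routinely from the triangle inequality and the chosen values of $k$ and $\epsilon'$; the accumulator construction is the only nontrivial structural element of the proof.
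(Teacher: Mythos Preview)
Your proposal is correct and follows essentially the same approach as the paper: sinc quadrature with $k=\mathcal{O}((\log(1/\epsilon))^2)$ nodes, per-node emulation via Proposition~\ref{prop:approx_exp_NN}, and a sequential accumulator that carries $x$ and the running sum through two ReLU identity channels (four neurons) alongside the width-$9$ exponential subnetwork to reach width $13$. The paper's accumulator is organized slightly differently (it propagates a triple $(x_1,x_2,x_3)$ through blocks $\Theta_{j,\epsilon}$ that re-copy $x_1$ into both the first and second slots after each step), but the idea and the resulting bounds are the same.
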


\begin{proof}
Observe that 
\begin{align}
	\frac{1}{x}
	=
	\int\limits_{0}^{\infty} \exp(-xt) \text{d}t
	\quad
	\text{for any }
	x>0.
\end{align}
Let us set $k_\epsilon \coloneqq \left\lceil\left (\log(2C_S/ \epsilon)\right)^2\right\rceil$
with $C_S$ as in Lemma \ref{lmm:stenger_quad}.
Lemma \ref{lmm:stenger_quad} yields
\begin{align}
	\snorm{\frac{1}{x}
	- 
	\sum_{j=-k_\epsilon}^{k_\epsilon}\omega_j \exp(-t_j x)} 
	\leq 
	\frac{\epsilon}{2}, 
	\quad
	x\geq 1.
\end{align}
According to Proposition \ref{prop:approx_exp_NN}
there exist $C>0$ and ReLU-NNs $\Phi_{j,D,\epsilon} \in \mathcal{N\!N}_{L_j,9,1,1}$, 
$j=-k_\epsilon,\dots,k_\epsilon$, 
with depth
\begin{equation}
\begin{aligned}
	L_j
	\leq
	C
	\left(
		\left\lceil \log\left(t_j D\right)\right\rceil^2
	+
	\left(
		\log\left(\frac{2}{\epsilon}
		\displaystyle
		\sum_{j=-k_\epsilon}^{k_\epsilon}
		\omega_j\right)
	\right)^2
	\right),
\end{aligned}
\end{equation}
such that
\begin{align}\label{eq:error_individual_NNs}
	\norm{
		\Phi_{j,D,\epsilon}(x) 
		- 
		\exp
		\left(
			-t_j x
		\right)
	}_{L^\infty((1,D))}
	\leq
	\frac{\epsilon}{2\displaystyle\sum_{j=-k_\epsilon}^{k_\epsilon}\omega_j},
	\quad
	\text{for }
	j=-k_\epsilon,\dots,k_\epsilon.
\end{align}
We observe that for $j=- k_\epsilon,\dots,k_\epsilon$
\begin{equation}
	t_j \leq \log(3) + k^{\frac{1}{2}}_{\epsilon}\leq 2 k^{\frac{1}{2}}_\epsilon
	\quad
	\text{and}
	\quad
	\omega_j \leq k^{-\frac{1}{2}}_\epsilon
\end{equation}
thus $\displaystyle\sum_{j=-k_\epsilon}^{k_\epsilon} \omega_j \leq 2k^{\frac{1}{2}}_\epsilon$.

Define the ReLU-NNs
\begin{align}\label{eq:theta_construction}
	\Theta_{-k_\epsilon,\epsilon}(x)\coloneqq
	\begin{pmatrix}
		x \\
		\omega_{-{k_\epsilon}} \Phi_{-{k_\epsilon},D,\epsilon}(x)  \\
		0
	\end{pmatrix}
	,
	\quad
	\Theta_{j,\epsilon}(x_1,x_2,x_3)
	\coloneqq
	A
	\begin{pmatrix}
		x_1 \\ 
		\omega_j \Phi_{j,D,\epsilon}(x_2) \\
		x_3
	\end{pmatrix},
\end{align}
for $j=-k_\epsilon+1,\dots,k_\epsilon$,
where $A\in \mathbb{R}^{3\times 3}$ 
is the matrix performing the operation
$A (y_1,y_2,y_3)^\top = (y_1,y_1,y_2+y_3)^\top$.
Define
\begin{align}
	\upphi_{D,\epsilon}(x)
	&
	\coloneqq
	\begin{pmatrix}
		0 & 0 & 1 \\
	\end{pmatrix}
	(
	\Theta_{k_{\epsilon},\epsilon} 
	\circ
	\Theta_{k_{\epsilon}-1,\epsilon} 
	\circ 
	\cdots
	\circ
	\Theta_{-k_{\epsilon}+1,\epsilon}
	\circ
	\Theta_{-k_{\epsilon},\epsilon})(x) 
	,
	\quad
	x\in [1,D].
\end{align}
and observe that
\begin{equation}
	\upphi_{D,\epsilon}(x)
	=
	\sum_{j=-k_\epsilon}^{k_\epsilon}
	\omega_j
	\Phi_{j,D,\epsilon}(x),
	\quad
	x\in [1,D].
\end{equation}
It follows from \eqref{eq:error_individual_NNs} that
\begin{align}
	\norm{
		\sum_{j=-k_\epsilon}^{k_\epsilon}\omega_j \exp(-t_j x) 
		-	 
		\upphi_{D,\epsilon}(x)
	}_{L^\infty((1,D))} \leq \frac{\epsilon}{2}.
\end{align}
The triangle inequality yields
\begin{equation}
\begin{aligned}
	\norm{
		\frac{1}{x} 
		-
		\upphi_{D,\epsilon}(x)
	}_{L^\infty((1,D))}
	\leq
	&
	\norm{
		\frac{1}{x} 
		- 
		\sum_{j=-k_\epsilon}^{k_\epsilon}
		\omega_j \exp(-t_j x)
	}_{L^\infty((1,D))}
	\\
	&
	+
	\norm{
		\sum_{j=-k_\epsilon}^{k_\epsilon}
		\omega_j \exp(-t_j x) 
		- 
		\upphi_{D,\epsilon}(x)
	}_{L^\infty((1,D))}\\
	\leq
	& \epsilon.
\end{aligned}
\end{equation}
The depth of $\upphi_{D,\epsilon}$ is 
\begin{equation}
\begin{aligned}
	\mathcal{L}
	\left(
		\upphi_{D,\epsilon}
	\right)
	=
	\sum_{j=-k_\epsilon}^{k_\epsilon}
	L_j
	\leq
	&
	C
	\sum_{j=-k_\epsilon}^{k_\epsilon}
	\left(
		\left\lceil\log\left(t_j D\right)\right\rceil^2
		+
		\left(
			\log\left(\frac{2}{\epsilon}\displaystyle\sum_{j=-k}^k\omega_j\right)
		\right)^2
	\right)\\
	\leq
	&
	2C
	k_\epsilon
	\left(
		\left\lceil\log\left(2 k^{\frac{1}{2}}_\epsilon D\right)\right\rceil^2
		+
		\left(
			\log
			\left(
				\frac{4}{\epsilon}k^{\frac{1}{2}}_\epsilon
			\right)
		\right)^2
	\right)
	\\
	\leq
	&
	2C
	k_\epsilon
	\left(
		\left\lceil\log\left(D\right)\right\rceil^2
		+
		2\left(\log\left(2 k^{\frac{1}{2}}_\epsilon \right)\right)^2
		+
		\left(
			\log
			\left(
				\frac{2}{\epsilon}
			\right)
		\right)^2
	\right).
\end{aligned}
\end{equation}
For $\epsilon \in \left(0,\frac{1}{2C_S}\right)$ we have that
$\left\lceil\left (\log\left(\frac{2C_S}{\epsilon}\right)\right)^2\right\rceil \leq 8\left(\log\left(\frac{1}{\epsilon}\right)\right)^2$. 
We obtain
\begin{align}
	\mathcal{L}
	\left(
		\upphi_{D,\epsilon}
	\right)
	\leq
	16
	C
	\left(\log\left(\frac{1}{\epsilon}\right)\right)^2
	\left(
		\left(\log\left(D\right)\right)^2
		+
		2
		\left(
		\log
		\left(
			8
			\log
			\left(
				\frac{1}{\epsilon}
			\right)
		\right)
		\right)^2
		+
		\left(
			\log
			\left(
				\frac{2}{\epsilon}
			\right)
		\right)^2
	\right)
	\quad
	\text{as } \epsilon\rightarrow0.
\end{align}
In view of \eqref{eq:relu_nn_po_2} and \eqref{eq:theta_construction}
we have that $\mathcal{M}(\upphi_{D,\epsilon}) = 13$.
The weights of $\upphi_{D,\epsilon}$ are the ones of $\Phi_{j,D,\epsilon}$,
for $j=-k_\epsilon,\dots,k_\epsilon$, 
which are bounded in absolute
value by a constant $C>0$ (which is independent of $D$),
and $\omega_j$, $j=-k_\epsilon,\dots,k_\epsilon$,
which in turn are bounded by $1$. 
Thus,
the weights are bounded according to
$\mathcal{B}(\upphi_{D,\epsilon}) \leq 1$.

\end{proof}

\begin{lemma}
\label{lmm:inverse_pol_NN}
There exists a constant $C>0$ 
such that for each 
$n\in \mathbb{N}_{\geq 2}$ and $D>1$, 
there exist 
ReLU-NNs $\uppsi_{n,D,\epsilon}\in \mathcal{N\!N}_{L,13,1,1}$ 
with 
\begin{equation}
\begin{aligned}
	L
	\leq
	&
	C
	n
	\left(
	\log
	\left(
		\frac{1}{\epsilon}
	\right)
	+
	\log(2n+1)
	\right)
	\\
	&
	+
	C
	\left(
	\log
	\left(
		\frac{1}{\epsilon}
	\right)
	+
	\log(2n+1)
	\right)^2
	\left(
		\left(\log\left(D\right)\right)^2
		+
		\left(
			\log
			\left(
				\frac{2}{\epsilon}
			\right)
			+
			\log(2n+1)
		\right)^2
	\right)
	\quad
	\text{as }
	\epsilon 
	\rightarrow 0
\end{aligned}
\end{equation}
such that
\begin{align}
	\norm{
		\frac{1}{x^n} 
		- 
		\uppsi_{n,D,\epsilon}(x)
	}_{L^\infty((1,D))}
	\leq \epsilon.
\end{align}
The weights of $\uppsi_{n,D,\epsilon}$ are bounded according to
$\mathcal{B}(\uppsi_{n,D,\epsilon}) \leq 1$.
\end{lemma}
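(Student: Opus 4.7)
The plan is to write $\frac{1}{x^n} = \bigl(\frac{1}{x}\bigr)^n$ and emulate this by concatenating the inverse network $\upphi_{D,\epsilon'}$ of Lemma \ref{lmm:approx_NN_inverse} with $n-1$ iterated products computed through the multiplication network $\Phi_{2,\delta}$ of Proposition \ref{prop:mult_network}. Concretely, I build the network in two stages: (i) apply $\upphi_{D,\epsilon'}$ to produce $\tilde{y}_1(x)\approx 1/x$ on $[1,D]$ with accuracy $\epsilon'$, and (ii) iterate, for $k=1,\dots,n-1$, blocks that take $(\tilde{y}_k,\tilde{y}_1)$ to $(\tilde{y}_{k+1},\tilde{y}_1)\coloneqq(\Phi_{2,\delta}(\tilde{y}_k,\tilde{y}_1),\tilde{y}_1)$, where $\tilde{y}_1$ is propagated layer-by-layer through the two-neuron identity gadget $x=\varrho(x)-\varrho(-x)$, exactly as in the $\Theta$-construction in \eqref{eq:theta_construction}. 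Both $\epsilon'$ and $\delta$ will be fixed as a constant multiple of $\epsilon/(2n+1)$.

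The error analysis is an elementary induction. Writing $y\coloneqq 1/x\in[1/D,1]$, a single multiplication block yields
\begin{align}
\snorm{\tilde{y}_{k+1}-y^{k+1}}
\le \snorm{\Phi_{2,\delta}(\tilde{y}_k,\tilde{y}_1)-\tilde{y}_k\tilde{y}_1}
+\snorm{\tilde{y}_k\tilde{y}_1-y^{k}y}
\le \delta+(1+\epsilon')\snorm{\tilde{y}_k-y^{k}}+\epsilon',
\end{align}
provided the arguments of $\Phi_{2,\delta}$ remain inside $(-2,2)^2$, which is the case once $\epsilon'\le\tfrac{1}{2}$ since $\tilde{y}_k\in[-\epsilon',1+\epsilon']$. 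Choosing $\epsilon'\le 1/(2n)$ ensures $(1+\epsilon')^{n-1}\le 2$, so the recursion telescopes to $\snorm{\tilde{y}_n-1/x^n}\le 2n(\delta+\epsilon')$, and the prescription $\epsilon'=\delta=\epsilon/(4n+2)$ delivers the target accuracy.

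For the depth bound, the initial $\upphi_{D,\epsilon'}$ contributes, via Lemma \ref{lmm:approx_NN_inverse} with $\epsilon'\asymp\epsilon/(2n+1)$, precisely the quadratic, $D$-dependent term $C\bigl(\log(1/\epsilon)+\log(2n+1)\bigr)^{2}\bigl((\log D)^{2}+(\log(2/\epsilon)+\log(2n+1))^{2}\bigr)$. Each of the $n-1$ multiplication blocks, applied with $D=2$ and $\delta\asymp\epsilon/(2n+1)$, contributes depth $O(\log(1/\epsilon)+\log(2n+1))$ by Proposition \ref{prop:mult_network}, and their concatenation yields the first term $Cn(\log(1/\epsilon)+\log(2n+1))$. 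For the width: $\upphi_{D,\epsilon'}$ has width $13$, while each multiplication block uses $5$ internal neurons for the product plus $2$ neurons to carry $\tilde{y}_1$ forward, totaling $7\le 13$; hence $\mathcal{M}(\uppsi_{n,D,\epsilon})=13$. Weight boundedness $\mathcal{B}(\uppsi_{n,D,\epsilon})\le 1$ is inherited from both constituent families.

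The only genuine subtlety is ensuring that the linear-in-$n$ accumulation of the multiplicative error is absorbed by scaling $\epsilon',\delta$ proportionally to $\epsilon/n$, together with checking that all intermediate activations stay within the validity range $[-2,2]$ of $\Phi_{2,\delta}$; once these are tracked, the stated bounds follow by direct combination of Lemma \ref{lmm:approx_NN_inverse} and Proposition \ref{prop:mult_network} with standard bookkeeping on depth, width, and weights.
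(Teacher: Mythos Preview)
Your proposal is correct and takes essentially the same approach as the paper: compute $\upphi_{D,\zeta}(x)\approx 1/x$ once via Lemma \ref{lmm:approx_NN_inverse}, then concatenate $n-1$ ReLU multiplication blocks from Proposition \ref{prop:mult_network} while carrying the base value forward, with the inner accuracy scaled like $\epsilon/n$ to absorb the linear error accumulation. The only cosmetic difference is that the paper uses a variable multiplication domain $D_{n,\zeta}=1+\|x^{-n}-\uppsi_{n,D,\zeta}\|_{L^\infty}$ (which is also $O(1)$) rather than your fixed choice $D=2$, and obtains the slightly different error bound $(2n+1)(1+\zeta)^{n}\zeta$ in place of your $2n(\delta+\epsilon')$; neither change affects the final depth, width, or weight conclusions.
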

\begin{proof}
Let $\upphi_{D,\zeta}\in \mathcal{N\!N}_{L,13,1,1}$ with 
\begin{equation}
\begin{aligned}
	L
	\leq
	C
	\left(\log\left(\frac{1}{\zeta}\right)\right)^2
	\left(
		\left(\log\left(D\right)\right)^2
		+
		\left(
			\log
			\left(
				\frac{1}{\zeta}
			\right)
		\right)^2
	\right)
	\quad
	\text{as }
	\zeta \rightarrow 0
\end{aligned}
\end{equation}
and such that
\begin{align}
	\norm{
		\frac{1}{x} 
		- 
		\upphi_{D,\zeta}(x)
	}_{L^\infty((1,D))}
	\leq 
	\zeta
\end{align}
be as in Proposition \ref{lmm:approx_NN_inverse}
with $C>0$ independent of $\zeta>0$.

Let $\mu_{D_{n,\zeta},\zeta} \in \mathcal{N\!N}_{L,5,2,1}$ with 
$L\leq C\left(\log\lceil D_{n,\zeta}\rceil)+ \log\left(\frac{1}{\zeta}\right)\right)$
as $\epsilon\rightarrow 0$ be the multiplication network from
Proposition \ref{prop:mult_network} satisfying 
\begin{equation}
	\norm{
		\mu_{D_{n,\zeta} ,\zeta}(x,y)-xy
	}_{L^\infty
	\left(\left(-D_{n,\zeta},D_{n,\zeta} \right)^2\right)} 
	\leq 
	\zeta,
\end{equation}
where
\begin{equation}
	D_{n,\zeta} 
	=
	1
	+
	\norm{
		\frac{1}{x^{n}} 
		-
		\uppsi_{n,D,\zeta}(x)
	}_{L^\infty((1,D))}.
\end{equation}
Define
\begin{equation}
	\uppsi_{n+1,D,\zeta}(x)
	\coloneqq
	\mu_{D_{n,\zeta},\zeta}
	\left(
		\uppsi_{n,D,\zeta}(x)
		,
		\upphi_{D,\zeta}(x)
	\right),
	\quad
	\uppsi_{1,D,\zeta}(x)
	\coloneqq
	\upphi_{D,\zeta}(x),
	\quad
	x\in [1,D].
\end{equation}
Next, we calculate
\begin{equation}
\begin{aligned}
	\norm{
		\frac{1}{x^{n+1}} 
		-
		\uppsi_{n+1,D,\zeta}(x)
	}_{L^\infty((1,D))}
	\\
	&
	\hspace{-3.5cm}
	\leq
	\norm{
		\frac{1}{x} 
		\left(
			\frac{1}{x^{n}} 
			-
			\uppsi_{n,D,\zeta}(x)
		\right)
	}_{L^\infty((1,D))}
	+
	\norm{
		\uppsi_{n,D,\zeta}(x)
		\left(
			\frac{1}{x}
			-
			\upphi_{D,\zeta}(x)
		\right)
	}_{L^\infty((1,D))}
	\\
	&
	\hspace{-3.2cm}
	+
	\norm{
		\uppsi_{n,D,\zeta}(x)\upphi_{D,\zeta}(x)
		-
		\mu_{D_{n,\zeta},\zeta}
		\left(
			\uppsi_{n,D,\zeta}(x)
			,
			\upphi_{D,\zeta}(x)
		\right)
	}_{L^\infty((1,D))}
	\\
	&
	\hspace{-3.5cm}
	\leq
	\norm{
		\frac{1}{x^{n}} 
		-
		\uppsi_{n,D,\zeta}(x)
	}_{L^\infty((1,D))}
	\\
	&
	\hspace{-3.2cm}
	+
	\zeta
	\left(
		1
		+
		\norm{
			\frac{1}{x^{n}} 
			-
			\uppsi_{n,D,\zeta}(x)
		}_{L^\infty((1,D))}
	\right)
	+
	\zeta.
\end{aligned}
\end{equation}
Hence, for $\zeta \in (0,1)$ we obtain
\begin{equation}
\begin{aligned}
	\norm{
		\frac{1}{x^{n+1}} 
		-
		\uppsi_{n+1,D,\zeta}(x)
	}_{L^\infty((1,D))}
	\leq
	&
	\zeta
	\left(
		1
		+
		\zeta
	\right)^n
	+
	2
	\zeta
	\sum_{j=0}^{n-1}
	\left(
		1
		+
		\zeta
	\right)^j
	\\
	\leq
	&
	\zeta
	\left(
		1
		+
		\zeta
	\right)^n
	+
	2
	\zeta
	n
	(1+\zeta)^n
	\\
	\leq
	&
	(2n+1)
	(1+\zeta)^n
	\zeta.
\end{aligned}
\end{equation}
Thus, for $n \in \mathbb{N}_{\geq 2}$
\begin{equation}
	\uppsi_{n,D,\zeta}(x)
	=
	\begin{pmatrix}
	0 & 1
	\end{pmatrix}
	\left(
		\Theta_{n,\zeta}
		\circ
		\cdots
		\circ
		\Theta_{2,\zeta}
		\circ
		\Theta_{1,\zeta}
	\right)
	(x),
	\quad
	x\in[1,D],
\end{equation}
where
\begin{equation}
	\Theta_{j,\zeta}
	(x_1,x_2)
	\coloneqq
	\begin{pmatrix}
		x_1 \\
		\mu_{D_{n,\zeta},\zeta}(x_1,x_2)
	\end{pmatrix},
	\quad
	j\geq 3,
\end{equation}
and
\begin{equation}
	\Theta_{2,\zeta}
	(x)
	\coloneqq
	\begin{pmatrix}
		x \\
		\mu_{D_{n,\zeta},\zeta}(x,x)
	\end{pmatrix}
	\quad
	\text{and}
	\quad
	\Theta_{1,\zeta}
	(x)
	\coloneqq
	\uppsi_{1,D,\zeta}(x).
\end{equation}
We have that
\begin{equation}
	\mathcal{M}
	\left(
		\uppsi_{n,D,\zeta}
	\right)
	=
	\max_{j=1,\dots,n}
	\mathcal{M}
	\left(
		\Theta_{j,\zeta}
	\right)
	=
	\mathcal{M}
	\left(
		\uppsi_{1,D,\zeta}(x)
	\right)
	=
	13
\end{equation}
and 
\begin{equation}
\begin{aligned}
	\mathcal{L}
	\left(
		\uppsi_{n,D,\zeta}
	\right)
	=
	&
	\sum_{j=1}^{n}
	\mathcal{L}
	\left(
		\Theta_{j,\zeta}
	\right)
	\\
	\leq
	&
	C
	n
	\left(
		\log
		\left(
			1
			+
			(2n-1)
			(1+\zeta)^{n-1}
			\zeta
		\right)
		+ 
		\log\left(\frac{1}{\zeta}\right)
	\right)
	\\
	&
	+
	C
	\left(\log\left(\frac{1}{\zeta}\right)\right)^2
	\left(
		\left\lceil\log\left(D\right)\right\rceil^2
		+
		\left(
			\log
			\left(
				\frac{2}{\zeta}
			\right)
		\right)^2
	\right).
\end{aligned}
\end{equation}
By setting $\epsilon = (2n-1)(1+\zeta)^{n-1} \zeta$ and observing 
that $\zeta \leq \frac{\epsilon}{2n-1}$ we get
\begin{equation}
\begin{aligned}
	\log
	\left(
		\frac{1}{\zeta}
	\right)
	&
	=
	\log
	\left(
		\frac{1}{\epsilon}
	\right)
	+
	\log(2n-1)
	+
	n
	\log
	\left(
		1
		+
		\zeta
	\right)
	\\
	&
	\leq
	\log
	\left(
		\frac{1}{\epsilon}
	\right)
	+
	\log(2n-1)
	+
	(n-1)
	\log
	\left(
		1
		+
		\frac{\epsilon}{2n-1}
	\right)
	\\
	&
	\leq
	\log
	\left(
		\frac{1}{\epsilon}
	\right)
	+
	\log(2n-1)
	+
	\epsilon
	\frac{n-1}{2n-1},
\end{aligned}
\end{equation}
thus yielding the claimed result.
\end{proof}

\section{Wavenumber-Robust Deep ReLU-NN Emulation}
\label{sec:k_robust_ReLU_NN_emulation}
In this section, we establish the existence of ReLU-NNs
emulating $\varphi_\kappa \in L^2(\Gamma)$ with 
a robust dependence of the depth, width and weights bounds 
on the wavenumber, and 
spectral convergence with respect to the target accuracy.

A result in this direction is established in the following proposition,
which is in turn based on Corollary \ref{eq:smoothness_neuman_trace}
and Proposition \ref{prop:approximation_P}.

\begin{proposition}[$\kappa$-Explicit ReLU-NN Emulation of $V_\kappa$]
\label{prop:approximation_V_bad}
Let Assumption \ref{assumption:smooth_curve} be satisfied.
For each $n\in \IN_{\geq 2}$ there exist $C_n>0$ independent of $\kappa$
and ReLU-NNs $V_{\kappa,n,\epsilon} \in \mathcal{N\!N}_{L,15,1,2}$, 
i.e.
one input and two outputs accounting for the real and imaginary parts of $V_\kappa$
separately, and
\begin{equation}
	L
	\leq
	C_n
	(1+\kappa)^{\frac{2}{3}}
	\left(
		(1+\kappa)^{\frac{2}{3}}
		\epsilon^{-\frac{2}{n-1}}
		+ 
		\epsilon^{-\frac{1}{n-1}}
		\log
		\left(\frac{1}{\epsilon}\right)
		+
		\epsilon^{-\frac{1}{n-1}}
		\log
		\left(
			1
			+
			\kappa
		\right)
	\right) 
	\quad
	\text{as }
	\epsilon \rightarrow 0
\end{equation}
satisfying
\begin{equation}
	\norm{
		V_\kappa
		-
		\left(V_{\kappa,n,\epsilon}\right)_1
		-
		\imath
		\left(V_{\kappa,n,\epsilon}\right)_2
	}_{
		L^\infty
		\left(
			(0,2\pi)
		\right)
	}
	\leq
	\epsilon,
\end{equation}
where $(V_{\kappa,n,\epsilon})_1$ and $(V_{\kappa,n,\epsilon})_2$ represent 
the first and second outputs of $V_{\kappa,n,\epsilon}$, respectively, with
weights bounded in absolute value by $C_n(1+\kappa)^{\frac{(n-1)}{3}}$
for some constant $C_n>0$ independent of $\kappa$ and dependent on
$n \in \IN_{\geq 2}$.
\end{proposition}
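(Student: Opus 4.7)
The plan is to combine the derivative bound from Lemma \ref{eq:smoothness_neuman_trace} with a piecewise polynomial interpolation of $V_\kappa$ on $[0,2\pi]$, and then to realise both the individual polynomials and their assembly by ReLU-NNs using Proposition \ref{prop:approximation_polynomials} together with the piecewise polynomial emulation result Proposition \ref{prop:approximation_P}. Since the target function is complex-valued, the real and imaginary parts will be approximated separately by the same construction, producing the two-output architecture anticipated in the statement; the width $15$ (as opposed to $9$ in Proposition \ref{prop:approximation_polynomials}) arises because Remark \ref{eq:remark_complex_coeff} has to be combined with the auxiliary channels needed to route the input coordinate and the accumulated output of the already-processed pieces through the network.

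First I partition $(0,2\pi)$ into $N$ equal subintervals of length $h = 2\pi/N$. On each subinterval $I_j$ I approximate $V_\kappa$ by a Taylor polynomial $T_j$ of degree $n-1$ centred at the midpoint. The Lagrange form of the remainder together with the derivative bound $|D^n_s V_\kappa(s)| \leq C_n(1+\kappa)^{(n-1)/3}$ from Lemma \ref{eq:smoothness_neuman_trace} yields
\begin{equation}
\|V_\kappa - T_j\|_{L^\infty(I_j)} \leq \frac{C_n}{n!}\,(1+\kappa)^{(n-1)/3}\,h^{n}.
\end{equation}
Moreover, the coefficients of $T_j$ are of the form $V^{(k)}_\kappa(s_j)/k!$ for $0\leq k\leq n-1$, so they are bounded in modulus by $C_n(1+\kappa)^{(n-1)/3}$; this is exactly the weight bound that is carried through the construction and appears in the statement.

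Next I invoke Proposition \ref{prop:approximation_polynomials}, together with Remark \ref{eq:remark_complex_coeff} to accommodate complex coefficients, to emulate each $T_j$ on its interval by a ReLU-NN of width $\leq 11$ and depth controlled by $C_n(\log(1/\epsilon)+\log(1+\kappa))$; the $\kappa$-dependence enters solely through the coefficient bound $A = C_n(1+\kappa)^{(n-1)/3}$. I then glue the $N$ local networks into a single ReLU-NN of fixed width by applying Proposition \ref{prop:approximation_P}, which implements the piecewise polynomial by sequentially activating the correct branch via ReLU-based indicator-type gates on each $I_j$. Balancing the Taylor error against $\epsilon$ fixes $N$ as a product of a $\kappa$-factor (bounded by $(1+\kappa)^{2/3}$) and $\epsilon^{-1/(n-1)}$, and substituting this $N$ into the aggregated depth bound produces the three terms in the statement: the linear-in-$N$ contributions give the $\epsilon^{-1/(n-1)}\log(1/\epsilon)$ and $\epsilon^{-1/(n-1)}\log(1+\kappa)$ terms (each multiplied by $(1+\kappa)^{2/3}$), while the composition overhead of the sequential assembly across $N$ pieces produces the quadratic $(1+\kappa)^{4/3}\epsilon^{-2/(n-1)}$ contribution.

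I expect the bookkeeping inside Proposition \ref{prop:approximation_P} to be the main obstacle: at fixed width the $N$ local polynomial networks must be composed \emph{sequentially}, so that the depth penalty for piecewise assembly is quadratic in $N$, and the interaction of this penalty with the coefficient growth $(1+\kappa)^{(n-1)/3}$ has to be handled without blowing up the stated weight bound. A secondary point is the $2\pi$-periodicity: since the tangency points lie in the interior of the parametric interval, Remark \ref{rmk:periodicity_Z_symbol} and the extension of $V_\kappa$ to a smooth $2\pi$-periodic function on $\mathbb{R}$ allow the Taylor centres to be chosen so that each $I_j$ lies strictly inside the smooth range, thereby avoiding artificial boundary effects at $s=0$ and $s=2\pi$ and preserving the derivative bounds uniformly across the partition.
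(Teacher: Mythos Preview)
Your proposal misidentifies the role of Proposition~\ref{prop:approximation_P}. That proposition is \emph{not} a piecewise assembly device: it takes a single smooth function $f\in\mathcal{P}_{\boldsymbol\lambda,D,\mathbb{K}}$ and approximates it globally by a truncated Chebyshev expansion (Proposition~\ref{prop:approx_cheb_pol}), with the depth bound coming from the number $n_k$ of Chebyshev modes needed via Jackson's theorem. There are no ``indicator-type gates'' or sequential branch activation in its proof. The paper's argument for Proposition~\ref{prop:approximation_V_bad} is accordingly a one-step application: Lemma~\ref{eq:smoothness_neuman_trace} gives $\lambda_{m,\kappa}=C_m(1+\kappa)^{(m-1)/3}$, hence $V_\kappa\in\mathcal{P}_{\boldsymbol\lambda_\kappa,2\pi,\mathbb{C}}$, and Proposition~\ref{prop:approximation_P}(ii) yields the network directly. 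The factor $(1+\kappa)^{2/3}$ is the bound on $B_n$ from \eqref{eq:explicit_constant_smooth_f}, since $(D^{n+1}\lambda_{n+1,\kappa})^{1/(n-1)}\lesssim_n(1+\kappa)^{n/(3(n-1))}\leq(1+\kappa)^{2/3}$ for $n\geq 2$; the $\epsilon^{-2/(n-1)}$ term is the $n_k^2$ contribution in the Chebyshev depth bound, not a piecewise-assembly penalty.

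Your Taylor-on-subintervals route is a legitimate alternative in spirit, but the arithmetic you sketch does not reproduce the stated exponents. With degree-$(n-1)$ Taylor polynomials and the $n$-th derivative bound, balancing the remainder gives $N\sim(1+\kappa)^{(n-1)/(3n)}\epsilon^{-1/n}$, not $(1+\kappa)^{2/3}\epsilon^{-1/(n-1)}$; and since each local polynomial has \emph{fixed} degree $n-1$, its ReLU depth is $O_n(\log(1/\epsilon)+\log(1+\kappa))$, so sequential concatenation over $N$ pieces costs $O(N)$ in depth, not $O(N^2)$. There is thus no mechanism in your outline that produces the quadratic term $(1+\kappa)^{4/3}\epsilon^{-2/(n-1)}$. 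To recover the paper's bound you would have to either redo the counting with the correct scalings (yielding a different but comparable estimate) or, more simply, drop the piecewise construction and invoke Proposition~\ref{prop:approximation_P} as the paper does.
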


\begin{proof}
According to Lemma \ref{eq:smoothness_neuman_trace}
the function $V_\kappa$ in \eqref{eq:decomp_neumann_trace}
belongs to $\mathcal{P}_{\boldsymbol{\lambda_\kappa},2\pi,\mathbb{C}}$
(cf.~Definition \ref{def:smooth_funct}) with $D = 2\pi$ and 
$\boldsymbol{\lambda}_\kappa = \{\lambda_{n,\kappa}\}_{n \in \IN_0}$ with
$\lambda_{n,\kappa} = C_n(1+\kappa)^{\frac{(n-1)}{3}}$ for each $n \in \IN_0$,
where each $C_n>0$ does not depend on $\kappa$.

It follows from Proposition \ref{prop:approximation_P} that
for each $n \in \IN_{\geq 2}$ there exist $\widetilde{C}_{n,\kappa}>0$,
which in this case does depend on $\kappa$, and
ReLU-NNs $V_{\kappa,n,\epsilon} \in \mathcal{N\!N}_{L,25,1,2}$
with
\begin{equation}
	L
	\leq
	\widetilde{C}_{n,\kappa}
	\left(
		\widetilde{C}_{n,\kappa}
		\epsilon^{-\frac{2}{n-1}}
		+ 
		\epsilon^{-\frac{1}{n-1}}
		\log
		\left(\frac{1}{\epsilon}\right)
		+
		\epsilon^{-\frac{1}{n-1}}
		\log
		\left(
			2
			\pi
		\right)
		+
		\epsilon^{-\frac{1}{n-1}}
		\log
		\left(
			1
			+
			\kappa
		\right)
	\right)
	\text{ as }\epsilon \rightarrow 0,
\end{equation}
and satisfying
\begin{equation}
	\norm{
		V_\kappa
		-
		\left(
			V_{\kappa,n,\epsilon}
		\right)_1
		-
		\imath
		\left(
			V_{\kappa,n,\epsilon}
		\right)_2
	}_{
		L^\infty
		\left(
			(0,2\pi)
		\right)
	}
	\leq
	\epsilon,
\end{equation}
where, according to the bound in \eqref{eq:explicit_constant_smooth_f},
$\widetilde{C}_{n,\kappa}\leq{{C}}_n \left(1+\kappa\right)^{\frac{2}{3}}$.
This concludes the proof.
\end{proof}

The result presented in Proposition \ref{prop:approximation_V_bad}
provides a ReLU-NN emulation of the Neumann datum with spectral accuracy,
fixed width, and increasing depth with an explicit dependence on the problem's
wavenumber. However, this result is not entirely satisfactory as the required
number of layers increases algebraically with the wavenumber, as opposed to a
wavenumber-robust dependence that would be obtained by having a number of layers
that increases poli-logarithmically with the wavenumber.

The reason behind this behaviour is that through 
Lemma \ref{eq:smoothness_neuman_trace},
we bound uniformly the derivatives of $V_\kappa$ over the entire interval $[0,2\pi]$.
It is well-understood that close to the tangency points $t_1$ and $t_2$
(cf. Figure \ref{fig:geometry}) the appearance of transition layers of size
$\mathcal{O}(\kappa^{-\frac{1}{3}})$ thwarts the convergence of
traditional BEM discretizations. 

In order to obtain wavenumber-robust ReLU-NN emulation of the solution
to the BIE \eqref{eq:cfie_eq} one needs to resort to Theorem \ref{thm:expansion_V}.
In particular, the asymptotic expansion stated in \eqref{eq:decomp_V_frequency}
uses the the composition operation to describe the behaviour of the normal derivative.
This operation is a key building block in the construction of ReLU-NNs, regardless of the
chosen activation function. 

The above observation naturally hints the use of ReLU-NNs for the approximation
of $V_\kappa$, the normal derivative, and, in turn, the approximation of the scattered 
field. 
\subsection{Wavenumber Robust ReLU-NN Emulation of Fock's Integral and its Derivatives}
\label{ssec:approx_fock_integral}
We consider the approximation of Fock's integral as in \eqref{eq:fock_integral}
within the interval 
\begin{align}\label{eq:interval_I_k}
	\mathcal{I}_\kappa
	\coloneqq
	\left(-\kappa^{\frac{1}{3}} A, \kappa^{\frac{1}{3}} A\right),
\end{align}
for $A>0$ 
that does not depend on the wavenumber $\kappa$.

The next result addresses the wavenumber-robust 
ReLU-NN emulation rate bounds of Fock's integral
with a fixed width and a depth depending poly-logarithmically on the wavenumber.
The proof of this result may be found in Appendix \ref{sec:proof_FockInt}.

\begin{proposition}[Wavenumber-Robust ReLU-NN Emulation of Fock's Integral $\Psi$]
\label{prop:approx_FockInt_NN}
For each $n\in \IN_{\geq 2}$, 
there exist a constant $C_n>0$ and ReLU-NNs 
$\Psi_{\kappa,n,\epsilon} \in \mathcal{N\!N}_{L,30,1,2}$, 
i.e.~
one input and two outputs accounting for the real and imaginary parts of $\Psi$, and
\begin{equation}\label{eq:depth_Psi}
\begin{aligned}
	L
	\leq
	&
	C_n
	\left(
	\log
		\left(
			\frac{1}{\epsilon}
		\right)
	\right)^2
	\left(
		\log
		\left(
			\kappa^{\frac{1}{3}}A
		\right)
		+
		\log
		\left(
			\kappa^{\frac{1}{3}}A
		\right)^2
		+
			\log
		\left(
			\frac{1}{\epsilon}
		\right)
		+
		\left(
		\log
		\left(
			\frac{1}{\epsilon}
		\right)
		\right)^2
	\right)
	\\
	&
	+
	B_{n,\epsilon}
	\left(
		B_{n,\epsilon}
		\epsilon^{-\frac{2}{n-1}}
		+
		\epsilon^{-\frac{1}{n-1}}
		\log\left(\frac{1}{\epsilon}\right) 
		+
		\epsilon^{-\frac{2}{n-1}}
		\log
		\left(
			\frac{1}{\epsilon}
		\right)
		+
		\epsilon^{-\frac{1}{n-1}}
	\right)
	\quad
	\text{as }\epsilon \rightarrow 0,
\end{aligned}
\end{equation}
with $B_{n,\epsilon}\leq C_n \epsilon^{-\frac{n+1}{(n-1)(3n+2)}}$ 
and for
\begin{equation}\label{eq:condition_kappa}
	\kappa
	\geq
	\left(\frac{2}{A} \right)^3
	\max
	\left\{
	 \left(
	 	2
	 	\frac{C_n}{\epsilon}
	\right)^{\frac{3}{3(n+1)-1}}
	,
	1
	\right\},
\end{equation}
satisfying
\begin{equation}
	\norm{
		\Psi
		-
		\left(\Psi_{\kappa,n,\epsilon}\right)_1
		-
		\imath
		\left(\Psi_{\kappa,n,\epsilon}\right)_2
	}_{
		L^\infty
		\left(
			\mathcal{I}_\kappa
		\right)
	}
	\leq
	\epsilon.
\end{equation}
The weights of $\Psi_{\kappa,n,\epsilon}$ are bounded in absolute value 
but a constant independent of $\kappa$.
\end{proposition}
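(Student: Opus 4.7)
The plan is to split $\mathcal{I}_\kappa$ into three subintervals matching the three behavioural regimes of $\Psi$ identified in Theorem \ref{thm:expansion_V}, and emulate $\Psi$ separately on each. Choose thresholds $T^-_\epsilon \sim \log(1/\epsilon)$ and $T^+_\epsilon \sim \epsilon^{-1/(3n+2)}$; the standing hypothesis \eqref{eq:condition_kappa} guarantees $T^+_\epsilon \leq \kappa^{1/3}A/2$, so the decomposition $\mathcal{I}_\kappa = [-\kappa^{1/3}A,-T^-_\epsilon]\cup[-T^-_\epsilon,T^+_\epsilon]\cup[T^+_\epsilon,\kappa^{1/3}A]$ is well-defined. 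These thresholds are engineered so that (i) truncating the positive-$\tau$ expansion \eqref{eq:asymptotic_psi_tau_large} after $n+1$ terms has residual at most $\epsilon/3$ on the positive tail, and (ii) the oscillatory asymptotic for negative $\tau$ (whose magnitude decays exponentially in $|\tau|$) approximates $\Psi$ to accuracy $\epsilon/3$ on the negative tail.

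On the positive tail, I would emulate $\Psi(\tau)\approx\sum_{j=0}^{n} a_j\tau^{1-3j}$ by representing the linear term $a_0\tau$ exactly via \eqref{eq:relu_nn_po_2} and, for each $j\geq 1$, by multiplying $\tau$ with the ReLU-NN approximation $\uppsi_{3j-1,\kappa^{1/3}A,\delta}$ of $\tau\mapsto\tau^{-(3j-1)}$ obtained from Lemma \ref{lmm:inverse_pol_NN}, using Proposition \ref{prop:mult_network} to combine them and taking $\delta\sim\epsilon/n$. On the negative tail, I would emulate $c_0\exp(-\imath\tau^3/3-\imath\tau\alpha_1)$ by first forming a ReLU-NN approximation of the cubic phase $\tau^3/3+\tau\,\mathrm{Im}(-\imath\alpha_1)$ via Proposition \ref{prop:approximation_polynomials}, composing with the $\sin$ and $\cos$ networks of Proposition \ref{prop:aprox_sc}, and multiplying (Proposition \ref{prop:mult_network}) by the decaying-exponential factor $\exp(\tau\,\mathrm{Re}(-\imath\alpha_1))$ from Proposition \ref{prop:approx_exp_NN}, keeping real and imaginary parts separate. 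The $\bigl(\log(1/\epsilon)\bigr)^2\bigl(\log(\kappa^{1/3}A)\bigr)^2$-type contribution to the depth in \eqref{eq:depth_Psi} then originates from the inverse-power emulations (positive tail) and from the sinusoidal/exponential emulations evaluated on arguments of magnitude $O(\kappa A^3)$ (negative tail).

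On the central region $[-T^-_\epsilon,T^+_\epsilon]$, $\Psi$ is $\mathscr{C}^\infty$ with derivative bounds uniformly controlled by Remark \ref{rmk:bound_psi} (decaying for orders $\geq 2$). Here the smooth-function emulation result Proposition \ref{prop:approximation_P} (cf.~the proof of Proposition \ref{prop:approximation_V_bad}) applies directly, producing the $B_{n,\epsilon}$-weighted contribution in \eqref{eq:depth_Psi}, with $B_{n,\epsilon}$ carrying the dependence on the interval half-length $T^+_\epsilon\sim\epsilon^{-1/(3n+2)}$. The three partial emulations are glued into a single ReLU-NN via piecewise-linear cutoffs at $\pm T^\pm_\epsilon$ (exactly representable) combined with a final application of Proposition \ref{prop:mult_network}, which preserves the width bound $30$ after the usual parallelization arguments.

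The main obstacle will be the careful bookkeeping of the $\kappa$- and $\epsilon$-dependencies through the compositional structure: all local target tolerances must be chosen so that the cumulative error, including the multiplicative amplifications arising from compositions with inputs of size $\sim\kappa^{1/3}A$ in the tails, remains bounded by $\epsilon$. In particular, the non-obvious exponent $(n+1)/((n-1)(3n+2))$ in the bound on $B_{n,\epsilon}$ reflects precisely the balance between the retained number of asymptotic terms $n$ (which fixes $T^+_\epsilon$ and hence the length of the central region) and the depth-versus-domain-size scaling of Proposition \ref{prop:approximation_P} applied on that central region.
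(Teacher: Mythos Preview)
Your proposal is correct and follows essentially the same approach as the paper: a three-region decomposition of $\mathcal{I}_\kappa$ (with thresholds $D^-_\epsilon\sim\log(1/\epsilon)$ and $D^+_{n,\epsilon}\sim\epsilon^{-1/(3n+2)}$), emulation of the positive-$\tau$ asymptotic via Lemma~\ref{lmm:inverse_pol_NN}, emulation of the negative-$\tau$ oscillatory asymptotic via Propositions~\ref{prop:approximation_polynomials}, \ref{prop:aprox_sc} and \ref{prop:approx_exp_NN}, emulation on the central region via Proposition~\ref{prop:approximation_P}, and a final gluing by piecewise-linear cutoffs combined with the multiplication network. One small slip: on the positive tail you should use $\uppsi_{3j-1,\kappa^{1/3}A,\delta}$ directly (it approximates $\tau^{-(3j-1)}=\tau^{1-3j}$), not multiply it by~$\tau$, which would give the wrong exponent.
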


The next result addresses the ReLU-NN emulation of the higher-order
derivatives of Fock's integral.
The proof of this result follows similar steps to that of Proposition 
\ref{prop:approx_FockInt_NN}, thus here we only highlight the main 
differences.
The proof of this result may also be found in Appendix \ref{sec:proof_FockInt}.

\begin{proposition}[Wavenumber-Robust ReLU-NN Emulation of $\Psi^{(\ell)}$]
\label{prop:approx_FockInt_NN_der}
For each $\ell\in \IN$ and $n \in \IN_{\geq 2}$
there exist a constant $C_{n,\ell}>0$ and ReLU-NNs 
$\Psi_{\kappa,n,\ell,\epsilon} \in \mathcal{N\!N}_{L,30,1,2}$,
i.e.~one input and two outputs accounting for the real and
imaginary parts of $\Psi^{(\ell)}$, 
and
\begin{equation}\label{eq:depth_Psi_L_der}
\begin{aligned}
	L
	\leq
	&
	C_{n,\ell}
	\left(
		\log
		\left(
			\frac{1}{\epsilon}
		\right)
	\right)^2
	\left(
		\log(\kappa)
		+
		\log
		\left(
			\kappa^{\frac{1}{3}}A
		\right)
		+
		\log
		\left(
			\kappa^{\frac{1}{3}}A
		\right)^2
		+
			\log
		\left(
			\frac{1}{\epsilon}
		\right)
		+
		\left(
		\log
		\left(
			\frac{1}{\epsilon}
		\right)
		\right)^2
	\right)
	\\
	&
	+
	B_{n,\ell,\epsilon}
	\left(
		B_{n,\ell,\epsilon}
		\epsilon^{-\frac{2}{n-1}}
		+
		\epsilon^{-\frac{1}{n-1}}
		\log\left(\frac{2}{\epsilon}\right) 
		+
		\epsilon^{-\frac{2}{n-1}}
		\log
		\left(
			\frac{1}{\epsilon}
		\right)
		+
		\epsilon^{-\frac{1}{n-1}}
	\right)
\end{aligned}
\end{equation}
with 
\begin{equation}
	B_{n,\ell,\kappa,\epsilon}
	\leq
	C_{n,\ell}
	\left(
		\log\left(\kappa^\frac{1}{3}A\right)
		+
		\log \left (\frac{1}{\epsilon}\right)  
	\right)^{\frac{n+1}{n-1}}
	\epsilon^{-\frac{n+1}{(n-1)(3n+\ell+2)}},
\end{equation}
as $\epsilon \rightarrow 0$, and for
\begin{equation}\label{eq:condition_kappa_l}
	\kappa
	\geq
	\left(\frac{2}{A} \right)^3
	\max
	\left\{
	 \left(
	 	2
	 	\frac{C_n}{\epsilon}
	\right)^{\frac{3}{3(n+1)+\ell-1}}
	,
	1
	\right\},
\end{equation}
satisfying
\begin{equation}
	\norm{
		\Psi^{(\ell)}
		-
		\left(\Psi_{\kappa,n,\ell,\epsilon}\right)_1
		-
		\imath
		\left(\Psi_{\kappa,n,\ell,\epsilon}\right)_2
	}_{
		L^\infty
		\left(
			\mathcal{I}_\kappa
		\right)
	}
	\leq
	\epsilon.
\end{equation}
The weights of $\Psi_{\kappa,n,\ell,\epsilon}$ are bounded in absolute value 
but a constant independent of $\kappa$.
\end{proposition}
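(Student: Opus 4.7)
The plan is to follow the blueprint of the proof of Proposition \ref{prop:approx_FockInt_NN}, with modifications that track the extra $\ell$-dependence in the asymptotic expansions of $\Psi^{(\ell)}$. I would split the interval $\mathcal{I}_\kappa = (-\kappa^{1/3}A,\kappa^{1/3}A)$ into three overlapping regions determined by a threshold $T_{\kappa,\epsilon}$ chosen so that the asymptotic remainders in Theorem \ref{thm:expansion_V} are already smaller than $\epsilon/3$:
\begin{equation}
\mathcal{I}_\kappa^- = (-\kappa^{1/3}A,-T_{\kappa,\epsilon}),\qquad \mathcal{I}_\kappa^0 = (-T_{\kappa,\epsilon},T_{\kappa,\epsilon}),\qquad \mathcal{I}_\kappa^+ = (T_{\kappa,\epsilon},\kappa^{1/3}A).
\end{equation}
Balancing $\tau^{1-3(n+1)-\ell}$ (positive tail) against the oscillating/decaying residual $O(e^{-\beta|\tau|})$ (negative tail) gives $T_{\kappa,\epsilon}$ of order $\epsilon^{-1/(3(n+1)+\ell-1)}\log(1/\epsilon)$, which is precisely what produces the factor $B_{n,\ell,\kappa,\epsilon}$ in the bound and dictates the admissibility condition \eqref{eq:condition_kappa_l} (so that $T_{\kappa,\epsilon}\le \kappa^{1/3}A/2$).

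On $\mathcal{I}_\kappa^+$ I would substitute the termwise-differentiated asymptotic \eqref{eq:asymptotic_psi_tau_large}, which is a finite sum of the form $\sum_{j} \tilde{a}_j\,\tau^{1-3j-\ell}$ plus a residual bounded by $C\tau^{1-3(n+1)-\ell}$. The positive-power term $\tau^{1-\ell}$ (if $\ell=1$) or the inverse-power terms $\tau^{-(3j+\ell-1)}$ are emulated by combining Proposition \ref{prop:approximation_polynomials} (or a trivial identity layer when $\ell=1$) with Lemma \ref{lmm:inverse_pol_NN}, scaled to the interval $(T_{\kappa,\epsilon},\kappa^{1/3}A)$. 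On $\mathcal{I}_\kappa^-$ I would use the representation
\begin{equation}
D^\ell_\tau\bigl(e^{-\imath\tau^3/3-\imath\tau\alpha_1}\bigr) = P_\ell(\tau)\,e^{-\imath\tau^3/3-\imath\tau\alpha_1},
\end{equation}
where $P_\ell$ is a polynomial in $\tau$ of degree $2\ell$ with computable coefficients. Writing $\alpha_1=e^{-2\pi\imath/3}\nu_1$ with $\nu_1\in\mathbb{R}_-$ separates the oscillating factor $\cos(\tau^3/3+\tau\,\mathfrak{Re}\alpha_1)\pm\imath\sin(\cdots)$ from the exponentially decaying real factor $e^{\tau\,\mathfrak{Im}\alpha_1}$ (decaying since $\mathfrak{Im}\alpha_1>0$ and $\tau<0$). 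Hence $\Psi^{(\ell)}$ on this region is a product of (i) a polynomial $P_\ell$ emulated by Proposition \ref{prop:approximation_polynomials} (with Remark \ref{eq:remark_complex_coeff} for complex coefficients), (ii) the oscillatory factor with argument $\tau^3/3+\tau\,\mathfrak{Re}\alpha_1$, whose frequency at the boundary $|\tau|\sim\kappa^{1/3}$ is $O(\kappa)$ and is emulated via the composition of a ReLU-NN approximation of $\tau\mapsto \tau^3$ (Proposition \ref{prop:approximation_polynomials}) with the sine/cosine networks of Proposition \ref{prop:aprox_sc}, and (iii) the decaying exponential $e^{\tau\,\mathfrak{Im}\alpha_1}$ from Proposition \ref{prop:approx_exp_NN} (with argument in $[0,\kappa^{1/3}A]$). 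These three factors are composed by the multiplication network of Proposition \ref{prop:mult_network}. On the bounded middle region $\mathcal{I}_\kappa^0$, $\Psi^{(\ell)}$ is smooth with derivatives bounded independently of $\kappa$ on a $\kappa$-independent (in fact $\kappa$-logarithmic) interval, so the smooth-function emulator (the same Proposition \ref{prop:approximation_P} invoked in the $\ell=0$ case) yields an $\epsilon/3$ approximation with the stated depth budget.

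The three regional approximations are stitched together using a ReLU-NN partition-of-unity localizer built from piecewise linear hat functions and the multiplication network, exactly as in the proof of Proposition \ref{prop:approx_FockInt_NN}. Summing the depths: the oscillatory/exponential block on $\mathcal{I}_\kappa^-$ contributes the $\log(\kappa)$ and $(\log(1/\epsilon))^2$ terms of \eqref{eq:depth_Psi_L_der}; the inverse-power block on $\mathcal{I}_\kappa^+$ contributes the $\log(\kappa^{1/3}A)^2$ terms via Lemma \ref{lmm:inverse_pol_NN}; the smooth block on $\mathcal{I}_\kappa^0$ contributes the $B_{n,\ell,\epsilon}$ dependent terms. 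Real and imaginary parts of $\Psi^{(\ell)}$ are carried through two output channels. The main obstacle I expect is the bookkeeping of the $\ell$- and $\epsilon$-dependence in the polynomial prefactor $P_\ell$ on $\mathcal{I}_\kappa^-$: since $\deg P_\ell=2\ell$ and one must multiply it against an oscillatory-exponential factor on an interval of length $\kappa^{1/3}A$, both Proposition \ref{prop:approximation_polynomials} (which produces an $m^2\log\lceil D\rceil$ term) and the subsequent multiplication network inflate the depth with $\ell$ and with $\log\kappa$, and one must verify that these contributions remain absorbed into $C_{n,\ell}$ and into the explicit logarithmic terms of \eqref{eq:depth_Psi_L_der}. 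The choice of $T_{\kappa,\epsilon}$ and the resulting $B_{n,\ell,\kappa,\epsilon}$ then ensure that the smooth-region emulation in $\mathcal{I}_\kappa^0$, which scales poorly in the width of that region, remains the dominant contribution, giving the claimed bound.
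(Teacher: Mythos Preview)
Your proposal is correct and follows essentially the same route as the paper's proof: the same three-region decomposition of $\mathcal{I}_\kappa$, the same use of the differentiated asymptotic expansion with Lemma \ref{lmm:inverse_pol_NN} on the positive tail, the polynomial prefactor $P_\ell$ (denoted $q_{2\ell}$ in the paper) times the oscillatory-decaying exponential on the negative tail, Proposition \ref{prop:approximation_P} on the middle region, and the partition-of-unity gluing from Proposition \ref{prop:approx_FockInt_NN}. The only minor difference is that the paper uses distinct thresholds $D^+_{n,\ell,\epsilon}$ and $D^-_{\ell,\kappa,\epsilon}$ rather than a single $T_{\kappa,\epsilon}$, and the $\log(\kappa^{1/3}A)$ contribution to $B_{n,\ell,\kappa,\epsilon}$ arises specifically because $D^-_{\ell,\kappa,\epsilon}$ depends on $\kappa$ through the bound $\|q_{2\ell}\|_{L^\infty(\mathcal{I}_\kappa)}\lesssim (\kappa^{1/3}A)^{2\ell}$ --- a detail your bookkeeping remark already anticipates.
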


\subsection{Wavenumber-Robust Approximation of the Neumann Trace}
\label{sec:wave_number_robust_approximation}
We present three different results that aim at 
approximating the Neumann trace of the sound-soft acoustic scattering
problem in two dimensions. Our approach leverages on the decomposition
stated in \eqref{eq:decomp_neumann_trace}. We proceed to describe these
three different statements:
\begin{itemize}
	\item[(i)] 
	{\bf Wavenumber-Robust ReLU-NN Emulation of $V_\kappa$.}
	In Theorem \ref{thm:approx_V_s_k}, we provide a constructive wavenumber-robust
	ReLU-NN emulation of the function $V_\kappa$ appearing in \eqref{eq:decomp_neumann_trace}.
	The construction of such ReLU-NN makes use of the following ingredients. 
	Proposition \ref{prop:asymptotic_V_remainder}
	provides a wavenumber-explicit asymptotic expansion of $V_\kappa$.
	Propositions \ref{prop:approx_FockInt_NN} and \ref{prop:approx_FockInt_NN_der}
	provide wavenumber robust ReLU-NN emulation of Fock's integral and its derivatives.
	By combining these two results, we obtain a wavenumber robust ReLU-NN emulation of $V_\kappa$.
	\item[(ii)] 
	{\bf Partial Wavenumber-Robust Approximation of the Neumann Trace.}
	The second result, presented in Corollary \ref{cor:approx_k_explicit}, aims at 
	providing a first, preliminary wavenumber-robust ReLU-NN emulation result
	of the Neumann trace. Therein, however, we still incorporate explicitly the
	oscillatory component of the solution. 
	\item[(iii)]
	{\bf Full Wavenumber-Robust Approximation of the Neumann Trace.}
	Finally, in Theorem \ref{thm:approximation_phi_k}, we provide a full wavenumber robust
	approximation of the Neumann trace $\varphi_{\kappa}$. 
	As opposed to Corollary \ref{cor:approx_k_explicit},
	we provide a complete ReLU-NN emulation, which includes emulation of the oscillatory component of the Neumann trace.
\end{itemize}

\begin{theorem}[Wavenumber-Robust ReLU-NN Emulation of $V_\kappa$]
\label{thm:approx_V_s_k}
Let Assumption \ref{assumption:smooth_curve} be satisfied.
For each $n\in \IN$ there exist $C_n>0$ and ReLU-NNs
$V_{\kappa,n,\epsilon} \in \mathcal{N\!N}_{L,38,1,2}$, 
i.e. 
one input and two outputs accounting 
for the real and imaginary part of $V_\kappa$ separately, 
such that for every $0<\epsilon \leq 1$ and for every 
$\kappa \geq C_n \epsilon^{-\frac{3}{4n+2}}$,
with 
\begin{equation}\label{eq:depth_V_k_n_eps}
\begin{aligned}
	L
	\leq
	&
	C_{n}
	\left(
		\log
		\left(
			\frac{1}{\epsilon}
		\right)
	\right)^2
	\left(
		\log
		\left(
			\kappa
		\right)^2
		+
			\log
		\left(
			\frac{1}{\epsilon}
		\right)
		+
		\left(
		\log
		\left(
			\frac{1}{\epsilon}
		\right)
		\right)^2
	\right)
	\\
	&
	+
	B_{n,\kappa,\epsilon}
	\left(
		B_{n,\kappa,\epsilon}
		\epsilon^{-\frac{2}{n-1}}
		+
		\epsilon^{-\frac{1}{n-1}}
		\log\left(\frac{2}{\epsilon}\right) 
		+
		\epsilon^{-\frac{2}{n-1}}
		\log
		\left(
			\frac{1}{\epsilon}
		\right)
		+
		\epsilon^{-\frac{1}{n-1}}
	\right)
\end{aligned}
\end{equation}
where
\begin{equation}
	B_{n,\kappa,\epsilon}
	\leq
	C_{n}
	\left(
		\log\left(\kappa\right)
		+
		\log \left (\frac{1}{\epsilon}\right)  
	\right)^{\frac{n+1}{n-1}}
	\epsilon^{-\frac{n+1}{(n-1)(4n+2)}}
	\quad
	\epsilon \rightarrow 0,
\end{equation}
there holds
\begin{equation}
	\norm{
		V_{\kappa}
		-
		\left(V_{\kappa,n,\epsilon}\right)_1
		-
		\imath
		\left(V_{\kappa,n,\epsilon}\right)_2
	}_{
		L^\infty
		\left(
			(0,2\pi)
		\right)
	}
	\leq
	\epsilon.
\end{equation}
The weights of $V_{\kappa,n,\epsilon}$ are bounded in absolute value 
by $C_n$.
\end{theorem}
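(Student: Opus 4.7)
The plan is to construct the ReLU-NN $V_{\kappa,n,\epsilon}$ directly from the finite asymptotic expansion in Proposition \ref{prop:asymptotic_V_remainder}, treating the truncated sum and the remainder separately. First, choose truncation indices $L=L(n)$, $M=M(n)$ so that the exponent $\mu$ in \eqref{eq:residual_R_mu} satisfies $-\mu \geq (4n+2)/3$ (for instance, $L=2n$ and $M= \lceil (4n+2)/3\rceil -1$). Under the hypothesis $\kappa \geq C_n \epsilon^{-3/(4n+2)}$, Proposition \ref{prop:asymptotic_V_remainder} then yields $\|R_{L,M,\kappa}\|_{L^\infty((0,2\pi))} \lesssim \kappa^{\mu} \leq \epsilon/2$, so the remainder can be ignored up to an error of $\epsilon/2$.

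Next, I emulate each of the finitely many summands
$s \mapsto \kappa^{-1/3-2\ell/3-m} b_{\ell,m}(s)\,\Psi^{(\ell)}(\kappa^{1/3}Z(s))$, for $0\leq \ell \leq L$, $0 \leq m \leq M$. By Remark \ref{rmk:periodicity_Z_symbol} the function $Z$ is smooth and $2\pi$-periodic, hence bounded on $[0,2\pi]$ by a constant $A>0$ independent of $\kappa$; so $\kappa^{1/3}Z(s)$ takes values in $\mathcal{I}_\kappa$, and Proposition \ref{prop:approx_FockInt_NN_der} (and Proposition \ref{prop:approx_FockInt_NN} for $\ell=0$) provides a ReLU-NN $\Psi_{\kappa,n,\ell,\epsilon'}$ of fixed width and depth bounded as in \eqref{eq:depth_Psi_L_der} approximating $\Psi^{(\ell)}$ on $\mathcal{I}_\kappa$ to accuracy $\epsilon'$. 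Since $Z$ and $b_{\ell,m}$ are smooth functions on $[0,2\pi]$ with $\kappa$-independent derivative bounds, they lie in the class of functions covered by the smooth-function emulation result (i.e.\ the framework of Proposition \ref{prop:approximation_P} with $\kappa$-independent $\boldsymbol{\lambda}$); so each admits ReLU-NN emulators of fixed width, spectral accuracy, and depth polynomial in $\epsilon^{-1/(n-1)}$, up to logarithmic factors. Compose $\Psi_{\kappa,n,\ell,\epsilon'}$ with an affine scaling together with the NN for $Z$ (using Proposition \ref{prop:mult_network} for the $\kappa^{1/3}$ pre-multiplication if needed), then multiply the result by the NN for $b_{\ell,m}$ and by the scalar $\kappa^{-1/3-2\ell/3-m}$ (just a constant weight into the affine layer). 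This yields a ReLU-NN for each summand; parallelization and a final linear combination produces a single NN of fixed width $38$.

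For error control, I choose $\epsilon' = \epsilon/(2(L+1)(M+1) K)$, where $K$ bounds $\kappa^{-1/3-2\ell/3-m}\|b_{\ell,m}\|_{L^\infty}$ uniformly in $\kappa$, and propagate the product errors through the multiplication networks using the Lipschitz bound for the square network together with Proposition \ref{prop:mult_network} (as in the proof of Lemma \ref{lmm:inverse_pol_NN}); this absorbs $L, M, n$ into a constant $C_n$. The total depth is the sum of depths of the constituent sub-networks; the dominant contribution to $L$ comes from the Fock-integral emulation, giving the term in \eqref{eq:depth_V_k_n_eps} involving $B_{n,\kappa,\epsilon}$, whereas composition with $Z$ and multiplication by $b_{\ell,m}$ contributes only the polylogarithmic term in $\log\kappa$ and $\log(1/\epsilon)$. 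Width is inherited from the widest sub-network (Fock-integral) plus a few channels for the partition/multiplication bookkeeping, yielding $38$. Weight bounds follow as in the constituent propositions, all $\kappa$-independent.

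The main obstacle is the bookkeeping for the compound error and for the depth accounting, and in particular verifying that the condition $\kappa \geq C_n \epsilon^{-3/(4n+2)}$ simultaneously makes the remainder $R_{L,M,\kappa}$ sub-$\epsilon$ and is compatible with the admissibility condition \eqref{eq:condition_kappa_l} on $\kappa$ required by the Fock-integral emulators for the fixed choice of truncation indices $L(n), M(n)$. Once this compatibility is checked, combining it with the depth bound \eqref{eq:depth_Psi_L_der} (and the estimate for $B_{n,\ell,\kappa,\epsilon}$) yields exactly \eqref{eq:depth_V_k_n_eps}.
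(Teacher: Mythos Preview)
Your overall strategy is the paper's: use the finite expansion of Proposition~\ref{prop:asymptotic_V_remainder}, emulate each summand $\kappa^{-1/3-2\ell/3-m}b_{\ell,m}\,\Psi^{(\ell)}(\kappa^{1/3}Z)$ by composing the Fock-integral networks of Propositions~\ref{prop:approx_FockInt_NN}--\ref{prop:approx_FockInt_NN_der} with networks for $Z$ and $b_{\ell,m}$ coming from Proposition~\ref{prop:approximation_P}, and stack the results sequentially. Your handling of the remainder differs: you enlarge the truncation indices so that the hypothesis $\kappa\ge C_n\epsilon^{-3/(4n+2)}$ kills $R_{L,M,\kappa}$ outright, whereas the paper keeps $L=M=n$ (for which $\|R\|_{L^\infty}\sim\kappa^{-2(n+1)/3}$ is \emph{not} below $\epsilon$ at the threshold $\kappa\sim\epsilon^{-3/(4n+2)}$) and instead emulates $R_{L,M,\kappa}$ as a smooth function, using that its first $2(n+1)$ derivatives are $\kappa$-uniformly bounded by \eqref{eq:residual_R}. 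Both routes are valid; yours trades an extra sub-network for more summands, which is harmless once absorbed into $C_n$.

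There is, however, a real gap in your error control. With $\epsilon'=\epsilon/\bigl(2(L{+}1)(M{+}1)K\bigr)$ and $K$ a $\kappa$-\emph{uniform} bound on $\kappa^{-1/3-2\ell/3-m}\|b_{\ell,m}\|_{L^\infty}$, the number $\epsilon'$ is of order $\epsilon$. Approximating $\Psi^{(\ell)}(\kappa^{1/3}Z(s))$ to accuracy $\epsilon'$ then forces you to approximate $\kappa^{1/3}Z$ to accuracy $\sim\epsilon'/\|\Psi^{(\ell+1)}\|_{L^\infty(\mathcal I_\kappa)}\sim\epsilon$, hence $Z$ to accuracy $\sim\epsilon/\kappa^{1/3}$; Proposition~\ref{prop:approximation_P} then gives a depth for the $Z$-emulator of order $(\kappa^{1/3}/\epsilon)^{2/(n-1)}$, which is \emph{not} $\kappa$-robust and contradicts your claim that the $Z$-composition contributes only polylogarithmically in $\kappa$. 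The crucial point, made explicit in the paper at \eqref{eq:accuracy_blm_kappa} and the paragraph following it, is that the prefactor $\kappa^{-1/3-2\ell/3-m}\le\kappa^{-1/3}$ must be exploited \emph{before} propagating: one only needs accuracy $\sim\epsilon\kappa^{1/3}$ for $b_{\ell,m}\,\Psi^{(\ell)}(\kappa^{1/3}Z)$, so the accuracy required of $\kappa^{1/3}Z$ is $\sim\epsilon\kappa^{1/3}$ as well, and the factor $\kappa^{1/3}$ in the $Z$-emulation depth cancels. In short, $\epsilon'$ has to be chosen $\kappa$-dependent, of order $\epsilon\kappa^{1/3}$, not uniform. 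A smaller point in the same spirit: the scalar $\kappa^{1/3}$ cannot be inserted as ``just a constant weight'' if weights are to stay bounded by $C_n$; the paper realises it via Proposition~\ref{prop:trading_weights} at the cost of $O(\log\kappa)$ extra layers. Once you make these two adjustments, your argument goes through.
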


\begin{proof}
Assumption \ref{assumption:smooth_curve} allows to 
use the asymptotic expansions in Section \ref{sec:AsTotField}.

Let us consider Proposition \ref{prop:asymptotic_V_remainder}, 
in particular the decomposition stated in \eqref{eq:decom_V_k}, 
which reads as follows:
for any $L$, $M\in \mathbb{N}_0$ it holds
\begin{equation}\label{eq:decom_V_k_updated}
	V_\kappa(s)
	=
	\sum_{\ell,m=0}^{L,M} 
	\kappa^{-\frac{1}{3}-\frac{2}{3}\ell-m}
	b_{\ell,m}(s) 
	\Psi^{(\ell)} \left(\kappa^\frac{1}{3} Z(s)\right)
	+
	R_{L,M,\kappa}(s),
	\quad
	s\in [0,2\pi],
\end{equation}
with remainder term $R_{L,M,\kappa}$ satisfying \eqref{eq:residual_R}.

In view of \eqref{eq:residual_R_mu} for each $n \in \IN$ 
we select $L = M = n $,
thus yielding for each $j \in \mathbb{N}_0$
\begin{align}\label{eq:upper_bound_res}
	\snorm{D^j _sR_{L,M,\kappa}(s)}
	\leq
	C_{L,M,j}
	\left(
		1
		+
		\kappa
	\right)^{-\frac{2}{3}(n+1)+\frac{j}{3}},
	\quad
	s\in [0,2\pi],
%
\end{align}
where $C_{L,M,j}$ does not depend on $\kappa$. 

For the sake of simplicity, we write the double sum in 
\eqref{eq:decom_V_k} as a single one. To this end,
we uniquely enumerate the pair of indices in the double sum
through a bijection $\{1,\dots,(n+1)^2\}\ni \ell \mapsto (k_{1,\ell},k_{2,\ell}) \in \{0,\dots,n\} \times \{0,\dots,n\}$
and obtain
\begin{equation}
\begin{aligned}
	V_\kappa(s)
	=
	\sum_{\ell=1}^{(n+1)^2} 
	\kappa^{-\frac{1}{3}-\frac{2}{3}k_{1,\ell}-k_{2,\ell}}
	b_{k_{1,\ell},k_{2,\ell}}(s) 
	\Psi^{\left(k_{1,\ell}\right)} \left(\kappa^\frac{1}{3} Z(s)\right)
	+
	R_{L,M,\kappa}(s),
	\quad
	s\in [0,2\pi].
\end{aligned}
\end{equation}
The following claims follow from Proposition \ref{prop:approximation_P}
in Appendix \ref{appendix:proofs}:
\begin{itemize}
\item[(i)]
The bound stated in \eqref{eq:upper_bound_res} implies that for each $n \in \mathbb{N}$
one has $R_{L,M,\kappa} \in \mathcal{P}_{\boldsymbol{\lambda},2\pi,\mathbb{C}}$ (cf. 
with 
\begin{equation}
	\boldsymbol{\lambda}_n\coloneqq\{\lambda_{j,n}\}_{j \in \mathbb{N}_0}
	\quad
	\text{and}
	\quad
	\lambda_{j,n}
	\coloneqq
	C_{L,M,j}
	\left(
		1
		+
		\kappa
	\right)^{-\frac{2}{3}(n+1)+\frac{j}{3}}
	,
	\quad
	j\in \IN_0.
\end{equation}
For $j= 0, \dots, 2(n+1)$ it holds $\snorm{D^j _sR_{L,M,\kappa}(s)} \leq C_{L,M,j}$,
where $C_{L,M,j}$ is as in \eqref{eq:upper_bound_res} and does not depend on $\kappa$.
According to Proposition \ref{prop:approximation_P},
for each $n\in \IN_{\geq 2}$ there exist ReLU-NNs 
$\upphi_{n,\epsilon}\in \mathcal{N\!N}_{L_R(\epsilon),15,1,2}$
with 
\begin{equation}
	L_R
	\left(
		\epsilon
	\right)
	\leq 
	C_n
	\left(
		\epsilon^{-\frac{2}{n-1}}
		+
		\epsilon^{-\frac{1}{n-1}}
		\log\left(\frac{1}{\epsilon}\right) 
		+
		\epsilon^{-\frac{1}{n-1}}
	\right)
	\quad
	\text{as }
	\epsilon \rightarrow 0
\end{equation}
such that 
$$
	\norm{
		R_{L,M,\kappa}(s)
		-
		\left(
			\left(\upphi_{n,\epsilon}(s) \right)_1
			+
			\imath 
			\left(\upphi_{n,\epsilon}(s) \right)_2
		\right)
		}_{L^\infty((0,2\pi))} 
		\leq 
		\epsilon,
$$
with $C_n>0$ independent of $\kappa$.
The weights of $\upphi_{n,\epsilon}$ are bounded
in absolute value by a constant depending on $n$ but not on $\kappa$ or $\epsilon$. 

\item[(ii)]
Item (ii) in Theorem \ref{thm:expansion_V} 
and Proposition \ref {prop:asymptotic_V_remainder} imply that for each
$\ell,m \in \IN_0\cap [0,\dots,n]$ one has 
$b_{\ell,m} \in \mathcal{P}_{\boldsymbol{\lambda},2\pi,\mathbb{C}}$
with $\boldsymbol{\lambda}\coloneqq\{\lambda_n\}_{n\in \mathbb{N}_0}$ 
independent of $\kappa$.
According to Proposition \ref{prop:approximation_P}
for each $n\in \IN_{\geq 2}$ there exist 
$C_n>0$ independent of $\kappa$ and ReLU-NNs 
$\beta_{\ell,m,n,\epsilon} \in \mathcal{N\!N}_{L,15,1,2}$
with 
\begin{equation}
	L_{b_{\ell,m}}
	\left(
		\epsilon
	\right)
	\leq 
	B_n
	\left(
		\epsilon^{-\frac{2}{n-1}}
		+
		\epsilon^{-\frac{1}{n-1}}
		\log\left(\frac{1}{\epsilon}\right) 
		+
		\epsilon^{-\frac{1}{n-1}}
	\right)
	\quad
	\text{as } 
	\epsilon \rightarrow 0
\end{equation}
such that 
$$
	\norm{
		b_{\ell,m}
		-
		\left(
			\left(
				\beta_{\ell,m,n,\epsilon}
			\right)_1
			+
			\imath
			\left(
				\beta_{\ell,m,n,\epsilon}
			\right)_2
		\right)
	}_{L^\infty((0,2\pi))} 
	\leq 
	\epsilon.
$$ 
Again, the weights of $\beta_{\ell,m,n,\epsilon}$ are bounded
in absolute value by a constant depending on $n$ but not on $\kappa$. 
\item[(iii)] 
Item (ii) in Remark \ref{rmk:periodicity_Z_symbol} and 
item (ii) in Theorem \ref{thm:expansion_V} imply that
$Z \in \mathcal{P}_{\boldsymbol{\lambda},2\pi,\mathbb{R}}$
with a sequence $\boldsymbol{\lambda}_{\kappa}\coloneqq\{\lambda_n\}_{n\in \mathbb{N}_0}$
independent of $\kappa$.
Therefore, according to Proposition \ref{prop:approximation_P}
for each $n\in \IN_{\geq 2}$ there exist $C_n>0$ independent
of $\kappa$ and ReLU-NNs 
$\Phi_{Z,n,\epsilon} \in \mathcal{N\!N}_{L_Z(\epsilon),13,1,2}$
with 
\begin{equation}
	L_Z(\epsilon)
	\leq 
	C_n
	\left(
		\epsilon^{-\frac{2}{n-1}}
		+
		\epsilon^{-\frac{1}{n-1}}
		\log\left(\frac{1}{\epsilon}\right) 
		+
		\epsilon^{-\frac{1}{n-1}}
	\right)
	\quad
	\text{as } 
	\epsilon \rightarrow 0
\end{equation}
such that 
\begin{equation}\label{eq:error_Z}
	\norm{
		Z
		-
		\Phi_{Z,n,\epsilon}
	}_{L^\infty((0,2\pi))} 
	\leq 
	\epsilon.
\end{equation}
Define $T_\kappa(x) = \kappa^{\frac{1}{3}} x $, which according
to \eqref{eq:relu_nn_po_1} belongs to $\mathcal{N\!N}_{2,2,1,1}$.
Furthermore, according to Proposition \ref{prop:trading_weights}
there exists a ReLU-NN (which we still refer to as $T_\kappa$) 
emulating exactly this map.
Indeed, $T_\kappa \in \mathcal{N\!N}_{L,3,1,2}$
with $L\leq 5+\frac{1}{3}\log(\kappa)$. 
Define the ReLU-NN $\Lambda_{\kappa,n,\epsilon}  = T_\kappa \circ \Phi_{Z,n,\epsilon}$.
For each $n \in \mathbb{N}$ one has
$\Lambda_{\kappa,n,\epsilon} 
\in 
\mathcal{N\!N}_{L_{\Lambda_{\kappa,n,\epsilon}}(\epsilon),13,1,1}$
with
\begin{equation}\label{eq:Lambda_depth}
\begin{aligned}
	L_{\Lambda_{\kappa,n,\epsilon}}(\epsilon)
	\leq
	&
	5+\frac{1}{3}\log(\kappa)
	\\
	&
	+
	C_n
	\left(
		\left(
			\frac{\kappa^{\frac{1}{3}}}{\epsilon}
		\right)^{\frac{2}{n-1}}
		+
		\left(
			\frac{\kappa^{\frac{1}{3}}}{\epsilon}
		\right)^{\frac{1}{n-1}}
		\log\left(\frac{\kappa^{\frac{1}{3}}}{\epsilon}\right) 
		+
		\left(
			\frac{\kappa^{\frac{1}{3}}}{\epsilon}
		\right)^{\frac{1}{n-1}}
	\right),
\end{aligned}
\end{equation}
as $\epsilon \rightarrow 0$, satisfying
\begin{equation}
	\norm{
		\kappa^{\frac{1}{3}}
		Z
		-
		\Lambda_{\kappa,n,\epsilon}
	}_{L^\infty((0,2\pi))} 
	\leq
	\epsilon,
\end{equation}
and with weights bounded in absolute value by a constant independent of $\kappa$, yet 
still depending on $n \in \IN_{\geq 2}$.
The presence of the wavenumber in \eqref{eq:Lambda_depth} at this point does not
directly lead to wavenumber-robust bounds. However, ahead we will see how 
the presence of the wavenumber can be mitigated. 
\item[(iv)]
For each $n\in \IN_{\geq 2}$, Propositions \ref{prop:approx_FockInt_NN} and
\ref{prop:approx_FockInt_NN_der} assert the existence of ReLU-NN 
$\Psi_{\kappa,n,\epsilon}$ and $\Psi_{\kappa,\ell,n,\epsilon}$ emulating
Fock's integral $\Psi$ and its derivatives, respectively. 
We set $A\coloneqq  2 \displaystyle\max_{s\in [0,2\pi]} \snorm{Z(s)}>0$ in the definition of $\mathcal{I}_\kappa$
and denote by $L_{\Psi^{(\ell)}}(\epsilon)$ the depth of the ReLU-NN approximating $\Psi_{\kappa,n,\epsilon}$ 
and $\Psi_{\kappa,\ell,n,\epsilon}$,
as described in Propositions \ref{prop:approx_FockInt_NN} and
\ref{prop:approx_FockInt_NN_der}, with accuracy $\epsilon>0$.
\end{itemize}

We are interested in the wavenumber-robust ReLU-NN emulation of
terms of the form $b_{\ell,m} \Psi^{(\ell)} \circ Z_\kappa $, for $\ell, m \in \IN_0$,
which appear in \eqref{eq:decom_V_k_updated}.
We recall Lemma \ref{lmm:approx_comp_mult_complex}
in Appendix \ref{sec:aux_res}. 
One can readily
verify that items (ii) through (iv) satisfy the assumptions of said result. 
Therefore, given $\ell,m \in \IN_0$, we conclude that for each $n\in \IN_{\geq 2}$
there exist ReLU-NNs $\Sigma_{n,\ell,m,\epsilon} \in \mathcal{N\!N}_{L,32,1,2}$ 
with 
\begin{equation}\label{eq:bound_L_Sigma}
\begin{aligned}
	L
	\leq 
	&
	C
	\left(
		\log
		\left(
			\lceil U_{\ell,m,\kappa} \rceil
		\right)
		+
		\log
		\left(
			\frac{12(n+1)^2}{\epsilon  \kappa^{\frac{1}{3}}}
		\right)
	\right)
	\\
	&
	+
	2
	\underbrace{
		L_{b_{\ell,m}}
		\left(
		\frac{
			\epsilon \kappa^{\frac{1}{3}}
		}{
			24
			(n+1)^2
			\norm{
				b_{\ell,m}
			}_{L^\infty(\mathcal{I}_{2\pi})}
			\norm{
				\Psi^{(\ell+1)}
			}_{L^\infty(\mathcal{I}_\kappa)}
		}
		\right)
		}_{\text{(I)}}
	\\
	&
	+
	2
	\underbrace{
		L_{\Psi^{(\ell)}}
		\left(
		\frac{
			\epsilon \kappa^{\frac{1}{3}}
		}{
			(n+1)^2
			\norm{
				b_{\ell,m}
			}_{L^\infty(\mathcal{I}_{2\pi})}
		}
		\right)
	}_{\text{(II)}}
		\\
		&
		+
		2
		\underbrace{
		L_{\Lambda_{\kappa,n,\epsilon}}
		\left(
		\frac{
			\epsilon \kappa^{\frac{1}{3}}
		}{
			24
			(n+1)^2
			\norm{
				b_{\ell,m}
			}_{L^\infty\left(\mathcal{I}_{2\pi}\right)}
			\norm{
				\Psi^{(\ell+1)}
			}_{L^\infty(\mathcal{I}_\kappa)}
		}
		\right),
		}_{\text{(III)}}
\end{aligned}
\end{equation}
where
\begin{equation}
	U_{\ell,m,\kappa} 
	=
	2 \max\left\{\norm{b_{\ell,m}}_{L^\infty(\mathcal{I}_{2\pi})},\norm{\Psi^{(\ell+1)}}_{L^\infty(\mathcal{I}_\kappa)}\right\}
\end{equation}
satisfying
\begin{equation}\label{eq:accuracy_blm_kappa}
	\norm{
		b_{\ell,m} 
		\Psi^{(\ell)}
		\circ
		Z_\kappa 
		-
		\Sigma_{n,\ell,m,\epsilon}
	}_{L^\infty(\mathcal{I}_D)}
	\leq
	\frac{\epsilon \kappa^{\frac{1}{3}}}{(n+1)^2}.
\end{equation}

We proceed to expand the terms on the right-hand side of \eqref{eq:bound_L_Sigma}
term-by-term. Firstly, by recalling that $b_{\ell,m}$ does not depend on $\kappa$
and that, according to Remark \ref{rmk:bound_psi}, for any $\ell \in \IN$ one has
$\norm{\Psi^{(\ell)}}_{L^\infty(\mathcal{I}_\kappa)} \leq C_\ell$, with $C_\ell$ independent
of $\kappa$ for each $\ell \in \IN$. Therefore, for each $n, \ell \in \IN$ there exists $B_{n,\ell}>0$ independent of $\kappa$
such that
\begin{equation}
\begin{aligned}
	\text{(I)}
	\leq
	B_{n,\ell}
	\left(
		\epsilon^{-\frac{2}{n-1}}
		+
		\epsilon^{-\frac{1}{n-1}}
		\log\left(\frac{1}{\epsilon}\right) 
		+
		\epsilon^{-\frac{1}{n-1}}
	\right)
	\quad
	\text{as } 
	\epsilon \rightarrow 0.
\end{aligned}
\end{equation}

The second term, labelled (II) in \eqref{eq:bound_L_Sigma},
is bounded as in \eqref{eq:depth_Psi} and \eqref{eq:depth_Psi_L_der},
for $\ell = 0$ and $\ell \in \IN$, respectively, and effectively remains unchanged
with possible different constants. 
The third term in \eqref{eq:bound_L_Sigma}, i.e. (III), is bounded
according to
\begin{equation}
	\text{(III)}
	\leq
	5+\frac{1}{3}\log(\kappa)
	+
	C_m
	\left(
		\epsilon^{-\frac{2}{n-1}}
		+
		\epsilon^{-\frac{1}{n-1}}
		\log\left(\frac{1}{\epsilon}\right) 
		+
		\epsilon^{-\frac{1}{n-1}}
		\log\left(\kappa \right) 
	\right),
\end{equation}
with $C_n$ independent of $\kappa$. Observe that the presence of the wavenumber 
in the terms of the form $\epsilon^{-\frac{1}{n-1}}$ and $\epsilon^{-\frac{2}{n-1}}$ is cancelled
due to the required accuracy stated in \eqref{eq:accuracy_blm_kappa}.
Next, set
\begin{equation}
	V_{\kappa,n,\epsilon}(s)
	\coloneqq
	\sum_{\ell=1}^{(n+1)^2} 
	\kappa^{-\frac{1}{3}-\frac{2}{3}k_{1,\ell}-k_{2,\ell}}
	\Sigma_{n,k_{1,\ell},k_{2,\ell},\epsilon}
	(s)
	+
	\upphi_{n,\epsilon}(s)
	,
	\quad
	s\in [0,2\pi].
\end{equation}
Therefore, we have
\begin{equation}
\begin{aligned}
	V_\kappa(s)
	-
	\left(
		V_{\kappa,n,\epsilon}(s)
	\right)_1
	-
	\imath
	\left(
		V_{\kappa,n,\epsilon}(s)
	\right)_2
	\\
	&
	\hspace{-3.5cm}
	=
	\sum_{\ell=1}^{(n+1)^2} 
	\kappa^{-\frac{1}{3}-\frac{2}{3}k_{1,\ell}-k_{2,\ell}}
	\left(
		b_{k_{1,\ell},k_{2,\ell}}(s) 
		\Psi^{\left(k_{1,\ell}\right)} \left(\kappa^\frac{1}{3} Z(s)\right)
		-
		\Sigma_{n,k_{1,\ell},k_{2,\ell},\epsilon}
	\right)
	\\
	&
	\hspace{-3.2cm}
	+
	R_{L,M}(s,\kappa)
	-
	\left(
		\left(\upphi_{n,\epsilon}(s) \right)_1
		+
		\imath 
		\left(\upphi_{n,\epsilon}(s) \right)_2
	\right)
\end{aligned}
\end{equation}
and
\begin{equation}
\begin{aligned}
	\norm{
		V_\kappa(s)
		-
		\left(
			V_{\kappa,n,\epsilon}(s)
		\right)_1
		-
		\imath
		\left(
			V_{\kappa,n,\epsilon}(s)
		\right)_2
	}_{L^\infty((0,2\pi))}
	\\
	&
	\hspace{-5.2cm}
	\leq
	\sum_{\ell=1}^{(n+1)^2} 
	\kappa^{-\frac{1}{3}-\frac{2}{3}k_{1,\ell}-k_{2,\ell}}
	\norm{
			\Sigma_{n,k_{1,\ell},k_{2,\ell},\epsilon}
		-
		b_{k_{1,\ell},k_{2,\ell}}
		\Psi^{\left(k_{1,\ell}\right)} \left(\kappa^\frac{1}{3} Z\right)
	}_{L^\infty((0,2\pi))}
	\\
	&
	\hspace{-4.9cm}
	+
	\norm{
		\upphi_{n,\epsilon}(s)
		-
		R_{L,M}(s,\kappa)
	}_{L^\infty((0,2\pi))}
	\\
	&
	\hspace{-5.2cm}
	\leq
	\epsilon.
\end{aligned}
\end{equation}
We express $V_{\kappa,n,\epsilon}$ as a ReLU-NN of fixed width. 
To this end, we define
\begin{equation}
	\Theta_{0,n,\epsilon}
	(x)
	\coloneqq
	\begin{pmatrix}
		\left( \upphi_{n,\epsilon}(x) \right)_1 \\
		\left( \upphi_{n,\epsilon}(x) \right)_2
	\end{pmatrix}
\end{equation}
and
\begin{equation}
	\Theta_{n,\ell,\epsilon}
	(x_1,x_2,x_3)
	\coloneqq
	A_\ell
	\begin{pmatrix}
		\left( \Sigma_{n,k_{1,\ell},k_{2,\ell},\epsilon}(x_1) \right)_1 \\
		\left(\Sigma_{n,k_{1,\ell},k_{2,\ell},\epsilon} (x_1)\right)_2 \\
		x_1 \\
		x_2 \\
		x_3 \\
	\end{pmatrix}
	,
	\quad
	\ell = 1,\dots, (n+1)^2,
\end{equation}
with $A_\ell \in \mathbb{R}^{3 \times 5}$ given by
\begin{equation}
	A_\ell
	=
	\begin{pmatrix}
		0 & 0 & 1 & 0 & 0 \\
		\kappa^{-\frac{1}{3}-\frac{2}{3}k_{1,\ell}-k_{2,\ell}} & 0 & 0 & 1 & 0 \\
		0 & \kappa^{-\frac{1}{3}-\frac{2}{3}k_{1,\ell}-k_{2,\ell}} & 0 & 0 & 1
	\end{pmatrix}
	.
\end{equation}
One can readily see that
\begin{equation}
	V_{\kappa,n,\epsilon}(s)
	=
	\begin{pmatrix}
	0 & 0 & 0 & 1 & 0 \\
	0 & 0 & 0 & 0 & 1
	\end{pmatrix}
	\left(
		\Theta_{n,(n+1)^2,\epsilon}
		\circ
		\cdots
		\circ
		\Theta_{n,1,\epsilon}
		\circ
		\Theta_{n,0,\epsilon}
	\right)
	(s).
\end{equation}
Therefore
\begin{equation}
\begin{aligned}
	\mathcal{M}
	\left(
		V_{\kappa,n,\epsilon}
	\right)
	&	
	=
	\max_{\ell\in \{0,\dots,(n+1)^2\}}
	\mathcal{M}
	\left(
		\Theta_{n,\ell,\epsilon}
	\right)
	\\
	&
	=
	6
	+
	\max_{\ell\in \{0,\dots,(n+1)^2\}}
	\mathcal{M}
	\left(
		\Sigma_{n,k_{1,\ell},k_{2,\ell},\epsilon}
	\right)
	=
	38
\end{aligned}
\end{equation}
and
\begin{equation}
	\mathcal{L}
	\left(
		V_{\kappa,n,\epsilon}
	\right)
	=
	\sum_{\ell=0}^{(n+1)^2}
	\mathcal{L}
	\left(
		\Theta_{\ell,\epsilon}
	\right)
	=
	\mathcal{L}
	\left(
		\upphi_{n,\epsilon}
	\right)
	+
	\sum_{\ell=1}^{(n+1)^2}
	\mathcal{L}
	\left(
		\Sigma_{n,k_{1,\ell},k_{2,\ell},\epsilon}
	\right),
\end{equation}
which yields \eqref{eq:depth_V_k_n_eps}.
\end{proof}

\begin{corollary}[Partial Wavenumber-Robust ReLU-NN Emulation of the Neumann Trace]
\label{cor:approx_k_explicit}
Let Assumption \ref{assumption:smooth_curve} be satisfied.
For each $n\in \IN$ there exist $C_n>0$ and ReLU-NNs
$V_{\kappa,n,\epsilon} \in \mathcal{N\!N}_{L,38,1,2}$ with $L$
as in \eqref{eq:depth_V_k_n_eps}, and $\Phi_{\gamma,\hat{{\bf d}},n,\epsilon} \in \mathcal{N\!N}_{L,13,1,1}$
with 
$$L\leq
	C_n
	\left(
		\epsilon^{-\frac{2}{n-1}}
		+
		\epsilon^{-\frac{1}{n-1}}
		\log\left(\frac{1}{\epsilon}\right) 
		+
		\epsilon^{-\frac{1}{n-1}}
	\right)
$$
satisfying
\begin{equation}
	\norm{
		\hat{\varphi}_\kappa
		-
		\kappa
		\left(
			\left(
				V_{\kappa,n,\epsilon}
			\right)_1
			+
			\imath
			\left(
				V_{\kappa,n,\epsilon}
			\right)_2
		\right)
		\exp
		\left(
			\imath \kappa \Phi_{\gamma,\hat{{\bf d}},n,\epsilon}
		\right)
	}_{
		L^\infty
		\left(
			(0,2\pi)
		\right)
	}
	\leq
	\epsilon,
\end{equation}
where $(V_{\kappa,n,\epsilon})_1$ and $(V_{\kappa,n,\epsilon})_2$ represent 
the first and second outputs of $V_{\kappa,n,\epsilon}$, respectively, 
$C_n$ is independent of $\kappa$, and
$\hat{\varphi}_\kappa$ is as in \eqref{eq:decomp_neumann_trace}.
The weights of $\Phi_{\boldsymbol{\gamma},\hat{{\bf d}},n,\epsilon}$ are
bounded in absolute value by a constant independent of $\kappa$.
\end{corollary}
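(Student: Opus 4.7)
The plan is to leverage the factorization $\hat{\varphi}_\kappa(s) = \kappa V_\kappa(s)\exp(\imath \kappa \boldsymbol{\gamma}(s)\cdot\hat{{\bf d}})$ from \eqref{eq:decomp_neumann_trace}: the amplitude $V_\kappa$ is approximated by a ReLU-NN, while the oscillatory exponential factor is kept explicit, with only its (wavenumber-independent) phase $s\mapsto \boldsymbol{\gamma}(s)\cdot\hat{{\bf d}}$ emulated. For the amplitude, I would directly invoke Theorem \ref{thm:approx_V_s_k} with a suitably scaled target accuracy $\delta_V$, yielding a ReLU-NN $V_{\kappa,n,\delta_V}\in\mathcal{N\!N}_{L,38,1,2}$ with depth as in \eqref{eq:depth_V_k_n_eps}. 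For the phase, Assumption \ref{assumption:smooth_curve} guarantees that $s\mapsto \boldsymbol{\gamma}(s)\cdot\hat{{\bf d}}$ is a $\mathscr{C}^\infty$, $2\pi$-periodic, real-valued function whose derivatives of every order are bounded independently of $\kappa$; hence it belongs to the class $\mathcal{P}_{\boldsymbol{\lambda},2\pi,\mathbb{R}}$ with a sequence $\boldsymbol{\lambda}$ independent of $\kappa$, and Proposition \ref{prop:approximation_P} provides a ReLU-NN $\Phi_{\gamma,\hat{{\bf d}},n,\delta_\Phi}\in \mathcal{N\!N}_{L,13,1,1}$ of the claimed form.

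The second step is to assemble the total error via the triangle inequality, using the $1$-Lipschitz property of $x\mapsto \exp(\imath x)$ on $\mathbb{R}$:
\begin{align}
	\bigl|\hat{\varphi}_\kappa - \kappa \widetilde{V}\exp(\imath \kappa\widetilde{\Phi})\bigr|
	\leq
	\kappa\,\bigl|V_\kappa - \widetilde{V}\bigr|
	\,+\,
	\kappa^{2}\,\bigl|\widetilde{V}\bigr|\,\bigl|\boldsymbol{\gamma}\cdot\hat{{\bf d}} - \widetilde{\Phi}\bigr|,
\end{align}
where $\widetilde{V} = (V_{\kappa,n,\delta_V})_1 + \imath (V_{\kappa,n,\delta_V})_2$ and $\widetilde{\Phi}= \Phi_{\gamma,\hat{{\bf d}},n,\delta_\Phi}$. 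By Remark \ref{rmk:correction_lemma_V_k}, $\|V_\kappa\|_{L^\infty((0,2\pi))}\leq C$ with $C$ independent of $\kappa$, hence $\|\widetilde V\|_\infty$ is also uniformly bounded provided $\delta_V\leq 1$. Choosing $\delta_V \coloneqq \epsilon/(2\kappa)$ and $\delta_\Phi \coloneqq \epsilon/(2C\kappa^2)$ then yields the desired total bound $\epsilon$.

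The third step is cosmetic: the weight bounds for $V_{\kappa,n,\delta_V}$ and $\Phi_{\gamma,\hat{{\bf d}},n,\delta_\Phi}$ transfer unchanged from Theorem \ref{thm:approx_V_s_k} and Proposition \ref{prop:approximation_P}, since in both cases the constants therein are independent of $\kappa$. The widths $38$ and $13$ are those of the two component networks.

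The main obstacle is to verify that the rescaling $\delta_V \sim \epsilon/\kappa$ and $\delta_\Phi \sim \epsilon/\kappa^2$ does not spoil the asserted dependence on $\kappa$ and $\epsilon$. For $V_\kappa$ this is harmless, since the depth bound \eqref{eq:depth_V_k_n_eps} already depends only poly-logarithmically on the wavenumber and replacing $\epsilon$ by $\epsilon/(2\kappa)$ merely inflates terms like $\log(1/\epsilon)$ by an additive $\log(\kappa)$, preserving the overall functional form. For $\Phi_{\gamma,\hat{{\bf d}}}$, substituting $\epsilon/(2C\kappa^2)$ into the estimate of Proposition \ref{prop:approximation_P} produces an extra factor of $\kappa^{4/(n-1)}$, which for sufficiently large $n$ is an arbitrarily small algebraic power of $\kappa$; this is absorbed into the constants $C_n$ and the stated $\epsilon$-dependence, preserving the claimed bound.
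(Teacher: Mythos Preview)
Your approach matches the paper's: invoke Theorem~\ref{thm:approx_V_s_k} for $V_\kappa$ and Proposition~\ref{prop:approximation_P} for the real-valued phase $s\mapsto\boldsymbol{\gamma}(s)\cdot\hat{\bf d}$, then combine via the factorization~\eqref{eq:decomp_neumann_trace}. In fact, the paper's own proof is much terser than yours: it records only the construction of $\Phi_{\boldsymbol{\gamma},\hat{\bf d},n,\epsilon}$ from Proposition~\ref{prop:approximation_P} with accuracy~$\epsilon$ on the phase, and leaves both the invocation of Theorem~\ref{thm:approx_V_s_k} and the combination of the two errors entirely implicit. Your triangle-inequality / Lipschitz argument for the assembly is correct and is a genuine addition over what the paper writes.

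There is, however, a gap in your final paragraph. You correctly observe that the rescalings $\delta_V\sim\epsilon/\kappa$ and $\delta_\Phi\sim\epsilon/\kappa^2$ introduce a factor $\kappa^{4/(n-1)}$ into the depth bound for $\Phi_{\boldsymbol{\gamma},\hat{\bf d}}$. But your claim that this can be ``absorbed into the constants $C_n$'' contradicts the explicit hypothesis of the Corollary that $C_n$ is independent of~$\kappa$. For fixed $n$, the quantity $\kappa^{4/(n-1)}$ is not bounded uniformly in~$\kappa$, no matter how small the exponent. The paper resolves this tension only in Theorem~\ref{thm:approximation_phi_k}, where it imposes the additional constraint $n\geq C\log(\kappa)$ precisely to render $\kappa^{4/(n-1)}$ bounded; Corollary~\ref{cor:approx_k_explicit} carries no such requirement. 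As stated, the depth bound for $\Phi_{\boldsymbol{\gamma},\hat{\bf d},n,\epsilon}$ in the Corollary is the one for accuracy $\epsilon$ on the phase itself, and the paper's proof does not carry out the rescaling you (correctly) identify as necessary for the combined error bound. Your analysis thus exposes an incompleteness that the paper's terse proof glosses over; you should not paper over it yourself by an incorrect absorption argument.
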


\begin{proof}
It follows from Proposition \ref{prop:approximation_P}
	that for each $n \in \mathbb{N}_{\geq 2}$ there exist ReLU-NNs
	$\Phi_{\boldsymbol{\gamma},\hat{{\bf d}},n,\epsilon}\in  \mathcal{N\!N}_{L,13,1,1}$ 
        with
	\begin{align}\label{eq:bound_depth_lc_cheb_f}
	L
	\leq
	C_n
	\left(
		\epsilon^{-\frac{2}{n-1}}
		+
		\epsilon^{-\frac{1}{n-1}}
		\log\left(\frac{1}{\epsilon}\right) 
		+
		\epsilon^{-\frac{1}{n-1}}
	\right)
	\quad
	\text{as } \epsilon \rightarrow 0
	\end{align}
	such that
	\begin{equation}
		\norm{
			\boldsymbol{\gamma}\cdot \hat{{\bf d}}
			-
			\Phi_{\boldsymbol{\gamma},\hat{{\bf d}},n,\epsilon}
		}_{L^\infty\left(\mathcal{I}_{2\pi}\right)}
		\leq 
		\epsilon,
	\end{equation}
	with weights bounded in absolute value by a constant depending on $n \in \IN$, but
	not on the wavenumber $\kappa$.
\end{proof}

\begin{theorem}[Full Wavenumber-Robust ReLU-NN Emulation of the Neumann Trace]
\label{thm:approximation_phi_k}
There exists a constant $C>0$ such that 
for each $n\in \IN_{n\geq C\log(\kappa)}$ 
there exist $C_n>0$ and ReLU-NNs
$\hat{\varphi}^{\mathcal{N\!N}}_{\kappa,n,\epsilon} \in \mathcal{N\!N}_{L,42,1,2}$,
i.e.~one input and two outputs 
for real and imaginary parts of $\hat{\varphi}_{\kappa}$,
of depth
\begin{equation}\label{eq:length_phi_kappa}
\begin{aligned}
	L
	\leq
	&
	C_{n}
	\left(
		\log
		\left(
			\kappa
		\right)
		+
		\log
		\left(
			\frac{1}{\epsilon}
		\right)
	\right)^2
	\left(
		\log
		\left(
			\kappa
		\right)^2
		+
			\log
		\left(
			\frac{1}{\epsilon}
		\right)
		+
		\left(
		\log
		\left(
			\frac{1}{\epsilon}
		\right)
		\right)^2
	\right)
	\\
	&
	+
	B_{n,\kappa,\epsilon}
       	\left(1+B_{n,\kappa,\epsilon}+2\log\left(\frac{2}{\epsilon}\right)\right) \epsilon^{-\frac{2}{n-1}}
	\text{as } \epsilon \rightarrow 0
\end{aligned}
\end{equation}
and 
\begin{equation}
	B_{n,\kappa,\epsilon}
	\leq
	C_{n}
	\left(
		\log\left(\kappa\right)
		+
		\log \left (\frac{1}{\epsilon}\right)  
	\right)^{\frac{n+1}{n-1}}
	\epsilon^{-\frac{n+1}{(n-1)(4n+2)}}
	\quad
	\text{as } \epsilon \rightarrow 0
\end{equation}
satisfying for $\kappa \geq C_n\epsilon^{-\frac{3}{4n+2}}$
\begin{equation}\label{eq:error_bound_varphi_kappa}
	\norm{
		\hat{\varphi}_{\kappa} 
		-
		\left(
			\hat{\varphi}^{\mathcal{N\!N}}_{\kappa,n,\epsilon}
		\right)_1
		-
		\imath
		\left(
			\hat{\varphi}^{\mathcal{N\!N}}_{\kappa,n,\epsilon}
		\right)_2
	}_{
		L^\infty
		\left(
			(0,2\pi)
		\right)
	}
	\leq
	\epsilon,
\end{equation}
where $\left(\hat{\varphi}^{\mathcal{N\!N}}_{\kappa,n,\epsilon}\right)_1$ and 
$\left(\hat{\varphi}^{\mathcal{N\!N}}_{\kappa,n,\epsilon}\right)_2$ represent 
the first and second outputs of $\hat{\varphi}^{\mathcal{N\!N}}_{\kappa,n,\epsilon}$, respectively.
The weights of $\hat{\varphi}^{\mathcal{N\!N}}_{\kappa,n,\epsilon}$ 
are bounded in absolute value by a constant independent of $\kappa$, however still depending 
on $n\in \mathbb{N}$.
\end{theorem}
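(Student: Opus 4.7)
The strategy is to emulate each factor in the decomposition \eqref{eq:decomp_neumann_trace}, namely
\[
\hat{\varphi}_\kappa(s)
=
\kappa V_\kappa(s)
\bigl( \cos(\kappa \boldsymbol{\gamma}(s)\cdot \hat{{\bf d}}) + \imath \sin(\kappa \boldsymbol{\gamma}(s)\cdot \hat{{\bf d}}) \bigr),
\]
by a dedicated ReLU-NN and then to combine them via composition and complex multiplication. Concretely, I would use Theorem \ref{thm:approx_V_s_k} to produce a ReLU-NN $V_{\kappa,n,\epsilon_V}$ approximating $V_\kappa$, Proposition \ref{prop:approximation_P} (as in Corollary \ref{cor:approx_k_explicit}) to produce $\Phi_{\gamma,\hat{{\bf d}},n,\epsilon_\phi}$ approximating the smooth phase $\boldsymbol{\gamma}\cdot \hat{{\bf d}}$, and Proposition \ref{prop:aprox_sc} to produce $\Phi^{\cos}_{\kappa,D,\epsilon_{sc}}$, $\Phi^{\sin}_{\kappa,D,\epsilon_{sc}}$ approximating $\cos(\kappa \cdot)$ and $\sin(\kappa \cdot)$ over a fixed, $\kappa$-independent interval $[-D,D]$ containing $\boldsymbol{\gamma}([0,2\pi])\cdot\hat{{\bf d}}$. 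The complete emulator is assembled by composing the trigonometric subnetworks with $\Phi_{\gamma,\hat{{\bf d}},n,\epsilon_\phi}$ and complex-multiplying the result with $V_{\kappa,n,\epsilon_V}$ using four invocations of the scalar multiplication network from Proposition \ref{prop:mult_network}; the outer factor $\kappa$ is absorbed into the output affine layer.

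The principal subtlety is the error propagation, since $\cos(\kappa \cdot)$ and $\sin(\kappa \cdot)$ are Lipschitz with constant $\kappa$ and the full output is further multiplied by $\kappa$. Concretely,
\[
\norm{
\Phi^{\cos}_{\kappa,D,\epsilon_{sc}}\bigl(\Phi_{\gamma,\hat{{\bf d}},n,\epsilon_\phi}\bigr)
-
\cos(\kappa \boldsymbol{\gamma}\cdot\hat{{\bf d}})
}_{L^\infty}
\leq
\epsilon_{sc} + \kappa \epsilon_\phi,
\]
and analogously for the sine. Combined with the standard triangle-inequality expansion $|V_\kappa E - \widetilde V \widetilde E| \leq \norm{E}_{L^\infty}\epsilon_V + (\norm{V_\kappa}_{L^\infty}+\epsilon_V)|E - \widetilde E| + \epsilon_{\text{mult}}$, and using the uniform bound $\norm{V_\kappa}_{L^\infty} \leq C$ from Remark \ref{rmk:correction_lemma_V_k} (which also lets me choose the truncation $D$ in Proposition \ref{prop:mult_network} independent of $\kappa$), the final accuracy $\epsilon$ in $\hat\varphi_\kappa$ is secured by requiring $\epsilon_V \lesssim \epsilon/\kappa$, $\epsilon_{sc} \lesssim \epsilon/\kappa$, $\epsilon_\phi \lesssim \epsilon/\kappa^2$, and scalar-multiplication accuracy $\lesssim \epsilon/\kappa$.

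Substituting these tightened tolerances into the respective results produces the two-term depth bound in \eqref{eq:length_phi_kappa}. Theorem \ref{thm:approx_V_s_k} invoked with target $\epsilon/\kappa$ contributes the second line of \eqref{eq:length_phi_kappa} (with $B_{n,\kappa,\epsilon/\kappa}$ and a $(\epsilon/\kappa)^{-2/(n-1)}$ factor) and replaces $\log(1/\epsilon)$ by $\log\kappa + \log(1/\epsilon)$ inside the squared prefactor of its first line. Proposition \ref{prop:aprox_sc} with $a=\kappa$ and accuracy $\epsilon/\kappa$ contributes $(\log\kappa+\log(1/\epsilon))^2 + \log\lceil\kappa D\rceil$. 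Proposition \ref{prop:approximation_P} applied to $\boldsymbol{\gamma}\cdot\hat{{\bf d}}$ with accuracy $\epsilon/\kappa^2$ adds terms of the form $(\kappa^2/\epsilon)^{2/(n-1)}$, and each scalar multiplication contributes $O(\log(\kappa/\epsilon))$. All four subnetworks are run in parallel using the identity $\varrho(x)-\varrho(-x)=x$ to forward inputs through unused layers, keeping the width at $42$ while letting the depth be the sum of the component depths.

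The main obstacle is to absorb the several prefactors of the form $\kappa^{c/n}$ (most visibly $\kappa^{4/(n-1)}$ arising from the phase approximation at accuracy $\epsilon/\kappa^2$ and the analogous $\kappa^{(n+1)/((n-1)(4n+2))}$ hidden in $B_{n,\kappa,\epsilon/\kappa}$) into bounded constants, so that the depth stays poly-logarithmic in $\kappa$ for fixed $\epsilon$. This is exactly the role of the hypothesis $n \geq C\log\kappa$: for a sufficiently large universal $C>0$ one has $\kappa^{O(1/n)} = O(1)$, which collapses all the $\kappa$-algebraic factors into the claimed polylogarithmic form. The auxiliary restriction $\kappa \geq C_n\epsilon^{-3/(4n+2)}$ simply propagates the analogous condition from Theorem \ref{thm:approx_V_s_k} after the internal rescaling $\epsilon \mapsto \epsilon/\kappa$. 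Finally, the $\kappa$-independent weight bound is inherited from each subnetwork, and the only $\kappa$-dependent scaling (the output factor $\kappa$) can if necessary be realized via Proposition \ref{prop:trading_weights} by $O(\log\kappa)$ additional layers with unit-bounded weights.
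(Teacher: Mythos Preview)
Your proposal is correct and follows essentially the same route as the paper. The paper packages the ``composition + complex multiplication'' step into a single auxiliary lemma (Lemma~\ref{lmm:approx_comp_mult_complex}), applied with $\alpha=V_\kappa$, $\omega=g_\kappa(x)=\kappa\exp(\imath\kappa x)$, and $\beta=\boldsymbol{\gamma}\cdot\hat{{\bf d}}$; you instead unroll that lemma by hand using four calls to the scalar multiplication network, but the underlying construction, the error propagation (driven by $\|g_\kappa'\|=\kappa^2$), and the use of $n\geq C\log\kappa$ to kill the $\kappa^{O(1/n)}$ factors are identical. One small point: the paper also folds the outer factor $\kappa$ into $g_\kappa$ and realizes it with Proposition~\ref{prop:trading_weights} from the start, rather than leaving it in the output affine layer (which would violate the $\kappa$-independent weight bound)---you correctly flag this fix at the end.
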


\begin{proof}
Firstly, it follows from the decomposition stated in \eqref{eq:decomp_neumann_trace} that
\begin{equation}\label{eq:decom_neumann_trace_2}
	\hat{\varphi}_{\kappa}(s)
	=
	V_\kappa(s) 
	g_\kappa 
	\left(
		\boldsymbol{\gamma}\cdot \hat{{\bf d}}(s)
	\right),
	\quad
	s\in [0,2\pi],
\end{equation}
with $g_\kappa(x) = \kappa \exp(\imath \kappa x)$.

The proof of this theorem relies on the application of Lemma \ref{lmm:approx_comp_mult_complex}
to \eqref{eq:decom_neumann_trace_2}.
We proceed to verify the hypothesis of this result item-by-item.
\begin{itemize}
	\item[(i)]
	According to Theorem \ref{thm:approx_V_s_k},
	for each $n\in \IN_{\geq 2}$ there exist $C_n>0$ and ReLU-NNs 
	$V_{\kappa,n,\epsilon} \in \mathcal{N\!N}_{L_{V_{\kappa}}(\epsilon),38,1,2}$,
	i.e.~with one input and two outputs (real and imaginary part, respectively),
	such that 
	\begin{equation}
	\norm{
		V_\kappa
		-
		\left(
			V_{\kappa,n,\epsilon}
		\right)_1
		-
		\imath
		\left(
			V_{\kappa,n,\epsilon}
		\right)_2
	}_{
		L^\infty
		\left(
			(0,2\pi)
		\right)
	}
	\leq
	\epsilon,
	\end{equation}
	with depth $L_{V_{\kappa}}(\epsilon)$ depending on $\epsilon>0$
	as in \eqref{eq:depth_V_k_n_eps}.
	\item[(ii)]
	Set $\zeta_k = \exp(\imath \kappa x)$. 
	It follows from Proposition \ref{prop:aprox_sc} that there exist ReLU-NNs
	$\Phi_{\zeta_k,\epsilon} \in \mathcal{N\!N}_{L_{\zeta_\kappa}(\epsilon),18,1,2}$ with
	\begin{equation}
		L_{\zeta_k}(\epsilon,B)
		\leq 
		C
		\left(
		\left(
		\log
		\left(
			\frac{1}{\epsilon}
		\right)
		\right)^2
		+
		\log
		\left(
			\kappa
			B
		\right)
		\right)
		\quad
		\text{as }
		\epsilon \rightarrow 0,
	\end{equation}
	with $C>0$ independent of $\kappa$, such that
	\begin{equation}
		\norm{
				\zeta_\kappa 
				-
				\left(
					\Phi_{\zeta_\kappa ,\epsilon}
				\right)_1
				-
				\imath
				\left(
					\Phi_{\zeta_\kappa ,\epsilon}
				\right)_2
		}_{L^\infty((-B,B))}
		\leq
		\epsilon,
	\end{equation}
	and with weights bounded in absolute value by a constant independent of $\kappa$.
	According to Proposition \ref{prop:trading_weights}, the map $T_\kappa(x) = \kappa x$
	can be exactly expressed by a ReLU-NN belonging to $\mathcal{N\!N}_{L,3,1,1}$ with
	$L\leq 5 + \log(\kappa)$, and with weights bounded in absolute value by one.
	Consequently, the ReLU-NN
	\begin{equation}
		\Phi_{g_\kappa,\epsilon}(x) 
		= 
		\begin{pmatrix}
			T_\kappa \circ (\Phi_{\zeta_k,\frac{\epsilon}{\kappa}}(x))_1 \\
			T_\kappa \circ (\Phi_{\zeta_k,\frac{\epsilon}{\kappa}}(x))_2
		\end{pmatrix}
		\in 
		\mathcal{N\!N}_{L_{g_k}(\epsilon,B),18,1,2}
	\end{equation}
	emulates $g_\kappa$ according to 
	\begin{equation}
		\norm{
				g_\kappa
				-
				\left(
					\Phi_{g_\kappa,\epsilon}
				\right)_1
				-
				\imath
				\left(
					\Phi_{g_\kappa,\epsilon}
				\right)_2
		}_{L^\infty((-B,B))}
		\leq
		\epsilon
		\quad
		\text{as } \epsilon\rightarrow 0,
	\end{equation}
	with 
	\begin{equation}
		L_{g_k}(\epsilon,B)
		\leq
		5 + \log(\kappa)
		+
		C
		\left(
		\left(
		\log
		\left(
			\frac{\kappa}{\epsilon}
		\right)
		\right)^2
		+
		\log
		\left(
			\kappa
			B
		\right)
		\right)
		\quad
		\text{as } \epsilon\rightarrow 0,
	\end{equation}
	where $C>0$ does not depend on $\kappa$, $B$ or $\epsilon>0$.
	\item[(iii)]
	As in the proof of Corollary \ref{cor:approx_k_explicit},
	it follows from Proposition \ref{prop:approximation_P}
	that for each $n \in \mathbb{N}_{\geq2}$ there exist ReLU-NNs
	$\Phi_{\gamma,\hat{{\bf d}},n,\epsilon}\in 
	\mathcal{N\!N}_{L_{\gamma,\hat{{\bf d}}}(\epsilon),13,1,1}$ with
	\begin{align}\label{eq:bound_depth_lc_cheb_f}
	L_{\gamma,\hat{{\bf d}}}(\epsilon)
	\leq
	C_n
	\left(
		\epsilon^{-\frac{2}{n-1}}
		+
		\epsilon^{-\frac{1}{n-1}}
		\log\left(\frac{1}{\epsilon}\right) 
		+
		\epsilon^{-\frac{1}{n-1}}
	\right),
	\quad
	\text{as }\epsilon \rightarrow 0,
	\end{align}
	such that
	$
		\norm{
			\boldsymbol{\gamma}\cdot \hat{{\bf d}}
			-
			\Phi_{\gamma,\hat{{\bf d}},n,\epsilon}
		}_{L^\infty\left(\mathcal{I}_{2\pi}\right)}
		\leq 
		\epsilon,
	$
	with weights bounded in absolute value by a constant depending on $n \in \IN$
	but not on $\kappa$.
\end{itemize}
Set $D_{\boldsymbol{\gamma}} = \norm{\boldsymbol{\gamma}}_{L^\infty((0,2\pi))}$.
Therefore, according to Lemma \ref{lmm:approx_comp_mult_complex} there exist
ReLU-NNs $\hat{\varphi}^{\mathcal{N\!N}}_{\kappa,n,\epsilon} \in  \mathcal{N\!N}_{L,42,1,2}$
\begin{equation}\label{eq:bound_L_varphi_NN}
\begin{aligned}
	L
	\leq 
	&
	C
	\left(
	\log
	\left(
		\lceil U \rceil
	\right)
	+
	\log
	\left(
		\frac{12}{\epsilon}
	\right)
	\right)
	+
	2
	L_{V_{\kappa}}
	\left(
		\frac{
			\epsilon
		}{
			24
			\norm{V_\kappa}_{L^\infty(\mathcal{I}_{2\pi})}\norm{g'_\kappa}_{L^\infty(\mathcal{I}_{2\pi})}
		}
	\right)
	\\
	&
	+
	2
	L_{g_\kappa}
	\left(
		\frac{
			\epsilon
		}{
			24
			\norm{V_\kappa}_{L^\infty(\mathcal{I}_{2\pi})}
		}
		,
		2
		D_{\boldsymbol{\gamma}} 
	\right)
	+
	2
	L_{\gamma,\hat{{\bf d}}}
	\left(
		\frac{
			\epsilon
		}{
			24
			\norm{V_\kappa}_{L^\infty(\mathcal{I}_{2\pi})}\norm{g'_\kappa}_{L^\infty(\mathcal{I}_{2\pi})}
		}
	\right)
\end{aligned}
\end{equation}
satisfying
\begin{equation}
	\norm{
		\hat{\varphi}_{\kappa}
		-
		\left(
			\hat{\varphi}^{\mathcal{N\!N}}_{\kappa,n,\epsilon}
		\right)_1
		-
		\imath
		\left(
			\hat{\varphi}^{\mathcal{N\!N}}_{\kappa,n,\epsilon}
		\right)_2
	}_{
		L^\infty
		\left(
			(0,2\pi)
		\right)
	}
	\leq
	\epsilon,
\end{equation}
where 
\begin{equation}
	U 
	=
	2
	\max
	\left\{
		\norm{V_\kappa}_{L^\infty(\mathcal{I}_{2\pi})}
		,
		\norm{g'_\kappa}_{L^\infty(\mathcal{I}_{2\pi})}
	\right\}.
\end{equation}
Clearly, $\norm{g'_\kappa}_{L^\infty(\mathcal{I}_{2\pi})} =\kappa^2$, and
it follows from Lemma \ref{eq:smoothness_neuman_trace} and Remark
\ref{rmk:correction_lemma_V_k} that
$\norm{V_\kappa}_{L^\infty(\mathcal{I}_{2\pi})}$ is bounded by a constant
independent of $\kappa$.

Observe that according to
\begin{equation}\label{eq:L_V_kappa}
\begin{aligned}
	L_{V_{\kappa}}
	&
	\left(
		\frac{
			\epsilon
		}{
			24
			\norm{V_\kappa}_{L^\infty(\mathcal{I}_{2\pi})}\norm{g'_\kappa}_{L^\infty(\mathcal{I}_{2\pi})}
		}
	\right)
	\\
	\leq
	&
	C'_{n}
	\left(
		\log(\kappa)
		+
		\log
		\left(
			\frac{1}{\epsilon}
		\right)
	\right)^2
	\left(
		\log
		\left(
			\kappa
		\right)^2
		+
			\log
		\left(
			\frac{\kappa}{\epsilon}
		\right)
		+
		\left(
		\log
		\left(
			\frac{\kappa}{\epsilon}
		\right)
		\right)^2
	\right)
	\\
	&
	+
	B_{n,\kappa,\epsilon}
	\left(
		B_{n,\kappa,\epsilon}
		\left(\frac{\kappa^2}{\epsilon}\right)^{\frac{2}{n-1}}
		+
		\left(\frac{\kappa^2}{\epsilon}\right)^{\frac{1}{n-1}}
		\log\left(\frac{\kappa}{\epsilon}\right) 
		+
		\left(\frac{\kappa^2}{\epsilon}\right)^{\frac{2}{n-1}}
		\log
		\left(
			\frac{\kappa}{\epsilon}
		\right)
		+
		\left(\frac{\kappa^2}{\epsilon}\right)^{\frac{1}{n-1}}
	\right)
\end{aligned}
\end{equation}
with 
\begin{equation}
	B_{n,\kappa,\epsilon}
	\leq
	C'_{n}
	\left(
		\log\left(\kappa\right)
		+
		\log \left (\frac{1}{\epsilon}\right)  
	\right)^{\frac{n+1}{n-1}}
	\left(\frac{\kappa^2}{\epsilon}\right)^{\frac{n+1}{(n-1)(4n+2)}}.
\end{equation}
This bound is not entirely wavenumber-robust. However, 
there exist $C,C'>0$ independent of $n$ and $\kappa$
such that for $n\geq C \log(\kappa)$ we have
$
	\kappa^{\frac{4}{n-1}}
	+
	\kappa^{\frac{2}{n-1}}
	+
	\kappa^{2\frac{n+1}{(n-1)(4n+2)}}
	\leq
	C',
$
thus allowing us to remove these terms in \eqref{eq:L_V_kappa}. 
Similar considerations are valid for the last two terms in \eqref{eq:bound_L_varphi_NN},
thus yielding the bound stated in \eqref{eq:length_phi_kappa}.

Finally, the weights are bounded
in absolute value by a constant independent of $\kappa$ and $\epsilon>0$
(still depending on $n\in \mathbb{N})$.
\end{proof}

%
\subsection{Wavenumber-Robust Approximation of the Far-Field}
\label{sec:wave_number_robust_approximation}
We are interested in the effect of the ReLU-NN approximation
of the Neumann trace on the far-field pattern of the scattered field.
An asymptotic expansion of \eqref{eq:green_rep_formula} yields
the following expression for the far-field behaviour of $u^{\text{s}}$
\begin{align}\label{eq:rep_formula_Dc_approx}
	u^{\text{s}}({\bf x})
	\sim
	\frac{\exp(\imath \pi/4)}{2\sqrt{2\pi}}
	\frac{\exp(\imath \kappa r)}{\sqrt{\kappa r}}
	F(\hat{\bf x})
	\quad
	\text{as }
	r
	\coloneqq
	\norm{{\bf x}}
	\rightarrow
	\infty,
\end{align}
where we have assumed that the origin of the cartesian system of coordinates
is contained in $\text{D}$, $\hat{\bf x} = {\bf x}/\norm{{\bf x}} \in \mathbb{S}_1$,
i.e. the unit circle, and
\begin{equation}
\begin{aligned}
	F(\hat{\bf x})
	&
	\coloneqq
	-
	\int\limits_{\Gamma}
	\exp(-\imath\kappa \hat{\bf x} \cdot {\bf y} )
	{\varphi}_{\kappa}({\bf x})
	\text{ds}_{{\bf y}}
	\\
	&
	=
	-
	\int\limits_{0}^{2\pi}
	\exp(-\imath\kappa \hat{\bf x} \cdot \boldsymbol{\gamma}(s))
	\hat{\varphi}_{\kappa}(s)
	\norm{\boldsymbol{\gamma}'(s)}
	\text{d}s,
	\quad
	\hat{\bf x}  \in \mathbb{S}_1.
\end{aligned}
\end{equation}
Let $\hat{\varphi}^{\mathcal{N\!N}}_{\kappa,n,\epsilon}$ be
as in Theorem \ref{thm:approximation_phi_k} and define
\begin{equation}\label{eq:far_field_pattern}
	F_{\kappa,n,\epsilon}(\hat{\bf x})
	=
	-
	\int\limits_{0}^{2\pi}
	\exp(-\imath\kappa \hat{\bf x} \cdot \boldsymbol{\gamma}(s))
	\left(
		\left(
			\hat{\varphi}^{\mathcal{N\!N}}_{\kappa,n,\epsilon}
		\right)_1
		+
		\imath
		\left(
			\hat{\varphi}^{\mathcal{N\!N}}_{\kappa,n,\epsilon}
		\right)_2
	\right)
	\norm{\boldsymbol{\gamma}'(s)}
	\text{d}s,
	\quad
	\hat{\bf x}  \in \mathbb{S}_1.
\end{equation}
Thus, it holds
\begin{equation}
	\norm{
		F
		-
		F_{\kappa,n,\epsilon}
	}_{L^\infty(\mathbb{S}^1)}
	\leq
	2\pi
	\norm{
		\boldsymbol{\gamma}'
	}_{L^\infty((0,2\pi))}
	\norm{
		\hat{\varphi}_{\kappa} 
		-
		\left(
			\hat{\varphi}^{\mathcal{N\!N}}_{\kappa,n,\epsilon}
		\right)_1
		-
		\imath
		\left(
			\hat{\varphi}^{\mathcal{N\!N}}_{\kappa,n,\epsilon}
		\right)_2
	}_{
		L^\infty
		\left(
			(0,2\pi)
		\right)
	}.
\end{equation}
Therefore, the constructed ReLU-NN $\hat{\varphi}^{\mathcal{N\!N}}_{\kappa,n,\epsilon}$ once is
inserted in \eqref{eq:far_field_pattern} produces an equally accurate approximation for the 
far-field pattern, of course, up to a constant which notably is independent of the wavenumber. 
\section{Concluding Remarks}
\label{sec:Concl}
We proved wavenumber-robust ReLU-NN emulation rate bounds
for the sound-soft acoustic scattering by a model 
smooth, strictly convex obstacle in two dimensions.
Due to these assumptions on the scatterers' shapes 
issues like conical diffraction, 
trapping modes and 
characteristic boundary behaviours were avoided.  

We present a constructive approximation rate bound
that relied on two main tools. 
Firstly, we adopt
a boundary reduction of the scattering problem using boundary potentials.
This yields an \emph{equivalent BIE equation of the second kind}
for unknown Neumann datum on the scatterers' surface.
Second, 
using the asymptotic expansion by Melrose and Taylor \cite{MelTaylor85}, 
we provide a detailed construction of
ReLU-NNs approximating the Neumann datum with NNs of 
moderate, fixed width,
and of 
depth that increases spectrally with the target accuracy, 
whereas a bound for the NN depth 
that depends poly-logarithmically on the wavenumber.

The present results pertain only to the \emph{DNN approximation error}.
As it is well-known (see, e.g., \cite{mishra2023estimates}), 
in a DNN-based solution algorithm,
also the training and the generalization errors need to be bounded.
In the presently considered, exterior acoustic scattering problem, 
\eqref{eq:sound_soft_problem}-\eqref{eq:Sommerf}, 
the boundary reduction to the BIE \eqref{eq:cfie_eq} on the scatterer's
profile $\Gamma$, similar to \cite{AHS23_2956},
reduces controlling all these errors in $L^2(\Gamma)$, 
rather than in the unbounded, exterior domain $\D^\cc$.
In addition, 
all quantities being periodic w.r. to the arclength parametrization of $\Gamma$,
for which efficient quadratures, such as the trapezoidal rule, are available.

We conclude this work by indicating extensions of the present results, and
comment on aspects of possible DL algorithms based on the BIE \eqref{eq:cfie_eq}.
For $L,M \in \IN$, 
let us consider the map 
\begin{align}\label{eq:loss_nn}
	L^2
	\left(
	\left(
		0,2\pi
	\right)
	\right)
	\supset
	\mathcal{N\!N}_{L,M,1,2} 
	\ni
	\Phi_\kappa
	\mapsto 
	\text{Loss}(\Phi_\kappa)
	\coloneqq
	\norm{
		\hat{\varphi}_\kappa
		-
			(\Phi_\kappa)_1
			-
			\imath
			(\Phi_\kappa)_2
	}^2_{L^2((0,2\pi))}.
\end{align}
Given a target accuracy $\epsilon>0$, we aim to
find a ReLU-NN $\Phi^\star_{\kappa,\epsilon}\in\mathcal{N\!N}_{L,M,1,2} 
\subset L^2((0,2\pi))$ such that
\begin{align}
	\text{Loss}
	\left(
		\Phi^\star_{\kappa}
	\right) 
	\leq 
	\epsilon,
\end{align}
where the depth $L$ and width $M$ depend on $\kappa$
and $\epsilon$. 
Of course, the computation of the loss function defined in
\eqref{eq:loss_nn} cannot be performed in practice.

Recalling Proposition \ref{thm:coercivity_cfie}, there exists a constant $\eta_0>0$ 
such that, given $\delta\in(0,1/2)$, there exists $\kappa_0$ (depending on $\delta$)
such that for $\kappa \geq \kappa_0$ and $\eta\geq \eta_0 \kappa$.
Choosing $\eta$ so that 
$\eta_0 \kappa \leq \eta  \lesssim \kappa$, we have that 
\begin{equation}
\begin{aligned}
	\norm{
		\hat{\varphi}_\kappa
		-
		(\Phi_\kappa)_1
		-
		\imath
		(\Phi_\kappa)_2	
	}_{L^2((0,2\pi))}
	&
	\leq 
	\frac{
	\displaystyle\sup_{s\in [0,2\pi]}
	\norm{
		\boldsymbol{\gamma}'(s)
	}^{-1}
	}{\frac{1}{2}-\delta}
	\norm{
		\mathsf{A}'_{\kappa,\eta}
		\left(
			\varphi_\kappa
			-
			\tau^{-1}_{\boldsymbol{\gamma}}
			\left(
			(\Phi_\kappa)_1
			+
			\imath
			(\Phi_\kappa)_2
			\right)
		\right)
	}_{L^2(\Gamma)}
	\\
	&
	\leq 
	\frac{
	\displaystyle\sup_{s\in [0,2\pi]}
	\norm{
		\boldsymbol{\gamma}'(s)
	}^{-1}
	}{\frac{1}{2}-\delta}
	\norm{
		f_{\kappa,\eta} 
		- 
		\mathsf{A}'_{\kappa,\eta}
		\left(
		\tau^{-1}_{\boldsymbol{\gamma}}
		(\Phi_\kappa)_1
		+
		\imath
		(\Phi_\kappa)_2
		\right)
	}_{L^2(\Gamma)}
	\\
	&
	\leq 
	C(\delta,\boldsymbol{\gamma})
	\norm{
		\tau_{\boldsymbol{\gamma}} f_{\kappa,\eta} 
		- 
		\tau_{\boldsymbol{\gamma}}
		\mathsf{A}'_{\kappa,\eta}
		\tau^{-1}_{\boldsymbol{\gamma}}
		\left(
		(\Phi_\kappa)_1
		+
		\imath
		(\Phi_\kappa)_2
		\right)
	}_{L^2((0,2\pi)}
\end{aligned}
\end{equation}
with $\tau_{\boldsymbol{\gamma}}:L^2(\Gamma) \rightarrow L^2((0,2\pi))$
as in Section \ref{sec:AsTotField}, and where the constant
\begin{equation}
	C(\delta,\boldsymbol{\gamma})
	\coloneqq
	\frac{
	\displaystyle\sup_{s\in [0,2\pi]}
	\norm{
		\boldsymbol{\gamma}'(s)
	}^{-1}
	\norm{
		\boldsymbol{\gamma}'
	}_{L^\infty((0,2\pi))}
	}{\frac{1}{2}-\delta}
	>0
\end{equation}
does not depend on the wavenumber $\kappa$.
Set 
\begin{equation}
	\widetilde{\text{Loss}}
	\left(
		\Phi_{\kappa}
	\right)
	\coloneqq
	\norm{
		\tau_{\boldsymbol{\gamma}} f_{\kappa,\eta} 
		- 
		\tau_{\boldsymbol{\gamma}}
		\mathsf{A}'_{\kappa,\eta}
		\tau^{-1}_{\boldsymbol{\gamma}}
		\left(
		(\Phi_\kappa)_1
		+
		\imath
		(\Phi_\kappa)_2
		\right)
	}^2_{L^2((0,2\pi)}
\end{equation}

We conclude that a NN $\Phi^\star_\kappa \in\mathcal{N\!N}_{L,M,1,2}$ 
computed by minimizing the loss function $\widetilde{\text{Loss}}$ within accuracy
$\epsilon>0$ approximates $\hat{\varphi}_\kappa$ with accuracy
$\sqrt{C(\delta,\boldsymbol{\gamma}) \epsilon}$ in the $L^2((0,2\pi)$-norm
for any $\delta\in(0,1/2)$ as long as $\eta>0$ is chosen as explained previously.
 
The presently employed NN constructions
were of so-called ``strict ReLU'' type.
This entailed various technicalities in our proofs due to the inability of ReLU-NNs
to perform exact multiplication of reals. 
Similar results with simpler proofs 
could be expected with more general activation functions.
The present ReLU-based approximations do imply, however, 
corresponding results for neuromorphic approximations by so-called
spiking NNs \cite{Maass1997b}. 
This follows from the recent ReLU-to-spiking conversion 
algorithm in \cite{SWBCPG2022}.

\appendix
\section{ReLU-NN Emulation of Smooth Functions}
\label{appendix:proofs}
In this section, we establish ReLU-NN \emph{spectral} emulation
rates of smooth functions using Chebyshev polynomials.

Let $T_n(x)\coloneqq\cos(n\arccos(x))\in \mathbb{P}_n$,
for $x\in[-1,1]$ and $n\in \mathbb{N}_0$, denote the
$n$-th Chebyshev polynomials of the first kind.
We have that $T_0(x)=1$, $T_1(x)=x$ and 
for each $ n\in \mathbb{N}$ it holds
\begin{align}
\label{eq:Chev_iter}
	T_{n+1}(x)
	=
	2xT_n(x)-T_{n-1}(x),
	\quad
	x\in [-1,1].
\end{align}
Chebyshev polynomials satisfy the following orthogonality
property in the interval $[-1,1]$
\begin{align}
	\int\limits_{-1}^{1} \frac{T_n(x) T_m(x)}{\sqrt{1-x^2}} \text{d}x
	=
	\left\{
	\begin{array}{cl}
	0, & n\neq m, \\
	\pi, & n= m =0, \\
	\frac{\pi}{2}, & n = m \neq 0. 
	\end{array}
	\right.
\end{align}
For $f\in \mathscr{C}^1([-1,1])$, the series 
\begin{align}
	f(x)
	=
	{\sum_{k=0}^{\infty}}^{\prime} a_k(f) T_k(x),
	\quad
	x\in [-1,1],
\end{align}
(the prime in the previous sum
means that the first term should be halved)
converges uniformly and absolutely in $[-1,1]$ and in $L^\infty((-1,1))$, 
respectively, where for $k\in \mathbb{N}_0$
\begin{align}\label{eq:a_f_T}
	a_k
	\left(f\right)
	\coloneqq
	\frac{2}{\pi}
	\int\limits_{-1}^{1} 
	\frac{T_k(x) f(x) }{\sqrt{1-x^2}} 
	\text{d}x.
\end{align}
According to \cite[Theorem 4.2]{Trefethen08} for $f\in \mathscr{C}^\infty([-1,1])$ 
and for all $\ell\in \mathbb{N}$ it holds that
\begin{align}\label{eq:bounds_a_f}
	\snorm{a_k(f)}
	\leq
	\frac{\norm{f^{(\ell+1)}}_{L^\infty((-1,1))}}{k(k-1)\cdots(k-\ell)}
	\leq
	\norm{f^{(\ell+1)}}_{L^\infty((-1,1))}
	,
	\quad
	k\geq \ell+1.
\end{align}
The following result quantifies how finite
linear combinations of Chebyshev polynomials 
can be emulated by means of ReLU-NNs.
\begin{proposition}\label{prop:approx_cheb_pol}
Let $n\in \mathbb{N}$ and let the sequence $\boldsymbol{a} \coloneqq \{a_k\}_{k=0}^{n} \subset \mathbb{K}$
be given, where $\mathbb{K} \in \{\mathbb{R},\mathbb{C}\}$. Set
\begin{align}
	f_{\boldsymbol{a},n}(x)
	=
	\sum_{k=0}^{n}
	a_k
	T_k(x),
	\quad
	x\in [-1,1].
\end{align}
There exists a constant $C>0$ such that the following statements hold true.
\begin{itemize}
	\item[(i)]
	Let $\mathbb{K} = \mathbb{R}$. Then there exist ReLU-NNs
	$\Phi_{\boldsymbol{a},n,\epsilon} \in  \mathcal{N\!N}_{L,13,1,1}$ with
	\begin{align}\label{eq:bound_depth_lc_cheb}
	L
	\leq
	C
	\left(
		n^2
		+
		n
		\log
		\left(\frac{1}{\epsilon}\right)
		+
		n
		\log
		\left(
			\max_{k=0,\dots,n}\snorm{a_k}
		\right)
	\right),
	\quad
	\text{as }
	\epsilon \rightarrow 0,
	\end{align}
	such that
	\begin{equation}
		\norm{
			f_{\boldsymbol{a},n}
			-
			\Phi_{\boldsymbol{a},n,\epsilon}
		}_{L^\infty((-1,1))}\leq \epsilon.
	\end{equation}
	\item[(ii)]
	Let $\mathbb{K} = \mathbb{C}$. Then there exist ReLU-NNs
	$\Phi_{\boldsymbol{a},n,\epsilon} \in  \mathcal{N\!N}_{L,15,1,2}$ with
	$L$ as in \eqref{eq:bound_depth_lc_cheb} such that
	\begin{equation}
		\norm{
			f_{\boldsymbol{a},n}
			-
			(\Phi_{\boldsymbol{a},n,\epsilon})_1
			-\imath
			(\Phi_{\boldsymbol{a},n,\epsilon})_2
		}_{L^\infty((-1,1))}
		\leq 
		\epsilon,
	\end{equation}
	where $(\Phi_{\boldsymbol{a},n,\epsilon})_1$ and $\imath(\Phi_{\boldsymbol{a},n,\epsilon})_2$
	denote the first and second outputs of $\Phi_{\boldsymbol{a},n,\epsilon}$, respectively.
\end{itemize}
In either case, the weights are bounded according to
\begin{align}
	\mathcal{B}
	\left(
		\Phi_{\boldsymbol{a},n,\epsilon}
	\right)
	\leq
	\max
	\left\{ 
		\displaystyle\max_{k=0,\dots,n}\snorm{a_k}
		,
		2
	\right\}.
\end{align}
\end{proposition}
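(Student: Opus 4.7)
\smallskip

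\noindent\textbf{Proof plan for Proposition \ref{prop:approx_cheb_pol}.} The natural construction exploits the three-term recurrence \eqref{eq:Chev_iter}, which mirrors the composition structure of a feed-forward ReLU-NN. The plan is to build $\Phi_{\boldsymbol{a},n,\epsilon}$ layer-block by layer-block, simulating the Clenshaw-like evaluation of the Chebyshev sum while carrying forward a constant number of scalar channels. This mirrors the role of the constant $2$ appearing in the weight bound, which strongly suggests the recurrence is being used as-is.

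First, I would set up the invariant data to be propagated between blocks. Using \eqref{eq:relu_nn_po_1}--\eqref{eq:relu_nn_po_2}, the identity map on $x$ is realized by a width-$2$ ReLU sub-network, so $x$ can be carried exactly through every block. At the start of block $k$, the network stores the tuple $(x,\ \widetilde{T}_{k-1}(x),\ \widetilde{T}_k(x),\ \widetilde{S}_k(x))$, where $\widetilde{T}_j \approx T_j$ and $\widetilde{S}_k = \sum_{j=0}^{k} a_j \widetilde{T}_j$ is the running accumulator. Block $k\!\to\!k{+}1$ calls the multiplication network $\mu_{D,\delta}$ of Proposition \ref{prop:mult_network} on $(2x,\widetilde{T}_k(x))$ with $D = 3$ (enough to contain $|2x|\le 2$ and $|\widetilde{T}_k|\le 1 + o(1)$), then forms
\begin{equation*}
	\widetilde{T}_{k+1}(x) \coloneqq \mu_{D,\delta}(2x,\widetilde{T}_k(x)) - \widetilde{T}_{k-1}(x),
	\qquad
	\widetilde{S}_{k+1}(x) \coloneqq \widetilde{S}_k(x) + a_{k+1} \widetilde{T}_{k+1}(x).
\end{equation*}
The channels used within the block are: $2$ for the identity carry of $x$, $5$ for $\mu_{D,\delta}$, and constant-width affine channels for $\widetilde{T}_{k-1}$, $\widetilde{T}_k$ and the accumulator, comfortably within the width-$13$ budget. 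For the complex case $\mathbb{K}=\mathbb{C}$, I would split $a_k = \Re a_k + \imath \Im a_k$ and run two accumulators in parallel (real and imaginary parts of $\widetilde{S}_k$), which costs two extra channels, matching the width $15$ and the two-output structure in the statement.

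The central quantitative step will be the error analysis of the recurrence. Writing $e_k(x) \coloneqq \widetilde{T}_k(x) - T_k(x)$, the construction yields, for $x \in [-1,1]$,
\begin{equation*}
	|e_{k+1}(x)| \le 2|e_k(x)| + |e_{k-1}(x)| + \delta,
	\qquad
	e_0 = e_1 = 0.
\end{equation*}
Solving this linear recursion bounds $\max_{0\le k\le n}\|e_k\|_{L^\infty((-1,1))} \le C\,(1+\sqrt{2})^n\,\delta$. Consequently the accumulator error obeys
\begin{equation*}
	\|\widetilde{S}_n - f_{\boldsymbol{a},n}\|_{L^\infty((-1,1))}
	\le \big(\max_{k}|a_k|\big)\cdot n\cdot C\,(1+\sqrt{2})^n\,\delta.
\end{equation*}
Choosing $\delta$ so that the right-hand side is $\le \epsilon$ gives $\log(1/\delta) = O\!\left(n + \log(1/\epsilon) + \log \max_k |a_k|\right)$. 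By Proposition \ref{prop:mult_network}, each multiplication block then has depth $O\!\left(n + \log(1/\epsilon) + \log \max_k |a_k|\right)$; composing $n$ such blocks produces total depth $O\!\left(n^2 + n\log(1/\epsilon) + n\log \max_k |a_k|\right)$, exactly the bound \eqref{eq:bound_depth_lc_cheb}. The weight bound follows from inspection: the non-trivial weights used outside of $\mu_{D,\delta}$ (whose weights are bounded by $1$) are the constant $2$ appearing in the recurrence and the coefficients $a_{k+1}$ multiplying $\widetilde{T}_{k+1}$ into the accumulator, yielding $\mathcal{B}(\Phi_{\boldsymbol{a},n,\epsilon})\le \max\{\max_k |a_k|, 2\}$.

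I expect the main obstacle to be the bookkeeping of the error recursion together with maintaining a genuinely fixed width: one has to verify that no auxiliary channel silently blows the width budget, and that the domain bound $D$ for each multiplication call remains a universal constant despite the $(1+\sqrt{2})^k$ growth of intermediate $\widetilde{T}_k$ — this is ensured by choosing $\delta$ small \emph{a priori} (as above) so that $\|\widetilde{T}_k - T_k\|_{L^\infty((-1,1))}\le 1$ uniformly in $k \le n$, and then $|\widetilde{T}_k|\le 2$ suffices, so that the choice $D = 3$ is legitimate throughout. Everything else is routine composition of ReLU sub-networks and affine identity carries using \eqref{eq:relu_nn_po_1}--\eqref{eq:relu_nn_po_2}.
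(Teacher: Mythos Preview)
Your proposal is correct and follows essentially the same construction as the paper: build the Chebyshev approximants recursively via \eqref{eq:Chev_iter} using the multiplication network of Proposition~\ref{prop:mult_network}, carry a fixed number of channels (including an accumulator for the running partial sum $\sum_{j\le k} a_j \widetilde T_j$) through $n$ composed blocks, and choose the multiplication tolerance exponentially small in $n$ to absorb the geometric error growth. The only cosmetic difference is that you fix the multiplication domain at $D=3$ and enforce $\lvert\widetilde T_k\rvert\le 2$ by the a~priori choice of $\delta$, whereas the paper lets the domain grow as $D_{k,\zeta}+1$ and tracks this in the depth estimate; both bookkeepings yield the same bound \eqref{eq:bound_depth_lc_cheb} and the same weight bound $\max\{\max_k|a_k|,2\}$.
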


\begin{proof}
We divide the proof in several steps.
\\

{\sf \encircle{1} ReLU-NN Emulation of Chebyshev Polynomials.} 
Let $\zeta \in (0,1)$.
We claim that for each $n\in \mathbb{N}$ there exist 
ReLU-NNs $\Phi^{\text{Cheb}}_{n,\zeta}$ such that
\begin{align}
	\label{eq:error_NN_Cheb}
	\norm{
		T_{n}
		-
		\Phi^{\text{Cheb}}_{n,\zeta}
	}_{L^\infty((-1,1))}
	\leq 
	\zeta \, 2^{2n}
	\eqqcolon
	D_{n,\zeta}.
\end{align}
Firstly, observe that $T_{0}(x)=1$ and $T_1(x)=x$
can be exactly emulated using ReLU-NNs.
Indeed, we have
\begin{equation}
	T_0 
	\in \mathcal{N\!N}_{2,4,1,1}
	\quad
	\text{and}
	\quad
	T_1
	\in \mathcal{N\!N}_{2,2,1,1},
\end{equation}
as in \eqref{eq:relu_nn_po_1} and \eqref{eq:relu_nn_po_2}, 
respectively.
Hence, \eqref{eq:error_NN_Cheb} holds for $n=0,1$. 

Assume that there exist ReLU-NNs $\Phi^{\text{Cheb}}_{n,\zeta}$ 
satisfying \eqref{eq:error_NN_Cheb} (induction hypothesis).
Then
\begin{align}
	\norm{\Phi^{\text{Cheb}}_{n,\zeta}}_{L^\infty((-1,1))}
	\leq
	\norm{\Phi^{\text{Cheb}}_{n,\zeta}-T_n}_{L^\infty((-1,1))}
	+
	\norm{T_n}_{L^\infty((-1,1))}
	\leq 
	D_{n,\zeta} +1.
\end{align}
Let $\mu_{D_{n,\zeta}+1,\zeta} \in\mathcal{N\!N}_{L,5,2,1}$ be the multiplication ReLU-NN
from Proposition \ref{prop:mult_network} in the interval $[-D_{n,\zeta}-1,D_{n,\zeta}+1]$.
There exists $C>0$ such that
\begin{align}\label{eq:mult_net_1}
	\norm{\mu_{D_{n,\zeta}+1,\zeta}(x,y)-xy}_{L^\infty((-D_{n,\zeta}-1,D_{n,\zeta}+1)^2)} 
	\leq 
	\zeta,
\end{align}
with 
\begin{align}\label{eq:mult_net_2}
	\mathcal{L}\left(\mu_{D_{n,\zeta}+1,\zeta}\right) 
	\leq 
	C
	\left(
	\log
	\left(
		\lceil D_{n,\zeta} +1 \rceil^2
	\right)
	+
	\log
	\left(
		\frac{1}{\zeta}
	\right)
	\right),
\end{align}
for a constant $C>0$ independent of $n$ and $\zeta$,
and with $\mathcal{B}(\mu_{D_{n,\zeta}+1,\zeta}) \leq 1$.

For each $n\in \mathbb{N}$ we set
\begin{align}\label{eq:def_cheb_net}
	\Phi^{\text{Cheb}}_{n+1,\zeta}(x)
	\coloneqq
	2\mu_{D_{n,\zeta}+1,\zeta}
	\left(
		x,\Phi^{\text{Cheb}}_{n,\zeta}(x)
	\right)
	-
	\Phi^{\text{Cheb}}_{n-1,\zeta}(x),
	\quad
	x\in[-1,1].
\end{align}

The network $\Phi^{\text{Cheb}}_{n+1,\zeta}$ defined recursively in \eqref{eq:def_cheb_net}
can be expressed as the composition of ReLU-NNs, 
hence it is a ReLU-NN itself. 
Assume that \eqref{eq:error_NN_Cheb} holds 
for the Chebyshev polynomials $T_n\in \mathbb{P}_n$ and $T_{n-1}\in\mathbb{P}_{n-1}$. 
For $x\in [-1,1]$, we calculate
\begin{equation}
\begin{aligned}
	\snorm{\Phi^{\text{Cheb}}_{n+1,\zeta}(x)-T_{n+1}(x)}
	\leq
	&
	2\snorm{x(T_n(x) - \Phi^{\text{Cheb}}_{n,\zeta})}
	+
	2\snorm{x\Phi^{\text{Cheb}}_{n,\zeta}(x)- \mu_{D_{n,\zeta}+1}
	\left(
		x,\Phi^{\text{Cheb}}_{n,\zeta}(x)
	\right)} \\
	&+
	\snorm{\Phi^{\text{Cheb}}_{n-1,\zeta}(x)-T_{n-1}(x)} \\
	\leq
	& 
	2D_{n,\zeta} + 2\zeta +D_{n-1,\zeta}
	\\
	\leq
	&
	\zeta 2^{2n}\left(2+1+\frac{1}{4}\right)
	\leq  
	\zeta  2^{2(n+1)}=D_{n+1,\zeta}.
\end{aligned}
\end{equation}
It follows that $\norm{\Phi^{\text{Cheb}}_{n+1,\zeta}- T_{n+1}}_{L^\infty((-1,1))}\leq D_{n+1,\zeta}$.
Hence, \eqref{eq:error_NN_Cheb} holds for all $n\in \mathbb{N}_0$.
For each $k\in\mathbb{N}$ we set
\begin{align}\label{eq:dnn_thetat_cheb}
	\Theta_0(x)
	\coloneqq
	\begin{pmatrix}
	x \\
	\Phi^{\text{Cheb}}_{1,\zeta}(x) \\
	\Phi^{\text{Cheb}}_{0,\zeta}(x) \\
	\frac{1}{2}a_0 \, \Phi^{\text{Cheb}}_{0,\zeta}(x)
	\end{pmatrix}
	\quad
	\text{and}
	\quad
	\Theta_k(x_1,x_2,x_3,x_4)
	\coloneqq
	M_k
	\begin{pmatrix}
	x_1 \\
	x_2 \\
	x_3\\
	x_4 \\
	\mu_{D_{k,\zeta}+1,\zeta}(x_1,x_2)
	\end{pmatrix},
\end{align}
where
\begin{equation}
	M_k
	=
	\begin{pmatrix}
	1 & 0 & 0 & 0 & 0\\
	0 & 0 & -1& 0  & 2\\
	0 & 1 & 0 & 0  & 0\\
	0 & a_k & 0 & 0 & 1
	\end{pmatrix},
\end{equation}
thus
\begin{equation}
	\Theta_k(x_1,x_2,x_3,x_4,x_5)
	=
	\begin{pmatrix}
	x_1 \\
	2\mu_{D_{k,\zeta}+1,\zeta}(x_1,x_2)-x_3 \\
	x_2 \\
	a_k x_2+x_4 \\
	\end{pmatrix}.
\end{equation}
Let us set
\begin{align}\label{eq:dnn_thetat_cheb_final_net}
	\Phi_{\boldsymbol{a},n,\epsilon}(x)
	\coloneqq
	\begin{pmatrix}
	0 & 0 & 0 & 1
	\end{pmatrix}
	\left(
		\Theta_n
		\circ
		\Theta_{n-1}
		\circ
		\Theta_{n-2}
		\circ
		\cdots
		\circ
		\Theta_1
		\circ
		\Theta_0
	\right)(x).
\end{align}
Observe that
\begin{align}
	\Phi_{\boldsymbol{a},n,\epsilon}(x)
	=
	{\sum_{k=0}^{n}}^{\prime} 
	a_k 
	\Phi^{\text{Cheb}}_{k,\zeta}(x).
\end{align}
The ReLU-NN $\Phi_{\boldsymbol{a},n,\epsilon}$ approximates $f_{\boldsymbol{a},n}$ as follows
\begin{equation}
\begin{aligned}
	\norm{
		\Phi_{\boldsymbol{a},n,\epsilon}
		-
		f_{\boldsymbol{a},n}
	}_{L^\infty((-1,1))}
	\leq
	&
	{\sum_{k=0}^{n}}^{\prime} 
	\snorm{a_k}
	\norm{\Phi^{\text{Cheb}}_{k,\zeta}-T_k}_{L^\infty([-1,1])} 
	\\
	\leq
	&
	\zeta
	\max_{k=0,\dots,n}\snorm{a_k} {\sum_{k=0}^{n}} 2^{k}
	\leq
	\max_{k=0,\dots,n}\snorm{a_k}
	\underbrace{
	\zeta \,
	2^{2(n+1)}}_{=D_{n+1,\zeta}}.
\end{aligned}
\end{equation}
Thus, by setting $\epsilon = \displaystyle\max_{k=0,\dots,n}\snorm{a_k}\zeta 2^{2(n+1)}$
we obtain $\norm{\Phi_{\boldsymbol{a},n,\epsilon} - f_{\boldsymbol{a},n} }_{L^\infty((-1,1))} \leq \epsilon$.

{\sf \encircle{2} Bounds for the Width and Depth.} 
To approximate $f_{\boldsymbol{a},n}$ by the ReLU-NN $\Phi_{f,n,\epsilon}$
with accuracy $\epsilon>0$
one needs 
\begin{equation}
\begin{aligned}
	\mathcal{L}
	\left(
		\Phi_{\boldsymbol{a},n,\epsilon}
	\right)
	&
	=
	\sum_{k=0}^n 
	\mathcal{L}\left(\mu_{D_{k,\zeta}+1,\zeta}\right)
	\\
	&
	\leq
	C
	\left(
		n
		+
		\sum_{k=0}^n \log(D_{k,\zeta}+1) 
		+ 
		\sum_{k=0}^n \log\left(\frac{1}{\zeta}\right)
	\right) \\
	&
	\leq
	C'
	\left(
		n^2
		+ 
		n 
		\log
		\left(\frac{1}{\epsilon}\right)
		+
		n 
		\log
		\left(
			\max_{k=0,\dots,n}\snorm{a_k}
		\right)
	\right),
	\quad
	\text{as }
	\epsilon \rightarrow 0,
\end{aligned}
\end{equation}
with $C,C'>0$ independent of $n \in \IN_0$.

An inspection of \eqref{eq:dnn_thetat_cheb} reveals that
\begin{equation}
	\mathcal{M}
	\left(
		\Phi_{\boldsymbol{a},n,\epsilon}
	\right)
	=
	\max_{j\in\{0,\dots,n\}}
	\mathcal{M}
	\left(
		\Theta_j
	\right)
	=
	8
	+
	\max_{j\in\{0,\dots,n\}}
	\mathcal{M}
	\left(
		\mu_{D_{j,\zeta}+1,\zeta}
	\right)
	=
	13.
\end{equation}
The weights of $\Phi_{\boldsymbol{a},n,\epsilon}$ are bounded in absolute value by 
\begin{align}
\label{eq:bound_weights_Cheb_NN}
	\max
	\left\{ 
		2,
		\displaystyle\max_{k=0,\dots,n}\snorm{a_k}
	\right\}.
\end{align}

{\sf \encircle{3} ReLU-NN Emulation of Complex-Valued Functions.} 
Finally, we discuss the case $\{a_k\}_{k=0}^{n} \subset \mathbb{C}$,
i.e.~we allow complex values. 

In this case, we approximate the real and imaginary part separately.
To this end, we modify the ReLU-NNs in \eqref{eq:dnn_thetat_cheb}
as follows
{\small
\begin{align}\label{eq:dnn_thetat_cheb_complex}
	\Theta_0(x)
	\coloneqq
	\begin{pmatrix}
	x \\
	\Phi^{\text{Cheb}}_{1,\zeta}(x) \\
	\Phi^{\text{Cheb}}_{0,\zeta}(x) \\
	\frac{1}{2}\Re\{a_0\} \, \Phi^{\text{Cheb}}_{0,\zeta}(x) \\
	\frac{1}{2}\Im\{a_0\} \, \Phi^{\text{Cheb}}_{0,\zeta}(x)
	\end{pmatrix}
	\;\;
	\text{and}
	\;\;
	\Theta_k(x_1,x_2,x_3,x_4,x_5)
	\coloneqq
	M_k
	\begin{pmatrix}
	x_1 \\
	x_2 \\
	x_3\\
	x_4 \\
	x_5 \\
	\mu_{D_{k,\zeta}+1,\zeta}(x_1,x_2)
	\end{pmatrix}
\end{align}
}%
with
\begin{equation}
	M_k
	=
	\begin{pmatrix}
	1 & 0 & 0 & 0 & 0 & 0\\
	0 & 0 & -1& 0 & 0 & 2\\
	0 & 1 & 0 & 0 & 0 & 0\\
	0 & \Re\{a_k\} & 0 & 0 & 1 & 0\\
	0 & \Im\{a_k\} & 0 & 0 & 0 & 1\\
	\end{pmatrix},
\end{equation}
thus
\begin{equation}
	\Theta_k(x_1,x_2,x_3,x_4,x_5)
	=
	\begin{pmatrix}
	x_1 \\
	2\mu_{D_{k,\zeta}+1,\zeta}(x_1,x_2)-x_3 \\
	x_2 \\
	\Re\{a_k\} x_2+x_4 \\
	\Im\{a_k\} x_2+x_5
	\end{pmatrix},
\end{equation}
and the one in \eqref{eq:dnn_thetat_cheb_final_net} as
\begin{align}
	\Phi_{\boldsymbol{a},n,\epsilon}(x)
	\coloneqq
	\begin{pmatrix}
	0 & 0 & 0 & 1 & 0 \\
	0 & 0 & 0 & 0 & 1 \\
	\end{pmatrix}
	\left(
		\Theta_n
		\circ
		\Theta_{n-1}
		\circ
		\Theta_{n-2}
		\circ
		\cdots
		\circ
		\Theta_1
		\circ
		\Theta_0
	\right)(x).
\end{align}
Observe that
\begin{align}
	\left(
		\Phi_{\boldsymbol{a},n,\epsilon}(x)
	\right)_1
	+
	\imath
	\left(
		\Phi_{\boldsymbol{a},n,\epsilon}(x)
	\right)_2
	=
	{\sum_{k=0}^{n}}  
	a_k 
	\Phi^{\text{Cheb}}_{k,\zeta}(x),
\end{align}
where $(\Phi_{\boldsymbol{a},n,\epsilon}(x))_1$ and $(\Phi_{\boldsymbol{a},n,\epsilon}(x))_2$
denote the first and second outputs of $\Phi_{\boldsymbol{a},n,\epsilon}$.
The bounds on the depth of exactly as in {\sf Step \encircle{2}}.
An inspection of \eqref{eq:dnn_thetat_cheb_complex} reveals that
\begin{equation}
	\mathcal{M}
	\left(
		\Phi_{\boldsymbol{a},n,\epsilon}
	\right)
	=
	\max_{j\in\{0,\dots,n\}}
	\mathcal{M}
	\left(
		\Theta_j
	\right)
	=
	10
	+
	\max_{j\in\{0,\dots,n\}}
	\mathcal{M}
	\left(
		\mu_{D_{j,\zeta}+1,\zeta}
	\right)
	=
	15.
\end{equation}
\end{proof}

%

Equipped with Propositions \ref{prop:approx_cheb_pol} we may 
state the ReLU-NN emulation of smooth functions.
To this end, we introduce the the following class
of functions.

\begin{definition}
\label{def:smooth_funct}
Let $\boldsymbol{\lambda}\coloneqq\{\lambda_n\}_{n\in \mathbb{N}_0}$
be a monotonically increasing sequence of positive numbers and let
$D\in \mathbb{R}_+$. We set
\begin{align}
	\mathcal{P}_{\boldsymbol{\lambda},D,\mathbb{K}}
	\coloneqq
	\left\{
		f\in\mathscr{C}^\infty([-D,D],\mathbb{K}): \,
		\norm{f^{(n)}}_{L^\infty((-D,D))} \leq \lambda_{n}, 
		\;\;
		\text{for each }n\in\mathbb{N}_0
	\right\},
\end{align}
where $\mathbb{K} \in \{\mathbb{R},\mathbb{C}\}$.
\end{definition}

Observe that in Definition \ref{def:smooth_funct} we allow the
functions to be either real- or complex-valued.
In view of Proposition \ref{prop:approx_cheb_pol}, this has implications
in the network's architecture. Indeed, one needs to accomodate the real
and imaginary outputs separately, thus yielding a slightly wider networks. 
In particular, when the sequence $\boldsymbol{\lambda}\coloneqq\{\lambda_n\}_{n\in \mathbb{N}_0}$
is given by $\lambda_n = n!$ for $n \in \mathbb{N}_0$, i.e.~when $f$ is an analytic 
function in the interval $[-D,D]$ for some $D>0$,
in \cite{elbrachter2021deep} the existence of
ReLU-NNs approximating this particular class of functions with exponential accuracy 
was proved.

The next result addresses the emulation of functions in 
$\mathcal{P}_{\boldsymbol{\lambda},D,\mathbb{K}}$ 
by ReLU-NNs with spectral accuracy in terms of the NN size.

\begin{proposition}\label{prop:approximation_P}
Let $\boldsymbol{\lambda}\coloneqq\{\lambda_n\}_{n\in \mathbb{N}_0}$ 
be a monotonically increasing sequence of positive numbers and 
let $D>0$.
For each $n \in \mathbb{N}_{\geq 2}$ 
there exist a constant $B_n>0$ such that for each
$f \in \mathcal{P}_{\boldsymbol{\lambda},D,\mathbb{K}}$ it holds:
\begin{itemize}
	\item[(i)]
	Let $\mathbb{K} = \mathbb{R}$. 
        There exist ReLU-NNs
	$\Phi_{f,n,D,\epsilon} \in  \mathcal{N\!N}_{L,13,1,1}$ 
        with
	\begin{align}\label{eq:bound_depth_lc_cheb_f}
	L
	\leq
	B_n
	\left(
		B_n
		\epsilon^{-\frac{2}{n-1}}
		+
		\epsilon^{-\frac{1}{n-1}}
		\log\left(\frac{2}{\epsilon}\right) 
		+
		\epsilon^{-\frac{1}{n-1}}
		\log
		\left(
			D
		\right)
		+
		\epsilon^{-\frac{1}{n-1}}
		\log
		\left(
			\lambda_n
		\right)
	\right)
	\end{align}
	as $\epsilon \rightarrow 0$ such that
	\begin{equation}
		\norm{
			f
			-
			\Phi_{f,n,D,\epsilon}
		}_{L^\infty\left(\left(-D,D\right)\right)}
		\leq 
		\epsilon.
	\end{equation}
	\item[(ii)]
	If $\mathbb{K} = \mathbb{C}$, there exist ReLU-NNs
	$\Phi_{f,n,D,\epsilon} \in  \mathcal{N\!N}_{L,15,1,2}$ with
	$L$ as in \eqref{eq:bound_depth_lc_cheb_f} such that
	\begin{equation}
		\norm{
			f
			-
			\left(
				(\Phi_{f,n,D,\epsilon})_1
				+
				\imath
				(\Phi_{f,n,D,\epsilon})_2
			\right)
		}_{L^\infty\left(\left(-D,D\right)\right)}
		\leq 
		\epsilon,
	\end{equation}
	where $(\Phi_{f,n,D,\epsilon})_1$ and $(\Phi_{f,n,D,\epsilon})_2$
	denote the first and second outputs of $\Phi_{f,n,D,\epsilon}$, respectively.
\end{itemize}
In either case
\begin{equation}\label{eq:explicit_constant_smooth_f}
	B_n
	\leq
	C
	\max\left\{
		n+2,
		4
		\left(
			\widetilde{C}_n
			D^{n+1}
			\lambda_{n+1}
		\right)^{\frac{1}{n-1}}
	\right\},
\end{equation}
and the weights are bounded according to
\begin{equation}
	\mathcal{B}
	\left(
		\Phi_{f,n,D,\epsilon}
	\right)
	\leq
	\max\left\{ 2,\lambda_n\right\},
\end{equation}
where $\widetilde{C}_n>0$ is independent of $\boldsymbol{\lambda}$.
\end{proposition}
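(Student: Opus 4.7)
The plan is to combine a Chebyshev-series truncation of $f$ with the ReLU-NN emulation of Chebyshev polynomial sums already established in Proposition \ref{prop:approx_cheb_pol}. After the affine rescaling $y = x/D$, the shifted function $g(y) \coloneqq f(Dy)$ belongs to $\mathscr{C}^\infty([-1,1],\mathbb{K})$ and satisfies $\|g^{(m)}\|_{L^\infty((-1,1))} \leq D^m \lambda_m$ for every $m \in \IN_0$; it therefore admits the uniformly convergent Chebyshev expansion $g(y) = {\sum_{k=0}^\infty}^{\prime} a_k(g) T_k(y)$ with coefficients given by \eqref{eq:a_f_T}. Applying the bound \eqref{eq:bounds_a_f} to $g$ with an index $\ell$ tied to $n$, and summing the tail, will yield an algebraic truncation estimate of the form
$$
	\norm{g - {\sum_{k=0}^N}^{\prime} a_k(g) T_k}_{L^\infty((-1,1))}
	\leq
	\tilde{C}_n \frac{D^{n+1}\lambda_{n+1}}{N^{n-1}},
$$
for a constant $\tilde{C}_n > 0$ depending only on $n$.

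The next step is to pick the truncation degree $N = N(\epsilon)$ so that this truncation error is at most $\epsilon/2$; solving for $N$ yields $N \sim B_n \epsilon^{-1/(n-1)}$ with $B_n$ as in \eqref{eq:explicit_constant_smooth_f}. Proposition \ref{prop:approx_cheb_pol} then furnishes a ReLU-NN $\Phi_{\boldsymbol{a},N,\epsilon/2}$ of width $13$ (real case) or $15$ (complex case) emulating the truncated Chebyshev sum with accuracy $\epsilon/2$. Substituting $N = N(\epsilon)$ and $\max_k |a_k(g)| \leq \lambda_n$ into the depth bound \eqref{eq:bound_depth_lc_cheb} of that proposition produces the three summands $N^2 \sim \epsilon^{-2/(n-1)}$, $N \log(1/\epsilon)$ and $N \log \lambda_n$, which together match the depth bound \eqref{eq:bound_depth_lc_cheb_f} claimed here.

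Finally, the rescaling $x \mapsto x/D$ is absorbed into the first affine layer of the constructed network, producing $\Phi_{f,n,D,\epsilon}(x) \coloneqq \Phi_{\boldsymbol{a},N,\epsilon/2}(x/D)$, and the triangle inequality combines the Chebyshev truncation error and the network emulation error into an overall sup-norm error of $\epsilon$ on $(-D,D)$. The complex case is handled by replacing the invocation of part (i) of Proposition \ref{prop:approx_cheb_pol} with part (ii), increasing the width from $13$ to $15$ and doubling the number of outputs; all depth and weight estimates transfer verbatim.

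The principal obstacle is the weight bookkeeping. The Chebyshev coefficients of the rescaled function satisfy $|a_k(g)| \leq D^n \lambda_n$, which is in general larger than the bound $\max\{2,\lambda_n\}$ claimed in the statement whenever $D>1$. The remedy is to factor the $D$-dependent prefactor out of the coefficients and instead realise the corresponding scalar multiplication by a dedicated depth-$\mathcal{O}(\log D)$ subnetwork of constant weight magnitude, using the trading-weights-for-depth construction of Proposition \ref{prop:trading_weights}, analogous to the scalar multiplier $T_\kappa$ appearing later in the proof of Theorem \ref{thm:approx_V_s_k}. This redistribution is precisely what gives rise to the extra $\epsilon^{-1/(n-1)}\log D$ term in \eqref{eq:bound_depth_lc_cheb_f} while preserving the $D$-independent weight bound.
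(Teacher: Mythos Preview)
Your overall architecture---rescale to $[-1,1]$, truncate the Chebyshev expansion, invoke Proposition~\ref{prop:approx_cheb_pol}, rescale back---is exactly the paper's approach. The only stylistic difference is that the paper obtains the truncation estimate via the Lebesgue constant combined with Jackson's theorem rather than by directly summing the tail of \eqref{eq:bounds_a_f}; both routes deliver the same $N^{-(n-1)}$ rate and hence the same choice of $N\sim B_n\epsilon^{-1/(n-1)}$.

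Your ``principal obstacle'', however, is a phantom. The Chebyshev coefficients of $g(y)=f(Dy)$ obey the elementary bound
\[
|a_k(g)| \;=\; \Bigl|\tfrac{2}{\pi}\int_{-1}^{1}\tfrac{T_k(y)g(y)}{\sqrt{1-y^2}}\,\mathrm{d}y\Bigr|
\;\le\; 2\|g\|_{L^\infty((-1,1))} \;=\; 2\|f\|_{L^\infty((-D,D))} \;\le\; 2\lambda_0,
\]
which is already independent of $D$. Hence Proposition~\ref{prop:approx_cheb_pol} directly yields weights bounded by $\max\{2,2\lambda_0\}$, and no appeal to Proposition~\ref{prop:trading_weights} is needed. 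The bound $|a_k(g)|\le D^n\lambda_n$ you quote (from \eqref{eq:bounds_a_f} with $\ell=n-1$) is valid but needlessly loose for this purpose.

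This also means your explanation of the $\epsilon^{-1/(n-1)}\log D$ term is misattributed. In the paper's argument it does not come from trading weights for depth; it appears because the paper inserts the (loose) bound $\max_k|a_k(g)|\le \hat\lambda_n=D^n\lambda_n$ into the depth estimate \eqref{eq:bound_depth_lc_cheb} of Proposition~\ref{prop:approx_cheb_pol}, producing an $N\log(D^n\lambda_n)$ contribution. With the sharper bound $2\lambda_0$ that term would in fact be superfluous---the stated depth bound is simply not tight in $D$.
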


\begin{proof}
We divide the proof of this result in three steps.

{\sf \encircle{1} 
Chebyshev Approximation for $D=1$.}
Firstly, we consider the $n$-term truncation
of the Chebyshev expansion for $f\in \mathcal{P}_{\boldsymbol{\lambda},1, \mathbb{R}}$
for $D =1$ and $\mathbb{K} = \mathbb{R}$.
For $f\in \mathcal{P}_{\boldsymbol{\lambda},1, \mathbb{R}}$ and  $n\in \mathbb{N}_0$ we set
\begin{align}
\label{eq:Cheb_exp}
	f_n(x)
	\coloneqq
	{\sum_{k=0}^{n}}^{\prime} a_k(f) \, T_k(x),
	\quad
	x\in [-1,1],
\end{align}
with $a_k(f) $ as in \eqref{eq:a_f_T}.
For each $n \in \IN_0$ it holds (see e.g.~\cite{rivlin2020chebyshev})
\begin{align}
\label{eq:error_trunc_Cheb}
	\norm{f-f_n}_{L^\infty((-1,1))}
	\leq
	\left(\frac{4}{\pi^2}\log(n+1) +4\right)
	\inf_{\pi_n \in \mathbb{P}_n}
	\norm{f-\pi_n}_{L^\infty((-1,1))}.
\end{align}
According to Jackson's theorem \cite[p.~26]{deBoor78}
for each $n \in \IN_0$ and $k\in\mathbb{N}$ such that $n>k+1$
it holds
\begin{align}
	\label{eq:best_approx_error}
	\inf_{\pi_n \in \mathbb{P}_n}
	\norm{f-\pi_n}_{L^\infty((-1,1))}
	\leq
	C_k \frac{2^k}{(n-1)^k} \omega\left(f^{(k)};\frac{1}{n-1-k} \right),
\end{align} 
where $C_k\coloneqq6 (3\text{e})^k/(1+k)$ and $\omega(f;h)$ denotes the
modulus of continuity of the function $f:[-1,1]\rightarrow \mathbb{R}$ at $h>0$, i.e.
\begin{align}
	\omega(f;h)
	\coloneqq
	\sup_{\substack{x,y\in [-1,1] \\ \snorm{x-y}\leq h}} \snorm{f(x)-f(y)}.
\end{align}
For any $g\in \mathscr{C}^{k+1}([-1,1])$ we have that
$\omega\left(g^{(k)}; h\right) \leq h \norm{g^{(k+1)}}_{L^\infty((-1,1))}$
for all $h>0$. By combining \eqref{eq:error_trunc_Cheb} and \eqref{eq:best_approx_error}
one readily observes that for all $n>k+1$ it holds
\begin{align}\label{eq:error_estimate_Cheb_approx}
	\norm{f-f_n}_{L^\infty((-1,1))}
	\leq
	\widetilde{C}_k
	n^{-k+1}
	\norm{f^{(k+1)}}_{L^\infty((-1,1))}
	\leq
	\widetilde{C}_k \lambda_{k+1}
	n^{-k+1},
\end{align} 
with $\widetilde{C}_k= 4^{k+1} C_k$,
which does not depend on
$\boldsymbol{\lambda}$.

For $k\in \mathbb{N}_{\geq2}$ and $\epsilon>0$  we set
\begin{align}\label{eq:index_n_k}
	n_{k}
	\coloneqq
	\max\left\{
		k+2,
		\left \lceil
			\left(2\frac{\widetilde{C}_k \lambda_{k+1}}{\epsilon} \right)^{\frac{1}{k-1}}
		\right \rceil
	\right\}
	\in
	\mathbb{N}.
\end{align}
Recalling \eqref{eq:error_estimate_Cheb_approx}, for any $\epsilon>0$
and for each $k\in \mathbb{N}$ one has
$$\norm{f-f_{n_{k}}}_{L^\infty((-1,1))}\leq\frac{\epsilon}{2}.$$

{\sf \encircle{2} 
ReLU-NN Emulation for $D=1$.}
Next, we consider the ReLU-NN emulation of function in 
$\mathcal{P}_{\boldsymbol{\lambda},D,\mathbb{R}}$
as the complex valued case follows from the exact same 
arguments.  

According to Proposition \ref{prop:approx_cheb_pol}, item (i),
there exists $C>0$ such that for each $k\in \mathbb{N}$ 
there exist networks
$\Phi_{f,k,\epsilon} \in \mathcal{N\!N}_{L,13,1,1}$ 
with
\begin{align}
\label{eq:depth_network_n}
	L
	\leq
	C  
	\left(
		n^2_{k} 
		+ 
		n_{k}  \log\left(\frac{2}{\epsilon}\right) 
		+ 
		n_{k} \log\left(\lambda_k\right)
	\right)
	\quad
	\text{as }
	\epsilon \rightarrow 0,
\end{align}
such that $\norm{\Phi_{f,k,\epsilon}-f_{n_{k}}}_{L^\infty((-1,1))} \leq \frac{\epsilon}{2}$,
with $f_{n_k}:[-1,1]\rightarrow \mathbb{R}$ as in \eqref{eq:Cheb_exp}.
For $\epsilon\in(0,1)$ we have that $1\leq \epsilon^{-\frac{1}{\ell}}$ for all $\ell \in \IN $
therefore for $k\in \mathbb{N}_{\geq2}$ 
\begin{align}
\label{eq:bound_nk}
	n_{k}
	\leq
	\max\left\{
		k+2,
		4
		\left(
			\widetilde{C}_k \lambda_{k+1}
		\right)^{\frac{1}{k-1}}
	\right\}
	\epsilon^{-\frac{1}{k-1}}.
\end{align}

Combining \eqref{eq:bound_nk} with \eqref{eq:depth_network_n}
and using the triangle inequality, we that conclude for each 
$k \in \mathbb{N}$ there exists a constant $B_k>0$ depending uniquely
on $\boldsymbol{\lambda}$ such that for each $f\in \mathcal{P}_{\boldsymbol\lambda,1,\mathbb{R}}$
there exist ReLU-NNs $\Phi_{f,k,\epsilon} \in  \mathcal{N\!N}_{L,13,1,1}$ 
with $L$ as in \eqref{eq:depth_network_n} and $n_k \in \mathbb{N}$
as in \eqref{eq:index_n_k}
such that $\norm{\Phi_{f,k,\epsilon}-f}_{L^\infty((-1,1))}\leq \epsilon$,
and with weights bounded in absolute value by
$
	\max
	\left\{
		2,
		\lambda_k
	\right\}.
$

{\sf \encircle{3} 
ReLU-NN Emulation for $D>1$.}
Next, we consider the case $D>1$. 
Let $f \in \mathcal{P}_{\boldsymbol{\lambda},D,\mathbb{R}}$
and define $\widehat{f}_D(x) = f\left(x D\right)$, for $x \in [-1,1]$.
If $f \in \mathcal{P}_{\boldsymbol{\lambda},D,\mathbb{R}}$
with $\boldsymbol{\lambda}\coloneqq\{\lambda_n\}_{n\in \mathbb{N}_0}$
then $\widehat{f}_D \in \mathcal{P}_{\widehat{\boldsymbol{\lambda}}_D,1,\mathbb{R}}$
with $\widehat{\boldsymbol{\lambda}}_D\coloneqq\{D^{k}\lambda_k\}_{k\in \mathbb{N}_0}$.

Therefore, for each $k\in \mathbb{N}_{\geq2}$ there exist ReLU-NNs
$\Phi_{\hat{f}_D,k,\epsilon} \in  \mathcal{N\!N}_{L,13,1,1}$ 
such that
\begin{equation}
	\norm{\Phi_{\hat{f}_D,k,\epsilon} -\widehat{f}_D}_{L^\infty((-1,1))}
	\leq 
	\epsilon.
\end{equation}
For each $k\in \IN_{\geq2}$ define
$\Phi_{f,k,\epsilon}(x) \coloneqq \Phi_{\widehat{f}_D,k,\epsilon}(x/D)$, thus yielding
\begin{equation}
	\norm{
		\Phi_{f,k,\epsilon} 
		-
		f
	}_{L^\infty((-D,D))}
	=
	\norm{
		\Phi_{\hat{f}_D,k,\epsilon} 
		-
		\widehat{f}_D
	}_{L^\infty((-1,1))}\leq \epsilon.
\end{equation}
The width of $\Phi_{f,k,\epsilon}$ is $13$ 
and the depth is bounded according to
\begin{equation}
\begin{aligned}
	\mathcal{L}
	\left(
		\Phi_{f,k,\epsilon}
	\right)
	\leq
	&
	\mathcal{L}
	\left(
		 \Phi_{\widehat{f}_D,k,\epsilon}
	\right)
	\\
	\leq
	&
	B_k
	\left(
		B_k
		\epsilon^{-\frac{2}{k-1}}
		+
		\epsilon^{-\frac{1}{k-1}}
		\log\left(\frac{2}{\epsilon}\right) 
		+
		\epsilon^{-\frac{1}{k-1}}
		\log
		\left(
			D
		\right)
		+
		\epsilon^{-\frac{1}{k-1}}
		\log
		\left(
			\lambda_k
		\right)
	\right)
\end{aligned}
\end{equation}
with 
\begin{equation}
	B_k
	\leq
	C
	\max\left\{
		k+2,
		4
		\left(
			\widetilde{C}_k 
			D^{k+1}
			\lambda_{k+1}
		\right)^{\frac{1}{k-1}}
	\right\},
\end{equation}
and $C>0$ independent of $k$ and $D$.
\end{proof}

\section{Auxiliary Results}
\label{sec:aux_res}
We introduce and prove technical results used in this work. 

\subsection{Product and Composition of ReLU-NNs}
\label{sec:PrdCmpReLU}

\begin{lemma}\label{lmm:product_of_real_NNs}
Let $\mathcal{I}_{\mathcal{D}} = (-D,D)$ for $D>0$.
Let $\alpha,\beta: {\mathcal{I}}_D\rightarrow \mathbb{R}$ be real-valued,
continuous functions in ${\mathcal{I}}_D$.
Assume there exist ReLU-NNs 
$\Phi_{\alpha,\epsilon} \in \mathcal{N\!N}_{L_\alpha(\epsilon),M_\alpha,1,1}$
and 
$\Phi_{\beta,\epsilon} \in \mathcal{N\!N}_{L_\beta(\epsilon),M_\beta,1,2}$
(i.e.~with depths $L_\alpha(\epsilon)$ and $L_\beta(\epsilon)$ 
depending on $\epsilon>0$ and fixed widths $M_\alpha$ and $M_\beta$) 
such that
\begin{equation}
	\norm{
		\alpha
		-
		\Phi_{\alpha,\epsilon}
	}_{L^\infty(\mathcal{I}_D)} 
	\leq 
	\epsilon
	\quad
	\text{and}
	\quad
	\norm{
		\beta
		-
		\Phi_{\beta,\epsilon}
	}_{L^\infty(\mathcal{I}_D)} 
	\leq 
	\epsilon.
\end{equation}
Then, for $\epsilon>0$ small enough there exist ReLU-NNs
$\Phi_{\times,\epsilon} \in \mathcal{N\!N}_{L,M,1,1}$ 
such that
\begin{equation}
	\norm{
		\alpha
		\beta
		-
		\Phi_{\times,\epsilon}
	}_{L^\infty(\mathcal{I}_D)} 
	\leq
	\epsilon,
\end{equation}
with $M$ given by
\begin{equation}
	M
	=
	\max\{
	2
	+
	M_\beta
	,
	\max
	\{
	M_\alpha,
	5
	\}
	\},
\end{equation}
and
\begin{equation}
\begin{aligned}
	L
	\leq
	&
	C
	\left(
	\log
	\left(
		\lceil U_{\alpha,\beta} \rceil
	\right)
	+
	\log
	\left(
		\frac{3}{\epsilon}
	\right)
	\right)
	\\
	&
	+
	\max
	\left\{
	L_\alpha
	\left(
	\frac{
		\epsilon
	}{
		3
		\norm{
			\beta
		}_{L^\infty(\mathcal{I}_\beta)}
	}
	\right)
	,
	\right\}
	+
	\max
	\left\{
	L_\beta
	\left(
		\frac{
		\epsilon
		}{
		6
		\norm{
			\alpha
		}_{L^\infty(\mathcal{I}_D)}
		}
	\right)
	,2
	\right\}.
\end{aligned}
\end{equation}
where 
$U_{\alpha,\beta} =2\max\left\{\norm{\alpha}_{L^\infty(\mathcal{I}_D)},\norm{\beta}_{L^\infty(\mathcal{I}_D)}\right\}$.
The weights of $\Phi_{\times,\epsilon}$ are bounded according to
\begin{equation}\label{eq:bound_mult_real_weights}
	\mathcal{B}
	\left(
		\Phi_{\times,\epsilon}
	\right)
	\leq
	\max
	\left\{
		\mathcal{B}
		\left(
			\Phi_{\alpha,\epsilon}
		\right),
		\mathcal{B}
		\left(
			\Phi_{\beta,\epsilon}
		\right),
		1	
	\right\}.
\end{equation}
\end{lemma}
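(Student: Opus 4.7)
The plan is to proceed by the standard triangle-inequality decomposition into three error terms, each controlled separately by choosing the sub-accuracies for $\Phi_{\alpha,\epsilon_\alpha}$, $\Phi_{\beta,\epsilon_\beta}$, and the multiplication network $\mu_{U_{\alpha,\beta},\epsilon_\mu}$ from Proposition \ref{prop:mult_network}. Setting $\tilde\alpha \coloneqq \Phi_{\alpha,\epsilon_\alpha}$, $\tilde\beta \coloneqq \Phi_{\beta,\epsilon_\beta}$ and defining the candidate $\Phi_{\times,\epsilon}(x) \coloneqq \mu_{U_{\alpha,\beta},\epsilon_\mu}(\tilde\alpha(x),\tilde\beta(x))$, I would start from the algebraic identity
\begin{equation}
\alpha\beta - \mu_{U_{\alpha,\beta},\epsilon_\mu}(\tilde\alpha,\tilde\beta)
= (\alpha-\tilde\alpha)\beta + \tilde\alpha(\beta-\tilde\beta) + \bigl[\tilde\alpha\tilde\beta - \mu_{U_{\alpha,\beta},\epsilon_\mu}(\tilde\alpha,\tilde\beta)\bigr]
\end{equation}
and bound each piece by $\epsilon/3$ in $L^\infty(\mathcal{I}_D)$.

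The first piece gives $\epsilon_\alpha = \epsilon/(3\norm{\beta}_{L^\infty(\mathcal{I}_D)})$. For the second, the approximation hypothesis yields $\norm{\tilde\alpha}_{L^\infty(\mathcal{I}_D)} \leq \norm{\alpha}_{L^\infty(\mathcal{I}_D)} + \epsilon_\alpha$, which is at most $2\norm{\alpha}_{L^\infty(\mathcal{I}_D)}$ for $\epsilon$ small enough (this is the source of the ``$\epsilon$ small enough'' clause in the statement), so I would choose $\epsilon_\beta = \epsilon/(6\norm{\alpha}_{L^\infty(\mathcal{I}_D)})$. Since in this regime $\snorm{\tilde\alpha(x)},\snorm{\tilde\beta(x)} \leq U_{\alpha,\beta}$ pointwise, Proposition \ref{prop:mult_network} with side length $U_{\alpha,\beta}$ and accuracy $\epsilon_\mu = \epsilon/3$ handles the third piece with a network of depth $C(\log\lceil U_{\alpha,\beta}\rceil + \log(3/\epsilon))$ and width $5$. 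Summing the three contributions gives $\norm{\alpha\beta - \Phi_{\times,\epsilon}}_{L^\infty(\mathcal{I}_D)} \leq \epsilon$ and the depth bound.

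The remaining task is to realize $\Phi_{\times,\epsilon}$ as an honest feedforward ReLU-NN of the claimed fixed width. The plan is to stage the computation in three sequential blocks: (i) an $M_\alpha$-wide block computing $\tilde\alpha(x)$, (ii) an $(M_\beta + 2)$-wide block computing $\tilde\beta(x)$ while preserving $\tilde\alpha(x)$ as two signed channels $\varrho(\tilde\alpha(x))$ and $\varrho(-\tilde\alpha(x))$ (necessary because ReLU forwards only the nonnegative part, while $\tilde\alpha$ can change sign), and (iii) a width-$5$ block applying $\mu_{U_{\alpha,\beta},\epsilon/3}$ to the two reassembled outputs. Taking the maximum over these three phases yields the width $\max\{M_\alpha, 2+M_\beta, 5\}$. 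All routing and identity layers use only weights in $\{-1,0,1\}$, and the multiplication network satisfies $\mathcal{B}(\mu_{U_{\alpha,\beta},\epsilon/3}) \leq 1$, from which the weight bound \eqref{eq:bound_mult_real_weights} follows.

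The main obstacle is not the analytic estimate, which is a one-line triangle inequality once the three sub-accuracies are chosen, but the architectural bookkeeping: verifying that the duplication of $x$, the sign-split preservation of $\tilde\alpha(x)$ across the second block, and the recombination before $\mu$ respect the strict feedforward ReLU structure and the fixed-width constraint, and that the induced depth increments from identity/routing layers are absorbed into the overall constant $C$ without affecting the width bound stated in the lemma.
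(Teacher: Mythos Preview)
Your proposal is correct and follows the same strategy as the paper: the identical three-term triangle-inequality decomposition $(\alpha-\tilde\alpha)\beta + \tilde\alpha(\beta-\tilde\beta) + [\tilde\alpha\tilde\beta - \mu(\tilde\alpha,\tilde\beta)]$, with the same sub-accuracies $\epsilon/(3\norm{\beta})$, $\epsilon/(6\norm{\alpha})$, $\epsilon/3$, and the same ``$\epsilon$ small enough'' reasoning to get $\norm{\tilde\alpha}_{L^\infty}\leq 2\norm{\alpha}_{L^\infty}$.

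The only difference is the ordering of the architectural blocks. The paper computes $\tilde\beta$ \emph{first} while forwarding the raw input $x$ (block $\Lambda_{1}(x)=(x,\Phi_{\beta,\epsilon}(x))^\top$, width $2+M_\beta$), and then from the preserved $x$ computes $\tilde\alpha$ and feeds both into $\mu$ (block $\Lambda_{2}(x_1,x_2)=\mu(\Phi_{\alpha,\epsilon}(x_1),x_2)$). In your ordering, block (i) as described has width $M_\alpha$ and outputs only $\tilde\alpha(x)$, so the original input $x$ is discarded; block (ii) then has no argument from which to evaluate $\tilde\beta(x)$. Repairing this by sign-splitting and forwarding $x$ through block (i) costs two extra channels there, giving width $M_\alpha+2$ rather than $M_\alpha$, which no longer matches the formula in the statement. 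Swapping the order of your first two blocks (compute $\tilde\beta$ while passing $x$, then $\tilde\alpha$) recovers exactly the paper's construction and the stated width.
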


\begin{proof}
Let $\mu_{U_{\alpha,\beta} ,\epsilon}$ 
be the multiplication ReLU-NN from Proposition \ref{prop:mult_network}
over the interval $(-U_{\alpha,\beta} ,U_{\alpha,\beta} )$ 
with accuracy $\frac{\epsilon}{3}>0$.
Define
$$
	\Phi_{\times,\epsilon}(x) 
	=
	\mu_{U_{\alpha,\beta},\epsilon}
	\left(
		\Phi_{\alpha,\epsilon}(x)
		,
		\Phi_{\beta,\epsilon}(x)
	\right),
	\quad
	x\in {\mathcal{I}_D}, 
$$
which is a ReLU-NN itself.
Firstly,
\begin{equation}\label{eq:error_approx_ot}
\begin{aligned}
	\norm{
		\alpha
		\beta
		-
		\Phi_{\times,\epsilon}
	}_{L^\infty(\mathcal{I}_D)} 
	\leq
	&
	\norm{
		\alpha
		\beta
		-
		\Phi_{\alpha,\epsilon}
		\Phi_{\beta,\epsilon}
	}_{L^\infty(\mathcal{I}_D)}
	+
	\norm{
		\Phi_{\alpha,\epsilon}
		\Phi_{\beta,\epsilon}
		-
		\Phi_{\times,\epsilon}
	}_{L^\infty(\mathcal{I}_D)}
	\\
	\leq
	&
	\norm{
		\beta
		\left(
			\alpha
			-
			\Phi_{\alpha,\epsilon}
		\right)
	}_{L^\infty(\mathcal{I}_D)}
	+
	\norm{
		\Phi_{\alpha,\epsilon}
		\left(
			\beta
			-
			\Phi_{\beta,\epsilon}
		\right)
	}_{L^\infty(\mathcal{I}_D)}
	\\
	&
	+
	\norm{
		\Phi_{\alpha,\epsilon}
		\Phi_{\beta,\epsilon}
		-
		\Phi_{\times,\epsilon}
	}_{L^\infty(\mathcal{I}_D)}
	\\
	\leq
	&
	\norm{
		\beta
	}_{L^\infty(\mathcal{I}_D)}
	\norm{
			\alpha
			-
			\Phi_{\alpha,\epsilon}
	}_{L^\infty(\mathcal{I}_D)}
	+
	\norm{
		\Phi_{\alpha,\epsilon}
	}_{L^\infty(\mathcal{I}_D)}
	\norm{
			\beta
			-
			\Phi_{\beta,\epsilon}
	}_{L^\infty(\mathcal{I}_D)}
	\\
	&
	+
	\norm{
		\Phi_{\alpha,\epsilon}
		\Phi_{\beta,\epsilon}
		-
		\Phi_{\times,\epsilon}
	}_{L^\infty(\mathcal{I}_D)}.
\end{aligned}
\end{equation}
We consider ReLU-NNs of accuracy
\begin{equation}\label{eq:error_bound_alpha_omega}
	\norm{
		\alpha
		-
		\Phi_{\alpha,\epsilon} 
	}_{L^\infty(\mathcal{I}_D)}
	\leq
	\frac{
		\epsilon
	}{
		3
		\norm{
			\beta
		}_{L^\infty(\mathcal{I}_D)}
	}
	,
	\quad
	\norm{
		\beta
		-
		\Phi_{\beta,\epsilon} 
	}_{L^\infty(\mathcal{I}_D)}
	\leq
	\frac{
		\epsilon
	}{
		6
		\norm{
			\alpha
		}_{L^\infty(\mathcal{I}_D)}
	},
\end{equation}
and observe that for
$\epsilon \in \left(0,3\norm{\alpha}_{L^\infty(\mathcal{I}_D)}\norm{\beta}_{L^\infty(\mathcal{I}_D)}\right)$ one has
$\norm{\Phi_{\alpha,\epsilon} }_{L^\infty(\mathcal{I}_D)}\leq 2\norm{\alpha}_{L^\infty(\mathcal{I}_D)}$.

Thus, we have that $\Phi_{\times,\epsilon} \in \mathcal{N\!N}_{L,M,1,1}$
\begin{equation}
	\norm{
		\alpha
		\beta
		-
		\Phi_{\times,\epsilon}
	}_{L^\infty(\mathcal{I}_D)} 
	\leq
	\epsilon
\end{equation}
We proceed to bound the width and depth of $\Phi_{f,\epsilon}$.
An inspection of the definition of $\Phi_{f,\epsilon}$ reveals that
\begin{equation}
	\Phi_{f,\epsilon}(x)
	=
	\left(
		\Lambda_{2,\epsilon}
		\circ
		\Lambda_{1,\epsilon}
	\right)(x),
	\quad
	x \in \mathcal{I}_D,
\end{equation}
where
\begin{equation}
	\Lambda_{1,\epsilon}(x)
	=
	\begin{pmatrix}
		x \\
		\Phi_{\beta,\epsilon}(x)
	\end{pmatrix}
	\quad
	\text{and}
	\quad
	\Lambda_{2,\epsilon}(x_1,x_2)
	=
	\mu_{U_{\alpha,\beta},\epsilon}
	\left(
		\Phi_{\alpha,\epsilon} (x_1)
		,
		x_2
	\right).
\end{equation}
Thus, we have
\begin{align}
	\mathcal{M}
	\left(
		\Phi_{\times,\epsilon}
	\right)
	=
	\max
	\left\{
		\mathcal{M}
		\left(
			\Lambda_{1,\epsilon}
		\right)
		,
		\mathcal{M}
		\left(
			\Lambda_{2,\epsilon}
		\right)
	\right\}
	=
	\max\{
	2
	+
	M_\beta
	,
	\max
	\{
	M_\alpha,
	5
	\}
	\},
\end{align}
and for the depth we have
\begin{equation}
\begin{aligned}
	\mathcal{L}
	\left(
		\Phi_{\times,\epsilon}
	\right)
	\leq
	&
	\mathcal{L}
	\left(
		\Lambda_{1,\epsilon}
	\right)
	+
	\mathcal{L}
	\left(
		\Lambda_{2,\epsilon}
	\right)
	\\
	\leq 
	&
	C
	\left(
	\log
	\left(
		\lceil U_{\alpha,\beta} \rceil
	\right)
	+
	\log
	\left(
		\frac{3}{\epsilon}
	\right)
	\right)
	\\
	&
	+
	\max
	\left\{
	L_\alpha
	\left(
	\frac{
		\epsilon
	}{
		3
		\norm{
			\beta
		}_{L^\infty(\mathcal{I}_D)}
	}
	\right)
	,
	2
	\right\}
	+
	\max
	\left\{
	L_\beta
	\left(
		\frac{
		\epsilon
		}{
		6
		\norm{
			\alpha
		}_{L^\infty(\mathcal{I}_D)}
		}
	\right)
	,2
	\right\}.
\end{aligned}
\end{equation}
Finally, the weights of $\Phi_{\times,\epsilon}$ 
are those of $\Phi_{\alpha,\epsilon}$,
$\Phi_{\beta,\epsilon} $, 
and 
of 
$\mu_{U_{\alpha,\beta},\epsilon}$,
thus yielding \eqref{eq:bound_mult_real_weights}.
\end{proof}

\begin{lemma}\label{lmm:general_result_oscillatory_texture}
Let $\mathcal{I}_D = (-D,D)$ for $D>0$.
Let $\omega: \R \rightarrow \R$ and  $\alpha,\beta: {\mathcal{I}_D}\rightarrow \R$
be continuously differentiable and continuous functions, respectively. 
Set $f(x) = \alpha(x)  (\omega \circ \beta)(x)$ for $x \in {\mathcal{I}_D}$.
Assume the following:
\begin{itemize}
	\item[(i)]
	There exist ReLU-NNs
	$\Phi_{\omega,\epsilon} \in \mathcal{N\!N}_{L_\omega(\epsilon,B),M_\omega,1,1}$
	(i.e.~with depth $L_\omega(\epsilon,B)$ depending
	on $\epsilon>0$ and $B>0$, and fixed width $M_\omega$)
	such that $\norm{\omega-\Phi_{\omega,\epsilon}}_{L^\infty((-B,B))} \leq \epsilon$
	for $\epsilon >0$ and any $B>0$. 
	\item[(ii)]
	In addition, assume that there exist ReLU-NNs
	$\Phi_{\alpha,\epsilon} \in \mathcal{N\!N}_{L_\alpha(\epsilon),M_\alpha,1,1}$ and
	$\Phi_{\beta,\epsilon} \in \mathcal{N\!N}_{L_\beta(\epsilon),M_\beta,1,1}$
	(i.e.~with depths $L_\alpha(\epsilon)$ and $L_\beta(\epsilon)$ depending
	on $\epsilon>0$ and fixed widths $M_\alpha$ and $M_\beta$)
	such that for $\epsilon >0$
	\begin{equation}
		\norm{\alpha-\Phi_{\alpha,\epsilon}}_{L^\infty(\mathcal{I}_D)} 
		\leq 
		\epsilon
		\quad
		\text{and}
		\quad
		\norm{\beta-\Phi_{\beta,\epsilon}}_{L^\infty(\mathcal{I}_D)} 
		\leq 
		\epsilon.
	\end{equation}
\end{itemize}
Then, there exist $C>0$ and ReLU-NNs
	$\Phi_{f,\epsilon} \in \mathcal{N\!N}_{L(\epsilon),M,1,1}$
	such that for $\epsilon>0$ small enough
	\begin{equation}
	\norm{
		f
		-
		\Phi_{f,\epsilon}
	}_{L^\infty(\mathcal{I}_D)}
	\leq
	\epsilon,
	\end{equation}
	with
	$
	M
	=
	\max\{
	2
	+
	\max\{M_\omega,M_\beta\}
	,
	\max
	\{
	M_\alpha,
	5
	\}
	\}
	$
	and
	\begin{equation}
	\begin{aligned}
	L(\epsilon)
	\leq 
	&
	C
	\left(
	\log
	\left(
		\lceil U_{\alpha,\omega} \rceil
	\right)
	+
	\log
	\left(
		\frac{3}{\epsilon}
	\right)
	\right)
	+
	L_\omega
	\left(
		\frac{
		\epsilon
		}{
		6
		\norm{
			\alpha
		}_{L^\infty(\mathcal{I}_D)}
		}
		,
		2\norm{\beta}_{L^\infty(\mathcal{I}_D)}
	\right)
	\\
	&
	+
	L_\alpha
	\left(
	\frac{
		\epsilon
	}{
		6
		\norm{
			\omega
		}_{L^\infty(\mathcal{I}_\beta)}
	}
	\right)
	+
	L_\beta
	\left(
		\frac{
		\epsilon
		}{
		6
		\norm{
			\alpha
		}_{L^\infty(\mathcal{I}_D)}
		\norm{
			\omega'
		}_{L^\infty(\mathcal{I}_\beta)}
		}
	\right),
\end{aligned}
\end{equation}
where $U_{\alpha,\omega} =2 \max\left\{\norm{\alpha}_{L^\infty(\mathcal{I}_{D})},\norm{\omega}_{L^\infty(\mathcal{I}_\beta)}\right\}$, and 
\begin{equation}\label{eq:interval_beta}
	\mathcal{I}_{\beta}
	\coloneqq
	\left(
		-2\norm{\beta}_{L^\infty(\mathcal{I}_D)}
		,
		2\norm{\beta}_{L^\infty(\mathcal{I}_D)}
	\right).
\end{equation}
The weights of $\Phi_{f,\epsilon}$ are bounded according to
\begin{equation}\label{eq:bound_mult_comp_weights}
	\mathcal{B}
	\left(
		\Phi_{f,\epsilon}
	\right)
	\leq
	\max
	\left\{
		\mathcal{B}
		\left(
			\Phi_{\alpha,\epsilon}
		\right),
		\mathcal{B}
		\left(
			\Phi_{\beta,\epsilon}
		\right),
		\mathcal{B}
		\left(
			\Phi_{\omega,\epsilon}
		\right),
		1	
	\right\}.
\end{equation}
\end{lemma}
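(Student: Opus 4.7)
The plan is to build $\Phi_{f,\epsilon}$ as a two-stage composition: first emulate the composite function $g\coloneqq\omega\circ\beta$ by $\Phi_{g,\eta}\coloneqq\Phi_{\omega,\eta_\omega}\circ\Phi_{\beta,\eta_\beta}$, and then multiply $\Phi_{\alpha,\eta_\alpha}$ and $\Phi_{g,\eta}$ using the multiplication ReLU-NN $\mu_{U_{\alpha,\omega},\epsilon/3}$ from Proposition~\ref{prop:mult_network}. In effect, the argument reduces to applying Lemma~\ref{lmm:product_of_real_NNs} with the role of $\beta$ played by $g$, once we have a good ReLU-NN emulation of $g$.

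The error analysis proceeds by a four-term triangle inequality. Writing
\begin{equation}
\alpha(x)\omega(\beta(x))-\mu_{U_{\alpha,\omega},\epsilon/3}\bigl(\Phi_{\alpha,\eta_\alpha}(x),\Phi_{\omega,\eta_\omega}(\Phi_{\beta,\eta_\beta}(x))\bigr),
\end{equation}
and adding and subtracting, one bounds the total error by
\begin{align}
\|\alpha\|_{L^\infty(\mathcal{I}_D)}\,\|\omega'\|_{L^\infty(\mathcal{I}_\beta)}\,\eta_\beta
&+\|\alpha\|_{L^\infty(\mathcal{I}_D)}\,\eta_\omega\\
&+\|\omega\|_{L^\infty(\mathcal{I}_\beta)}\,\eta_\alpha+\tfrac{\epsilon}{3},
\end{align}
where the first term uses the $\mathscr{C}^1$ regularity (Lipschitz continuity) of $\omega$ on $\mathcal{I}_\beta$. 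Choosing $\eta_\beta$, $\eta_\omega$, $\eta_\alpha$ proportional to $\epsilon$ with the constants indicated in the statement makes each of the four contributions at most $\epsilon/6$ (two of them lumped into $\epsilon/3$ for the multiplication), yielding the claimed $L^\infty$-error $\leq\epsilon$. The accuracies prescribed in the statement are exactly those of this budget, via Lemma~\ref{lmm:product_of_real_NNs} applied with $g$ in place of $\beta$.

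The main obstacle is ensuring that the compositional emulation $\Phi_{\omega,\eta_\omega}(\Phi_{\beta,\eta_\beta}(x))$ is meaningful: $\Phi_{\omega,\eta_\omega}$ only approximates $\omega$ on an interval $(-B,B)$, and its argument $\Phi_{\beta,\eta_\beta}(x)$ is guaranteed to lie only in a neighborhood of the image of $\beta$. Taking $\eta_\beta\leq\|\beta\|_{L^\infty(\mathcal{I}_D)}$ (which is automatic for $\epsilon$ small) gives the uniform bound $|\Phi_{\beta,\eta_\beta}(x)|\leq 2\|\beta\|_{L^\infty(\mathcal{I}_D)}$, which is why the enclosing interval $\mathcal{I}_\beta$ in \eqref{eq:interval_beta} is chosen with the factor of two and why $\Phi_{\omega,\eta_\omega}$ is invoked with $B=2\|\beta\|_{L^\infty(\mathcal{I}_D)}$. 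On this enlarged interval the Lipschitz constant of $\omega$ entering the first error term is $\|\omega'\|_{L^\infty(\mathcal{I}_\beta)}$, explaining the appearance of this norm in the bound for $L_\beta$.

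Once error control is in place, the architectural bookkeeping is routine and parallels Lemma~\ref{lmm:product_of_real_NNs}. Casting $\Phi_{f,\epsilon}$ as the composition of (a) an auxiliary layer producing $(x,\Phi_{\beta,\eta_\beta}(x))$ with a channel carrying $x$, (b) a sublayer applying $\Phi_{\omega,\eta_\omega}$ to the second component while preserving $x$, and (c) the multiplication $\mu_{U_{\alpha,\omega},\epsilon/3}(\Phi_{\alpha,\eta_\alpha}(x),\cdot)$, the width is the maximum over these sublayers, namely $\max\{2+\max\{M_\omega,M_\beta\},\max\{M_\alpha,5\}\}$, and the depth is the sum of the depths of the three sublayers with the indicated accuracies, giving the stated bound $L(\epsilon)$. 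The weight bound \eqref{eq:bound_mult_comp_weights} follows because the only new weights introduced beyond those of $\Phi_{\alpha,\eta_\alpha}$, $\Phi_{\beta,\eta_\beta}$, $\Phi_{\omega,\eta_\omega}$ are those of the multiplication network (bounded by $1$) and of trivial identity/concatenation blocks whose entries lie in $\{0,\pm 1\}$.
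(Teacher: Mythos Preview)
Your proposal is correct and follows essentially the same approach as the paper: both define $\Phi_{f,\epsilon}(x)=\mu_{U_{\alpha,\omega},\zeta}\bigl(\Phi_{\alpha,\epsilon}(x),(\Phi_{\omega,\epsilon}\circ\Phi_{\beta,\epsilon})(x)\bigr)$, split the error via the same triangle inequality (the paper groups it as three terms $(\heartsuit),(\clubsuit),(\spadesuit)$ with the last further decomposed into your first two contributions), handle the range containment $|\Phi_{\beta,\eta_\beta}(x)|\leq 2\|\beta\|_{L^\infty(\mathcal{I}_D)}$ identically, and obtain the width/depth bounds from the same two-block decomposition $\Lambda_{2,\epsilon}\circ\Lambda_{1,\epsilon}$.
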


\begin{proof}
Let $\mu_{U_{\alpha,\omega} ,\zeta}$ be the multiplication ReLU-NN from Proposition \ref{prop:mult_network}
over the interval $(-U_{\alpha,\omega} ,U_{\alpha,\omega} )$ with accuracy $\zeta>0$,
to be be specified ahead in terms of $\epsilon>0$.
Define
$$
	\Phi_{f,\epsilon}(x) 
	=
	\mu_{U_{\alpha,\omega} ,\zeta}(\Phi_{\alpha,\epsilon}(x),\left(\Phi_{\omega,\epsilon} 
	\circ \Phi_{\beta,\epsilon}\right)(x)),
	\quad
	x\in {\mathcal{I}_D}, 
$$
which is a ReLU-NN itself.

Firstly,
\begin{equation}\label{eq:error_approx_ot}
\begin{aligned}
	\norm{
		f
		-
		\Phi_{f,\epsilon}
	}_{L^\infty(\mathcal{I}_D)}
	\leq
	&
	\norm{
		\Phi_{f,\epsilon} 
		-
		\Phi_{\alpha,\epsilon} 
		\left(\Phi_{\omega,\epsilon}  \circ  \Phi_{\beta,\epsilon} \right)
	}_{L^\infty(\mathcal{I}_D)}
	+
	\norm{
		\Phi_{\alpha,\epsilon} 
		\left(\Phi_{\omega,\epsilon}  \circ \Phi_{\beta,\epsilon} \right) 
		-
		f
	}_{L^\infty(\mathcal{I}_D)}
	\\
	\leq
	&
	\norm{
		\Phi_{f,\epsilon} 
		-
		\Phi_{\alpha,\epsilon} 
		\left(
			\Phi_{\omega,\epsilon}  \circ  \Phi_{\beta,\epsilon} 
		\right) 
	}_{L^\infty(\mathcal{I}_D)} \\
	&
	+
	\norm{
		\Phi_{\alpha,\epsilon} 
		\left(
			\Phi_{\omega,\epsilon}  \circ  \Phi_{\beta,\epsilon} 
		\right)
		-
		\alpha
		\left( 
			\Phi_{\omega,\epsilon}  \circ \Phi_{\beta,\epsilon} 
		\right)
	}_{L^\infty(\mathcal{I}_D)}
	\\
	&
	+
	\norm{
		\alpha
		\left(
			\Phi_{\omega,\epsilon}  \circ \Phi_{\beta,\epsilon} 
		\right)
		-
		f
	}_{L^\infty(\mathcal{I}_D)}
	\\
	\leq
	&
	\underbrace{
	\norm{
		\Phi_{f,\epsilon} 
		 -
		 \Phi_{\alpha,\epsilon} 
		 \left(
			\Phi_{\omega,\epsilon}  \circ  \Phi_{\beta,\epsilon} 
		\right)
	}_{L^\infty(\mathcal{I}_D)}
	}_{(\heartsuit)}
	\\
	&
	+
	\underbrace{
	\norm{
		 \Phi_{\omega,\epsilon}  \circ  \Phi_{\beta,\epsilon} 
	}_{L^\infty(\mathcal{I}_D)}
	\norm{
		 \Phi_{\alpha,\epsilon} 
		 -
		 \alpha
	}_{L^\infty(\mathcal{I}_D)}
	}_{(\clubsuit)}
	\\
	&
	+
	\underbrace{
	\norm{
		\alpha
	}_{L^\infty(\mathcal{I}_D)}
	\norm{
		\Phi_{\omega,\epsilon}  \circ \Phi_{\beta,\epsilon}  
		-
		\omega \circ \beta 
	}_{L^\infty(\mathcal{I}_D)}
	}_{(\spadesuit)}.
\end{aligned}
\end{equation}
Let
\begin{equation}
	\epsilon
	\in
	\left(
		0
		,
		\min
		\left\{
		6
		\norm{
			\alpha
		}_{L^\infty(\mathcal{I}_D)}
		\norm{
			\omega'
		}_{L^\infty(\mathcal{I}_\beta)}
		\norm{
			\beta
		}_{L^\infty(\mathcal{I}_D)}
		,
		\norm{
			\beta
		}_{L^\infty(\mathcal{I}_D)}
		,
		3
		\norm{
			\alpha
		}_{L^\infty(\mathcal{I}_D)}
		\norm{
			\omega
		}_{L^\infty(\mathcal{I}_\beta)}
		\right\}
	\right).
\end{equation}
We consider ReLU-NNs of accuracy
\begin{equation}\label{eq:error_bound_alpha_omega}
	\norm{
		 \Phi_{\alpha,\epsilon} 
		 -
		 \alpha
	}_{L^\infty(\mathcal{I}_D)}
	\leq
	\frac{
		\epsilon
	}{
		6
		\norm{
			\omega
		}_{L^\infty(\mathcal{I}_\beta)}
	}
	,
	\quad
	\norm{
		 \Phi_{\beta,\epsilon} 
		 -
		 \beta
	}_{L^\infty(\mathcal{I}_D)}
	\leq
	\frac{
		\epsilon
	}{
		6
		\norm{
			\alpha
		}_{L^\infty(\mathcal{I}_D)}
		\norm{
			\omega'
		}_{L^\infty(\mathcal{I}_\beta)}
	},
\end{equation}
and
\begin{equation}
	\norm{
		 \Phi_{\omega,\epsilon} 
		 -
		 \omega
	}_{L^\infty(\mathcal{I}_\beta)}
	\leq
	\frac{
		\epsilon
	}{
		6
		\norm{
			\alpha
		}_{L^\infty(\mathcal{I}_D)}
	}.
\end{equation}
In addition, 
\begin{equation}\label{eq:bound_phi_beta}
\begin{aligned}
	\norm{\Phi_{\beta,\epsilon} }_{L^\infty(\mathcal{I}_D)}
	&
	\leq
	\norm{
		\Phi_{\beta,\epsilon} 
		-
		\beta
	}_{L^\infty(\mathcal{I}_D)}
	+
	\norm{
		\beta
	}_{L^\infty(\mathcal{I}_D)}
	\\
	&
	\leq
	\frac{
		\epsilon
	}{
		6
		\norm{
			\alpha
		}_{L^\infty(\mathcal{I}_D)}
		\norm{
			\omega'
		}_{L^\infty(\mathcal{I}_\beta)}
	}
	+
	\norm{
		\beta
	}_{L^\infty(\mathcal{I}_D)}
	\\
	&
	\leq
	2
	\norm{
		\beta
	}_{L^\infty(\mathcal{I}_D)}
\end{aligned}
\end{equation}
and
\begin{equation}\label{eq:bound_Phi_omega_Phi_beta}
\begin{aligned}
	\norm{
		\Phi_{\omega,\epsilon}  \circ \Phi_{\beta,\epsilon} 
	}_{L^\infty(\mathcal{I}_D)}
	\leq
	&
	\norm{
		\Phi_{\omega,\epsilon}  \circ \Phi_{\beta,\epsilon} 
		-
		\omega \circ  \Phi_{\beta,\epsilon} 
	}_{L^\infty(\mathcal{I}_D)}
	+
	\norm{
		\omega \circ \Phi_{\beta,\epsilon} 
		-
		\omega \circ \beta
	}_{L^\infty(\mathcal{I}_D)}
	\\
	&
	+
	\norm{
		\omega \circ \beta
	}_{L^\infty(\mathcal{I}_D)}
	\\
	\leq
	&
	\norm{
		\Phi_{\omega,\epsilon} 
		-
		\omega
	}_{L^\infty\left(\mathcal{I}_{\beta}\right)}
	+
	\norm{
		\omega'
	}_{L^\infty(\mathcal{I}_\beta)}
	\norm{
		\Phi_{\beta,\epsilon} 
		-
		\beta
	}_{L^\infty(\mathcal{I}_D)}
	\\
	&
	+
	\norm{
		\omega
	}_{L^\infty(\mathcal{I}_{\beta})}
	\\
	\leq
	&
	\frac{
		\epsilon
	}{
		6
		\norm{
			\alpha
		}_{L^\infty(\mathcal{I}_D)}
	}
	+
	\frac{
		\epsilon
	}{
		6
		\norm{
			\alpha
		}_{L^\infty(\mathcal{I}_D)}
	}
	+
	\norm{
		\omega
	}_{L^\infty(\mathcal{I}_{\beta})}
	\\
	=
	&
	\frac{
		\epsilon
	}{
		3
		\norm{
			\alpha
		}_{L^\infty(\mathcal{I}_D)}
	}
	+
	\norm{
		\omega
	}_{L^\infty(\mathcal{I}_{\beta})}
	\\
	\leq
	&
	2
	\norm{
		\omega
	}_{L^\infty(\mathcal{I}_{\beta})}.
\end{aligned}
\end{equation}
Next,
\begin{equation}\label{eq:bound_Phi_omega_Phi_beta_diff}
\begin{aligned}
	\norm{
		\Phi_{\omega,\epsilon}  \circ \Phi_{\beta,\epsilon}  
		-
		\omega \circ \beta 
	}_{L^\infty(\mathcal{I}_D)}
	\leq
	&
	\norm{
		\Phi_{\omega,\epsilon}  \circ \Phi_{\beta,\epsilon}  
		-
		\omega \circ \Phi_{\beta,\epsilon}  
	}_{L^\infty(\mathcal{I}_D)}
	\\
	&
	+
	\norm{
		\omega \circ \Phi_{\beta,\epsilon}  
		-
		\omega \circ \beta 
	}_{L^\infty(\mathcal{I}_D)}
	\\
	\leq
	&
	\norm{
		\Phi_{\omega,\epsilon} 
		-
		\omega
	}_{L^\infty(\mathcal{I}_{\beta})}
	+
	\norm{
		\omega'
	}_{L^\infty(\mathcal{I}_\beta)}
	\norm{
		\Phi_{\beta,\epsilon}  
		-
		\beta 
	}_{L^\infty(\mathcal{I}_D)}
	\\
	\leq
	&
	\frac{
		\epsilon
	}{
		6
		\norm{
			\alpha
		}_{L^\infty(\mathcal{I}_D)}
	}
	+
	\frac{
		\epsilon
	}{
		6
		\norm{
			\alpha
		}_{L^\infty(\mathcal{I}_D)}
	}
	=
	\frac{
		\epsilon
	}{
		3
		\norm{
			\alpha
		}_{L^\infty(\mathcal{I}_D)}
	}.
\end{aligned}
\end{equation}
From Proposition \ref{prop:mult_network} there exist $C>0$
and ReLU-NNs $\mu_{U_{\alpha,\omega},\zeta} \in\mathcal{N\!N}_{L,5,2,1}$ with 
$$
	L
	\leq 
	C
	\left(
	\log
	\left(
		\lceil U_{\alpha,\omega} \rceil
	\right)
	+
	\log
	\left(
		\frac{1}{\zeta}
	\right)
	\right)
$$
as $\zeta \rightarrow 0$ and satisfying
$$\norm{\mu_{U_{\alpha,\omega},\zeta}(x,y)-xy}_{L^\infty((-U_{\alpha,\omega},U_{\alpha,\omega})^2)}\leq \zeta.$$
Thus,
\begin{equation}\label{eq:error_mult_net}
	(\heartsuit)
	=
	\norm{
		\mu_{U_{\alpha,\omega},\zeta}
		\left(
			\Phi_{\alpha,\epsilon} 
			,
			\Phi_{\omega,\epsilon}  \circ \Phi_{\beta,\epsilon} 
		\right)
		 -
		\Phi_{\alpha,\epsilon} 
		(\Phi_{\omega,\epsilon}  \circ  \Phi_{\beta,\epsilon} )
	}_{L^\infty((-U_{\alpha,\omega},U_{\alpha,\omega})^2)}
	\leq
	\zeta
	=
	\frac{\epsilon}{3}
\end{equation}
as a consequence of \eqref{eq:bound_phi_beta} and 
\eqref{eq:bound_Phi_omega_Phi_beta}.

Recalling \eqref{eq:error_bound_alpha_omega},
\eqref{eq:bound_Phi_omega_Phi_beta}, and 
\eqref{eq:bound_Phi_omega_Phi_beta_diff}
\begin{equation}
	(\clubsuit)
	\leq
	2
	\norm{
		\omega
	}_{L^\infty(\mathcal{I}_{\beta})}
	\frac{
		\epsilon
	}{
		6
		\norm{
			\omega
		}_{L^\infty(\mathcal{I}_\beta)}
	}
	=
	\frac{\epsilon}{3}
	\quad
	\text{and}
	\quad
	(\spadesuit)
	\leq
	\norm{
		\alpha
	}_{L^\infty(\mathcal{I})}
	\frac{
		\epsilon
	}{
		3
		\norm{
			\alpha
		}_{L^\infty(\mathcal{I}_D)}
	}
	=
	\frac{\epsilon}{3}.
\end{equation}
Next, from \eqref{eq:error_approx_ot}
and \eqref{eq:error_mult_net} we conclude
\begin{equation}
	\norm{
		f
		-
		\Phi_{f,\epsilon}
	}_{L^\infty(\mathcal{I}_D)}
	\leq
	\epsilon.
\end{equation}
We proceed to bound the width and depth of $\Phi_{f,\epsilon}$.
An inspection of the definition of $\Phi_{f,\epsilon}$ reveals that
\begin{equation}
	\Phi_{f,\epsilon}(x)
	=
	\left(
		\Lambda_{2,\epsilon}
		\circ
		\Lambda_{1,\epsilon}
	\right)(x),
	\quad
	x \in \mathcal{I}_D,
\end{equation}
where
\begin{equation}
	\Lambda_{1,\epsilon}(x)
	=
	\begin{pmatrix}
		x \\
		\left(\Phi_{\omega,\epsilon}  \circ  \Phi_{\beta,\epsilon} \right)(x)
	\end{pmatrix}
	\quad
	\text{and}
	\quad
	\Lambda_{2,\epsilon}(x_1,x_2)
	=
	\mu_{U_{\alpha,\omega},\zeta}
	\left(
		\Phi_{\alpha,\epsilon} (x_1)
		,
		x_2
	\right).
\end{equation}
Thus, we have
\begin{align}
	\mathcal{M}
	\left(
		\Phi_{f,\epsilon}
	\right)
	=
	\max
	\left\{
		\mathcal{M}
		\left(
			\Lambda_{1,\epsilon}
		\right)
		,
		\mathcal{M}
		\left(
			\Lambda_{2,\epsilon}
		\right)
	\right\}
	=
	\max\{
	2
	+
	\max\{M_\omega,M_\beta\}
	,
	\max
	\{
	M_\alpha,
	5
	\}
	\},
\end{align}
and for the depth we have
\begin{equation}
\begin{aligned}
	\mathcal{L}
	\left(
		\Phi_{f,\epsilon}
	\right)
	\leq
	&
	\mathcal{L}
	\left(
		\Lambda_{1,\epsilon}
	\right)
	+
	\mathcal{L}
	\left(
		\Lambda_{2,\epsilon}
	\right)
	\\
	\leq 
	&
	C
	\left(
	\log
	\left(
		\lceil U_{\alpha,\omega} \rceil
	\right)
	+
	\log
	\left(
		\frac{3}{\epsilon}
	\right)
	\right)
	+
	L_\omega
	\left(
		\frac{
		\epsilon
		}{
		6
		\norm{
			\alpha
		}_{L^\infty(\mathcal{I}_D)}
		}
		,
		2\norm{\beta}_{L^\infty(\mathcal{I}_D)}
	\right)
	\\
	&
	+
	L_\alpha
	\left(
	\frac{
		\epsilon
	}{
		6
		\norm{
			\omega
		}_{L^\infty(\mathcal{I}_\beta)}
	}
	\right)
	+
	L_\beta
	\left(
		\frac{
		\epsilon
		}{
		6
		\norm{
			\alpha
		}_{L^\infty(\mathcal{I}_D)}
		\norm{
			\omega'
		}_{L^\infty(\mathcal{I}_\beta)}
		}
	\right).
\end{aligned}
\end{equation}
Finally, the weights of $\Phi_{f,\epsilon}$ 
are those of $\Phi_{\alpha,\epsilon}$,
$\Phi_{\beta,\epsilon} $, 
$\Phi_{\omega,\epsilon}$, 
and of
$\mu_{U_{\alpha,\omega},\zeta}$,
thus yielding \eqref{eq:bound_mult_comp_weights}.
\end{proof}

\begin{lemma}\label{lmm:product_of_complex_NNs}
Let $\mathcal{I}_{\mathcal{D}} = (-D,D)$ for $D>0$.
Let $\alpha,\beta: {\mathcal{I}}_D\rightarrow \mathbb{C}$ be complex-valued,
continuous functions in ${\mathcal{I}}_D$.
Assume there exist ReLU-NNs 
$\Phi_{\alpha,\epsilon} \in \mathcal{N\!N}_{L_\alpha(\epsilon),M_\alpha,1,2}$
and 
$\Phi_{\beta,\epsilon} \in \mathcal{N\!N}_{L_\beta(\epsilon),M_\beta,1,2}$
(i.e.~with depths $L_\alpha(\epsilon)$ and $L_\beta(\epsilon)$ 
depending on $\epsilon>0$ and fixed widths $M_\alpha$ and $M_\beta$) 
such that
\begin{equation}
	\norm{
		\alpha
		-
		\left(
			\Phi_{\alpha,\epsilon}
		\right)_1
		-
		\imath
		\left(
			\Phi_{\alpha,\epsilon}
		\right)_2
	}_{L^\infty(\mathcal{I}_D)} 
	\leq 
	\epsilon
	\quad
	\text{and}
	\quad
	\norm{
		\beta
		-
		\left(
			\Phi_{\beta,\epsilon}
		\right)_1
		-
		\imath
		\left(
			\Phi_{\beta,\epsilon}
		\right)_2
	}_{L^\infty(\mathcal{I}_D)} 
	\leq 
	\epsilon.
\end{equation}
Then, for $\epsilon>0$ small enough there exist ReLU-NNs
$\Phi_{\times,\epsilon} \in \mathcal{N\!N}_{L,M,1,2}$ 
such that
\begin{equation}
	\norm{
		\alpha
		\beta
		-
		\left(
			\Phi_{\times,\epsilon}
		\right)_1
		-
		\imath
		\left(
			\Phi_{\times,\epsilon}
		\right)_2
	}_{L^\infty(\mathcal{I}_D)} 
	\leq
	\epsilon,
\end{equation}
with $M$ given by
\begin{equation}
	M
	=
	\max
	\left \{
		4+M_\alpha,
		2+M_\beta, 
		2
		+
		\max
		\left\{
			10
			,
			M_\alpha
		\right\},
		2
		+
		\max
		\left\{
			10
			,
			M_\beta
		\right\}
	\right\},
\end{equation}
and with depth bound
\begin{equation}
	L
	\leq
	C
	\left(
	\log
	\left(
		\lceil U_{\alpha,\beta} \rceil
	\right)
	+
	\log
	\left(
		\frac{4}{\epsilon}
	\right)
	\right)
	+
	2
	\max
	\left\{
		L_\alpha
		\left(
			\frac{\epsilon}{4}
		\right)
		,
		2
	\right\}
	+
	2
	\max
	\left\{
		L_\beta
		\left(
			\frac{\epsilon}{4}
		\right)
		,
		2
	\right\},
\end{equation}
where 
$U_{\alpha,\beta} =2\max\left\{\norm{\alpha}_{L^\infty(\mathcal{I}_D)},\norm{\beta}_{L^\infty(\mathcal{I}_D)}\right\}$.

The weights of $\Phi_{\times,\epsilon}$ are bounded according to
\begin{equation}\label{eq:bound_mult_comp_weights_2}
	\mathcal{B}
	\left(
		\Phi_{\times,\epsilon}
	\right)
	\leq
	\max
	\left\{
		\mathcal{B}
		\left(
			\Phi_{\alpha,\epsilon}
		\right),
		\mathcal{B}
		\left(
			\Phi_{\beta,\epsilon}
		\right),
		1	
	\right\}.
\end{equation}
\end{lemma}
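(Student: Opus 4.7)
The proof proceeds by reducing the complex multiplication to four real-valued multiplications via the elementary identity
\begin{equation}
\alpha\beta \;=\; (\alpha_1\beta_1 - \alpha_2\beta_2) \;+\; \imath\,(\alpha_1\beta_2 + \alpha_2\beta_1),
\end{equation}
where $\alpha_j \coloneqq \Re\{\alpha\},\Im\{\alpha\}$ and $\beta_k \coloneqq \Re\{\beta\},\Im\{\beta\}$. By hypothesis, each scalar output $(\Phi_{\alpha,\epsilon})_j$ is a (single-output) ReLU-NN of width at most $M_\alpha$ and depth at most $L_\alpha(\epsilon)$ approximating $\alpha_j$ in $L^\infty(\mathcal{I}_D)$ to accuracy $\epsilon$, and analogously for $\beta$. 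I will first replace $\epsilon$ by a suitably rescaled accuracy $\epsilon/(4\,\norm{\alpha}_{L^\infty}\norm{\beta}_{L^\infty})$ so that the four real products can be bounded individually by $\epsilon/4$ via the triangle inequality.

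Next I would invoke Lemma \ref{lmm:product_of_real_NNs} four times — once for each pair $(\alpha_j,\beta_k)$, with $j,k\in\{1,2\}$ — to obtain real-valued ReLU-NNs $\Phi_{\alpha_j\beta_k,\epsilon/4}$ with $\norm{\alpha_j\beta_k-\Phi_{\alpha_j\beta_k,\epsilon/4}}_{L^\infty(\mathcal{I}_D)}\leq \epsilon/4$. Setting
\begin{equation}
\left(\Phi_{\times,\epsilon}\right)_1 \coloneqq \Phi_{\alpha_1\beta_1,\epsilon/4} - \Phi_{\alpha_2\beta_2,\epsilon/4}, \qquad \left(\Phi_{\times,\epsilon}\right)_2 \coloneqq \Phi_{\alpha_1\beta_2,\epsilon/4} + \Phi_{\alpha_2\beta_1,\epsilon/4},
\end{equation}
the triangle inequality on the identity above yields the $L^\infty(\mathcal{I}_D)$-error bound $\epsilon$. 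The final $\pm 1$ linear combinations are absorbed into the terminal affine layer of the composite network and thus preserve both the depth and the weight bound \eqref{eq:bound_mult_comp_weights_2}, the latter being immediate since the construction only uses weights of $\Phi_{\alpha,\epsilon}$, $\Phi_{\beta,\epsilon}$, and of the multiplication network $\mu_{U_{\alpha,\beta},\zeta}$ from Proposition \ref{prop:mult_network}, together with $\pm 1$ linear combination coefficients.

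The main obstacle lies in organizing the four real multiplications as a \emph{single} fixed-width ReLU-NN realizing the stated width bound $M=\max\{4+M_\alpha,\,2+M_\beta,\,2+\max\{10,M_\alpha\},\,2+\max\{10,M_\beta\}\}$. This is achieved through a sequential four-phase architecture that mirrors the construction in the proof of Lemma \ref{lmm:product_of_real_NNs}. In phase one $\Phi_{\alpha,\epsilon/4}$ is evaluated while four auxiliary channels carry the input $x$ and buffer slots for intermediate sums (width $4+M_\alpha$). In phase two $\Phi_{\beta,\epsilon/4}$ is evaluated while the two $\alpha$-outputs are carried forward (width $2+M_\beta$). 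Phases three and four realize two parallel copies of the scalar multiplication network (each of width $5$, hence contributing $10$) applied to the pairs $(\alpha_1,\beta_1),(\alpha_2,\beta_2)$ and $(\alpha_1,\beta_2),(\alpha_2,\beta_1)$ respectively, with two additional channels accumulating the signed partial sums into the two output coordinates. Each of $\Phi_{\alpha,\epsilon/4}$ and $\Phi_{\beta,\epsilon/4}$ enters the depth bound via two phases (real and imaginary tracks of the accumulator), which accounts for the factor $2$ in the prefactors $2\max\{L_\alpha(\epsilon/4),2\}$ and $2\max\{L_\beta(\epsilon/4),2\}$ of the depth estimate, while the term $C(\log\lceil U_{\alpha,\beta}\rceil+\log(4/\epsilon))$ comes from Proposition \ref{prop:mult_network} applied with target accuracy $\epsilon/4$ on the domain $(-U_{\alpha,\beta},U_{\alpha,\beta})^2$.
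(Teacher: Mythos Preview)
Your overall strategy coincides with the paper's: expand $\alpha\beta$ into four real products, emulate each with the multiplication network of Proposition~\ref{prop:mult_network}, and organise everything as a single sequential fixed-width ReLU-NN. The error and weight bounds follow exactly as you say.

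Where your sketch slips is in the architecture. In your four phases you evaluate $\Phi_{\alpha}$ once (phase~1), $\Phi_{\beta}$ once (phase~2), and then run multiplications in phases~3--4. But at the start of phase~3 you then hold four scalars $(\alpha_1,\alpha_2,\beta_1,\beta_2)$, and to perform the two multiplications of phase~3 \emph{and} still have all four scalars available for phase~4 you would have to carry them alongside the two width-$5$ multiplication blocks, forcing width at least $10+4=14$ (plus accumulator channels), which overshoots the stated bound $2+\max\{10,M_\alpha\}$. Your own remark that ``each of $\Phi_{\alpha,\epsilon/4}$ and $\Phi_{\beta,\epsilon/4}$ enters the depth bound via two phases'' already signals the fix, but it contradicts your phase description.

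The paper avoids this by carrying only the scalar input $x$ (not the four outputs) and \emph{re-evaluating} $\Phi_\alpha$ and $\Phi_\beta$ from $x$ where needed. Concretely it uses blocks $\Theta_1,\dots,\Theta_4$: $\Theta_1$ evaluates $\Phi_\beta$ and carries $x$; $\Theta_2$ evaluates $\Phi_\alpha$ on the carried $x$, does the two multiplications for the real part in parallel, and carries $x$; $\Theta_3$ evaluates $\Phi_\alpha$ again on $x$ while carrying the real-part result; $\Theta_4$ evaluates $\Phi_\beta$ again and does the two multiplications for the imaginary part. This is precisely what produces the four widths $2+M_\beta$, $2+\max\{10,M_\alpha\}$, $4+M_\alpha$, $2+\max\{10,M_\beta\}$ and the factor $2$ in front of $L_\alpha$ and $L_\beta$ in the depth. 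Also, rather than invoking Lemma~\ref{lmm:product_of_real_NNs} four times (whose depth bound involves $L_\alpha(\epsilon/(3\|\beta\|))$ etc.\ and does not compose cleanly), the paper builds the composite directly from $\mu_{U_{\alpha,\beta},\epsilon}$. With these adjustments your argument goes through.
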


\begin{proof}
Observe that
\begin{equation}
\begin{aligned}
	\alpha
	\beta
	&
	=
	\left(
		\Re\{\alpha \}
		+
		\imath
		\Im\{\alpha \}
	\right)
	\left(
		\Re\{\beta \}
		+
		\imath
		\Im\{\beta \}
	\right)
	\\
	&
	=
	\Re\{\alpha \}
	\Re\{\beta \}
	-
	\Im\{\alpha \}
	\Im\{\beta \}
	+
	\imath
	\left(
		\Re\{\alpha \}
		\Im\{\beta \}
		+
		\Im\{\alpha \}
		\Re\{\beta \}
	\right).
\end{aligned}
\end{equation}
Define the ReLU-NNs
\begin{equation}
\begin{aligned}
	\Phi^{(1)}_{\times,\epsilon}
	(x)
	&
	=
	\mu_{U_{\alpha,\beta},\epsilon}
	\left(
		\left(
			\Phi_{\alpha,\epsilon}(x)
		\right)_1
		,
		\left(
			\Phi_{\beta,\epsilon}(x)
		\right)_1
	\right)
	,
	&
	\quad
	\Phi^{(2)}_{\times,\epsilon}
	(x)
	&
	=
	\mu_{U_{\alpha,\beta},\epsilon}
	\left(
		\left(
			\Phi_{\alpha,\epsilon}(x)
		\right)_2
		,
		\left(
			\Phi_{\beta,\epsilon}(x)
		\right)_2
	\right),
	\\
	\Phi^{(3)}_{\times,\epsilon}
	(x)
	&
	=
	\mu_{U_{\alpha,\beta},\epsilon}
	\left(
		\left(
			\Phi_{\alpha,\epsilon}(x)
		\right)_1
		,
		\left(
			\Phi_{\beta,\epsilon}(x)
		\right)_2
	\right)
	,
	&
	\quad
	\Phi^{(4)}_{\times,\epsilon}
	(x)
	&
	=
	\mu_{U_{\alpha,\beta},\epsilon}
	\left(
		\left(
			\Phi_{\alpha,\epsilon}(x)
		\right)_2
		,
		\left(
			\Phi_{\beta,\epsilon}(x)
		\right)_1
	\right),
\end{aligned}
\end{equation}
and $\mu_{U_{\alpha,\beta},\epsilon} \in\mathcal{N\!N}_{L,5,2,1}$ with
$
	L
	\leq
	C
	\left(
	\log
	\left(
		\lceil U_{\alpha,\beta} \rceil
	\right)
	+
	\log
	\left(
		\frac{4}{\epsilon}
	\right)
	\right)
$ for $\epsilon \in (0,1/8)$, and satisfying
\begin{equation}
	\norm{
		xy
		-
		\mu_{U_{\alpha,\beta},\epsilon}(x,y)
	}_{L^\infty(U_{\alpha,\beta})}
	\leq
	\frac{\epsilon}{4}
\end{equation}
is the multiplication ReLU-NN of Proposition \ref{prop:mult_network}.
Set
\begin{equation}
	\Phi_{\times,\epsilon}
	(x)
	=
	\begin{pmatrix}
		\Phi^{(1)}_{\times,\epsilon}
		(x)
		-
		\Phi^{(2)}_{\times,\epsilon}
		(x)
		\\
		\Phi^{(3)}_{\times,\epsilon}
		(x)
		+
		\Phi^{(4)}_{\times,\epsilon}
		(x)
	\end{pmatrix},
	\quad
	x \in \mathcal{I}_D.
\end{equation}
It holds that
\begin{equation}
	\norm{
		\Re\{\alpha \}
		\Re\{\beta \}
		-
		\Phi^{(1)}_{\times,\epsilon}
	}_{L^\infty(\mathcal{I}_D)}
	\leq
	\frac{\epsilon}{4}.
\end{equation}
The same holds for $\Phi^{(i)}_{\times,\epsilon}$, $i=2,3,4$,
thus
\begin{equation}
	\norm{
		\alpha
		\beta
		-
		\left(
			\left(\Phi_{\times,\epsilon}\right)_1
			+
			\imath
			\left(\Phi_{\times,\epsilon}\right)_2
		\right)
	}_{L^\infty(\mathcal{I}_D)}
	\leq
	\epsilon.
\end{equation}
Let us define the ReLU-NNs
\begin{equation}
	\Lambda_{1,\epsilon}
	(x_1,x_2,x_3)
	\coloneqq
	\begin{pmatrix}
		1 & -1   
	\end{pmatrix}
	\begin{pmatrix}
		\mu_{U_{\alpha,\beta},\epsilon}\left(\left(\Phi_{\alpha,\epsilon}(x_1)\right)_1,x_2\right) \\
		\mu_{U_{\alpha,\beta},\epsilon}\left(\left(\Phi_{\alpha,\epsilon}(x_1)\right)_2,x_3\right) 
	\end{pmatrix}
\end{equation}
and
\begin{equation}
	\Lambda_{2,\epsilon}
	(x_1,x_2,x_3)
	\coloneqq
	\begin{pmatrix}
		1 & 1
	\end{pmatrix}
	\begin{pmatrix}
		\mu_{U_{\alpha,\beta},\epsilon}\left(x_3,\left(\Phi_{\beta,\epsilon}(x_1)\right)_2\right) \\
		\mu_{U_{\alpha,\beta},\epsilon}\left(x_2,\left(\Phi_{\beta,\epsilon}(x_1)\right)_1\right) 
	\end{pmatrix}.
\end{equation}
Next, we set
\begin{equation}
\begin{aligned}
	\Theta_{1,\epsilon}(x)
	&
	\coloneqq
	\begin{pmatrix}
		x \\
		\left(\Phi_{\beta,\epsilon}(x)\right)_1 \\
		\left(\Phi_{\beta,\epsilon}(x)\right)_2
	\end{pmatrix},
	&
	\quad
	\Theta_{2,\epsilon}(x_1,x_2,x_3)
	&
	\coloneqq
	\begin{pmatrix}
		\Lambda_{1,\epsilon} (x_1,x_2,x_3) \\
		x_1
	\end{pmatrix},
	\\
	\Theta_{3,\epsilon}(x_1,x_2)
	&
	\coloneqq
	\begin{pmatrix}
		x_1 \\
		x_2 \\
		\left(\Phi_{\alpha,\epsilon}(x_2)\right)_1 \\
		\left(\Phi_{\alpha,\epsilon}(x_2)\right)_2
	\end{pmatrix},
	&
	\quad
	\Theta_{4,\epsilon}(x_1,x_2,x_3)
	&
	\coloneqq
	\begin{pmatrix}
		x _1\\
		\Lambda_{2,\epsilon}
		(x_2,x_3,x_4)
	\end{pmatrix}.
\end{aligned}
\end{equation}
Thus
\begin{equation}
	\Phi_{\times,\epsilon}
	(x)
	=
	\left(
		\Theta_{4,\epsilon}
		\circ
		\Theta_{3,\epsilon}
		\circ
		\Theta_{2,\epsilon}
		\circ
		\Theta_{1,\epsilon}
	\right)
	(x),
	\quad
	x \in \mathcal{I}_D.
\end{equation}
We have
\begin{equation}
\begin{aligned}
	\mathcal{M}
	\left(
		\Theta_{1,\epsilon}
	\right)
	&
	=
	2+M_\beta,
	&
	\quad
	\mathcal{M}
	\left(
		\Theta_{2,\epsilon}
	\right)
	&
	=
	2
	+
	\max
	\left\{
		10
		,
		M_\alpha
	\right\},
	\\
	\mathcal{M}
	\left(
		\Theta_{3,\epsilon}
	\right)
	&
	=
	4+M_\alpha,
	&
	\quad
	\mathcal{M}
	\left(
		\Theta_{4,\epsilon}
	\right)
	&
	=
	2
	+
	\max
	\left\{
		10
		,
		M_\beta
	\right\},
\end{aligned}
\end{equation}
and
\begin{equation}
\begin{aligned}
	\mathcal{L}
	\left(
		\Theta_{1,\epsilon}
	\right)
	&
	\leq
	\max
	\left\{
		L_\beta
		\left(
			\frac{\epsilon}{4}
		\right)
		,
		2
	\right\},
	\\
	\mathcal{L}
	\left(
		\Theta_{2,\epsilon}
	\right)
	&
	\leq
	C
	\left(
	\log
	\left(
		\lceil U_{\alpha,\beta} \rceil
	\right)
	+
	\log
	\left(
		\frac{4}{\epsilon}
	\right)
	\right)
	+
	\max
	\left\{
	L_\alpha
	\left(
	\frac{
		\epsilon
	}{
		12
		\norm{
			\beta
		}_{L^\infty(\mathcal{I}_\beta)}
	}
	\right)
		,
		2
	\right\},
	\\
	\mathcal{L}
	\left(
		\Theta_{3,\epsilon}
	\right)
	&
	\leq
	\max
	\left\{
		L_\alpha
		\left(
			\frac{\epsilon}{4}
		\right)
		,
		2
	\right\},
	\\
	\mathcal{L}
	\left(
		\Theta_{4,\epsilon}
	\right)
	&
	\leq
	C
	\left(
	\log
	\left(
		\lceil U_{\alpha,\beta} \rceil
	\right)
	+
	\log
	\left(
		\frac{4}{\epsilon}
	\right)
	\right)
	+
	\max
	\left\{
	L_\beta
	\left(
		\frac{
		\epsilon
		}{
		24
		\norm{
			\alpha
		}_{L^\infty(\mathcal{I}_D)}
		}
	\right)
		,
		2
	\right\}.
\end{aligned}
\end{equation}
Thus $\Phi_{\times,\epsilon} \in \mathcal{N\!N}_{L,M,1,2}$ with
\begin{equation}
	M
	=
	\max_{i=1,\dots,4}
	\left\{
		\mathcal{M}
		\left(
			\Theta_{i,\epsilon}
		\right)
	\right\},
\end{equation}
and
\begin{equation}
\begin{aligned}
	L
	&
	\leq
	\sum_{i=4}^{4}
	\mathcal{L}
	\left(
		\Theta_{i,\epsilon}
	\right)
	\\
	\leq
	&
	2
	C
	\left(
	\log
	\left(
		\lceil U_{\alpha,\beta} \rceil
	\right)
	+
	\log
	\left(
		\frac{4}{\epsilon}
	\right)
	\right)
	+
	2
	\max
	\left\{
	L_\alpha
	\left(
	\frac{
		\epsilon
	}{
		12
		\norm{
			\beta
		}_{L^\infty(\mathcal{I}_\beta)}
	}
	\right)
		,
		2
	\right\}
	\\
	&
	+
	2
	\max
	\left\{
	L_\beta
	\left(
		\frac{
		\epsilon
		}{
		24
		\norm{
			\alpha
		}_{L^\infty(\mathcal{I}_D)}
		}
	\right)
		,
		2
	\right\},
\end{aligned}
\end{equation}
with $C>0$ as in Proposition \ref{prop:mult_network}.
This concludes the proof of this result.
\end{proof}

Using Lemma \ref{lmm:general_result_oscillatory_texture}
and Lemma \ref{lmm:product_of_complex_NNs} we get the following result.

\begin{lemma}\label{lmm:approx_comp_mult_complex}
Let $\mathcal{I}_D = (-D,D)$ for $D>0$.
Let $\omega: \R \rightarrow \mathbb{C}$
and $\alpha: \overline{\mathcal{I}}_D\rightarrow \mathbb{C}$,
$\beta: \overline{\mathcal{I}}_D\rightarrow \R$
be continuously differentiable and continuous functions, respectively. 
Set $f(x) = \alpha(x)  (\omega \circ \beta)(x)$ for $x \in \mathcal{I}_D$.
Assume the following:
\begin{itemize}
	\item[(i)]
	There exist ReLU-NNs
	$\Phi_{\omega,\epsilon} \in \mathcal{N\!N}_{L_\omega(\epsilon,B),M_\omega,1,2}$
	(i.e.~with depth $L_\omega(\epsilon,B)$ depending
	on $\epsilon>0$ and $B>0$, and fixed width $M_\omega$)
	such that for $\epsilon >0$ and any $B>0$ it holds
	\begin{equation}
		\norm{
			\omega
			-
			\left(\Phi_{\omega,\epsilon} \right)_1
			-
			\imath
			\left(\Phi_{\omega,\epsilon} \right)_2
		}_{L^\infty((-B,B))} 
		\leq 
		\epsilon.
	\end{equation}
	\item[(ii)]
	In addition, assume that there exist ReLU-NNs
	$\Phi_{\alpha,\epsilon}  \in \mathcal{N\!N}_{L_\alpha(\epsilon),M_\alpha,1,2}$ 
        and
	$\Phi_{\beta,\epsilon}  \in \mathcal{N\!N}_{L_\beta(\epsilon),M_\beta,1,1}$
	(i.e.~with depths $L_\alpha(\epsilon)$ and $L_\beta(\epsilon)$ 
        depending on $\epsilon>0$ 
        and fixed widths $M_\alpha$ and $M_\beta$)
	such that for $\epsilon >0$ it holds
	\begin{equation}
		\norm{
			\alpha
			-
			\left(
				\left(\Phi_{\alpha,\epsilon} \right)_1
				+
				\imath
				\left(\Phi_{\alpha,\epsilon} \right)_2
			\right)
		}_{L^\infty(\mathcal{I}_D)} 
		\leq 
		\epsilon
		\quad
		\text{and}
		\quad
		\norm{\beta-\Phi_{\beta,\epsilon} }_{L^\infty(\mathcal{I}_D)} 
		\leq 
		\epsilon.
	\end{equation}
\end{itemize}
Then, 
there exist $C>0$ and ReLU-NNs
$\Phi_{f,\epsilon} \in \mathcal{N\!N}_{L(\epsilon),M,1,1}$
such that for $\epsilon>0$ small enough
\begin{equation}
	\norm{
		f
		-
		\Phi_{f,\epsilon}
	}_{L^\infty(\mathcal{I}_D)}
	\leq
	\epsilon,
\end{equation}
with
\begin{equation}
	M
	=
	\max\{
	2
	+
	\max\{M_\omega,M_\beta\}
	,
	2
	+
	\max
	\left\{
		10
		,
		M_\alpha
	\right\}
	,
	4+M_\alpha
	,
	2
	+
	\max
	\left\{
		10
		,
		M_\omega
		,
		M_\beta
	\right\}
	\},
\end{equation}
and
\begin{equation}
\begin{aligned}
	L(\epsilon)
	\leq 
	&
	C
	\left(
	\log
	\left(
		\lceil U_{\alpha,\omega} \rceil
	\right)
	+
	\log
	\left(
		\frac{12}{\epsilon}
	\right)
	\right)
	+
	2
	\max
	\left\{
		L_\alpha
		\left(
		\frac{
			\epsilon
		}{
			24
			\norm{
				\alpha
			}_{L^\infty(\mathcal{I})}
			\norm{
				\omega'
			}_{L^\infty(\mathcal{I}_\beta)}
		}
		\right)
		,
		2
	\right\},
	\\
	&
	+
	2
	\max
	\left\{
		L_\omega
		\left(
		\frac{
			\epsilon
		}{
			24
			\norm{
				\alpha
			}_{L^\infty(\mathcal{I}_D)}
		}
		,
		2\norm{\beta}_{L^\infty(\mathcal{I}_D)}
		\right)
		+
		L_\beta
		\left(
		\frac{
			\epsilon
		}{
			24
			\norm{
				\alpha
			}_{L^\infty(\mathcal{I})}
			\norm{
				\omega'
			}_{L^\infty(\mathcal{I}_\beta)}
		}
		\right)
		,
		2
	\right\},
\end{aligned}
\end{equation}
where $U_{\alpha,\omega} =
2 \max\left\{\norm{\alpha}_{L^\infty(\mathcal{I}_{D})},\norm{\omega}_{L^\infty(\mathcal{I}_{\beta})}\right\}$.

The weights of $\Phi_{f,\epsilon}$ are bounded according to
\begin{equation}\label{eq:bound_mult_comp_complex_weights}
	\mathcal{B}
	\left(
		\Phi_{f,\epsilon}
	\right)
	\leq
	\max
	\left\{
		\mathcal{B}
		\left(
			\Phi_{\alpha,\epsilon}
		\right),
		\mathcal{B}
		\left(
			\Phi_{\beta,\epsilon}
		\right),
		\mathcal{B}
		\left(
			\Phi_{\omega,\epsilon}
		\right),
		1	
	\right\}.
\end{equation}
\end{lemma}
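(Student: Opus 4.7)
The plan is to combine Lemma~\ref{lmm:product_of_complex_NNs} with a composition argument tailored to a complex-valued outer function and a real-valued inner function. The key observation is that the ReLU-NN $\Phi_{\omega,\epsilon}$ emulating $\omega = \omega_1 + \imath \omega_2$ has two real outputs corresponding to $\omega_1$ and $\omega_2$ separately, so composing it with the scalar ReLU-NN $\Phi_{\beta,\epsilon}$ yields a two-output ReLU-NN emulating the complex composition $\omega \circ \beta$. Once we have this object, we can apply Lemma~\ref{lmm:product_of_complex_NNs} to multiply it by $\Phi_{\alpha,\epsilon}$, producing the desired $\Phi_{f,\epsilon}$.

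First I would construct $\Phi_{\omega \circ \beta, \epsilon} \coloneqq \Phi_{\omega,\epsilon} \circ \Phi_{\beta,\epsilon}$. To control the approximation error, I bound
\begin{equation*}
\norm{
\omega\circ\beta - \left(\Phi_{\omega \circ \beta, \epsilon}\right)_1 - \imath \left(\Phi_{\omega \circ \beta, \epsilon}\right)_2
}_{L^\infty(\mathcal{I}_D)}
\leq
\norm{\omega'}_{L^\infty(\mathcal{I}_\beta)} \norm{\beta - \Phi_{\beta,\epsilon}}_{L^\infty(\mathcal{I}_D)}
+
\norm{\omega - (\Phi_{\omega,\epsilon})_1 - \imath (\Phi_{\omega,\epsilon})_2 }_{L^\infty(\mathcal{I}_\beta)},
\end{equation*}
where $\mathcal{I}_\beta$ is as in \eqref{eq:interval_beta} and the argument for the range of $\Phi_{\beta,\epsilon}$ landing in $\mathcal{I}_\beta$ follows exactly as in \eqref{eq:bound_phi_beta} in the proof of Lemma~\ref{lmm:general_result_oscillatory_texture}. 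By choosing the accuracies $\epsilon/\bigl(c \norm{\alpha}_{L^\infty(\mathcal{I}_D)} \norm{\omega'}_{L^\infty(\mathcal{I}_\beta)}\bigr)$ and $\epsilon/\bigl(c \norm{\alpha}_{L^\infty(\mathcal{I}_D)}\bigr)$ for $\Phi_{\beta,\epsilon}$ and $\Phi_{\omega,\epsilon}$ respectively (with $c$ a small integer), I ensure that $\Phi_{\omega\circ\beta,\epsilon}$ approximates $\omega \circ \beta$ within tolerance $\epsilon/(c\norm{\alpha}_{L^\infty(\mathcal{I}_D)})$. The resulting ReLU-NN has input dimension $1$, output dimension $2$, width $\max\{M_\omega, M_\beta\}$, and depth $L_\beta(\cdot) + L_\omega(\cdot, 2\norm{\beta}_{L^\infty(\mathcal{I}_D)})$ at the corresponding tolerances.

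Next I would apply Lemma~\ref{lmm:product_of_complex_NNs} to the two complex-valued ReLU-NNs $\Phi_{\alpha,\epsilon}$ and $\Phi_{\omega\circ\beta,\epsilon}$, with the roles of $\alpha$ and $\beta$ in that lemma played here by $\alpha$ and $\omega \circ \beta$, respectively. The accuracy threshold becomes $\epsilon/2$ on each factor to produce a combined error of $\epsilon$, but because one of the two factors is itself a composition, the scaling of the error in terms of $\norm{\omega'}_{L^\infty(\mathcal{I}_\beta)}$ and $\norm{\alpha}_{L^\infty(\mathcal{I}_D)}$ is exactly what generates the factors in the denominators in the statement. The width bound then follows by substituting $M_\beta \mapsto \max\{M_\omega,M_\beta\}$ in the width bound of Lemma~\ref{lmm:product_of_complex_NNs}, which yields precisely the stated $M$.

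The depth bound is then obtained by adding the depth of $\Phi_{\omega\circ\beta,\epsilon}$, which contributes the $L_\omega$ and $L_\beta$ terms, to twice the depth of the complex multiplication block from Lemma~\ref{lmm:product_of_complex_NNs}. The main obstacle is the careful bookkeeping of the three nested accuracy requirements: the target accuracy $\epsilon$ is split among the multiplication network error, the error in $\alpha$, and the error in $\omega\circ\beta$, and the latter is itself split between the error in $\omega$ and the error in $\beta$ amplified by $\norm{\omega'}_{L^\infty(\mathcal{I}_\beta)}$. Propagating the constants correctly while keeping the width fixed and the depth at most logarithmic in the auxiliary quantities, as in the analogous real-valued Lemma~\ref{lmm:general_result_oscillatory_texture}, is the only nontrivial technical point. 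Finally, the weight bound \eqref{eq:bound_mult_comp_complex_weights} is inherited directly from the weights of the subnetworks $\Phi_{\alpha,\epsilon}$, $\Phi_{\beta,\epsilon}$, $\Phi_{\omega,\epsilon}$, and the multiplication network $\mu_{U_{\alpha,\omega},\epsilon}$ of Proposition~\ref{prop:mult_network}.
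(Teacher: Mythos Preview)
Your proposal is correct and follows essentially the same approach as the paper. The paper writes out the four real products $\Phi^{(i)}_{\times,\epsilon}$ explicitly and re-assembles the $\Theta_{j,\epsilon}$ blocks from Lemma~\ref{lmm:product_of_complex_NNs} with $\Phi_{\omega,\epsilon}\circ\Phi_{\beta,\epsilon}$ in place of the second factor, whereas you first isolate the composition step and then invoke Lemma~\ref{lmm:product_of_complex_NNs} as a black box with the substitution $M_\beta \mapsto \max\{M_\omega,M_\beta\}$ and $L_\beta \mapsto L_\omega+L_\beta$; the error splitting and the resulting width and depth bounds coincide.
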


\begin{proof}
The construction 
is similar to that of Lemma \ref{lmm:product_of_complex_NNs}.
Firstly
\begin{equation}
\begin{aligned}
	\alpha  (\omega \circ \beta)
	&
	=
	\left(
		\Re\{\alpha \}
		+
		\imath
		\Im\{\alpha \}
	\right)
	\left(
		\Re\{ \omega \circ \beta \}
		+
		\imath
		\Im\{\omega \circ \beta \}
	\right)
	\\
	&
	=
	\Re\{\alpha \}
	\Re\{\omega \circ \beta \}
	-
	\Im\{\alpha \}
	\Im\{\omega \circ \beta \}
	+
	\imath
	\left(
		\Re\{\alpha \}
		\Im\{\omega \circ \beta \}
		+
		\Im\{\alpha \}
		\Re\{\omega \circ \beta \}
	\right).
\end{aligned}
\end{equation}
As in the proof of Lemma \ref{lmm:general_result_oscillatory_texture}
we define the ReLU-NNs
\begin{equation}
\begin{aligned}
	\Phi^{(1)}_{\times,\epsilon}
	(x)
	&
	=
	\mu_{U_{\alpha,\omega},\epsilon}
	\left(
		\left(
			\Phi_{\alpha,\epsilon}(x)
		\right)_1
		,
		\left(
			\left(
			\Phi_{\omega,\epsilon}
			\circ
			\Phi_{\beta,\epsilon}
			\right)
			(x)
		\right)_1
	\right),
	\\
	\Phi^{(2)}_{\times,\epsilon}
	(x)
	&
	=
	\mu_{U_{\alpha,\omega},\epsilon}
	\left(
		\left(
			\Phi_{\alpha,\epsilon}(x)
		\right)_2
		,
		\left(
			\left(
			\Phi_{\omega,\epsilon}
			\circ
			\Phi_{\beta,\epsilon}
			\right)
			(x)
		\right)_2
	\right),
	\\
	\Phi^{(3)}_{\times,\epsilon}
	(x)
	&
	=
	\mu_{U_{\alpha,\omega},\epsilon}
	\left(
		\left(
			\Phi_{\alpha,\epsilon}(x)
		\right)_1
		,
		\left(
			\left(
			\Phi_{\omega,\epsilon}
			\circ
			\Phi_{\beta,\epsilon}
			\right)
			(x)
		\right)_2
	\right)
	,
	\\
	\Phi^{(4)}_{\times,\epsilon}
	(x)
	&
	=
	\mu_{U_{\alpha,\omega},\epsilon}
	\left(
		\left(
			\Phi_{\alpha,\epsilon}(x)
		\right)_2
		,
		\left(
			\left(
			\Phi_{\omega,\epsilon}
			\circ
			\Phi_{\beta,\epsilon}
			\right)
			(x)
		\right)_1
	\right),
\end{aligned}
\end{equation}
and 
$\mu_{U_{\alpha,\omega},\epsilon} \in\mathcal{N\!N}_{L,5,2,1}$ 
with
$
	L
	\leq 
	C
	\left(
	\log
	\left(
		\lceil U_{\alpha,\omega} \rceil
	\right)
	+
	\log
	\left(
		\frac{12}{\epsilon}
	\right)
	\right)
$, as $\epsilon \rightarrow 0$,
is the multiplication ReLU-NN of Proposition \ref{prop:mult_network}
satisfying
$$
	\norm{
		\mu_{U_{\alpha,\omega},\epsilon}(x,y)-xy
	}_{L^\infty((-U_{\alpha,\omega},U_{\alpha,\omega})^2)}
	\leq
	\frac{\epsilon}{12}.
$$
Set
\begin{equation}
	\Phi_{\times,\epsilon}
	(x)
	=
	\begin{pmatrix}
		\Phi^{(1)}_{\times,\epsilon}
		(x)
		-
		\Phi^{(2)}_{\times,\epsilon}
		(x)
		\\
		\Phi^{(3)}_{\times,\epsilon}
		(x)
		+
		\Phi^{(4)}_{\times,\epsilon}
		(x)
	\end{pmatrix}.
\end{equation}
From Lemma \ref{lmm:general_result_oscillatory_texture}
\begin{equation}\label{eq.accuracy_NNs_prod_comp}
\begin{aligned}
	\norm{
		\Re\{\alpha \}
		\Re\{\omega \circ \beta \}
		-
		\Phi^{(1)}_{\times,\epsilon}
	}_{L^\infty(\mathcal{I}_D)}
	&
	\leq
	\frac{\epsilon}{4},
	&
	\quad
	\norm{
		\Im\{\alpha \}
		\Im\{\omega \circ \beta \}
		-
		\Phi^{(2)}_{\times,\epsilon}
	}_{L^\infty(\mathcal{I}_D)}
	&
	\leq
	\frac{\epsilon}{4},
	\\
	\norm{
		\Re\{\alpha \}
		\Im\{\omega \circ \beta \}
		-
		\Phi^{(3)}_{\times,\epsilon}
	}_{L^\infty(\mathcal{I}_D)}
	&
	\leq
	\frac{\epsilon}{4},
	&
	\quad
	\norm{
		\Im\{\alpha \}
		\Re\{\omega \circ \beta \}
		-
		\Phi^{(4)}_{\times,\epsilon}
	}_{L^\infty(\mathcal{I}_D)}
	&
	\leq
	\frac{\epsilon}{4}.
\end{aligned}
\end{equation}
An inspection of the proof of Lemma \ref{lmm:general_result_oscillatory_texture}
reveals that to achieve the accuracies prescribed in \eqref{eq.accuracy_NNs_prod_comp}
with depths as in Lemma \ref{lmm:general_result_oscillatory_texture}
one requires the individual ReLU-NNs to satisfy
\begin{equation}
\begin{aligned}
	\norm{
		\alpha
		-
		\left(
			\left(\Phi_{\alpha,\epsilon}\right)_1
			+
			\imath
			\left(\Phi_{\alpha,\epsilon}\right)_2
		\right)
	}_{L^\infty(\mathcal{I}_D)}
	&
	\leq
	\frac{
		\epsilon
	}{
		24
		\norm{
			\omega
		}_{L^\infty(\mathcal{I}_\beta)}
	}
	, \\
	\norm{
		\beta
		-
		\Phi_{\beta,\epsilon} 
	}_{L^\infty(\mathcal{I}_D)}
	&
	\leq
	\frac{
		\epsilon
	}{
		24
		\norm{
			\alpha
		}_{L^\infty(\mathcal{I}_D)}
		\norm{
			\omega'
		}_{L^\infty(\mathcal{I}_\beta)}
	},
\end{aligned}
\end{equation}
and
\begin{equation}
	\norm{
		\omega
		-
		\left(
			\left(\Phi_{\omega,\epsilon}\right)_1
			+
			\imath
			\left(\Phi_{\omega,\epsilon}\right)_2
		\right)
	}_{L^\infty(\mathcal{I}_\beta)}
	\leq
	\frac{
		\epsilon
	}{
		24
		\norm{
			\alpha
		}_{L^\infty(\mathcal{I}_D)}
	},
\end{equation}
with $\mathcal{I}_\beta$ as in \eqref{eq:interval_beta}.

Let us define the ReLU-NN 
\begin{equation}
	\Lambda_{1,\epsilon}
	(x_1,x_2,x_3)
	\coloneqq
	\begin{pmatrix}
		1 & -1   
	\end{pmatrix}
	\begin{pmatrix}
		\mu_{U_{\alpha,\omega},\epsilon}\left(\left(\Phi_{\alpha,\epsilon}(x_1)\right)_1,x_2\right) \\
		\mu_{U_{\alpha,\omega},\epsilon}\left(\left(\Phi_{\alpha,\epsilon}(x_1)\right)_2,x_3\right) 
	\end{pmatrix}
\end{equation}
and
\begin{equation}
	\Lambda_{2,\epsilon}
	(x_1,x_2,x_3)
	\coloneqq
	\begin{pmatrix}
		1 & 1
	\end{pmatrix}
	\begin{pmatrix}
		\mu_{U_{\alpha,\omega},\epsilon}
		\left(
			x_3,
			\left(
			\left(
			\Phi_{\omega,\epsilon}
			\circ
			\Phi_{\beta,\epsilon}
			\right)
			(x_1)
			\right)_2
		\right) \\
		\mu_{U_{\alpha,\omega},\epsilon}
		\left(
			x_2,
			\left(
			\left(
			\Phi_{\omega,\epsilon}
			\circ
			\Phi_{\beta,\epsilon}
			\right)
			(x_1)
			\right)_1
		\right)
	\end{pmatrix}.
\end{equation}
Next, we set
\begin{equation}
\begin{aligned}
	\Theta_{1,\epsilon}(x)
	&
	\coloneqq
	\begin{pmatrix}
		x \\
		\left(
			\Phi_{\omega,\epsilon}
			\circ
			\Phi_{\beta,\epsilon}
			(x)
		\right)_1 \\
		\left(
			\Phi_{\omega,\epsilon}
			\circ
			\Phi_{\beta,\epsilon}
			(x)
		\right)_2
	\end{pmatrix},
	&
	\quad
	\Theta_{2,\epsilon}(x_1,x_2,x_3)
	&
	\coloneqq
	\begin{pmatrix}
		\Lambda_{1,\epsilon} (x_1,x_2,x_3) \\
		x_1
	\end{pmatrix},
	\\
	\Theta_{3,\epsilon}(x_1,x_2)
	&
	\coloneqq
	\begin{pmatrix}
		x_1 \\
		x_2 \\
		\left(\Phi_{\alpha,\epsilon}(x_2)\right)_1 \\
		\left(\Phi_{\alpha,\epsilon}(x_2)\right)_2
	\end{pmatrix},
	&
	\quad
	\Theta_{4,\epsilon}(x_1,x_2,x_3)
	&
	\coloneqq
	\begin{pmatrix}
		x _1\\
		\Lambda_{2,\epsilon}
		(x_2,x_3,x_4)
	\end{pmatrix}.
\end{aligned}
\end{equation}
We set
\begin{equation}
	\Phi_{\times,\epsilon}
	(x)
	=
	\left(
		\Theta_{4,\epsilon}
		\circ
		\Theta_{3,\epsilon}
		\circ
		\Theta_{2,\epsilon}
		\circ
		\Theta_{1,\epsilon}
	\right)
	(x),
	\quad
	x \in \mathcal{I}_D.
\end{equation}
We have
\begin{equation}
\begin{aligned}
	\mathcal{M}
	\left(
		\Theta_{1,\epsilon}
	\right)
	&
	=
	2+\max\{M_\omega,M_\beta\},
	&
	\quad
	\mathcal{M}
	\left(
		\Theta_{2,\epsilon}
	\right)
	&
	=
	2
	+
	\max
	\left\{
		10
		,
		M_\alpha
	\right\},
	\\
	\mathcal{M}
	\left(
		\Theta_{3,\epsilon}
	\right)
	&
	=
	4+M_\alpha,
	&
	\quad
	\mathcal{M}
	\left(
		\Theta_{4,\epsilon}
	\right)
	&
	=
	2
	+
	\max
	\left\{
		10
		,
		M_\omega
		,
		M_\beta
	\right\}.
\end{aligned}
\end{equation}
and
\begin{equation}
\begin{aligned}
	\mathcal{L}
	\left(
		\Theta_{1,\epsilon}
	\right)
	=
	&
	\max
	\left\{
		L_\omega
		\left(
		\frac{
			\epsilon
		}{
			24
			\norm{
				\alpha
			}_{L^\infty(\mathcal{I}_D)}
		}
		,
		2\norm{\beta}_{L^\infty(\mathcal{I}_D)}
		\right)
		+
		L_\beta
		\left(
		\frac{
			\epsilon
		}{
			24
			\norm{
				\alpha
			}_{L^\infty(\mathcal{I}_D)}
			\norm{
				\omega'
			}_{L^\infty(\mathcal{I}_\beta)}
		}
		\right)
		,
		2
	\right\},
	\\
	\mathcal{L}
	\left(
		\Theta_{2,\epsilon}
	\right)
	\leq
	&
	C
	\left(
	\log
	\left(
		\lceil U_{\alpha,\omega} \rceil
	\right)
	+
	\log
	\left(
		\frac{12}{\epsilon}
	\right)
	\right)
	+
	\max
	\left\{
		L_\alpha
		\left(
		\frac{
			\epsilon
		}{
			24
			\norm{
				\alpha
			}_{L^\infty(\mathcal{I}_D)}
			\norm{
				\omega'
			}_{L^\infty(\mathcal{I}_\beta)}
		}
		\right)
		,
		2
	\right\},
	\\
	\mathcal{L}
	\left(
		\Theta_{3,\epsilon}
	\right)
	=
	&
	\max
	\left\{
		L_\alpha
		\left(
		\frac{
			\epsilon
		}{
			24
			\norm{
				\alpha
			}_{L^\infty(\mathcal{I}_D)}
			\norm{
				\omega'
			}_{L^\infty(\mathcal{I}_\beta)}
		}
		\right)
		,
		2
	\right\},
	\quad
	\text{and},
	\\
	\mathcal{L}
	\left(
		\Theta_{4,\epsilon}
	\right)
	\leq
	&
	C
	\left(
	\log
	\left(
		\lceil U_{\alpha,\omega} \rceil
	\right)
	+
	\log
	\left(
		\frac{12}{\epsilon}
	\right)
	\right)
	\\
	&
	+
	\max
	\left\{
		L_\omega
		\left(
		\frac{
			\epsilon
		}{
			24
			\norm{
				\alpha
			}_{L^\infty(\mathcal{I}_D)}
		}
		,
		2\norm{\beta}_{L^\infty(\mathcal{I}_D)}
		\right)
		+
		L_\beta
		\left(
		\frac{
			\epsilon
		}{
			24
			\norm{
				\alpha
			}_{L^\infty(\mathcal{I}_D)}
			\norm{
				\omega'
			}_{L^\infty(\mathcal{I}_\beta)}
		}
		\right)
		,
		2
	\right\}.
\end{aligned}
\end{equation}
Thus $\Phi_{\times,\epsilon} \in \mathcal{N\!N}_{L,M,1,2}$ with
\begin{equation}
	L
	\leq
	\sum_{i=1}^{4}
	\mathcal{L}
	\left(
		\Theta_{i,\epsilon}
	\right)
	\quad
	\text{and}
	\quad
	M
	=
	\max_{i=1,\dots,4}
	\left\{
		\mathcal{M}
		\left(
			\Theta_{i,\epsilon}
		\right)
	\right\}.
\end{equation}
Finally, the weights of $\Phi_{f,\epsilon}$ are those of $\Phi_{\alpha,\epsilon}$,
$\Phi_{\beta,\epsilon} $, $\Phi_{\omega,\epsilon}$, 
and of $\mu_{U_{\alpha,\omega},\epsilon}$,
thus yielding \eqref{eq:bound_mult_comp_complex_weights}.
\end{proof}

\subsection{Trading off bound of weights for depth}
\label{sec:BdDepth}
\begin{proposition}[{\cite[Corollary A.2]{elbrachter2021deep}}]
\label{prop:trading_weights}
Let $d,d' \in \mathbb{N}$, $a \in \mathbb{R}_+$, $A \in [-a,a]^{d'\times d}$,
and $b \in [-a,a]^{d'}$. 
There exist ReLU-NNs $\Phi_{A,b} \in \mathcal{N\!N}_{L,M,d,d'}$
satisfying $\Phi_{A,b}(x) = Ax+b$, with 
\begin{equation}
	L
	\leq
	\left \lfloor \log(\snorm{a}) \right \rfloor+5,
	\quad
	M \leq \max\{d,3d'\},
	\quad
	\text{and}
	\quad
	\mathcal{B}(\Phi_{A,b}) \leq 1.
\end{equation}
\end{proposition}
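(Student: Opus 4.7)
The plan is to exploit a \emph{binary doubling} strategy: factor the affine map through its magnitude-one normalisation, then amplify iteratively via a sequence of short ReLU sub-networks, each of which multiplies the signal by $2$ while keeping all weights in $\{-1, 0, 1\}$. This is the standard ``weight-for-depth'' trade-off made possible by the positive homogeneity of $\varrho$ together with the identity decomposition $x = \varrho(x) - \varrho(-x)$.

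First, I would normalise. Setting $k \coloneqq \lfloor \log_2(a) \rfloor + 1$, write $A = 2^k \widetilde A$ and $b = 2^k \widetilde b$, so that all entries of $\widetilde A$ and $\widetilde b$ lie in the closed interval $[-1,1]$. The initial affine layer then computes the vertical concatenation $\big(\widetilde A x + \widetilde b,\ -(\widetilde A x + \widetilde b)\big)^\top \in \R^{2d'}$ using a weight matrix with entries in $[-1,1]$ and a bias with entries in $[-1,1]$. Applying $\varrho$ component-wise yields the pair $(y^+, y^-) \in \R_{\geq 0}^{2d'}$ of non-negative and non-positive parts, which satisfies $\widetilde A x + \widetilde b = y^+ - y^-$ and, crucially, is preserved under further applications of $\varrho$.

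Next I would cascade $k$ identical \emph{doubling layers}. Each layer acts on the current non-negative pair $(z^+, z^-)$ by splitting it into two duplicate copies in a hidden layer of width $3d'$, applying $\varrho$ (which acts as the identity there), and then recombining to produce $(2z^+, 2z^-)$, all with weight matrix entries in $\{-1, 0, 1\}$; no bias is needed. After $k$ such layers the signal becomes $(2^k y^+, 2^k y^-)$, and one final linear layer $2^k y^+ - 2^k y^-$ recovers $2^k(\widetilde A x + \widetilde b) = Ax + b$ with weights in $\{-1, +1\}$. By construction, each individual weight and bias entry satisfies $|\cdot| \leq 1$, so $\mathcal{B}(\Phi_{A,b}) \leq 1$; the width is at most $\max\{d, 3d'\}$ (the input width $d$ dominates in the first affine map, the hidden doubling width $3d'$ dominates thereafter); and the depth is bounded by $k$ doubling layers plus a handful of initial/final bookkeeping layers, which after careful counting fits within $\lfloor \log(a) \rfloor + 5$.

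The main obstacle is not conceptual but combinatorial: one must carefully match the explicit depth constant $5$ and width constant $3d'$ by choosing an efficient identity-propagation scheme across layers, and handle the edge case $a \leq 1$ separately (in which case a single affine layer with weights in $[-1,1]$ already represents $x \mapsto Ax + b$ exactly, and no doubling is necessary). Once the doubling sub-network is formalised as a single lemma, the assembly is routine composition of DNNs.
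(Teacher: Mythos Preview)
The paper does not supply its own proof of this proposition: it is quoted verbatim from \cite[Corollary~A.2]{elbrachter2021deep} and used as a black box. Your proposal reproduces the standard argument behind that cited result, and the overall strategy --- normalise by $2^{k}$ with $k=\lfloor\log_2 a\rfloor+1$, represent the identity via $x=\varrho(x)-\varrho(-x)$, then amplify through $k$ doubling layers with $\{-1,0,1\}$ weights --- is correct and is exactly what the cited reference does.

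One detail deserves tightening. You write that each doubling layer ``splits into two duplicate copies in a hidden layer of width $3d'$,'' but two copies of the $2d'$-vector $(z^+,z^-)$ would occupy width $4d'$, not $3d'$. To hit the stated bound $M\leq\max\{d,3d'\}$ one instead carries the triple $(v^+,v^-,\lvert v\rvert)$ with $\lvert v\rvert=v^++v^-$ componentwise; then the affine updates
\[
v^+\mapsto v^+-v^-+\lvert v\rvert=2v^+,\qquad
v^-\mapsto -v^++v^-+\lvert v\rvert=2v^-,\qquad
\lvert v\rvert\mapsto v^++v^-+\lvert v\rvert=2\lvert v\rvert
\]
all have non-negative pre-activations (so $\varrho$ acts as the identity) and use only weights in $\{-1,0,1\}$. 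With this refinement the layer count fits within $\lfloor\log a\rfloor+5$, and the edge case $a\leq 1$ that you correctly flag is handled by a single affine layer.
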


\section{Proof of Propositions \ref{prop:approx_FockInt_NN} and \ref{prop:approx_FockInt_NN_der}}
\label{sec:proof_FockInt}
\begin{proof}[Proof of Propositions \ref{prop:approx_FockInt_NN}]
We divide the proof in five steps.
We construct ReLU-NNs
emulating $\Psi$ in three different regions
of the interval $\mathcal{I}_\kappa$
by leveraging on the asymptotic expansion
described in Theorem \ref{thm:expansion_V}.
These three constructions are described in steps
{\sf \encircle{1}} through {\sf \encircle{3}}. 
Next, in step {\sf \encircle{4}}, using a suitable
partition of unity defined over the interval $\mathcal{I}_\kappa$,
we put together the aforementioned localized ReLU-NNs to construct 
a global approximation. 
Finally, in step {\sf \encircle{5}} we provide
wavenumber-robust bounds for the depth, width, and weights of the
constructed ReLU-NNs.\\

{\sf \encircle{1}}  {\sf ReLU-NN Emulation of $\Psi(\tau)$ for $\tau \rightarrow +\infty$}.
According to Theorem \ref{thm:expansion_V} for each $n\in \mathbb{N}$
there exists $C_n>0$ such that
\begin{align}
	\snorm{
	\Psi(\tau)
	-
	\sum_{j=0}^{n}a_j \tau^{1-3j}} 
	\leq  C_n\tau^{1-3(n+1)}
	\quad
	\text{as}\quad \tau\rightarrow\infty.
\end{align}
For each $n\in \mathbb{N}$ and $\epsilon>0$ 
we set
\begin{align}\label{eq:upper_D}
	D^+_{{n,\epsilon}}
	\coloneqq
	2
	\max
	\left\{
	 \left(
	 	2
	 	\frac{C_n}{\epsilon}
	\right)^{\frac{1}{3(n+1)-1}}
	,
	1
	\right\}.
\end{align}
For each $n\in\mathbb{N}$, $\epsilon>0$,
and $D^+_{{n,\epsilon}}$ as in 
\eqref{eq:upper_D} it holds
\begin{align}\label{eq:bound_Psi_a}
	\norm{
		\Psi(\tau)
		-
		\sum_{j=0}^{n}a_j \tau^{1-3j}
	}_{
		L^\infty
		\left(
		\left(
			\frac{1}{2}D^+_{{n,\epsilon}},\kappa^{\frac{1}{3}}A
		\right)
		\right)
	} 
	\leq
	\frac{\epsilon}{2}.
\end{align}
Lemma \ref{lmm:approx_NN_inverse} 
asserts the existence of $C>0$ and ReLU-NNs
$\upphi_{D_\kappa,\zeta}\in \mathcal{N\!N}_{L,13,1,1}$ 
with
\begin{equation}
\begin{aligned}
	L
	\leq
	C
	\left(\log\left(\frac{1}{\zeta}\right)\right)^2
	\left(
		\left(\log\left(\kappa^{\frac{1}{3}}A\right)\right)^2
		+
		\left(
			\log
			\left(
				\frac{1}{\zeta}
			\right)
		\right)^2
	\right)
	\quad
	\text{as }
	\zeta \rightarrow 0
\end{aligned}
\end{equation}
such that
\begin{align}
	\norm{
		\frac{1}{\tau} 
		- 
		\upphi_{D_\kappa,\zeta}(\tau)
	}_{L^\infty\left(\left(1,\kappa^{\frac{1}{3}}A\right)\right)}
	\leq 
	\zeta.
\end{align}

According to Lemma \ref{lmm:inverse_pol_NN} there exist $C>0$ 
and for each $j\in \{1,\dots,n\}$ ReLU-NNs
$\uppsi_{3j-1,\epsilon} \in \mathcal{N\!N}_{L_j,13,1,1}$
with
\begin{equation}
\begin{aligned}
	L_j
	\leq
	&
	C
	\left(
	\log
	\left(
		\frac{1}{\epsilon}
	\right)
	+
	\log(6j-2)
	\right)^2
	\left(
		\left(\log\left(\kappa^{\frac{1}{3}}A\right)\right)^2
		+
		\left(
			\log
			\left(
				\frac{2}{\epsilon}
			\right)
			+
			\log(6j-2)
		\right)^2
	\right)
	\\
	&
	+
	C
	(3j-1)
	\left(
	\log
	\left(
		\frac{1}{\epsilon}
	\right)
	+
	\log(6j-2)
	\right),
\end{aligned}
\end{equation}
and satisfying 
\begin{equation}
	\norm{
		\uppsi_{3j-1,\epsilon}(\tau)
		-
		\frac{1}{\tau^{3j-1}}
	}_{L^\infty
	\left(\left(
		\frac{1}{2}D^+_{{n,\epsilon}},\kappa^{\frac{1}{3}}A
	\right)\right)}
	\leq
	\norm{
		\uppsi_{3j-1,\epsilon}(\tau)
		-
		\frac{1}{\tau^{3j-1}}
	}_{L^\infty
	\left(\left(
		1,\kappa^{\frac{1}{3}}A
	\right)\right)}
	\leq 
	\epsilon.
\end{equation}
We shortly recall the construction of the ReLU-NN $\uppsi_{3j-1,\epsilon}$ from Lemma \ref{lmm:inverse_pol_NN}.
Let $\mu_{D_{j,\zeta},\zeta} \in \mathcal{N\!N}_{L,5,2,1}$ with 
$L\leq C\left(\log(D_{j,\zeta})+ \log\left(\frac{1}{\zeta}\right)\right)$
as $\zeta\rightarrow 0$ be the multiplication ReLU-NN from
Proposition \ref{prop:mult_network} satisfying 
\begin{equation}
	\norm{
		\mu_{D_{j,\zeta} ,\zeta}(x,y)-xy
	}_{L^\infty
	\left(\left(-D_{j,\zeta},D_{j,\zeta} \right)^2\right)} 
	\leq 
	\zeta,
\end{equation}
with $D_{j,\zeta} \coloneqq 1+ (2j-1)(1+\zeta)^{j-1}\zeta$.
Set
\begin{align}\label{eq:net_psi_mius_1}
	\Theta_{j,\zeta}(x_1,x_2,x_3,x_4)
	\coloneqq
	A_j
	\begin{pmatrix}
	x_1 \\
	x_2 \\
	x_3 \\
	\mu_{D_{j,\zeta} ,\zeta}(x_3,x_4)
	\end{pmatrix},
	\quad
	j =3,\dots,3n-1,
\end{align}
together with 
\begin{equation}
	\Theta_{1,\zeta}(x)
	\coloneqq
	\begin{pmatrix}
	\Re\{a_0\} x \\
	\Im\{a_0\} x \\
	\upphi_{D_\kappa,\zeta} (x)
	\end{pmatrix}
	\quad
	\text{and}
	\quad
	\Theta_{2,\zeta}(x_1,x_2,x_3)
	\coloneqq
	A_j
	\begin{pmatrix}
	x_1 \\
	x_2 \\
	x_3 \\
	\mu_{D_{1,\zeta} ,\zeta}(x_3,x_3)
	\end{pmatrix},
\end{equation}
with
\begin{equation}
	A_j
	\coloneqq
	\begin{pmatrix}
		1 & 0 & 0 & \Re\{\widetilde{a}_j\}\\
		0 & 1 & 0 & \Im\{\widetilde{a}_j\}\\
		0 & 0 & 1 & 0\\
		0 & 0 & 0 & 1
	\end{pmatrix}
	,
	\quad
	\widetilde{a}_j
	\coloneqq
	\left\{
	\begin{array}{cl}
		a_{\frac{j+1}{3}}, & \text{mod}_3(j+1) = 0, \\
		0, & \text{otherwise},
	\end{array}
	\right.
	\quad
	j=1,\dots,3n-1.
\end{equation}
Define
\begin{align}\label{eq:net_psi_mius_2}
	\Psi^+_{n,\epsilon}(\tau)
	\coloneqq
	\begin{pmatrix}
	1 & 0 & 0 \\
	0 & 1 & 0 \\
	\end{pmatrix}
	\left(
		\Theta_{3n-1,\zeta} \circ \Theta_{n-1,\zeta} \circ\cdots\circ\Theta_{2,\zeta}\circ\Theta_{1,\zeta}
	\right)(\tau),
\end{align}
with $\epsilon$ depending on $\zeta$ in a form to be specified ahead.

One can readily see that
\begin{equation}
	\left(
		\Psi^+_{n,\zeta}(\tau)
	\right)_1
	+
	\imath
	\left(
		\Psi^+_{n,\epsilon}(\tau)
	\right)_2
	=
	a_0
	\tau
	+
	\sum_{j=1}^{3n-1}
	\widetilde{a}_j
	\uppsi_{j,D_\kappa,\epsilon}(\tau)
	=
	a_0
	\tau
	+
	\sum_{j=1}^{n}
	a_j
	\uppsi_{3j-1,D_\kappa,\zeta}(\tau).
\end{equation}
Next
\begin{equation}
\begin{aligned}
	\norm{
		\left(
			\Psi^+_{n,\epsilon}(\tau)
		\right)_1
		+
		\imath
		\left(
			\Psi^+_{n,\epsilon}(\tau)
		\right)_2
		-
		\sum_{j=0}^{n}a_j \tau^{1-3j}
	}_{
		L^\infty
		\left(
		\left(
			\frac{1}{2}D^+_{{n,\epsilon}},\kappa^{\frac{1}{3}}A
		\right)
		\right)
	}
	&
	\\
	&
	\hspace{-4.5cm}
	\leq 
	\sum_{j=1}^{n}
	\snorm{
		a_j
	}
	\norm{
		\frac{
			1
		}{
			\tau^{3j-1}
		}
		-
		\uppsi_{3j-1,D_\kappa,\zeta}(\tau)
	}_{
		L^\infty
		\left(
		\left(
			\frac{1}{2}D^+_{{n,\zeta}},\kappa^{\frac{1}{3}}A
		\right)
		\right)
	}
	\\
	&
	\hspace{-4.5cm}
	\leq 
	\sum_{j=1}^{n}
	\snorm{
		a_j
	}
	(6j-1)
	(1+\zeta)^{3j-1}
	\zeta
	\\
	&
	\hspace{-4.5cm}
	\leq 
	(6n-1)
	(1+\zeta)^{3n-1}
	\zeta
	U_n,
\end{aligned}
\end{equation}
with $U_n = \sum_{j=1}^{n}\snorm{a_j}$.

Let us set $\epsilon = 2(6n-1) (1+\zeta)^{3n-1} \zeta U_n$. Therefore,
recalling \eqref{eq:bound_Psi_a} and using the triangle inequality we get
\begin{equation}
\begin{aligned}
	\norm{
		\Psi
		-
		\left(
			\Psi^+_{n,\epsilon}(\tau)
		\right)_1
		-
		\imath
		\left(
			\Psi^+_{n,\epsilon}(\tau)
		\right)_2
	}_{L^\infty
	\left(\left(
		\frac{1}{2}D^+_{{n,\epsilon}}
		,
		\kappa^{\frac{1}{3}}
		A
	\right)\right)} 
	\leq
	\epsilon.
\end{aligned}
\end{equation}
An inspection of \eqref{eq:net_psi_mius_1} and
\eqref{eq:net_psi_mius_2} reveals that
the width of $\Psi^+_{n,\epsilon}$ is
\begin{equation}
\begin{aligned}
	\mathcal{M}
	\left(
		\Psi^+_{n,\epsilon}
	\right)
	=
	\max_{j \in \{1,\dots,3n-1\}}
	\mathcal{M}
	\left(
		 \Theta_{j,\zeta} 
	\right)
	=
	\mathcal{M}
	\left(
		 \Theta_{1,\zeta} 
	\right)
	=
	4
	+
	\mathcal{M}
	\left(
		\upphi_{D_\kappa,\zeta} 
	\right)
	=
	17.
\end{aligned}
\end{equation}
The depth of $\Psi^+_{n,\zeta}(\tau)$ is bounded according to
{\small
\begin{equation}
\begin{aligned}
	\mathcal{L}
	\left(
		\Psi^+_{n,\zeta}
	\right)
	\leq
	&
	\sum_{j=1}^{3n-1}
	\mathcal{L}
	\left(
		\Theta_{j,\zeta}
	\right)
	\\
	\leq
	&
	\mathcal{L}
	\left(
		\upphi_{D_\kappa,\zeta}
	\right)
	+
	\sum_{j=2}^{3n-1}
	\mathcal{L}
	\left(
		\mu_{D_{j,\zeta} ,\zeta}
	\right)
	\\
	\leq
	&
	C
	\left(\log\left(\frac{1}{\zeta}\right)\right)^2
	\left(
		\left(\log\left(\kappa^{\frac{1}{3}}A\right)\right)^2
		+
		\left(
			\log
			\left(
				\frac{2}{\zeta}
			\right)
		\right)^2
	\right)
	\\
	&
	+
	C
	(3n-1)
 	\left(
		\log\left(D_{3n-1,\zeta}\right)
		+ 
		\log\left(\frac{1}{\zeta}\right)
	\right)
	\\
	\leq
	&
	C
	\left(
	\log
	\left(
		\frac{U_n}{\epsilon}
	\right)
	+
	\log(12n-2)
	\right)^2
	\left(
		\left(\log\left(\kappa^{\frac{1}{3}}A\right)\right)^2
		+
		\left(
			\log
			\left(
				\frac{2}{\epsilon}
				U_n
			\right)
			+
			\log(12n-2)
		\right)^2
	\right)
	\\
	&
	+
	C
	(3n-1)
	\left(
	\log
	\left(
		\frac{U_n}{\epsilon}
	\right)
	+
	\log(12n-2)
	\right),
	\quad
	\text{as }
	\epsilon \rightarrow 0,
\end{aligned}
\end{equation}
}%
where we have used that
$\epsilon = 2(6n-1) (1+\zeta)^{3n-1} \zeta U_n \geq 2(6n-1)  \zeta U_n$,
the fact that $D_{3n-1,\zeta} \leq D_{3n,\zeta} = 1+ \frac{\epsilon}{2 U_n}$,
and
$
	\log
	\left(
		\frac{1}{\zeta}
	\right)
	\leq
	\log
	\left(
		\frac{U_n}{\epsilon}
	\right)
	+
	\log(12n-2)
	+
	\epsilon
	\frac{3n-1}{(12n-2)U_n}.
$
The weights of this ReLU-NN
are bounded in absolute value by a constant independent of $\kappa$, however depending on $n\in\IN$.
Indeed, the weights of $\Psi^+_{n,\zeta}$ are those of $\upphi_{D_\kappa,\zeta}$ and $\mu_{D_{j,\zeta} ,\zeta}$. 
%

{\sf \encircle{2}} {\sf Approximation of $\Psi(\tau)$ for $\tau \rightarrow -\infty$}.
According to Theorem \ref{thm:expansion_V} 
\begin{align}
	\Psi(\tau)
	=
	c_0 \exp(-\imath \tau^3/3-\imath \tau \alpha_1) 
	(1+ \mathcal{O}(\exp(-\beta \snorm{\tau}))
	\quad
	\text{as } \tau\rightarrow-\infty.
\end{align}
There exists $C>0$ independent of $\kappa$ such that
\begin{align}\label{eq:asymp_Psi}
	\snorm{\Psi(\tau) - c_0 \exp(-\imath \tau^3/3-\imath \tau \alpha_1) } 
	\leq 
	C
	\exp\left(\left(\frac{\nu_1\sqrt{3}}{2}-\beta\right)\snorm{\tau}\right), 
	\quad
	\tau \in \mathbb{R}_{-}.
\end{align}
Recall that $\frac{\nu_1\sqrt{3}}{2}-\beta<0$, $\nu_1 <0$,
$\alpha_1=\exp(-2\pi/3 \imath)\nu_1$, 
and that $\nu_1\in \mathbb{R}_{-}$ is the right-most root of $\normalfont\text{Ai}$,
as explained in Theorem \ref{thm:expansion_V}.

Let us set
\begin{align}
	D^{-}_{\epsilon}
	\coloneqq
	2
	\max
	\left\{
	\snorm{
		\frac{\nu_1\sqrt{3}}{2}-\beta
	}^{-1} 
	\log 
	\left(
		2
		\frac{C}{\epsilon}
	\right)
	,
	1
	\right\},
\end{align}
with $C>0$ as in \eqref{eq:asymp_Psi}.
Then, we have
\begin{align}\label{eq:approx_complex_exp_Psi}
	\norm{
		\Psi(\tau) 
		- 
		c_0 
		\exp(-\imath \tau^3/3-\imath \tau \alpha_1)
	}_{
		L^\infty
		\left(
			\left(
				-\kappa^{\frac{1}{3}}A
				,
				-\frac{1}{2} D^-_\epsilon
			\right)
		\right)
	}
	\leq
	\frac{\epsilon}{2}.
\end{align}
Let us set $ \phi(\tau) \coloneqq c_0  \exp(-\imath \tau^3/3-\imath \tau \alpha_1)	$.
Observe that 
\begin{equation}\label{eq:real_img_app}
\begin{aligned}
	\Re\{\phi(\tau)\}
	&
	=
	\left(
	\Re\{c_0\}
	\cos
	\left(
		\frac{\tau \nu_1}{2} 
		- 
		\frac{\tau^3}{3}
	\right)
	-
	\Im\{c_0\}
	\sin
	\left(
		\frac{\tau \nu_1}{2} 
		- 
		\frac{\tau^3}{3}
	\right)
	\right)
	\exp
	\left(
		\frac{\nu_1\sqrt{3}}{2}\snorm{\tau}
	\right),
	\quad \text{and} \\
	\Im\{\phi(\tau)\}
	&
	=
	\left(
	\Re\{c_0\}
	\sin
	\left(
		\frac{\tau \nu_1}{2} 
		- 
		\frac{\tau^3}{3}
	\right)
	+
	\Im\{c_0\}
	\cos
	\left(
		\frac{\tau \nu_1}{2} 
		- 
		\frac{\tau^3}{3}
	\right)
	\right)
	\exp
	\left(
		\frac{\nu_1\sqrt{3}}{2}\snorm{\tau}
	\right).
\end{aligned}
\end{equation}
Define
\begin{equation}\label{eq:def_phi}
\begin{aligned}
	\phi_1(\tau)
	=
	&
	\cos
	\left(
		\frac{\tau \nu_1}{2} 
		- 
		\frac{\tau^3}{3}
	\right)
	\exp
	\left(
		\frac{\nu_1\sqrt{3}}{2}\snorm{\tau}
	\right),
	\quad
	\text{and}\\
	\phi_2(\tau)
	=
	&
	\sin
	\left(
		\frac{\tau \nu_1}{2} 
		- 
		\frac{\tau^3}{3}
	\right)
	\exp
	\left(
		\frac{\nu_1\sqrt{3}}{2}\snorm{\tau}
	\right).
\end{aligned}
\end{equation}
Hence, 
\begin{align}
	\norm{
		\Re\{\Psi(\tau)\}
		- 
		\left(
			\Re\{c_0\}
			\phi_1(\tau)
			-
			\Im\{c_0\}
			\phi_2(\tau)
		\right)
	}_{L^\infty\left(\left(-\kappa^{\frac{1}{3}}A,-\frac{1}{2}D^-_\epsilon\right)\right)}
	\leq
	\frac{\epsilon}{2},
\end{align}
and
\begin{align}
	\norm{
		\Im\{\Psi(\tau)\}
		- 
		\left(
			\Re\{c_0\}
			\phi_2(\tau)
			+
			\Im\{c_0\}
			\phi_1(\tau)
		\right)
	}_{L^\infty\left(\left(-\kappa^{\frac{1}{3}}A,-\frac{1}{2}D^-_\epsilon\right)\right)}
	\leq
	\frac{\epsilon}{2}.
\end{align}

For the ReLU-NN emulation of $\phi_1(\tau)$ and $\phi_2(\tau)$
we use Theorem \ref{lmm:general_result_oscillatory_texture}.
We proceed to verify the assumptions of said result item-by-item.
\begin{itemize}
	\item[(i)]
	According to Proposition \ref{prop:aprox_sc},
	there exist $C>0$ and ReLU-NNs
	$$
		\Phi^{\cos}_{B,\epsilon}, \Phi^{\sin}_{B,\epsilon} 
		\in 
		\mathcal{N\!N}_{L_{\sin,\cos}(\epsilon,B),9,1,1}
	$$
	such that for any $B>0$
	\begin{equation}
	L_{\sin,\cos}(\epsilon,B)
	\leq 
	C
	\left(
	\left(
		\log
		\left(
			\frac{1}{\epsilon}
		\right)
	\right)^2
	+
	\log
	\left(
		\lceil B \rceil
	\right)
	\right),
	\quad
	\text{as }
	\epsilon \rightarrow 0,
	\end{equation}
	such that 
	\begin{equation}
		\norm{
			\cos(x)
			-
			\Phi^{\cos}_{B,\epsilon}(x)
		}_{L^\infty\left(\left(-B,B\right)\right)} 
		\leq 
		\epsilon
		\;\;\;
		\text{and}
		\;\;\;
		\norm{
			\sin(x)
			-
			\Phi^{\sin}_{B,\epsilon}(x)
		}_{L^\infty\left(\left(-B,B\right)\right)} 
		\leq 
		\epsilon.
	\end{equation}
	\item[(ii)]
	According to Proposition \ref{prop:approximation_polynomials},
	there exist ReLU-NNs
	$\vartheta_{\epsilon} \in \mathcal{N\!N}_{L_\vartheta(\epsilon),9,1,1}$ 
	with
	\begin{equation}
	L_\vartheta(\epsilon)
	\leq 
	C
	\left(
		\log
		\left(
			\frac{1}{\epsilon}
		\right)
		+
		\log
		\left(
		\kappa^{\frac{1}{3}} A
		\right)
	\right)
	\quad
	\text{as }
	\epsilon \rightarrow 0,
	\end{equation}	
	such that
	\begin{equation}
		\norm{
			\vartheta_{\epsilon} (x)
			-
			\left(
			\frac{x \nu_1}{2} - \frac{x^3}{3}
			\right)
		}_{L^\infty\left(\left(-\kappa^{\frac{1}{3}}A,-\frac{1}{2}D^-_\epsilon\right)\right)} \leq \epsilon.
	\end{equation}
	\item[(iii)]
	According to Proposition \ref{prop:approx_exp_NN}
	there exist ReLU-NN
	$\Phi^{\exp}_{a,D,\epsilon} \in \mathcal{N\!N}_{L_{\exp}(\epsilon),9,1,1}$ 
	with $a = \frac{\nu_1\sqrt{3}}{2}$, $D = \kappa^{\frac{1}{3}}A$, and
	\begin{align}
	L_{\exp}(\epsilon)
	\leq
	C
	\left(
		\left(
		\log
		\left(
			\frac{\snorm{\nu_1}\sqrt{3}}{2}
			\kappa^{\frac{1}{3}} A
		\right)
		\right)^2
		+
		\left(\log\left(\frac{1}{\epsilon}\right)\right)^2
	\right),
	\quad
	\text{as }
	\epsilon \rightarrow 0,
	\end{align}
	such that
	\begin{equation}
		\norm{
			\Phi^{\exp}_{a,D,\epsilon}  (x)
			-
			\exp\left(\frac{\nu_1\sqrt{3}}{2}\snorm{x}\right)
		}_{L^\infty\left(\left(-\kappa^{\frac{1}{3}}A,-\frac{1}{2}D^-_\epsilon\right)\right)} \leq \epsilon.
	\end{equation}
\end{itemize}
Consequently, Lemma \ref{lmm:general_result_oscillatory_texture} asserts 
the existence of a constant $C>0$ and ReLU-NNs 
$\phi_{1,\epsilon},\phi_{2,\epsilon} \in \mathcal{N\!N}_{L,14,1,1}$ 
with
\begin{equation}
\begin{aligned}
	L
	\leq
	&
	C
	\log
	\left(
		\frac{1}{\epsilon}
	\right)
	+
	L_{\sin,\cos}
	\left(
		\frac{
			\epsilon
		}{
			6
		}
		,
		\snorm{\nu_1}
		\kappa^{\frac{1}{3}} A
		+
		\frac{2}{3}
		\kappa A^3
	\right)
	+
	L_{\exp}
	\left(
	\frac{
		\epsilon
	}{
		6
	}
	\right)
	+
	L_\vartheta
	\left(
		\frac{
		\epsilon
		}{
		6
		}
	\right)
	\\
	\leq
	&
	C
	\left(
	\log
	\left(
		\frac{1}{\epsilon}
	\right)
	+
	\left(
		\log
		\left(
			\frac{6}{\epsilon}
		\right)
	\right)^2
	+
	\log
	\left(
		\snorm{\nu_1}
		\kappa^{\frac{1}{3}} A
		+
		\frac{2}{3}
		\kappa A^3
	\right)
	\right)
	\\
	&
	+
	C
	\left(
		\log
		\left(
			\frac{6}{\epsilon}
		\right)
		+
		\log
		\left(
		\kappa^{\frac{1}{3}} A
		\right)
	\right)
	+
	C
	\left(
		\left(
		\log
		\left(
			\frac{\snorm{\nu_1}\sqrt{3}}{2}
			\kappa^{\frac{1}{3}} A
		\right)
		\right)^2
		+
		\left(\log\left(\frac{6}{\epsilon}\right)\right)^2
	\right)
\end{aligned}
\end{equation}
such that for $i=1,2$
\begin{equation}\label{eq:error_bound_Phi}
	\norm{
		\phi_{i}
		-
		\phi_{i,\epsilon}
	}_{L^\infty\left(\left(-\kappa^{\frac{1}{3}}A,-\frac{1}{2}D^-_\epsilon\right)\right)}
	\leq
	\epsilon.
\end{equation}
Let us set
\begin{equation}
	\Theta_0(x)
	=
	\begin{pmatrix}
	0 \\
	0 \\
	x
	\end{pmatrix}
	\quad
	\text{and}
	\quad
	\Theta_{i,\epsilon}(x_1,x_2,x_3)
	=
	A_i
	\begin{pmatrix}
		x_1 \\
		x_2 \\
		x_3 \\
		\phi_{i,\epsilon}(x_3)
	\end{pmatrix}
	\quad
	\text{for }\\
	i=1,2,
\end{equation}
with
\begin{equation}
	A_1
	\coloneqq
	\begin{pmatrix}
		1 & 0 & 0 & \Re\{c_0\} \\
		0 & 1 & 0 & \Im\{c_0\} \\
		0 & 0 & 1 & 0
	\end{pmatrix}
	\in 
	\mathbb{R}^{3\times 4}
	\quad
	\text{and}
	\quad
	A_2
	\coloneqq
	\begin{pmatrix}
		1 & 0 & 0 & -\Im\{c_0\} \\
		0 & 1 & 0 & \Re\{c_0\} \\
		0 & 0 & 1 & 0
	\end{pmatrix}
	\in 
	\mathbb{R}^{3\times 4}
\end{equation}	
Define the ReLU-NN 
\begin{equation}
	\Psi^-_{\epsilon}(\tau)
	=
	\begin{pmatrix}
	1 & 0 & 0  \\
	0 & 1 & 0 
	\end{pmatrix}
	\left(
		\Theta_{2,\epsilon}
		\circ
		\Theta_{1,\epsilon}
		\circ
		\Theta_0
	\right)(\tau).
\end{equation}
Observe that
\begin{equation}
\begin{aligned}
	\left(
		\Psi^-_{\epsilon}(\tau)
	\right)_1
	+
	\imath
	\left(
		\Psi^-_{\epsilon}(\tau)
	\right)_2
	=
	\Re\{c_0\} 
	\phi_{1,\epsilon}(\tau)
	-
	\Im\{c_0\} 
	\phi_{2,\epsilon}(\tau)
	+
	\imath
	\left(
		\Re\{c_0\} 
		\phi_{2,\epsilon}(\tau)
		+
		\Im\{c_0\} 
		\phi_{1,\epsilon}(\tau)
	\right).
\end{aligned}
\end{equation}
Recalling \eqref{eq:approx_complex_exp_Psi}
and \eqref{eq:error_bound_Phi}
\begin{equation}
\begin{aligned}
	\norm{
		\Psi
		-
		\left(
		\left(
			\Psi^-_{\epsilon}(\tau)
		\right)_1
		+
		\imath
		\left(
			\Psi^-_{\epsilon}(\tau)
		\right)_2
		\right)
	}_{L^\infty\left(\left(-\kappa^{\frac{1}{3}}A,-\frac{1}{2}D^-_\epsilon\right)\right)}
	\\
	&
	\hspace{-7.5cm}
	\leq
	\norm{
		\Psi(\tau) 
		- 
		c_0 
		\exp(-\imath \tau^3/3-\imath \tau \alpha_1)
	}_{
		L^\infty
		\left(
			\left(
				-\kappa^{\frac{1}{3}}
				A
				,
				-
				\frac{1}{2}
				D^-_\epsilon
			\right)
		\right)
	}
	\\
	&
	\hspace{-7.2cm}
	+
	\norm{
		c_0 
		\exp(-\imath \tau^3/3-\imath \tau \alpha_1)
		-
		\left(
		\left(
			\Psi^-_{n,\epsilon}(\tau)
		\right)_1
		+
		\imath
		\left(
			\Psi^-_{n,\epsilon}(\tau)
		\right)_2
		\right)
	}_{
		L^\infty
		\left(
			\left(
				-\kappa^{\frac{1}{3}}
				A
				,
				-
				\frac{1}{2}
				D^-_\epsilon
			\right)
		\right)
	}
	\\
	&
	\hspace{-7.5cm}
	\leq
	\norm{
		\Psi(\tau) 
		- 
		c_0 
		\exp(-\imath \tau^3/3-\imath \tau \alpha_1)
	}_{
		L^\infty
		\left(
			\left(
				-\kappa^{\frac{1}{3}}
				A
				,
				-
				\frac{1}{2}
				D^-_\epsilon
			\right)
		\right)
	}
	\\
	&
	\hspace{-7.2cm}
	+
	\left( \snorm{\Re\{c_0\}}+\snorm{\Im\{c_0\}} \right)
	\left(
		\norm{
		\phi_{1}
		-
		\phi_{1,\epsilon}
		}_{L^\infty\left(\left(-\kappa^{\frac{1}{3}}A,-\frac{1}{2}D^-_\epsilon\right)\right)}
	\right.
	\\
	&
	\hspace{-7.2cm}
	+
	\left.
		\norm{
		\phi_{2}
		-
		\phi_{2,\epsilon}
		}_{L^\infty\left(\left(-\kappa^{\frac{1}{3}}A,-\frac{1}{2}D^-_\epsilon\right)\right)}
	\right)
	\\
	&
	\hspace{-7.5cm}
	\leq
	\left(
		\frac{1}{2}
		+
		\snorm{\Re\{c_0\}}+\snorm{\Im\{c_0\}}
	\right)
	\epsilon.
\end{aligned}
\end{equation}
Consequently, there exist ReLU-NNs 
$\Psi^-_{\epsilon} $ of depth
\begin{equation}
\begin{aligned}
	\mathcal{L}
	\left(
		\Psi^-_{\epsilon} 
	\right)
	\leq
	&
	\sum_{i=1}^{2}
	\mathcal{L}
	\left(
		\phi_{i,\epsilon}
	\right),
\end{aligned}
\end{equation}
thus
\begin{equation}
\begin{aligned}
	\mathcal{L}
	\left(
		\Psi^-_{\epsilon} 
	\right)
	\leq
	&
	C
	\left(
	\log
	\left(
		\frac{U_0}{\epsilon}
	\right)
	+
	\left(
		\log
		\left(
			\frac{6}{\epsilon}
			U_0
		\right)
	\right)^2
	+
	\log
	\left(
		\snorm{\nu_1}
		\kappa^{\frac{1}{3}} A
		+
		\frac{2}{3}
		\kappa A^3
	\right)
	\right)
	\\
	&
	+
	C
	\left(
		\log
		\left(
			\frac{6}{\epsilon}
			U_0
		\right)
		+
		\log
		\left(
		\kappa^{\frac{1}{3}} A
		\right)
	\right)
	+
	C
	\left(
		\left(
		\log
		\left(
			\frac{\snorm{\nu_1}\sqrt{3}}{2}
			\kappa^{\frac{1}{3}} A
		\right)
		\right)^2
		+
		\left(\log\left(\frac{6}{\epsilon}U_0\right)\right)^2
	\right)
	\\
	\leq
	&
	C'
	\left(
		\log
		\left(
			\frac{1}{\epsilon}
		\right)
		+
		\left(
		\log
		\left(
			\frac{1}{\epsilon}
		\right)
		\right)^2
		+
		\log
		\left(
			\kappa^{\frac{1}{3}}
			A
		\right)
		+
		\left(
		\log
		\left(
			\kappa^{\frac{1}{3}}
			A
		\right)
		\right)^2
	\right),
	\quad
	\text{as }
	\epsilon \rightarrow 0,
\end{aligned}
\end{equation}
with $U_0 = \frac{1}{2}+\snorm{\Re\{c_0\}}+\snorm{\Im\{c_0\}}$
and such that 
\begin{equation}
\norm{
		\Psi(\tau)
		-
		\left(
		\left(
			\Psi^-_{\epsilon}(\tau)
		\right)_1
		+
		\imath
		\left(
			\Psi^-_{\epsilon}(\tau)
		\right)_2
		\right)
	}_{L^\infty\left(\left(-\kappa^{\frac{1}{3}}A_Z,-\frac{1}{2}D^-_\epsilon\right)\right)}
	\leq
	\epsilon.
\end{equation}
The width of $\Psi^-_{\epsilon}$ is
\begin{equation}
	\mathcal{M}
	\left(
		\Psi^-_{\epsilon}
	\right)
	=
	\max
	\left\{
	\mathcal{M}
	\left(
		\Theta_{0}
	\right)
	,
	\mathcal{M}
	\left(
		\Theta_{1,\epsilon}
	\right)
	,
	\mathcal{M}
	\left(
		\Theta_{2,\epsilon}
	\right)
	\right\}
	=
	20.
\end{equation}

{\sf \encircle{3}}
{\sf Approximation of $\Psi(\tau)$ for $\tau$ bounded.}
Set $$ D_{n,\varepsilon} \coloneqq \max \left\{ D^+_{{n,\epsilon}},D^-_{{\epsilon}}\right\}.$$
Remark \ref{rmk:bound_psi} implies that
$\Psi \in \mathcal{P}_{\boldsymbol{\lambda},D_{n,\epsilon},\mathbb{C}}$
with $\boldsymbol{\lambda}\coloneqq\{\lambda_n\}_{n\in \mathbb{N}_0}$ 
defined as
$\lambda_0\leq C_0\left(1+D_{n,\epsilon}\right)$
and $\lambda_n\leq C_n$ for all $n\in \IN_0$
with $C_n>0$ as in Remark \ref{rmk:bound_psi},
thus independent of $\kappa$.

According to Proposition \ref{prop:approximation_P}
for each $n\in \IN_{\geq 2}$ there exist ReLU-NNs 
$\Psi^\cc_{n,\epsilon} \in \mathcal{N\!N}_{L,15,1,2}$
with 
\begin{equation}
	L
	\leq 
	B_{n,\epsilon}
	\left(
		B_{n,\epsilon}
		\epsilon^{-\frac{2}{n-1}}
		+
		\epsilon^{-\frac{1}{n-1}}
		\log\left(\frac{1}{\epsilon}\right) 
		+
		\epsilon^{-\frac{2}{n-1}}
		\log
		\left(
			D_{n,\epsilon}
		\right)
		+
		\epsilon^{-\frac{1}{n-1}}
		\log
		\left(
			\lambda_n
		\right)
	\right),
\end{equation}
as $\epsilon \rightarrow 0$, such that 
\begin{equation}
	\norm{
		\Psi
		-
		\left(
			(\Psi^\cc_{n,\epsilon})_1
			+
			\imath
			(\Psi^\cc_{n,\epsilon})_2
		\right)
	}_{L^\infty((-D_{n,\varepsilon},D_{n,\varepsilon}))} 
	\leq 
	\epsilon,
\end{equation}
with $B_{n,\epsilon}$ independent of $\kappa$
and bounded for each $n \in \IN$ according to
\begin{equation}
	B_{n,\epsilon}
	\leq
	C
	\max\left\{
		n+2,
		\left(
			\widetilde{C}_n 
			D^{n+1}_{n,\varepsilon}
			\lambda_{n+1}
		\right)^{\frac{1}{n-1}}
	\right\},
\end{equation}
with $C,\widetilde{C}_n>0$ independent of $\kappa$
and $C$ independent of $n$.
Furthermore, 
we have\footnote{We have reasonably assumed $D^+_{{n,\epsilon}}\geq D^-_{{\epsilon}}$. }
\begin{equation}\label{eq:bound_Bnepsilon}
	B_{n,\epsilon}
	\leq
	C_n
	\epsilon^{-\frac{n+1}{(n-1)(3n+2)}}.
\end{equation}

{\sf \encircle{4}}
{\sf Approximation of $\Psi(\tau)$ in $\mathcal{I}_\kappa$.}
In steps {\sf \encircle{1}} through {\sf \encircle{3}} we have
constructed local approximations for $\Psi(\tau)$ in three different
regions of the interval $\mathcal{I}_\kappa$.
Here we combine them using a suitable partition of unity
over the wavenumber-dependent interval $\mathcal{I}_\kappa$, 
as defined in \eqref{eq:interval_I_k}.

Let us set
\begin{align}
	\chi(x)
	\coloneqq 
	\varrho(x+1)
	-
	2\varrho(x)
	+
	\varrho(x-1)
	\in 
	\mathcal{N\!N}_{2,3,1,1},
\end{align}
and, for $\ell\in \mathbb{N}$, $\chi_\ell(x)\coloneqq\chi(x-\ell)\in \mathcal{N\!N}_{2,3,1,1}$.
Observe that
\begin{align}
\label{eq:partition_unity}
	\sum_{\ell=-3}^{3}
	\chi_\ell(x)
	=1,
	\quad
	x
	\in[-3,3],
\end{align}
thus yielding a partition of unity of the bounded interval $[-3,3]$.

Define the map
$T_{\kappa,n,\epsilon}: \overline{\mathcal{I}_\kappa} \rightarrow [-3,3]$
as
\begin{equation}\label{eq:T_kappa_n_eps}
	T_{\kappa,n,\epsilon}(x)
	=
	\left\{
	\begin{array}{cl}
		-3+\frac{x+\kappa^{\frac{1}{3}} A}{-D^{-}_{\epsilon}+\kappa^{\frac{1}{3}} A},
		& 
		x\in [-\kappa^{\frac{1}{3}} A,-D^{-}_{\epsilon}),
		\\
		\frac{x}{D^{-}_{\epsilon}/2}, 
		& 
		x\in [-D^{-}_{\epsilon},0),
		\\
		\frac{x}{D^+_{{n,\epsilon}}/2}, 
		& 
		x\in [0,D^+_{{n,\epsilon}}),
		\\
		2+\frac{x-D^+_{{n,\epsilon}}}{\kappa^{\frac{1}{3}} A-D^+_{{n,\epsilon}}},
		& 
		x\in [D^+_{{n,\epsilon}},\kappa^{\frac{1}{3}} A].
	\end{array}
	\right.
\end{equation}
In the definition of $T_{\kappa,n,\epsilon}$ in \eqref{eq:T_kappa_n_eps}
we have implicitely assumed that $\kappa^\frac{1}{3}A \geq \max\{D^+_{{n,\epsilon}},D^{-}_{\epsilon}\}$. 
Therefore, under the reasonable assumption that $D^+_{{n,\epsilon}}\geq D^{-}_{\epsilon}$, 
this implies that
the bounds presented here are valid for $\kappa$ satisfying \eqref{eq:condition_kappa}.

For $x \in \mathcal{I}_\kappa$
\begin{equation}
\begin{aligned}
	T_{\kappa,n,\epsilon}(x)
	=
	&
	-3
	+
	\frac{\varrho(x+\kappa^{\frac{1}{3}} A)}{-D^{-}_{\epsilon}+\kappa^{\frac{1}{3}} A}
	+
	\left(
		\frac{1}{D^{-}_{\epsilon}}
		-
	 	\frac{1}{-D^{-}_{\epsilon}+\kappa^{\frac{1}{3}} A}
	\right)
	\varrho
	\left(
		x+D^{-}_{\epsilon}
	\right)
	\\
	&
	+
	\left(
	 	\frac{1}{D^{+}_{n,\epsilon}}
		-
		\frac{1}{D^{-}_{\epsilon}}
	\right)
	\varrho
	\left(
		x
	\right)
	+
	\left(
		\frac{1}{\kappa^{\frac{1}{3}} A-D^{+}_{n,\epsilon}}
		-
	 	\frac{1}{D^{+}_{n,\epsilon}}
	\right)
	\varrho
	\left(
		x-D^{+}_{n,\epsilon}
	\right).
\end{aligned}
\end{equation}
Thus
\begin{equation}\label{eq:T_NN}
	T_{\kappa,n,\epsilon}(x) 
	=
	{\bf W}_2
	\varrho
	\left(
		{\bf W}_1
		x
		+
		{\bf b}_1
	\right)
	+
	\mathbf{b}_2
	\in \mathcal{N\!N}_{2,4,1,1},
\end{equation}
where
\begin{equation}
	\mathbf{W}_1
	=
	\begin{pmatrix}
		1 \\
		1 \\
		1 \\
		1
	\end{pmatrix}
	\in 
	\mathbb{R}^{4\times 1}
	,
	\quad
	\mathbf{W}_2
	=
	\begin{pmatrix}
	\frac{1}{-D^{-}_{\epsilon}+\kappa^{\frac{1}{3}} A} \\
	\frac{1}{D^{-}_{\epsilon}}
	-
	\frac{1}{-D^{-}_{\epsilon}+\kappa^{\frac{1}{3}} A} \\
	\frac{1}{D^{+}_{n,\epsilon}}
	-
	\frac{1}{D^{-}_{\epsilon}} \\
	\frac{1}{\kappa^{\frac{1}{3}} A-D^{+}_{n,\epsilon}}
	-
	\frac{1}{D^{+}_{n,\epsilon}}
	\end{pmatrix}^\top
	\in
	\mathbb{R}^{1\times 4},
\end{equation}
and\footnote{Clearly $\mathbf{b}_2$ is a scalar value.
However, we use this notation to comply with the structure of Definition \ref{def:DNN}.}
\begin{equation}
	\mathbf{b}_1
	=
	\begin{pmatrix}
		\kappa^{\frac{1}{3}} A  \\
		D^{-}_{\epsilon}  \\
		0 \\
		-D^{+}_{n,\epsilon} 
	\end{pmatrix}
	\in
	\mathbb{R}^{4\times 1}
	,
	\quad
	\mathbf{b}_2
	=
	-
	3
	\in 
	\mathbb{R}^{1\times 1}.
\end{equation}
However, 
one can readily see that
\begin{equation}
	\norm{{\bf W}_1}_\infty
	\leq
	1,
	\quad
	\norm{{\bf W}_2}_\infty
	\leq
	\frac{1}{D^{+}_{n,\epsilon}}
	+
	\frac{1}{D^{-}_{\epsilon}}
	=
	2,
	\quad
	\norm{{\bf b}_1}_\infty
	\leq
	\kappa^{\frac{1}{3}} A,
	\quad
	\text{and}
	\quad
	\norm{{\bf b}_2}_\infty
	\leq
	3,
\end{equation}
which is not satisfactory as the weights are bounded in absolute value 
by $\kappa^{\frac{1}{3}} A$.

By recalling Proposition \ref{prop:trading_weights}, we conclude that 
there exists a ReLU-NN $T_\kappa \in \mathcal{N\!N}_{L,3,1,1}$ 
with 
$L\leq \log(\kappa^{\frac{1}{3}} A)+5$ 
and weights bounded in absolute value
by $1$ such that $T_\kappa(x) =\varrho\left(x+\kappa^{\frac{1}{3}} A\right)$, 
thus
\begin{equation}\label{eq:representation_T}
\begin{aligned}
	T_{\kappa,n,\epsilon}(x)
	=
	&
	-3
	+
	\frac{T_\kappa(x)}{-D^{-}_{\epsilon}+\kappa^{\frac{1}{3}} A}
	+
	\left(
		\frac{1}{D^{-}_{\epsilon}}
		-
	 	\frac{1}{-D^{-}_{\epsilon}+\kappa^{\frac{1}{3}} A}
	\right)
	\varrho
	\left(
		x+D^{-}_{\epsilon}
	\right)
	\\
	&
	+
	\left(
	 	\frac{1}{D^{+}_{n,\epsilon}}
		-
		\frac{1}{D^{-}_{\epsilon}}
	\right)
	\varrho
	\left(
		x
	\right)
	+
	\left(
		\frac{1}{\kappa^{\frac{1}{3}} A-D^{+}_{n,\epsilon}}
		-
	 	\frac{1}{D^{+}_{n,\epsilon}}
	\right)
	\varrho
	\left(
		x-D^{+}_{n,\epsilon}
	\right)
	\\
	=
	&
	{\bf  W}_2
	\begin{pmatrix}
		T_\kappa(x) \\
		\varrho
		\left(
			{\bf A}
			x
			+
			{\bf c}
		\right)
	\end{pmatrix}
	+
	\mathbf{b}_2
\end{aligned}
\end{equation}
with 
\begin{equation}
	\mathbf{A}
	=
	\begin{pmatrix}
		1 \\
		1 \\
		1
	\end{pmatrix}
	\in 
	\mathbb{R}^{3\times 1}
	\quad
	\text{and}
	\quad
	\mathbf{c}
	=
	\begin{pmatrix}
		D^{-}_{\epsilon}  \\
		0 \\
		-D^{+}_{n,\epsilon} 
	\end{pmatrix}
	\in
	\mathbb{R}^{3\times 1}.
\end{equation}
Therefore, \eqref{eq:representation_T} 
reveals that 
$T_{\kappa,n,\epsilon} \in \mathcal{N\!N}_{L,6,1,1}$
with 
$L\leq \log(\kappa^{\frac{1}{3}} A)+5$ 
and with weights bounded in absolute value by $1$.

Observe that
\begin{align}\label{eq:partition_unity_T}
	\sum_{\ell=-3}^{3}
	\left(
		\chi_\ell
		\circ
		T_{\kappa,n,\epsilon}
	\right)(\tau)
	=1,
	\quad
	\tau
	\in
	\mathcal{I}_\kappa.
\end{align}
The function $\Psi$ admits the representation
\begin{equation}\label{eq:partition_unity_T_Psi}
	\Psi(\tau)
	=
	\sum_{k=-3}^{3}
	\left(
		\chi_{\ell}
		\circ
		T_{\kappa,n,\epsilon}
	\right)
	(\tau)
	\Psi(\tau),
	\quad
	\tau
	\in 
	\mathcal{I}_\kappa.
\end{equation}

For each $n \in \IN_{\geq2}$ let us set 
\begin{equation}
	\Psi_{\kappa,n,\ell,\epsilon}
	\coloneqq
	\left\{
	\begin{array}{cl}
	\Psi^-_{\epsilon}, & \ell \in \{-3,-2\}, \\
	\Psi^\cc_{n,\epsilon}, & \ell \in \{-1,0,1\}, \\
	\Psi^+_{n,\epsilon}, & \ell \in \{2,3\}. \\
	\end{array}
	\right.
\end{equation}

Set $\lambda_{\kappa}:=\norm{\Psi}_{L^\infty\left(\mathcal{I}_\kappa\right)}$. 
According to Remark \ref{rmk:bound_psi} it holds
$\lambda_{\kappa} \leq C_{\Psi}(1+\kappa)^{\frac{1}{3}}$ with $C_{\Psi}$
independent of $\kappa$.
Let $\mu_{\lambda_{\kappa},\epsilon}\in \mathcal{N\!N}_{L,5,2,1}$ with 
$L\leq C\left(\log( \lambda_{\kappa})+ \log\left(\frac{1}{\epsilon}\right)\right)$
as $\epsilon\rightarrow 0$
be the multiplication network from Proposition \ref{prop:mult_network} satisfying
\begin{align}\label{eq:error_mult_network}
	\norm{
		\mu_{\lambda_\kappa,\epsilon}(x,y)-xy
	}_{L^\infty
	\left(\left(-2\lambda_\kappa,2\lambda_\kappa\right)^2\right)} 
	\leq 
	\epsilon.
\end{align}
For $\epsilon>0$ and $n\in \IN$ we set
\begin{equation}
\label{eq:network_approximation_D}
	\Psi_{\kappa,n,\epsilon}(\tau)
	\coloneqq
	\sum_{\ell=-3}^{3} 
	\begin{pmatrix}
	\mu_{\lambda_\kappa,\epsilon}
	\left(
		\left(
			\chi_{\ell}\circ T_{\kappa,n,\epsilon}
		\right)(\tau)
		,
		\left(\Psi_{\kappa,n,\ell,\epsilon}(\tau)\right)_1
	\right) \\
	\mu_{\lambda_\kappa,\epsilon}
	\left(
		\left(
			\chi_{\ell}\circ T_{\kappa,n,\epsilon}
		\right)(\tau)
		,
		\left(\Psi_{\kappa,n,\ell,\epsilon}(\tau)\right)_2
	\right)
	\end{pmatrix},
	\quad
	\tau
	\in 
	\mathcal{I}_\kappa.
\end{equation}

We argue that $\Psi_{\kappa,n,\epsilon}(\tau)$ as in \eqref{eq:network_approximation_D}
is indeed a ReLU-NN of a fixed width to be determined.
Firstly, for $\ell \in \{-3,\dots,3\}$ we set
\begin{align}
	\Theta_0(x)
	\coloneqq
	\begin{pmatrix}
	x \\
	x \\
	0 \\
	0
	\end{pmatrix}
	\quad
	\text{and}
	\quad
	\Theta_{\ell,\epsilon}(x_1,x_2,x_3,x_4)
	\coloneqq
	A
	\begin{pmatrix}
	x_1 \\
	\mu_{\lambda_\kappa,\epsilon}
	\left(
		\left(
			\chi_{\ell}\circ T_{\kappa,n,\epsilon}
		\right)(x_2)
		,
		\left(\Psi_{\kappa,\ell,n,\epsilon}(x_2)\right)_1
	\right)
	\\
	\mu_{\lambda_\kappa,\epsilon}
	\left(
		\left(
			\chi_{\ell}\circ T_{\kappa,n,\epsilon}
		\right)(x_2)
		,
		\left(\Psi_{\kappa,\ell,n,\epsilon}(x_2)\right)_2
	\right)
	\\
	x_3 \\
	x_4
	\end{pmatrix}
\end{align}
where $A\in \mathbb{R}^{5\times 4}$ is the matrix that
performs the operation $A(y_1,y_2,y_3,y_4,y_5)^\top = (y_1,y_1,y_2+y_4,y_3+y_5)^\top$,
thus
\begin{equation}
\begin{aligned}
	\Psi_{\kappa,n,\epsilon}(\tau)
	=
	\begin{pmatrix}
		0 & 0 & 1 & 0 \\
		0 & 0 & 0 & 1 \\
	\end{pmatrix}
	\left(
	\Theta_{3,\epsilon}
	\circ
	\cdots
	\circ
	\Theta_{-3,\epsilon}
	\circ
	\Theta_0
	\right)(\tau),
	\quad
	\tau \in \mathcal{I}_\kappa,
\end{aligned}
\end{equation}
thus rendering $\Psi_{\kappa,n,\epsilon}$ a ReLU-NN itself.

{\sf \encircle{5}}
{\sf Bounds for the Width, Depth, and Weights}.
Recalling \eqref{eq:partition_unity_T} and \eqref{eq:partition_unity_T_Psi}
we obtain for $\tau \in \mathcal{I}_{\kappa}$
\begin{equation}
\begin{aligned}
	\Psi(\tau)
	-
	&
	\left(
		\Psi_{\kappa,n,\epsilon}(\tau)
	\right)_1
	-
	\imath
	\left(
		\Psi_{\kappa,n,\epsilon}(\tau)
	\right)_2
	\\
	=
	&
	\sum_{\ell=-3}^{3} \Psi(\tau) \chi_\ell(\tau) 
	-
	\left(
		\Psi_{\kappa,n,\epsilon}(\tau)
	\right)_1
	-
	\imath
	\left(
		\Psi_{\kappa,n,\epsilon}(\tau)
	\right)_2 \\
	=
	&
	\sum_{\ell=-3}^{3}
	\left(
	\Psi(\tau) \chi_{\ell}(\tau)
	-
	\mu_{\lambda_\kappa,\epsilon}
	\left(
	\chi_{\ell}(\tau)
	,
	\left(\Psi_{\kappa,\ell,n,\epsilon}(\tau)\right)_1
	\right)
	-
	\imath
	\mu_{\lambda_\kappa,\epsilon}
	\left(
	\chi_{\ell}(\tau)
	,
	\left(\Psi_{\kappa,\ell,n,\epsilon}(\tau)\right)_2
	\right)
	\right).
\end{aligned}
\end{equation}
Next, for each $\ell \in \{-3,\dots,3\}$
\begin{equation}
\begin{aligned}
	\norm{
		\Psi \chi_{\ell}
		-
		\mu_{\lambda_\kappa,\epsilon}
		\left(
		\chi_{\ell}
		,
		\left(
			\Psi_{\kappa,\ell,n,\epsilon}\right)_1
		\right)
		-
		\imath
		\mu_{\lambda_\kappa,\epsilon}
		\left(
		\chi_{\ell}
		,
		\left(
			\Psi_{\kappa,\ell,n,\epsilon}\right)_2
		\right)
	}_{L^\infty\left(\mathcal{I}_\kappa\right)}
	\\
	&
	\hspace{-4cm}
	\leq
	\norm{
		\chi_\ell
		\left(
			\Psi
			-
			\left(
				\Psi_{\kappa,\ell,n,\epsilon}
			\right)_1
			-
			\imath
			\left(
				\Psi_{\kappa,\ell,n,\epsilon}
			\right)_2
		\right)
	}_{L^\infty\left(\mathcal{I}_\kappa\right)}
	\\
	&
	\hspace{-3.7cm}
	+
	\norm{
		\mu_{\lambda_\kappa,\epsilon}
		\left(
		\chi_{\ell}
		,
		\left(
			\Psi_{\kappa,\ell,n,\epsilon}
		\right)_1
		\right)
		-
		\chi_{\ell}
		\left(
			\Psi_{\kappa,\ell,n,\epsilon}
		\right)_1
	}_{L^\infty\left(\mathcal{I}_\kappa\right)}
	\\
	&
	\hspace{-3.7cm}
	+
	\norm{
		\mu_{\lambda_\kappa,\epsilon}
		\left(
		\chi_{\ell}
		,
		\left(
			\Psi_{\kappa,\ell,n,\epsilon}
		\right)_2
		\right)
		-
		\chi_{\ell}
		\left(
			\Psi_{\kappa,\ell,n,\epsilon}
		\right)_2
	}_{L^\infty\left(\mathcal{I}_\kappa\right)}.
\end{aligned}
\end{equation}
As a consequence of steps {\sf \encircle{1}} through {\sf \encircle{3}},
and the compactness of the support of $\chi_\ell$
\begin{equation}
	\norm{
		\chi_\ell
		\left(
			\Psi
			-
			\left(
				\Psi_{\kappa,\ell,n,\epsilon}
			\right)_1
			-
			\imath
			\left(
				\Psi_{\kappa,\ell,n,\epsilon}
			\right)_2
		\right)
	}_{L^\infty\left(\mathcal{I}_\kappa\right)}
	\leq
	\epsilon,
	\quad
	\ell
	\in \{-3,\dots,3\}.
\end{equation}
and recalling \eqref{eq:error_mult_network}, we get
for $\ell\in \{-3,\dots,3\}$
\begin{equation}
	\norm{
		\Psi \chi_{\ell}
		-
		\mu_{\lambda_\kappa,\epsilon}
		\left(
		\chi_{\ell}
		,
		\left(
			\Psi_{\kappa,\ell,n,\epsilon}\right)_1
		\right)
		-
		\imath
		\mu_{\lambda_\kappa,\epsilon}
		\left(
		\chi_{\ell}
		,
		\left(
			\Psi_{\kappa,\ell,n,\epsilon}\right)_2
		\right)
	}_{L^\infty\left(\mathcal{I}_\kappa\right)}
	\leq
	3\epsilon.
\end{equation}
The ReLU-NNs $\chi_\ell$ with $\ell$ even do not overlap. 
The same consideration holds for those $\chi_\ell$ with $\ell$ odd. 
Therefore, we obtain
\begin{equation}
\begin{aligned}
	\norm{
		\Psi
		-
		\left(
			\Psi_{\kappa,n,\epsilon}
		\right)_1
		-
		\imath
		\left(
			\Psi_{\kappa,n,\epsilon}
		\right)_2
	}_{L^\infty\left(\mathcal{I}_\kappa\right)}
	\\
	&
	\hspace{-5cm}
	\leq
	\max_{\substack{\ell \in\{-3,\dots,3\} \\ \ell \text{ even}}}
	\norm{
		\Psi \chi_{\ell}
		-
		\mu_{\lambda_\kappa,\epsilon}
		\left(
		\chi_{\ell}
		,
		\left(
			\Psi_{\kappa,\ell,n,\epsilon}\right)_1
		\right)
		-
		\imath
		\mu_{\lambda_\kappa,\epsilon}
		\left(
		\chi_{\ell}
		,
		\left(
			\Psi_{\kappa,\ell,n,\epsilon}\right)_2
		\right)
	}_{L^\infty\left(\mathcal{I}_\kappa\right)}
	\\
	&
	\hspace{-4.7cm}
	+
	\max_{\substack{\ell \in\{-3,\dots,3\} \\ \ell \text{ odd}}}
		\norm{
		\Psi \chi_{\ell}
		-
		\mu_{\lambda_\kappa,\epsilon}
		\left(
		\chi_{\ell}
		,
		\left(
			\Psi_{\kappa,\ell,n,\epsilon}\right)_1
		\right)
		-
		\imath
		\mu_{\lambda_\kappa,\epsilon}
		\left(
		\chi_{\ell}
		,
		\left(
			\Psi_{\kappa,\ell,n,\epsilon}\right)_2
		\right)
	}_{L^\infty\left(\mathcal{I}_\kappa\right)}.
\end{aligned}
\end{equation}
Hence,
\begin{equation}
	\norm{
		\Psi
		-
		\left(
			\Psi_{\kappa,n,\epsilon}
		\right)_1
		-
		\imath
		\left(
			\Psi_{\kappa,n,\epsilon}
		\right)_2
	}_{L^\infty\left(\mathcal{I}_\kappa\right)}
	\leq
	6\epsilon.
\end{equation}
Next, we calculate the width of $\Psi_{\kappa,n,\epsilon}$.
For each $\ell \in \{-3,\dots,3\}$ one has
\begin{equation}
	\mathcal{M}
	\left(
		\Theta_{\ell,\epsilon}
	\right)
	=
	6
	+
	\max
	\left\{
		2
		\mathcal{M}
		\left(
			\mu_{\lambda_\kappa,\epsilon}
		\right)
		,
		\mathcal{M}
		\left(
			\chi_{\ell}\circ T_{\kappa,n,\epsilon}
		\right)
		+
		\mathcal{M}
		\left(
			\Psi_{\kappa,\ell,n,\epsilon}
		\right)
	\right\},
\end{equation}
thus
\begin{equation}
	\mathcal{M}
	\left(
		 \Psi_{\kappa,n,\epsilon}
	\right)
	=
	\max_{\ell \in \{-3,\dots,3\}}
	\mathcal{M}
	\left(
		\Theta_{\ell,\epsilon}
	\right)
	=
	30.
\end{equation}
The depth of $ \Psi_{\kappa,n,\epsilon}$ is bounded according to
\begin{equation}
\begin{aligned}
	\mathcal{L}
	\left(
		 \Psi_{\kappa,n,\epsilon}
	\right)
	\leq
	&
	\mathcal{L}
	\left(
		 \Theta_0
	\right)
	+
	\sum_{\ell=-3}^{3}
	\mathcal{L}
	\left(
		 \Theta_{\ell,\epsilon}
	\right)
	\\
	\leq
	&
	4
	+
	\sum_{\ell=-3}^{3}
	\left(
	\mathcal{L}
	\left(
		\mu_{\lambda_\kappa,\epsilon}
	\right)
	+
	\max
	\left\{
	\mathcal{L}
	\left(
		 \chi_{\ell}\circ T_{\kappa,n,\epsilon}
	\right)
	,
	\mathcal{L}
	\left(
		\Psi_{\kappa,\ell,n,\epsilon}
	\right)
	\right\}
	\right)
\end{aligned}
\end{equation}
thus
{\small
\begin{equation}
\begin{aligned}	
	\mathcal{L}
	\left(
		 \Psi_{\kappa,n,\epsilon}
	\right)
	\leq
	&
	C\left(\log( \lambda_{\kappa})+ \log\left(\frac{6}{\epsilon}\right)\right)
	\\
	&
	+
	C
	\left(
	\log
	\left(
		\frac{U_n}{\epsilon}
	\right)
	+
	\log(12n-2)
	\right)^2
	\left(
		\left(\log\left(\kappa^{\frac{1}{3}}A\right)\right)^2
		+
		\left(
			\log
			\left(
				\frac{2}{\epsilon}
				U_n
			\right)
			+
			\log(12n-2)
		\right)^2
	\right)
	\\
	&
	+
	C
	(3n-1)
	\left(
	\log
	\left(
		\frac{U_n}{\epsilon}
	\right)
	+
	\log(12n-2)
	\right)
	\\
	&
	+
	B_{n,\epsilon}
	\left(
		B_{n,\epsilon}
		\epsilon^{-\frac{2}{n-1}}
		+
		\epsilon^{-\frac{1}{n-1}}
		\log\left(\frac{1}{\epsilon}\right) 
		+
		\epsilon^{-\frac{2}{n-1}}
		\log
		\left(
			D_{n,\epsilon}
		\right)
		+
		\epsilon^{-\frac{1}{n-1}}
		\log
		\left(
			\lambda_n
		\right)
	\right)
	\\
	&
	+
	C
	\left(
		\log
		\left(
			\frac{1}{\epsilon}
		\right)
		+
		\left(
		\log
		\left(
			\frac{1}{\epsilon}
		\right)
		\right)^2
		+
		\log
		\left(
			\kappa^{\frac{1}{3}}A
		\right)
		+
		\left(
		\log
		\left(
			\kappa^{\frac{1}{3}}A
		\right)
		\right)^2
	\right),
	\quad
	\text{as }
	\epsilon \rightarrow 0,
\end{aligned}
\end{equation}
}%
with $B_n$ and $\lambda_n$ as in step {\sf \encircle{3}}.

By combining terms in the previous expression, 
recalling that $\lambda_n$ does not depend on $\kappa$,
and 
that\footnote{Again, we have reasonably assumed $D^+_{{n,\epsilon}}\geq D^-_{{\epsilon}}$.}
$\log(D_{n,\epsilon})\leq C_n \log\left(\frac{1}{\epsilon} \right)$
for some 
constant independent of $\kappa$, 
we obtain
\begin{equation}
\begin{aligned}
	\mathcal{L}
	\left(
		 \Psi_{\kappa,n,\epsilon}
	\right)
	\leq
	&
	C_n
	\left(
	\log
		\left(
			\frac{1}{\epsilon}
		\right)
	\right)^2
	\left(
		\log(\kappa)
		+
		\log
		\left(
			\kappa^{\frac{1}{3}}A
		\right)
		+
		\log
		\left(
			\kappa^{\frac{1}{3}}A
		\right)^2
		+
			\log
		\left(
			\frac{1}{\epsilon}
		\right)
		+
		\left(
		\log
		\left(
			\frac{1}{\epsilon}
		\right)
		\right)^2
	\right)
	\\
	&
	+
	B_{n,\epsilon}
	\left(
		B_{n,\epsilon}
		\epsilon^{-\frac{2}{n-1}}
		+
		\epsilon^{-\frac{1}{n-1}}
		\log\left(\frac{1}{\epsilon}\right) 
		+
		\epsilon^{-\frac{1}{n-1}}
		\log
		\left(
			\frac{1}{\epsilon}
		\right)
		+
		\epsilon^{-\frac{1}{n-1}}
	\right),
\end{aligned}
\end{equation}
as $\epsilon \rightarrow 0$,
where for each $n \in \IN$ the constant $C_n>0$ does not depend neither
on $\epsilon>0$ nor $\kappa$, and 
$B_{n,\epsilon}$ is bounded as in \eqref{eq:bound_Bnepsilon}.

Finally, an inspection of the proof reveals that the weights are bounded 
in absolute value by a constant independent of $\kappa$, but depending on $n$.
\end{proof}

\begin{proof}[Proof of Proposition \ref{prop:approx_FockInt_NN_der}]
As in Proposition \ref{prop:approx_FockInt_NN},
we divide the proof in five steps. Throughout this proof,
$\ell \in \mathbb{N}$ corresponds to the order of differentiation of
$\Psi(\tau)$.

{\sf \encircle{1} Approximation of $\Psi^{(\ell)}(\tau)$ for $\tau \rightarrow \infty$}.
Theorem \ref{thm:expansion_V} provides the following asymptotic expansion of $\Psi^{(\ell)}$:
For each $n \in \mathbb{N}$ it holds
\begin{align}
	\snorm{
	\Psi^{(\ell)}(\tau)
	-
	\widetilde{a}_{0,\ell}
	-
	\sum_{j=1}^{n}
	a_{j,\ell}
	\tau^{1-3j-\ell} }
	\leq  
	C_{n,\ell}\tau^{1-3(n+1)-\ell}
	\quad
	\text{as}\quad \tau\rightarrow\infty,
\end{align}
where
\begin{equation}
	\widetilde{a}_{0,\ell}
	=
	\left\{
	\begin{array}{cl}
		a_0, & \ell =1, \\
		0, & \ell \in \IN_{\geq 2}, \\
	\end{array}
	\right.
\end{equation}
where $a_{j,\ell} =a_j(1-3j)\cdots(1-3j-\ell+1)$,
and $a_j$ are exactly as in \eqref{eq:asymptotic_psi_tau_large}.

Let us set 
\begin{align}
	D^+_{{n,\ell,\epsilon}}
	\coloneqq
	2
	\max
	\left\{
		\left(2\frac{C_{n,\ell}}{\epsilon}\right)^{\frac{1}{3(n+1)+\ell-1}}
		,
		1
	\right \}.
\end{align}
Then, for $n,\ell \in \IN$
\begin{align}
	\norm{
	\Psi^{(\ell)}(\tau)
	-
	\widetilde{a}_{0,\ell}
	-
	\sum_{j=1}^{n}
	a_{j,\ell} \tau^{1-3j-\ell}
	}_{L^\infty\left(\left(\frac{1}{2}D^+_{{n,\ell,\epsilon}},\kappa^{\frac{1}{3}}A\right)\right)} 
	\leq  
	\frac{\epsilon}{2}.
\end{align}

By following the construction in step {\sf \encircle{1}} of Proposition 
\ref{prop:approx_FockInt_NN} we can conclude that there exist ReLU-NNs
$\Psi^+_{n,\ell,\epsilon} \in \mathcal{N\!N}_{L_\ell,17,1,2} $
such that
\begin{equation}
	\left(
		\Psi^+_{n,\ell,\epsilon}(\tau)
	\right)_1
	+
	\imath
	\left(
		\Psi^+_{n,\ell,\epsilon}(\tau)
	\right)_2
	=
	\widetilde{a}_{0,\ell}
	+
	\sum_{j=1}^{n}
	a_{j,\ell}
	\uppsi_{3j+\ell-1,\epsilon},
\end{equation}
with 
{\small
\begin{equation}
\begin{aligned}
	L_\ell
	\leq
	&
	C
	\left(
	\log
	\left(
		\frac{U_{n,\ell}}{\epsilon}
	\right)
	+
	\log(12n+4\ell-2)
	\right)^2
	\left(
		\left(\log\left(\kappa^{\frac{1}{3}}A\right)\right)^2
		+
		\left(
			\log
			\left(
				\frac{2}{\epsilon}
				U_{n,\ell}
			\right)
			+
			\log(12n+4\ell-2)
		\right)^2
	\right)
	\\
	&
	+
	C
	(3n+\ell-1)
	\left(
	\log
	\left(
		\frac{U_{n,\ell}}{\epsilon}
	\right)
	+
	\log(12n+4\ell-2)
	\right),
	\quad
	\text{as }
	\epsilon \rightarrow 0,
\end{aligned}
\end{equation}
}%
with $U_{n,\ell} = \sum_{j=1}^{n} \snorm{a_{j,\ell}} $, and satisfying
\begin{equation}
	\norm{
		\left(
			\Psi^+_{n,\ell,\epsilon}(\tau)
		\right)_1
		+
		\imath
		\left(
			\Psi^+_{n,\ell,\epsilon}(\tau)
		\right)_2
		-
		\widetilde{a}_{0,\ell}
		-
		\sum_{j=0}^{n}a_j \tau^{1-3j-\ell}
	}_{
		L^\infty
		\left(
		\left(
			\frac{1}{2}D^+_{{n,\epsilon}},\kappa^{\frac{1}{3}}A
		\right)
		\right)
	}
	\leq
	\frac{\epsilon}{2}.
\end{equation}
We conclude that
\begin{align}
	\norm{
	\Psi^{(\ell)}(\tau)
	-
	\left(
		\Psi^+_{n,\ell,\epsilon}(\tau)
	\right)_1
	-
	\imath
	\left(
		\Psi^+_{n,\ell,\epsilon}(\tau)
	\right)_2
	}_{L^\infty\left(\left(\frac{1}{2}D^+_{{n,\ell,\epsilon}},\kappa^{\frac{1}{3}}A\right)\right)} 
	\leq  
	\epsilon.
\end{align}

{\sf \encircle{2} Approximation of $\Psi^{(\ell)}(\tau)$ for $\tau \rightarrow -\infty$}.
Theorem \ref{thm:expansion_V} provides the following asymptotic expansion of $\Psi^{(\ell)}$
\begin{align}
	\Psi^{(\ell)}(\tau)
	=
	c_0 
	D^\ell_\tau 
	\left\{
		\exp(-\imath \tau^3/3-\imath \tau \alpha_1) 
	\right \}
	(1+ \mathcal{O}(\exp(-\beta \snorm{\tau}))
	\quad
	\text{as}\quad \tau\rightarrow-\infty.
\end{align}
One can readily observe that
\begin{equation}
	c_0
	D^\ell_\tau 
	\left\{
		\exp(-\imath \tau^3/3-\imath \tau \alpha_1) 
	\right \}
	=
	q_{2\ell}(\tau)
	\exp(-\imath \tau^3/3-\imath \tau \alpha_1) 
\end{equation}
where $q_{2\ell}$ is a polynomial of degree $2\ell$ 
(we have absorbed the constant $c_0$ into $q_{2\ell}$) with 
complex coefficients that are independent of $\kappa$.
Let us set
\begin{equation}\label{eq:phi_ell_def}
	\psi_{\ell}(\tau)
	\coloneqq
	q_{2\ell}(\tau)
	\exp(-\imath \tau^3/3-\imath \tau \alpha_1).
\end{equation}
It holds that
\begin{align}
	\snorm{
		\Psi^{(\ell)}(\tau) 
		- 
		\psi_{\ell}(\tau)
	} 
	\leq 
	C_{\ell,\kappa}
	\exp\left(\left(\frac{\nu_1\sqrt{3}}{2}-\beta\right)\snorm{\tau}\right)
	\quad
	\text{as } \tau \rightarrow-\infty,
\end{align}
with $C_{\ell,\kappa}>0$ given by
\begin{equation}\label{eq:constant_pol}
	C_{\ell,\kappa}
	\coloneqq
	\max_{\tau \in 
	\left[
		-\kappa^{\frac{1}{3}} A,\kappa^{\frac{1}{3}} A
	\right]}
	\snorm{
		q_{2\ell}(\tau)
	}
	\leq
	C_\ell
	\left(
		\kappa^{\frac{1}{3}} A
	\right)^{2\ell}.
\end{equation}
Observe that $\frac{\nu_1\sqrt{3}}{2}-\beta<0$.
Let us set 
\begin{align}
	D^{-}_{\ell,\kappa,\epsilon}
	\coloneqq
	2
	\max
	\left\{
		\snorm{\frac{\nu_1\sqrt{3}}{2}-\beta}^{-1} 
		\log \left (2\frac{C_{\ell,\kappa}}{\epsilon}\right)  
		,
		1
	\right\}.
\end{align}
Therefore
\begin{align}
	\norm{
		\Psi^{(\ell)}(\tau) 
		- 
		\psi_{\ell}(\tau)
	}_{L^\infty\left(
	\left(
		-\kappa^{\frac{1}{3}}A
		,
		-
		\frac{1}{2}D^-_{\ell,\kappa,\epsilon}
	\right)\right)}
	\leq
	\frac{\epsilon}{2}.
\end{align}
Observe that 
\begin{equation}\label{eq:real_img_app_q2l}
\begin{aligned}
	\Re\{\psi_{\ell}(\tau)\}
	&
	=
	\left(
	\Re\{q_{2\ell}(\tau)\}
	\cos
	\left(
		\frac{\tau \nu_1}{2} 
		- 
		\frac{\tau^3}{3}
	\right)
	-
	\Im\{q_{2\ell}(\tau)\}
	\sin
	\left(
		\frac{\tau \nu_1}{2} 
		- 
		\frac{\tau^3}{3}
	\right)
	\right)
	\exp
	\left(
		\frac{\nu_1\sqrt{3}}{2}\snorm{\tau}
	\right),
\end{aligned}
\end{equation}
and
\begin{equation}
\begin{aligned}
	\Im\{\psi_{\ell}(\tau)\}
	&
	=
	\left(
	\Re\{q_{2\ell}(\tau)\}
	\sin
	\left(
		\frac{\tau \nu_1}{2} 
		- 
		\frac{\tau^3}{3}
	\right)
	+
	\Im\{q_{2\ell}(\tau)\}
	\cos
	\left(
		\frac{\tau \nu_1}{2} 
		- 
		\frac{\tau^3}{3}
	\right)
	\right)
	\exp
	\left(
		\frac{\nu_1\sqrt{3}}{2}\snorm{\tau}
	\right).
\end{aligned}
\end{equation}
Thus,
\begin{align}\label{eq:error_real_DtPsi}
	\norm{
		\Re\left\{ \Psi^{(\ell)}(\tau)\right\}
		- 
		\left(
			\Re\left\{q_{2\ell}(\tau)\right\}
			\phi_{1}(\tau)
			-
			\Im\left\{q_{2\ell}(\tau)\right\}
			\phi_{2}(\tau)
		\right)
	}_{L^\infty\left(\left(-\kappa^{\frac{1}{3}}A,-\frac{1}{2}D^-_{\ell,\kappa,\epsilon}\right)\right)}
	\leq
	\frac{
		\epsilon
	}{
		2
	},
\end{align}
and
\begin{align}\label{eq:error_imag_DtPsi}
	\norm{
		\Im\left\{\Psi^{(\ell)}(\tau) \right\}
		- 
		\left(
			\Re\left\{q_{2\ell}(\tau)\right\}
			\phi_{2}(\tau)
			+
			\Im\left\{q_{2\ell}(\tau)\right\}
			\phi_{1}(\tau)
		\right)
	}_{L^\infty\left(\left(-\kappa^{\frac{1}{3}}A,-\frac{1}{2}D^-_{\ell,\kappa,\epsilon}\right)\right)}
	\leq
	\frac{
		\epsilon
	}{
		2
	},
\end{align}
where $\phi_1(\tau)$ and $\phi_2(\tau)$ are as in 
\eqref{eq:def_phi}. The ReLU-NN emulation of these
functions has been done in step {\sf \encircle{2}} of
Proposition \ref{prop:approx_FockInt_NN}.


Next, we use Lemma \ref{lmm:product_of_complex_NNs}
to approximate $\psi_{\ell}(\tau)$ as defined in \eqref{eq:phi_ell_def}.
We verify the hypothesis item-by-item
\begin{itemize}
\item[(i)]
According to Proposition \ref{prop:approximation_polynomials}
and Remark \ref{def:DNN} there exist a constant $C>0$, 
independent of $\kappa$, and ReLU-NNs $\Phi_{q_{2\ell},\epsilon} \in \mathcal{N\!N}_{L,11,1,2}$ 
with
\begin{equation}
	L_{q_{2\ell}}
	(\epsilon)
	\leq
	C
	\left(
		2\ell
		\log
		\left(
			\frac{1}{\epsilon}
		\right)
		+
		4\ell^2
		\log
		\left(
			\kappa
		\right)
		+
		2\ell \log(2\ell)
		+
		2\ell 
		\log
		\left(
			C_{\ell,\kappa}
		\right)
	\right),
	\quad
	\text{as }
	\epsilon \rightarrow 0,
\end{equation}
which satisfies
\begin{equation}
	\norm{
		q_{2\ell}
		-
		\left(
		\left(
			 \Phi_{q_{2\ell},\epsilon} 
		\right)_1
		+
		\imath
		\left(
			 \Phi_{q_{2\ell},\epsilon} 
		\right)_2
		\right)
	}_{L^\infty(\mathcal{I}_\kappa)}
	\leq
	\epsilon,
\end{equation}
with weights bounded in absolute value
by a constant independent of $\kappa$.
\item[(ii)]
An inspection of step {\sf \encircle{2}} in Proposition \ref{prop:approx_FockInt_NN}
reveals the existence of ReLU-NNs emulating the map 
$\tau \mapsto \left(\phi_1(\tau),\phi_2(\tau) \right)^\top$.
Let us set
\begin{equation}
	\Theta_0(x)
	=
	\begin{pmatrix}
	0 \\
	0 \\
	x
	\end{pmatrix}
	\quad
	\text{and}
	\quad
	\Theta_{i,\epsilon}(x_1,x_2,x_3)
	=
	A_i
	\begin{pmatrix}
		x_1 \\
		x_2 \\
		x_3 \\
		\phi_{i,\epsilon}(x_3)
	\end{pmatrix}
	\quad
	\text{for }\\
	i=1,2,
\end{equation}
with
\begin{equation}
	A_1
	\coloneqq
	\begin{pmatrix}
		1 & 0 & 0 & 1 \\
		0 & 1 & 0 & 0 \\
		0 & 0 & 1 & 0
	\end{pmatrix}
	\in 
	\mathbb{R}^{3\times 4}
	\quad
	\text{and}
	\quad
	A_2
	\coloneqq
	\begin{pmatrix}
		1 & 0 & 0 & 0 \\
		0 & 1 & 0 & 1 \\
		0 & 0 & 1 & 0
	\end{pmatrix}
	\in 
	\mathbb{R}^{3\times 4}.
\end{equation}	
Define the ReLU-NN 
\begin{equation}
	\Phi_{\epsilon}(\tau)
	=
	\begin{pmatrix}
	1 & 0 & 0  \\
	0 & 1 & 0 
	\end{pmatrix}
	\left(
		\Theta_{2,\epsilon}
		\circ
		\Theta_{1,\epsilon}
		\circ
		\Theta_0
	\right)(\tau).
\end{equation}
There exist $C>0$ and ReLU-NNs 
$\Phi_{\epsilon} \in \mathcal{N\!N}_{L,20,1,2}$ with
\begin{equation}
	L_{\phi_1,\phi_2}
	(\epsilon)
	\leq
	C
	\left(
		\left(
		\log
		\left(
			\frac{1}{\epsilon}
		\right)
		\right)^2
		+
		\left(
		\log
		\left(
		\kappa
		\right)
		\right)^2
	\right)
	\quad
	\text{as }
	\epsilon
	\rightarrow 0,
\end{equation}
such that
\begin{equation}
	\norm{
		\left(
			\Phi_{\epsilon}(\tau)
		\right)_1
		+
		\imath
		\left(
			\Phi_{\epsilon}(\tau)
		\right)_2
		-
		\phi_1(\tau)
		-
		\imath
		\phi_2(\tau)
	}_{L^\infty(\mathcal{I}_\kappa)}
	\leq
	\epsilon.
\end{equation}
\end{itemize}
It follows from Lemma \ref{lmm:product_of_complex_NNs} that
there exist ReLU-NNs $\Psi^{-}_{\ell,\epsilon}\in \mathcal{N\!N}_{L,22,1,2}$ 
such that
\begin{equation}
	\norm{
		q_{2\ell}(\tau)
		\left(
			\phi_1(\tau)
			+
			\imath
			\phi_2(\tau)
		\right)
		-
		\left(
			\Psi^{-}_{\ell,\epsilon}(\tau)
		\right)_1
		-
		\imath
		\left(
			\Psi^{-}_{\ell,\epsilon}(\tau)
		\right)_2
	}_{L^\infty(\mathcal{I}_\kappa)} 
	\leq
	\frac{\epsilon}{2}.
\end{equation}
and
\begin{equation}\label{eq:bound_L_kappa}
\begin{aligned}
	L
	\leq
	&
	C
	\left(
	\log
	\left(
		\lceil  U_{\ell,\kappa}  \rceil
	\right)
	+
	\log
	\left(
		\frac{4}{\epsilon}
	\right)
	\right)
	+
	2
	\max
	\left\{
		L_{q_{2\ell}}
		\left(
			\frac{\epsilon}{4}
		\right)
		,2
	\right\}
	+
	2
	\max
	\left\{
		L_{\phi_1,\phi_2}
		\left(
			\frac{\epsilon}{4}
		\right)
		,2
	\right\}
	\\
	\leq
	&
	C
	\left(
	\log
	\left(
		\lceil U_{\ell,\kappa} \rceil
	\right)
	+
	\log
	\left(
		\frac{4}{\epsilon}
	\right)
	\right)
	\\
	&
	+
	C
	\left(
		\ell
		\log
		\left(
			\frac{1}{\epsilon}
		\right)
		+
		\ell^2
		\log
		\left(
			\kappa
		\right)
		+
		\ell \log(2\ell)
		+
		\ell 
		\log
		\left(
			C_{\ell,\kappa}
		\right)
	\right)
	\\
	&
	+
	C
	\left(
		\left(
		\log
		\left(
			\frac{1}{\epsilon}
		\right)
		\right)^2
		+
		\left(
		\log
		\left(
		\kappa
		\right)
		\right)^2
	\right)
\end{aligned}
\end{equation}
with 
\begin{equation}
	U_{\ell,\kappa} 
	= 
	2\max\left\{\norm{q_{2\ell}}_{L^\infty(\mathcal{I}_\kappa)},\norm{\phi_1(\tau)+\imath\phi_2(\tau)}_{L^\infty(\mathcal{I}_\kappa)}\right\} 
	= 
	2C_{\ell,\kappa}, 
\end{equation}
with $C_{\ell,\kappa}$ as in
\eqref{eq:constant_pol}, and $C>0$ independent of $\kappa$ and $\ell \in \IN$.

Recalling \eqref{eq:error_real_DtPsi} and \eqref{eq:error_imag_DtPsi} 
we conclude that $\Psi^{-}_{\ell,\epsilon}\in \mathcal{N\!N}_{L,24,1,2}$
satisfies
\begin{equation}
	\norm{
		\Psi^{(\ell)}(\tau) 
		-
		\left(
			\Psi^{-}_{\ell,\epsilon}(\tau)
		\right)_1
		-
		\imath
		\left(
			\Psi^{-}_{\ell,\epsilon}(\tau)
		\right)_2
	}_{L^\infty\left(\left(-\kappa^{\frac{1}{3}}A,-\frac{1}{2}D^-_{\ell,\kappa,\epsilon}\right)\right)}
	\leq
	\epsilon,
\end{equation}
with $L$ bounded as in \eqref{eq:bound_L_kappa}.

{\sf \encircle{3} 
Approximation of $\Psi^{(\ell)}(\tau)$ for $\tau$ bounded.}
Set 
\begin{equation}
	D_{n,\ell,\kappa,\epsilon} 
	\coloneqq 
	\max \left\{ D^+_{{n,\ell,\epsilon}},D^-_{{\ell,\kappa,\epsilon}}\right\}.
\end{equation}
Remark \ref{rmk:bound_psi} implies that
$\Psi ^{(\ell)}\in \mathcal{P}_{\boldsymbol{\lambda},D_{n,\ell,\kappa,\epsilon},\mathbb{C}}$
with $\boldsymbol{\lambda}_\ell \coloneqq\{\lambda_{n,\ell}\}_{n\in \mathbb{N}_0}$ 
defined as
$\lambda_{n,\ell}\leq C_{n,\ell}$
with $C_{n,\ell}>0$ as in Remark \ref{rmk:bound_psi},
thus independent of $\kappa$.
According to Proposition \ref{prop:approximation_P}
for each $n\in \IN_{\geq 2}$ there exist ReLU-NNs 
$\Psi^\cc_{n,\epsilon} \in \mathcal{N\!N}_{L,15,1,2}$
with 
\begin{equation}
	L
	\leq 
	B_{n,\ell,\kappa,\epsilon}
	\left(
		B_{n,\ell,\kappa,\epsilon}
		\epsilon^{-\frac{2}{n-1}}
		+
		\epsilon^{-\frac{1}{n-1}}
		\log\left(\frac{1}{\epsilon}\right) 
		+
		\epsilon^{-\frac{2}{n-1}}
		\log
		\left(
			D_{n,\ell,\kappa,\epsilon}
		\right)
		+
		\epsilon^{-\frac{1}{n-1}}
		\log
		\left(
			\lambda_{n,\ell}
		\right)
	\right)
\end{equation}
as $\epsilon\rightarrow0$ such that 
\begin{equation}
	\norm{
		\Psi^{(\ell)}
		-
		\left(
			\left(\Psi^\cc_{n,\ell,\epsilon}\right)_1
			+
			\imath
			\left(\Psi^\cc_{n,\ell,\epsilon}\right)_2
		\right)
	}_{L^\infty((-D_{n,\ell,\kappa,\epsilon},D_{n,\ell,\kappa,\epsilon}))} 
	\leq 
	\epsilon,
\end{equation}
with $B_{n,\ell,\kappa,\epsilon}>0$ bounded according to
\begin{equation}
	B_{n,\ell,\kappa,\epsilon}
	\leq
	C
	\max\left\{
		n+2,
		\left(
			\widetilde{C}_n 
			D^{n+1}_{n,\ell,\kappa,\epsilon}
			\lambda_{n+1,\ell}
		\right)^{\frac{1}{n-1}}
	\right\},
\end{equation}
and $C,\widetilde{C}_n>0$ independent of $\kappa$
and $C$ independent of $n$.
Furthermore, we have that
\begin{equation}\label{eq:bound_Bnepsilon_ell}
	B_{n,\ell,\kappa,\epsilon}
	\leq
	C_{n,\ell}
	\left(
		\log\left(\kappa^\frac{1}{3}A\right)
		+
		\log \left (\frac{1}{\epsilon}\right)  
	\right)^{\frac{n+1}{n-1}}
	\epsilon^{-\frac{n+1}{(n-1)(3n+\ell+2)}},
\end{equation}
with $C_{n,\ell}$ not depending on $\kappa$ or $\epsilon$.

{\sf \encircle{4}}
{\sf Approximation of $\Psi^{(\ell)}(\tau)$ in $\mathcal{I}_\kappa$.}
As in the proof of Proposition \ref{prop:approx_FockInt_NN},
in steps {\sf \encircle{1}} through {\sf \encircle{3}} we have
constructed local approximations for $\Psi^{(\ell)}$ in three different
regions of the interval $\mathcal{I}_\kappa$.
We combine them using a suitable partition of unity, as in step 
{\sf \encircle{4}} of the proof of Proposition \ref{prop:approx_FockInt_NN},
over the wavenumber-dependent interval $\mathcal{I}_\kappa$, which 
has been defined in \eqref{eq:interval_I_k}.

Define the map $T_{\kappa,n,\ell,\epsilon}:{\mathcal{I}_\kappa \rightarrow [-3,3]}$
as
\begin{equation}\label{eq:T_kappa_n_eps_l}
	T_{\kappa,n,\ell,\epsilon}(x)
	=
	\left\{
	\begin{array}{cl}
		-3+\frac{x+\kappa^{\frac{1}{3}} A}{-D^{-}_{\ell,\kappa,\epsilon}+\kappa^{\frac{1}{3}} A},
		& 
		x\in [-\kappa^{\frac{1}{3}} A,-D^{-}_{\ell,\kappa,\epsilon}),
		\\
		\frac{x}{D^{-}_{\ell,\kappa,\epsilon}/2}, 
		& 
		x\in [-D^{-}_{\ell,\kappa,\epsilon},0),
		\\
		\frac{x}{D^+_{{n,\ell,\epsilon}}/2}, 
		& 
		x\in [0,D^+_{{n,\ell,\epsilon}}),
		\\
		2+\frac{x-D^+_{{n,\ell,\epsilon}}}{\kappa^{\frac{1}{3}} A-D^+_{{n,\ell,\epsilon}}},
		& 
		x\in [D^+_{{n,\ell,\epsilon}},\kappa^{\frac{1}{3}} A].
	\end{array}
	\right.
\end{equation}

Observe that in the definition of $T_{\kappa,n,\epsilon}$ in \eqref{eq:T_kappa_n_eps_l}
we have implicitely assumed that $\kappa^\frac{1}{3}A \geq \max\{D^+_{{n,\ell,\epsilon}},D^{-}_{\ell,\kappa,\epsilon}\}$. 
Therefore, under the reasonable assumption that $D^+_{{n,\ell,\epsilon}}\geq D^{-}_{\ell,\kappa,\epsilon}$, this implies that
the bounds presented here are valid for $\kappa$ satisfying \eqref{eq:condition_kappa_l}.

The function $\Psi^{(\ell)}$ admits the representation
\begin{equation}\label{eq:partition_unity_T_2}
	\Psi^{(\ell)}(\tau)
	=
	\sum_{j=-3}^{3}
	\left(
		\chi_{j}
		\circ
		T_{\kappa,n,\ell,\epsilon}
	\right)
	(\tau)
	\Psi^{(\ell)}(\tau),
	\quad
	\tau
	\in 
	\mathcal{I}_\kappa.
\end{equation}

Let us set\footnote{
	The notation is slightly abused 
	in this definition. In \eqref{eq:Psi_NN_l}, we use
	$\Psi^{(\ell)}_{\kappa,n,\epsilon}$ to denote 
	a ReLU-NN emulating $\Psi^{(\ell)}$, not to denote
	the $\ell$-order derivative of a ReLU-NN. 
}
\begin{equation}\label{eq:Psi_NN_l}
	\Psi^{(\ell)}_{\kappa,n,j,\epsilon}
	\coloneqq
	\left\{
	\begin{array}{cl}
	\Psi^-_{\ell,\epsilon}, & j \in \{-3,-2\}, \\
	\Psi^\cc_{n,\ell,\epsilon}, & j \in \{-1,0,1\}, \\
	\Psi^+_{n,\ell,\epsilon}, & j \in \{2,3\}. \\
	\end{array}
	\right.
\end{equation}
As in \eqref{eq:network_approximation_D}, for each $n,\ell \in \IN$
and $\epsilon>0$ we set
\begin{equation}
\label{eq:network_approximation_D_DerLPsi}
	\Psi^{(\ell)}_{\kappa,n,\epsilon}(\tau)
	\coloneqq
	\sum_{j=-3}^{3} 
	\begin{pmatrix}
	\mu_{\lambda_\kappa,\epsilon}
	\left(
		\left(
			\chi_{j}\circ T_{\kappa,n,\ell,\epsilon}
		\right)(\tau)
		,
		\left(\Psi^{(\ell)}_{\kappa,n,j,\epsilon}(\tau)\right)_1
	\right) \\
	\mu_{\lambda_\kappa,\epsilon}
	\left(
		\left(
			\chi_{j}\circ T_{\kappa,n,\ell,\epsilon}
		\right)(\tau)
		,
		\left(\Psi^{(\ell)}_{\kappa,n,j,\epsilon}(\tau)\right)_2
	\right)
	\end{pmatrix},
	\quad
	\tau
	\in 
	\mathcal{I}_\kappa.
\end{equation}
As in step {\sf \encircle{4}} of the proof of Proposition \ref{prop:approx_FockInt_NN},
we can conclude that $\Psi^{(\ell)}_{\kappa,n,\epsilon}(\tau)$ can be expressed as 
a ReLU-NN.

{\sf \encircle{5}}
{\sf Bounds for the Width, Depth, and Weights}.
As in step {\sf \encircle{5}} of the proof of Proposition \ref{prop:approx_FockInt_NN}, 
we have
\begin{equation}
	\mathcal{M}
	\left(
		\Psi^{(\ell)}_{\kappa,n,\epsilon}
	\right)
	=
	6
	+
	\max_{j \in \{-3,\dots,3\}}
	\left\{
		2
		\mathcal{M}
		\left(
			\mu_{\lambda_\kappa,\epsilon}
		\right)
		,
		\mathcal{M}
		\left(
			\chi_{\ell}\circ T_{\kappa,n,\epsilon}
		\right)
		+
		\mathcal{M}
		\left(
			\Psi^{(\ell)}_{\kappa,n,j,\epsilon}
		\right)
	\right\}
	=
	30.
\end{equation}
The depth of $\Psi^{(\ell)}_{\kappa,n,\epsilon}(\tau)$ is bounded according to
\begin{equation}
	\mathcal{L}
	\left(
		\Psi^{(\ell)}_{\kappa,n,\epsilon}
	\right)
	\leq
	4
	+
	\sum_{\ell=-3}^{3}
	\left(
	\mathcal{L}
	\left(
		\mu_{\lambda_\kappa,\epsilon}
	\right)
	+
	\max
	\left\{
	\mathcal{L}
	\left(
		 \chi_{\ell}\circ T_{\kappa,n,\epsilon}
	\right)
	,
	\mathcal{L}
	\left(
		\Psi^{(\ell)}_{\kappa,n,j,\epsilon}
	\right)
	\right\}
	\right),
\end{equation}
thus yielding the bound stated in \eqref{eq:depth_Psi_L_der}.
\end{proof}

\bibliographystyle{siam}
\bibliography{ref}{}

\end{document}